\documentclass[12pt]{amsart}
\usepackage{amssymb,amsmath,amsthm}
\usepackage[msc-links]{amsrefs}
\usepackage[all,cmtip]{xy}
\usepackage{graphicx}
\setlength{\textwidth}{\paperwidth} 
\addtolength{\textwidth}{-2.5in}
\setlength{\textheight}{\paperheight} 
\addtolength{\textheight}{-2.5in}
\calclayout
\newtheorem{theorem}{Theorem}
\newtheorem{lemma}[theorem]{Lemma}
\newtheorem{remark}[theorem]{Remark}
\newtheorem{corollary}[theorem]{Corollary}
\newtheorem{definition}[theorem]{Definition}
\newtheorem{proposition}[theorem]{Proposition}
\numberwithin{theorem}{section} \numberwithin{equation}{section}
\newcommand\scalemath[2]{\scalebox{#1}{\mbox{\ensuremath{\displaystyle #2}}}}
\newcommand*{\MyScaleMedium}{0.87}

\newcommand*{\MyScaleTiny}{0.7}

\newcommand{\im}{\textnormal{Im}\;\;\!\!\!}
\newcommand{\modvar}{\varpi}
\begin{document}
\title[Six Line Configurations]{Six line configurations and string dualities}
\author{A. Clingher}
\address{Department of Mathematics and Computer Science
University of Missouri
St. Louis MO 63121}
\email{clinghera@umsl.edu}
\author{A. Malmendier}
\address{Department of Mathematics and Statistics, Utah State University,
Logan, UT 84322}
\email{andreas.malmendier@usu.edu}
\author{T. Shaska}
\address{Department of Mathematics and Statistics, Oakland University, Rochester, MI 48309}
\email{shaska@oakland.edu}
\begin{abstract}
We study the family of K3 surfaces of Picard rank sixteen associated with the double cover of the projective plane branched along the union of six lines, and the family of its Van~\!Geemen-Sarti partners, i.e., K3 surfaces with special Nikulin involutions, such that quotienting by the involution and blowing up recovers the former. We prove that the family of Van~\!Geemen-Sarti partners is a four-parameter family of K3 surfaces with $H \oplus E_7(-1) \oplus E_7(-1)$ lattice polarization. We describe explicit Weierstrass models on both families using even modular forms on the bounded symmetric domain of type $IV$. We also show that our construction provides a geometric interpretation, called geometric two-isogeny, for the F-theory/heterotic string duality in eight dimensions.
\end{abstract}
\keywords{K3 surfaces, six line configurations, heterotic string, F-theory}
\subjclass[2010]{11F03, 14J28, 14J81}
\maketitle
\section{Introduction}
In this article, we consider configurations of six lines in general position on the projective plane. The double cover of the plane branched along their union is a K3 surface after resolving only ordinary double points. The moduli space of such K3 surfaces was described in~\cite{MR1136204}. Kloosterman classified all possible types of elliptic fibrations with a section on them in~\cite{MR2254405}. In \cite{MR3201823}, the authors consider K3 surfaces which are double covers of a blow-up of $\mathbb{P}^2$, branched along rational curves. They classified the elliptic fibrations on such surfaces and their van Geemen-Sarti involutions.
\par The assumption that the six lines are in general position implies that the Picard rank of the resulting K3 surface is sixteen. In the special case when the six lines are tangent to a conic, the Picard rank is, generically, seventeen and one obtains as K3 surface a Kummer surface $\mathrm{Kum}(\operatorname{Jac}\mathcal{C})$ of the Jacobian $\operatorname{Jac}(\mathcal{C})$ of a generic genus-two curve $\mathcal{C}$. There is then, as shown in \cites{MR2427457, MR2824841, MR2935386, MR3366121}, a closely related K3 surface, called the \emph{Shioda-Inose surface} $\mathrm{SI}(\operatorname{Jac}\mathcal{C})$, which carries a \emph{Nikulin involution}, i.e., an automorphism of order two preserving the holomorphic two-form, such that quotienting by this involution and blowing up the fixed points recovers the Kummer surface. The Shioda-Inose surface $\mathrm{SI}(\operatorname{Jac}\mathcal{C})$ carries a canonical lattice polarization of type $H \oplus E_8(-1) \oplus E_7(-1)$ and is part of \emph{a geometric two-isogeny}:
 \begin{equation}
\xymatrix{
\mathrm{Kum}(\operatorname{Jac}\mathcal{C})  \ \  \ar @/_0.5pc/ @{-->} _{} [rr]
&
& \ \ \mathrm{SI}(\operatorname{Jac}\mathcal{C}) \ar @/_0.5pc/ @{-->} _{} [ll]
}
\end{equation}
establishing a one-to-one correspondence between two different types of surfaces with the same Hodge-theoretic data: principally polarized abelian surfaces and algebraic K3 surfaces polarized the special lattice $H \oplus E_8(-1) \oplus E_7(-1)$. The key geometric ingredient in this construction is a normal form equation for an elliptically fibered K3 surface whose periods determine a point $\tau$ in the Siegel upper-half space $\mathbb{H}_2$, with the coefficients in the equation being Siegel modular forms. The normal form equation, as well as the two-isogeny construction, are due in different forms to Kumar \cite{MR2427457} and to Clingher and Doran \cite{MR2935386}.
\par In this article, we extend the notion of geometric two-isogeny to K3 surfaces with Picard rank sixteen. In this context, Kummer surfaces are replaced by what we shall refer to as \emph{double sextic surfaces} - K3 surfaces $\mathcal{Y}$ obtained as minimal resolutions of double covers of the projective plane branched along a configuration of six distinct lines. The Shioda-Inose surfaces from above are then replaced, as shown by Clingher and Doran in \cite{MR2824841} by K3 
surfaces $\mathcal{X}$ polarized by the rank-sixteen lattice $H \oplus E_7(-1) \oplus E_7(-1)$. Similarly to the Shioda-Inose case, each of these K3 surfaces $\mathcal{X}$ carries a special Nikulin involution, $\jmath_{\mathcal{X}} $ called \emph{Van~\!Geemen-Sarti involution}. When quotienting by the involution $\jmath_{\mathcal{X}} $ and blowing up the fixed locus, one recovers the corresponding double-sextic surface $\mathcal{Y}$ together with a rational double cover map $ \Phi \colon \mathcal{X} \dashrightarrow \mathcal{Y}$. However, the Van~\!Geemen-Sarti involutions $\jmath_{\mathcal{X}} $ no longer determine Shioda-Inose structures. Instead, they appear as fiber-wise translation by two-torsion in a suitable Jacobian elliptic fibration $\pi^{\mathcal{X}}_{\mathrm{alt}}$. The geometric two-isogeny picture is then given by the diagram below:
 \begin{equation}
\xymatrix{
\mathcal{X} \ar @(dl,ul) _{\jmath_{\mathcal{X}}} \ar [dr] _{\pi^{\mathcal{X}}_{\mathrm{alt}}} \ar @/_0.5pc/ @{-->} _{\hat{\Phi}} [rr]
&
& \mathcal{Y} \ar @(dr,ur) ^{\jmath_{\mathcal{Y}}} \ar [dl] ^{\pi^{\mathcal{Y}}_{\mathrm{alt}}} \ar @/_0.5pc/ @{-->} _{\Phi} [ll] \\
& \mathbb{P}^1 }
\end{equation}
\noindent We shall refer to the K3 surfaces $\mathcal{X}$ as the \emph{Van~\!Geemen-Sarti partners} of the double sextic surface $\mathcal{Y}$. From a physics point of view, Jacobian elliptic fibrations on K3 surfaces correspond to a certain subclass of eight-dimensional compactifications of the type IIB string in which the axio-dilaton field varies over a base~\cites{MR1403744,MR1409284,MR1412112,MR1408164}. The period lattice of the Van~\!Geemen-Sarti partners describes physical models dual to the $\mathfrak{e}_8\oplus \mathfrak{e}_8$ heterotic string, with an unbroken gauge algebra $\mathfrak{e}_7\oplus \mathfrak{e}_7$ ensuring that two Wilson line expectation values are non-zero.  A similar result holds for the $\mathfrak{so}(32)$ heterotic string with an unbroken gauge algebra $\mathfrak{so}(24)\oplus \mathfrak{su}(2)^{\oplus 2}$. The function field of the Narain moduli space of these heterotic theories turns out to be the ring of modular forms of \emph{even characteristic} on the bounded symmetric domain of type $IV$ introduced by Matsumoto et al.~\!\cite{MR1204828}.  Geometric two-isogeny provides a more refined and geometric understanding for this string duality on a natural sub-space of the full eighteen dimensional moduli space \cites{MR2369941, MR2826187, MR3366121}:  by taking the K3 surface to be the Shioda-Inose surface $\mathrm{SI}(\operatorname{Jac}\mathcal{C})$, the F-theory/heterotic string duality is manifested as the aforementioned geometric two-isogeny.
\par This article is structured as follows: in Section~2 we review the work of Dolgachev and Ortland \cite{MR1007155} and the moduli space associated with six-line configurations in the projective plane.  We define new invariants of six-line configurations that generalize the Igusa invariants of binary sextics. We construct the function field of the moduli space explicitly, by determining a complete set of generators for the ring of modular forms of even characteristic. In Section~3 we construct explicit Weierstrass models for three Jacobian elliptic fibrations on the family of double-sextic surfaces $\mathcal{Y}$. One of them, which we call the \emph{alternate fibration}, is of particular importance: the coefficients in its Weierstrass equation are the generators of the ring of modular forms derived before. In Section~4 we construct the family of Van~\!Geemen-Sarti partners $\mathcal{X}$ of the double-sextic surfaces $\mathcal{Y}$ polarized by the lattice $H \oplus E_7(-1) \oplus E_7(-1)$. There are four non-isomorphic elliptic fibrations on $\mathcal{X}$; three will be important for the considerations in this article, and Weierstrass models will be constructed for them. Using the Van~\!Geemen-Sarti involution, we will determine the coefficients of these Weierstrass models in terms of the modular forms found in Section~2. Using a result of Vinberg \cite{MR3235787} and its interpretation in string theory in \cite{MR3366121}, we prove that the function field of the Narain moduli space of quantum-exact heterotic string compactifications with two non-vanishing Wilson lines  is the ring of Siegel modular forms of {\it even weight.}  In Section~5 we discuss the specialization of six-line configurations tangent to a common conic and the associated K3 surfaces. We find perfect agreement in this case with the results in~\cites{MR2824841,MR3712162,MR3366121}. 
\section{Invariants of six-line configurations in the projective plane}
\label{sec:invariants}
The Pl\"ucker embedding algebraically embeds the Grassmannian $\operatorname{Gr}(k,n;\mathbb{C})$ of all $k$-dimensional sub-spaces of an $n$-dimensional complex vector space $V$ as a sub-variety of the projective space $\mathbb{P}(\wedge^k V)$. The homogeneous coordinates of the image under the Pl\"ucker embedding, with respect to the natural basis of the exterior space $\wedge^k V$ relative to a chosen basis in $V$, are called \emph{Pl\"ucker coordinates}. The image of the Pl\"ucker embedding is an intersection of a number of quadrics defined by the so called \emph{Pl\"ucker relations}. 
\par We consider the situation $k=3$ and $n=6$ with $\dim \operatorname{Gr}(k,n;\mathbb{C})=9$. We start with the geometric setup of an \emph{ordered} configuration of six lines in general position in the projective plane $\mathbb{P}^2$. We write each line in the form $\ell_i: a_i z_1 + b_i  z_2 + c_i z_3 =0$ for $i=1, \dots , 6$ with $[z_1:z_2:z_3] \in \mathbb{P}^2$.  The coefficients of the lines are assembled in vectors $\mathbf v_i = \langle a_i, b_i, c_i \rangle^t$ and form a matrix $\mathbf{A} \in \operatorname{Mat}(3,6;\mathbb{C})$ given by $\mathbf{A}= [ \mathbf v_1 | \cdots | \mathbf v_6]$. Let  $\mathbf{A}_{ijk} = [ \mathbf v_i | \mathbf v_j | \mathbf v_k]$ and $D_{ijk} = \det \mathbf{A}_{ijk}$ be the Pl\"ucker coordinates derived from $\mathbf{A} \in \operatorname{Mat}(3,6;\mathbb{C})$ considered as an element of the Grassmannian $\operatorname{Gr}(3,6;\mathbb{C})$.  
\par We consider the following cases of configurations of six lines in $\mathbb{P}^2$: 
\begin{definition}
\label{def:sstable}
We consider configurations of six lines in $\mathbb{P}^2$ that
\begin{enumerate}
\item[(0)] contain six lines in general position,
\item are tangent to a common conic,
\item contain three lines which are coincident in one point,
\item contain one line which is coincident with two different pairs of lines in two different points,
\item contain three lines pairwise coincident in three different points, and each of the three remaining lines is coincident in one intersection point,
\item are combinations of case (1) and cases (2) through (4),
\item[(6a)] contain four lines which intersect in one point,
\item[(6b)] contain one double line.
\end{enumerate}
\end{definition}
Configurations that include cases (0) through (6a) and (6b) are called \emph{semi-stable} configurations. On configurations of six lines we have a right action of  $(\mathbb{C}^*)^6$ given by rescaling each line separately, and the obvious left action of $\operatorname{GL}_3(\mathbb{C})$ by acting on $[z_1:z_2:z_3]\in \mathbb{P}^2$. Next, we want to describe the isomorphism classes of such configurations of six lines. We define the so called degree-one Dolgachev-Ortland coordinates \cite{MR1007155} for configurations of six lines in $\mathbb{P}^2$ to be given by
\begin{equation}
\label{eqn:DOcoords}
\begin{array}{lclclcl}
 t_1 & = & D_{135} D_{246}, &\quad& t_2 & = & D_{145} D_{236}, \\
 t_3 & = & D_{146} D_{235}, &\quad& t_4 & = & D_{136} D_{245},\\
 t_5 & = & D_{125} D_{346}, &\quad& t_6 & = & D_{126} D_{345}, \\
 t_7 & = & D_{134} D_{256}, &\quad& t_8 & = & D_{124} D_{356},\\
 t_9 & = & D_{156} D_{234},  &\quad& t_{10} & = & D_{123} D_{456}.
\end{array}
\end{equation}
We have the following:
\begin{lemma}
The degree-one coordinates $t_1, \dots, t_{10}$ satisfy the relations
\begin{equation}
\label{eqn:PlueckerRelations}
\begin{array}{lll}
t_1-t_2-t_5-t_9,& 
t_1-t_2-t_6-t_7, &
t_1-t_3-t_5-t_{10},\\
t_1-t_3-t_6-t_8,&
t_1-t_4-t_7-t_{10}, &
t_1-t_4-t_8-t_9,\\ 
t_2-t_3+t_7-t_8, &
t_2-t_3+t_9-t_{10},& 
t_2-t_4+t_5-t_8,\\
t_2-t_4+t_6-t_{10},& 
t_3-t_4+t_5-t_7, &
t_3-t_4+t_6-t_9,\\
t_5-t_6-t_7+t_9,&
t_5-t_6-t_8+ t_{10}, &
t_7-t_8-t_9+t_{10}.
\end{array}
\end{equation}
In particular, only five relations among the fifteen relations are linearly independent.
\end{lemma}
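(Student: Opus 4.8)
The plan is to obtain the fifteen relations directly from the quadratic Grassmann--Pl\"ucker relations that cut out $\operatorname{Gr}(3,6;\mathbb{C})\subset\mathbb{P}(\wedge^3\mathbb{C}^6)$, and then to compute the rank of the resulting $15\times 10$ linear system. For a two-element subset $\{i,j\}\subset\{1,\dots,6\}$ with complement $\{k,l,m,n\}$, $k<l<m<n$, the associated Pl\"ucker relation has the shape
\begin{equation*}
[ijk]\,[lmn]-[ijl]\,[kmn]+[ijm]\,[kln]-[ijn]\,[klm]=0 ,
\end{equation*}
where $[abc]$ is the Pl\"ucker bracket, equal to $\operatorname{sgn}(\rho)\,D_{\rho(a)\rho(b)\rho(c)}$ for the permutation $\rho$ that sorts $a,b,c$ into increasing order. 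The point is that in each of the four products the two index triples are complementary in $\{1,\dots,6\}$, so each product equals $\pm t_a$ for a unique $a\in\{1,\dots,10\}$; hence every such Pl\"ucker relation becomes a four-term linear relation among $t_1,\dots,t_{10}$ with coefficients in $\{0,\pm 1\}$, and the $\binom{6}{2}=15$ pairs $\{i,j\}$ yield fifteen such relations.

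The first step is to run this bookkeeping and to identify the output with \eqref{eqn:PlueckerRelations}. For instance, the pair $\{1,5\}$ has complement $\{2,3,4,6\}$, and after sorting the indices in each bracket the relation reads $-D_{125}D_{346}+D_{135}D_{246}-D_{145}D_{236}-D_{156}D_{234}=0$, i.e.\ $t_1-t_2-t_5-t_9=0$, which is the first relation on the list; the remaining fourteen pairs are handled in precisely the same way and exhaust the other fourteen displayed relations. This can be organised using the $S_6$-action permuting the six lines (which acts by signed permutations on the $t_a$ and preserves the set of fifteen relations), or simply checked once and for all by evaluating the twenty $D_{ijk}$ on a single generic integer matrix $\mathbf{A}\in\operatorname{Mat}(3,6;\mathbb{Z})$ and reading off the fifteen numerical identities. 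The only delicate point here is fixing a consistent ordering convention so that the signs come out exactly as in \eqref{eqn:PlueckerRelations}.

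Finally, the assertion that only five of the fifteen relations are independent amounts to saying that the solution space of \eqref{eqn:PlueckerRelations} in $\mathbb{C}^{10}$ is five-dimensional. Using five suitably chosen relations one solves for the last five coordinates in terms of $t_1,\dots,t_5$, obtaining
\begin{equation*}
\begin{array}{lll}
t_6=t_1-t_2-t_3+t_4-t_5, & t_7=t_3-t_4+t_5, & t_8=t_2-t_4+t_5,\\[2pt]
t_9=t_1-t_2-t_5, & t_{10}=t_1-t_3-t_5, &
\end{array}
\end{equation*}
and one then checks by direct substitution that the remaining ten relations hold identically in $t_1,\dots,t_5$. Thus the solution space has dimension exactly five, so the span of the fifteen relations has dimension $10-5=5$, which is the claim. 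The main obstacle in the whole argument is purely clerical---keeping the Pl\"ucker signs consistent across all fifteen pairs---after which both the match with \eqref{eqn:PlueckerRelations} and the rank count are routine.
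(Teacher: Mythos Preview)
Your proof is correct and follows essentially the same approach as the paper, which simply states that the result follows by explicit computation for any matrix $\mathbf{A}\in\operatorname{Mat}(3,6;\mathbb{C})$. Your version is more informative in that you identify the fifteen relations as coming from the quadratic Grassmann--Pl\"ucker relations indexed by the $\binom{6}{2}$ pairs $\{i,j\}$, and you carry out the rank count explicitly; the paper leaves all of this implicit.
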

\begin{proof}
The proof follows by explicit computation for any matrix $\mathbf{A} \in \operatorname{Mat}(3,6;\mathbb{C})$.
\end{proof}
\noindent
One also introduces the degree-two Dolgachev-Ortland coordinate given by
\begin{equation}
\label{Eqn:moduli}
 R= D_{123}  D_{145} D_{246} D_{356} - D_{124} D_{135} D_{236} D_{456}. 
\end{equation}
We have the following:
\begin{lemma}
The degree-two coordinate $R$ satisfies
\begin{equation}
\label{eqn:PlueckerRelations2}
 R^2 = \frac{1}{12} \left( \Big(\sum_{i=1}^{10} t_i^2\Big)^2 -  4\sum_{i=1}^{10} t_i^4 \right).
 \end{equation}
\end{lemma}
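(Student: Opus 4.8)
The plan is to prove \eqref{eqn:PlueckerRelations2} as an identity of polynomials in the eighteen entries of $\mathbf A$, exploiting the symmetries of the situation to reduce the verification to a four-parameter normal form.

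First note that both sides have matching weights. Since each $D_{ijk}$ is cubic in the entries of $\mathbf A$, every $t_i$ is homogeneous of degree $6$ and $R$ of degree $12$, so $R^2$, $\bigl(\sum_i t_i^2\bigr)^2$ and $\sum_i t_i^4$ are all homogeneous of degree $24$. Under the left action $\mathbf A\mapsto g\mathbf A$ one has $D_{ijk}\mapsto(\det g)D_{ijk}$, and under rescaling the $j$-th column of $\mathbf A$ by $\lambda_j$ one has $D_{ijk}\mapsto\lambda_i\lambda_j\lambda_k D_{ijk}$; since in each of the two monomials of $R$ every index $1,\dots,6$ occurs exactly twice, all three polynomials above transform by the same character, namely $g\mapsto(\det g)^8$ of $\operatorname{GL}_3(\mathbb C)$ and $(\lambda_1,\dots,\lambda_6)\mapsto(\lambda_1\cdots\lambda_6)^4$ of the torus. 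As a consistency check, the ten monomials $t_i^2=D_S^{\,2}D_{S^c}^{\,2}$ are indexed by the ten partitions of $\{1,\dots,6\}$ into two triples, so $\sum_i t_i^2$ and $\sum_i t_i^4$ are $S_6$-symmetric, and modulo the relations \eqref{eqn:PlueckerRelations} one checks $R\mapsto\pm R$ under $S_6$, so $R^2$ is $S_6$-invariant as well.

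By the $\operatorname{GL}_3\times(\mathbb C^*)^6$-equivariance just noted, it is enough to verify \eqref{eqn:PlueckerRelations2} on a Zariski-dense set meeting every relevant orbit, and equality on one slice then propagates to its orbit because both sides transform by the same character. For generic $\mathbf A$ the first four columns $\mathbf v_1,\dots,\mathbf v_4$ are in general position, and after applying a suitable $g\in\operatorname{GL}_3(\mathbb C)$ and rescaling columns we may take $\mathbf v_1,\dots,\mathbf v_4$ to be $\langle1,0,0\rangle^t$, $\langle0,1,0\rangle^t$, $\langle0,0,1\rangle^t$, $\langle1,1,1\rangle^t$, leaving $\mathbf v_5=\langle1,s_1,s_2\rangle^t$ and $\mathbf v_6=\langle1,s_3,s_4\rangle^t$. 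On this four-parameter normal form every $D_{ijk}$ is an explicit polynomial in $s_1,\dots,s_4$ (most are monomials or binomials), hence so are the $t_i$ and $R$, and \eqref{eqn:PlueckerRelations2} becomes a polynomial identity in four variables to be checked by direct expansion; the overall constant $\tfrac1{12}$ is fixed by one numerical specialization. Since the orbit of the normal form is dense in $\operatorname{Mat}(3,6;\mathbb C)$ and both sides are polynomials, the identity follows for all $\mathbf A$.

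An equivalent route, in the spirit of the proof of the previous lemma, is to expand $R^2$, $\bigl(\sum_i t_i^2\bigr)^2$ and $\sum_i t_i^4$ as polynomials in the brackets $D_{ijk}$ and reduce everything modulo the Pl\"ucker relations \eqref{eqn:PlueckerRelations} to a standard-monomial basis; the asserted combination then vanishes identically. The only genuine obstacle in either approach is the bulk of the algebra — $R^2$ and the quartic in the $t_i$ expand into a large number of bracket monomials — so in practice one organises the computation by $S_6$-orbits or carries it out in the four-variable normal form. Conceptually there is nothing deeper here than the statement that $R$ is, up to sign, a square root of the displayed $S_6$-invariant quartic in the degree-one coordinates, reflecting the classical presentation of the coordinate ring of the GIT quotient of six ordered points in $\mathbb P^2$ by the degree-one and degree-two Dolgachev--Ortland coordinates subject to exactly this relation.
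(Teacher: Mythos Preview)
Your approach is correct and essentially the same as the paper's, which simply states that the proof follows by explicit computation for any matrix $\mathbf{A}\in\operatorname{Mat}(3,6;\mathbb{C})$. Your reduction to the four-parameter normal form via $\operatorname{GL}_3\times(\mathbb{C}^*)^6$-equivariance makes explicit the organization of that computation (and indeed coincides with the normal form the paper introduces later in Section~\ref{ssec:natural_fibration}).
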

\begin{proof}
The proof follows by explicit computation for any matrix $\mathbf{A} \in \operatorname{Mat}(3,6;\mathbb{C})$.
\end{proof}
The different strata in the moduli space can now be characterized as follows:
\begin{lemma}
\label{lem:equiv}
In Definition~\ref{def:sstable} we have the following:
\begin{enumerate}
 \item[(0)] $\Leftrightarrow$ no element of $(t_i)_{i=1}^{10}$ vanishes and $R\not =0$,
 \item $\Leftrightarrow$ no element of $(t_i)_{i=1}^{10}$ vanishes and $R=0$,
 \item  $\Leftrightarrow$ exactly one element of $(t_i)_{i=1}^{10}$ vanishes,
 \item  $\Leftrightarrow$ exactly two elements of $(t_i)_{i=1}^{10}$ vanish,
 \item  $\Leftrightarrow$ exactly three elements of $(t_i)_{i=1}^{10}$ vanish,
 \item  $\Leftrightarrow$ up to three elements of $(t_i)_{i=1}^{10}$ vanish and $R=0$,
 \item  $\Leftrightarrow$ exactly four elements of $(t_i)_{i=1}^{10}$ vanish and $R=0$.
\end{enumerate}
\end{lemma}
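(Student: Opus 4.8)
The plan is to pass to the dual plane and match, case by case, the geometric degenerations of Definition~\ref{def:sstable} with the vanishing patterns of the invariants. Regard the coefficient vectors $\mathbf v_i$ as points $p_i=[a_i:b_i:c_i]$ of the dual plane $\check{\mathbb{P}}^2$. Three lines $\ell_i,\ell_j,\ell_k$ meet in a common point precisely when $\mathbf v_i,\mathbf v_j,\mathbf v_k$ are linearly dependent, so $D_{ijk}=0$ if and only if $\ell_i,\ell_j,\ell_k$ are concurrent (equivalently $p_i,p_j,p_k$ are collinear), a statement that also subsumes the degenerate case of two coincident lines. The twenty triples of $\{1,\dots,6\}$ fall into ten complementary pairs, and by inspection of \eqref{eqn:DOcoords} the ten coordinates $t_1,\dots,t_{10}$ are exactly the ten products $D_{abc}D_{def}$ over these pairs. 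Consequently each concurrent triple forces exactly one $t_m$ to vanish, namely the one whose partition contains it, and conversely $t_m=0$ forces one of the two complementary triples of the $m$-th partition to be concurrent. In particular no $t_m$ vanishes if and only if no three of the lines are concurrent, and, as long as no concurrent triple is the complement of another, the number of vanishing $t_m$ equals the number of concurrent triples.

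The one input that is not elementary linear algebra concerns $R$: one has $R=0$ if and only if $p_1,\dots,p_6$ lie on a common conic, equivalently $\ell_1,\dots,\ell_6$ are tangent to a common conic. This is the classical statement that a sextuple of points in $\mathbb{P}^2$ is self-associated exactly when it lies on a conic (see Dolgachev--Ortland~\cite{MR1007155}); it can be verified by a direct computation in the spirit of the two preceding lemmas, specializing the $\ell_i$ to tangent lines of a fixed conic, checking that $R$ vanishes identically there, and then comparing hypersurfaces using irreducibility of the conic locus and of the quartic in \eqref{eqn:PlueckerRelations2}. Note that any configuration whose six points happen to lie on a degenerate conic also has $R=0$: this is so for two complementary concurrent triples (the six points then lie on the union of the two lines carrying them), for four concurrent lines, and for a doubled line.

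Granting these facts, the equivalences reduce to bookkeeping over representative degenerations, understanding that cases (2)--(4) carry the implicit requirement $R\neq0$, consistently with case (5) being exactly the locus $R=0$ together with at most three vanishing $t_m$. In case (0), ``general position'' means no three lines concurrent and the lines not tangent to a common conic, i.e.\ no $t_m$ vanishes and $R\neq0$. Case (1) adds tangency to a common conic, so still no $t_m$ vanishes but $R=0$. In case (2) a single concurrent triple with the complementary triple generic gives exactly one vanishing $t_m$ and $R\neq0$. In case (3) a line through two distinct triple points contributes exactly the two concurrent triples sharing that line, hence exactly two vanishing $t_m$. In case (4) a triangle together with three further lines through its three vertices gives three concurrent triples, no two of them complementary, hence exactly three vanishing $t_m$. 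Case (5) superimposes tangency to a common conic and is therefore the locus with $R=0$ and at most three vanishing $t_m$. Finally, in cases (6a) and (6b) four concurrent lines and, respectively, a doubled line $\ell_5=\ell_6$ (which forces $D_{i56}=0$ for the four choices of $i$) each produce exactly four vanishing $t_m$, with $R=0$ because the six points then lie on a degenerate conic.

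It remains to check that these seven cases are exhaustive, which is what turns the implications into equivalences: no semi-stable configuration has five or more vanishing $t_m$, since a vanishing pattern with five coordinates yields five concurrent triples in five distinct partitions, and a short count shows these force at least five of the lines through a single point — a configuration that is not semi-stable. I expect the genuine obstacle to be the second step, namely identifying $\{R=0\}$ with the self-association / common-tangent-conic locus rather than merely proving one inclusion. The combinatorial last step is routine, but must be carried out with care to confirm that the vanishing pattern of $(t_1,\dots,t_{10})$ together with the value of $R$ genuinely separates the strata — for example, a configuration with two complementary concurrent triples has only one vanishing $t_m$ yet forces $R=0$, so it belongs to case (5) and not to case (2).
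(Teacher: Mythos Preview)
Your approach is correct in outline and genuinely different from the paper's. The paper normalizes the configuration via $\operatorname{GL}_3$ (after a suitable relabeling of the lines) to the explicit form of Equations~\eqref{lines}, writes $t_1,\dots,t_{10}$ and $R$ as polynomials in the four affine moduli $a,b,c,d$ (Equations~\eqref{compare1}), and then verifies each case by direct inspection of those formulas; for the identification of $\{R=0\}$ with the tangent-conic locus it, like you, invokes an external reference. Your argument is intrinsic: you read the factorization $t_m=D_{abc}D_{def}$ as ``one of two complementary triples is concurrent'' and then match vanishing patterns to the strata of Definition~\ref{def:sstable}. This buys conceptual transparency---one sees immediately, for instance, why a single triple point kills exactly one $t_m$ and why four lines through a point kill exactly four---at the price of combinatorial bookkeeping for the converse implications. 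The paper's parametric route buys that every verification becomes a polynomial identity in $a,b,c,d$, so edge cases are absorbed automatically by the explicit formulas rather than enumerated by hand.

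Two remarks on the gaps you flag yourself. First, the exhaustiveness argument in your final paragraph is neither needed in the form you state it nor literally correct: the paper observes just after this lemma that there \emph{do} exist configurations with exactly six vanishing $t_i$, and others with four vanishing $t_i$ and $R\neq0$, all lying outside the list in Definition~\ref{def:sstable}. The lemma only characterizes each listed case; it does not assert that (0)--(6) exhaust all six-line configurations, so you do not need to rule out ``five or more vanishing $t_m$'' globally, only to check that none of the listed cases produces such a pattern. Second, the subtlety you raise about two complementary concurrent triples (one vanishing $t_m$, yet $R=0$ via a degenerate conic) is real, but it is an ambiguity in how the strata of Definition~\ref{def:sstable} are read---whether ``tangent to a common conic'' admits degenerate conics---rather than a defect of your method. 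The paper's parametric computation is agnostic here: one simply records the vanishing pattern from \eqref{compare1} without having to adjudicate the borderline case.
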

\begin{proof}
Configurations of six lines no three of which are concurrent have four homogeneous moduli which we denote by $a, b, c, d$. A general matrix $\mathbf{A} \in \operatorname{Mat}(3,6;\mathbb{C})$ is written in terms of only $a, b, c, d$ using a $\operatorname{GL}_3(\mathbb{C})$ transformation. The lines are then in the form of Equations~(\ref{lines}). We discuss the details in Section~\ref{ssec:natural_fibration}. Equations~(\ref{compare1}) determine the Dolgachev-Ortland coordinates in terms of these moduli. We can easily check necessary and sufficient conditions for cases (1) through (6). It follows from Equation~(\ref{compare1}) and \cite{MalmendierClingher:2018}*{Prop.~5.13} that $R=0$ in Equation~(\ref{Eqn:moduli}) if and only if the six lines in general position are tangent to a common conic. 
\end{proof}
We have the following:
\begin{lemma}
\label{lem:invariance}
For a configuration of six lines in $\mathbb{P}^2$ the point
\begin{equation}
 [t_1: \dots : t_{10}: R] \in \mathbb{P}(1, \dots, 1, 2)
\end{equation}
in complex weighted projective space, is well-defined and invariant under the right action of  $(\mathbb{C}^*)^6$ and the left action of $\operatorname{GL}_3(\mathbb{C})$ on $\mathbf{A}$.
\end{lemma}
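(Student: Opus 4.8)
The plan is to verify the two invariance claims separately: first under the torus action $(\mathbb{C}^*)^6$, then under $\operatorname{GL}_3(\mathbb{C})$, and to track the weights carefully so that the point lands in the weighted projective space $\mathbb{P}(1,\dots,1,2)$ rather than in ordinary $\mathbb{P}^{10}$.

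First I would record how the basic building blocks transform. If we rescale the $i$-th line by $\lambda_i \in \mathbb{C}^*$, i.e.\ replace $\mathbf v_i$ by $\lambda_i \mathbf v_i$, then each Pl\"ucker coordinate scales as $D_{ijk} \mapsto \lambda_i \lambda_j \lambda_k D_{ijk}$. Consulting \eqref{eqn:DOcoords}, each $t_m$ is a product $D_{ijk} D_{i'j'k'}$ in which $\{i,j,k\} \sqcup \{i',j',k'\} = \{1,\dots,6\}$; hence $t_m \mapsto (\prod_{\ell=1}^6 \lambda_\ell)\, t_m$, so all ten $t_m$ pick up the \emph{same} scalar $\Lambda := \prod_{\ell=1}^6 \lambda_\ell$. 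Similarly, the two monomials $D_{123}D_{145}D_{246}D_{356}$ and $D_{124}D_{135}D_{236}D_{456}$ appearing in \eqref{Eqn:moduli} are each a product of four $3\times 3$ minors whose twelve indices cover $\{1,\dots,6\}$ exactly twice, so each monomial — and therefore $R$ — scales by $\Lambda^2$. Thus under the torus action $(t_1:\dots:t_{10}:R) \mapsto (\Lambda t_1:\dots:\Lambda t_{10}:\Lambda^2 R)$, which is precisely the scaling identification defining $\mathbb{P}(1,\dots,1,2)$; so the point is unchanged. (This is also the conceptual reason the weight of $R$ must be $2$: consistency of \eqref{eqn:PlueckerRelations2}, where $R^2$ is equated to a degree-four expression in the $t_i$'s, forces it.)

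Next I would treat the left $\operatorname{GL}_3(\mathbb{C})$ action. For $g \in \operatorname{GL}_3(\mathbb{C})$ acting by $\mathbf{A} \mapsto g\mathbf{A}$, i.e.\ $\mathbf v_i \mapsto g\mathbf v_i$, one has $\mathbf{A}_{ijk} \mapsto g\,\mathbf{A}_{ijk}$ and hence $D_{ijk} = \det \mathbf{A}_{ijk} \mapsto (\det g)\, D_{ijk}$, uniformly in $(i,j,k)$. Feeding this into the formulas: each $t_m$, being a product of two minors, scales by $(\det g)^2$, and $R$, a sum of products of four minors, scales by $(\det g)^4$. Therefore $(t_1:\dots:t_{10}:R) \mapsto ((\det g)^2 t_1:\dots:(\det g)^2 t_{10}:(\det g)^4 R)$, which is again the weighted-projective scaling with weight-$1$ entries multiplied by $\mu := (\det g)^2$ and the weight-$2$ entry by $\mu^2$; so the point in $\mathbb{P}(1,\dots,1,2)$ is fixed. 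Note this also shows that even though one could act on $\mathbb{P}^2$ by $\operatorname{PGL}_3$, the homogeneous coordinates $[t_i:R]$ descend to a well-defined $\operatorname{GL}_3$-invariant, and in particular a genuine function only of the $\operatorname{PGL}_3 \times (\mathbb{C}^*)^6$-orbit of the configuration.

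Finally, for well-definedness I should note that the point is defined only when the six lines form a semi-stable configuration, so that not all $t_i$ vanish simultaneously (using Lemma~\ref{lem:equiv}, at least six of the $t_i$ are nonzero in every listed case), guaranteeing $(t_1,\dots,t_{10},R) \neq (0,\dots,0)$ and hence a legitimate point of weighted projective space; and that reordering within each unordered pair $\{D_{ijk}, D_{i'j'k'}\}$, or permuting columns inside a single minor, changes the $D_{ijk}$ only by signs that cancel in the degree-two monomials $t_m$ and (after the computation behind the previous lemma) in $R$, so the listed formulas are unambiguous. The only mildly delicate point — and the one place worth spelling out rather than asserting — is the bookkeeping that the twelve Pl\"ucker indices occurring in each monomial of $R$ in \eqref{Eqn:moduli} cover $\{1,\dots,6\}$ with multiplicity exactly two, which is what pins down the weight-$2$ scaling; everything else is the uniform scaling $D_{ijk}\mapsto (\det g)\,D_{ijk}$ and $D_{ijk}\mapsto \lambda_i\lambda_j\lambda_k D_{ijk}$ propagated through monomials, i.e.\ routine.
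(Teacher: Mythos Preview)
Your proof is correct and follows essentially the same approach as the paper: the paper invokes Lemma~\ref{lem:equiv} for well-definedness, declares the $(\mathbb{C}^*)^6$-invariance ``immediate,'' and for the $\operatorname{GL}_3(\mathbb{C})$-action notes that $t_i$ and $R$ rescale by $(\det g)^2$ and $(\det g)^4$ respectively---exactly the computations you spell out in detail. Your version is more explicit about the index-bookkeeping that makes the torus action come out with the right weights, but the underlying argument is identical.
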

\begin{proof}
The point in weighted projective space is well-defined because of Lemma~\ref{lem:equiv}. The invariance under the right action of $(\mathbb{C}^*)^6$ on $\mathbf{A}$ is immediate. The invariance under the left action of $\operatorname{GL}_3(\mathbb{C})$ follows from a computation showing that the coordinates $t_i$ for $1\le i \le 10$ and $R$ rescale by the determinant with weight two and four, respectively, and the point in weighted projective space remains invariant.
\end{proof}
For more details we refer to \cites{MR1007155, MR1828467}.  The following is a corollary of Lemma~\ref{lem:invariance}:
\begin{corollary}[\cite{MR1007155}]
The moduli space of configurations of six lines in $\mathbb{P}^2$ is isomorphic to the algebraic variety in $\mathbb{P}(1, \dots, 1, 2)$ with the coordinates $[t_1: \dots : t_{10}: R]$ given by Equations~(\ref{eqn:PlueckerRelations}) and (\ref{eqn:PlueckerRelations2}), and $R \not =0$ and $t_i\not =0$ for all $i \in \{1, \dots, 10\}$.
\end{corollary}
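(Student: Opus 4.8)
The plan is to upgrade the invariant point of Lemma~\ref{lem:invariance} to an isomorphism by exhibiting an explicit two-sided inverse. By Lemma~\ref{lem:invariance} the assignment $\mathbf{A}\mapsto[t_1:\dots:t_{10}:R]$ is constant on orbits of $\operatorname{GL}_3(\mathbb{C})\times(\mathbb{C}^*)^6$, so it descends to a morphism $\Psi$ from the moduli space $\mathcal{M}$ of six-line configurations in general position to the weighted projective variety $V\subset\mathbb{P}(1,\dots,1,2)$ cut out by the linear relations \eqref{eqn:PlueckerRelations} and the quadratic relation \eqref{eqn:PlueckerRelations2}; by case~(0) of Lemma~\ref{lem:equiv} the image lies in the open locus $U=V\cap\{R\neq 0,\ t_i\neq 0\text{ for all }i\}$. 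As a sanity check, the five independent linear relations restrict the $t$-coordinates to a $\mathbb{P}^4$, over which \eqref{eqn:PlueckerRelations2} is a double cover, so $V$ has dimension $4=\dim\mathcal{M}$. It then suffices to prove that $\Psi\colon\mathcal{M}\to U$ is bijective with inverse a morphism.

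For injectivity and for the inverse I would use the normal form for configurations in general position used in the proof of Lemma~\ref{lem:equiv}: after a $\operatorname{GL}_3(\mathbb{C})\times(\mathbb{C}^*)^6$ transformation one may take four of the lines to be the three coordinate lines together with $z_1+z_2+z_3=0$, with the remaining two lines carrying the four homogeneous moduli $a,b,c,d$. Substituting this into \eqref{eqn:DOcoords} and \eqref{Eqn:moduli} expresses $t_1,\dots,t_{10},R$ as explicit polynomials in $a,b,c,d$, and one checks that suitable ratios $t_i/t_j$ recover $a,b,c,d$ as a point of the appropriate weighted projective space. This shows at once that two configurations with the same image differ by an element of $\operatorname{GL}_3(\mathbb{C})\times(\mathbb{C}^*)^6$, so $\Psi$ is injective, and that the rational formulas giving $a,b,c,d$ in terms of the $t_i$ define a morphism $\Xi\colon U\to\mathcal{M}$. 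Running a point of $U$ through the normal form and recomputing its Dolgachev--Ortland coordinates reproduces the original $[t_1:\dots:t_{10}:R]$ exactly because \eqref{eqn:PlueckerRelations} and \eqref{eqn:PlueckerRelations2} are precisely the compatibility conditions for this over-determined reconstruction; hence $\Psi$ is also onto $U$ and $\Xi=\Psi^{-1}$. A bijective morphism between these varieties whose inverse is again a morphism is an isomorphism, which is the assertion; cf.\ \cites{MR1007155, MR1828467}.

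The only non-formal ingredient is this reconstruction. One must verify that the ten coordinates $t_i$, subject to the relations of the two preceding lemmas, genuinely determine the four moduli $a,b,c,d$, and conversely that \eqref{eqn:PlueckerRelations} together with \eqref{eqn:PlueckerRelations2} generate the ideal of the image of $\Psi$, so that $V\cap U$ is irreducible of dimension $4$ without spurious components and $\Xi$ is regular on all of $U$. One must also keep track of the residual discrete symmetry --- the $S_6$-action permuting the six lines and the finite stabilizer of the chosen normal form --- and check that it is absorbed into the $\operatorname{GL}_3(\mathbb{C})\times(\mathbb{C}^*)^6$-action on $\mathbf{A}$. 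This is where the explicit formulas of Section~\ref{sec:invariants}, already marshalled for Lemma~\ref{lem:equiv}, do the real work, and it is the main obstacle.
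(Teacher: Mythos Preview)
The paper does not actually prove this corollary; it is stated as a direct consequence of Lemma~\ref{lem:invariance} and attributed to Dolgachev--Ortland \cite{MR1007155}. Your outline is therefore a genuine attempt to fill in what the paper only cites, and the overall strategy---descend to a morphism via Lemma~\ref{lem:invariance}, then invert via the normal form~(\ref{lines}) and the formulas~(\ref{compare1})---is the right one.

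There are, however, two points where your sketch goes astray. First, your claim that ``suitable ratios $t_i/t_j$ recover $a,b,c,d$'' is not correct as stated: the degree-one coordinates $t_1,\dots,t_{10}$ alone do \emph{not} determine the configuration. This is precisely the content of the association involution $\imath$ discussed immediately after the corollary (see Equation~(\ref{eqn:i_action}) and Lemma~\ref{lem:M2}): the map $[t_1:\dots:t_{10}:R]\mapsto[t_1:\dots:t_{10}]$ is two-to-one off the locus $R=0$. Concretely, from~(\ref{compare1}) the differences $t_2,t_3,t_7,t_8$ give you $b-a,\,d-c,\,d-b,\,c-a$, but solving for the individual moduli then leads to a quadratic, whose two roots correspond to the two associated configurations. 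You must use $R$---not just the $t_i$---to single out the correct branch, and your inverse $\Xi$ has to be written accordingly.

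Second, your remark that the $\mathrm{S}_6$-action permuting the six lines must be ``absorbed into the $\operatorname{GL}_3(\mathbb{C})\times(\mathbb{C}^*)^6$-action'' is a misreading of the setup. The moduli space in this corollary is of \emph{ordered} configurations (cf.\ the definition of $\mathfrak{M}(2)^+$ in~(\ref{eqn:M2+}) immediately below), and the $\mathrm{S}_6$-action is a genuine residual action on it, later identified with $\Gamma/\Gamma(1+i)$ (Lemma~\ref{lem:Svanish}). It is not, and should not be, quotiented out at this stage; the unordered moduli space $\mathfrak{M}$ appears only later in~(\ref{modulispace}). So this is not an obstacle you need to overcome---simply drop it.
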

We define the moduli space $\mathfrak{M}(2)^+$ to be the moduli space of \emph{ordered} configurations of six lines in $\mathbb{P}^2$ that fall into cases (0) through (5) in Definition~\ref{def:sstable}, i.e., 
\begin{equation}
\label{eqn:M2+}
 \mathfrak{M}(2)^+ = \left\lbrace \big[ t_1 : \dots : t_{10} : R ]  \ \Big\vert \begin{array}{l}\text{$t_i=0$ for at most three $i \in \{1, \dots, 10\}$},\\ \text{Eqns.~(\ref{eqn:PlueckerRelations}) and (\ref{eqn:PlueckerRelations2}) hold.}\end{array}  \right\rbrace.
\end{equation}
The notation $\mathfrak{M}(2)^+$ indicates (i) the existence of a level-two structure obtained by splitting up six indices into two pairs of three, and (ii) the fact that we include all cases (1) through (5) in Definition~\ref{def:sstable} in addition to case (0). 
\par For a given ordered configuration of lines $\{\ell_1, \dots , \ell_6\}$ in general position, let us fix six out of fifteen points of intersection, namely the points
\begin{equation}
\begin{split}
 p_1 = \ell_2 \cap \ell_3, \qquad  p_2 & = \ell_1 \cap \ell_3, \qquad  p_3 = \ell_1 \cap \ell_2, \\
 p_4 = \ell_5 \cap \ell_6, \qquad  p_5 & = \ell_4 \cap \ell_6, \qquad  p_6 = \ell_4 \cap \ell_5.
\end{split}
\end{equation}
Given any non-singular conic $C \subset \mathbb{P}^2$, we define the dual of a point $p_i \not \in C$ to be the line $\ell'_i$ that joins the two points of $C$ on the two tangent lines of $C$ passing through $p_i$; if $p_i \in C$ we define $\ell'_i$ to be the tangent line of $C$ at $p_i$. Changing the conic $C$ to another non-singular conic $C'$ in this construction simply transforms the lines $\ell'_i$ by a projective automorphism of $\mathbb{P}^2$. We then say that the two configurations $\{\ell'_1, \dots , \ell'_6\}$ and $\{\ell_1, \dots , \ell_6\}$ are \emph{in association}.  It was proved in \cite{MR1828467} that $\{\ell'_1, \dots , \ell'_6\}$ and $\{\ell_1, \dots , \ell_6\}$ are associated if and only if their respective matrices $\mathbf{A}'$ and $\mathbf{A}$ satisfy  $\mathbf{A}' \cdot D \cdot \mathbf{A}^t=0$ for some diagonal matrix $D$ with $\det D \not =0$. 
\par Mapping an ordered configuration of six lines to an associated ordered configuration defines an involution $\imath$ on $\mathfrak{M}^+(2)$ with a fixed point set that consists of configurations of six lines tangent to a common conic, and in terms of the Dolgachev-Ortland coordinates it is given by
\begin{equation}
\label{eqn:i_action}
 \imath: \, [\ t_1: \dots : t_{10}: R \ ] \to [\ t_1: \dots : t_{10}: -R \ ].
\end{equation}
We define a four-dimensional sub-space $\mathfrak{M}(2)$ of $\mathbb{P}^9$ by setting
\begin{equation}
\label{M_2}
 \mathfrak{M}(2) = \left\lbrace \big[ t_1 : \dots : t_{10} ] \in \mathbb{P}^9 \ \Big\vert \begin{array}{l} \text{$t_i=0$ for at most three $i \in \{1, \dots, 10\}$},\\ \text{and Eqns.~\!(\ref{eqn:PlueckerRelations}) hold.}\end{array}  \right\rbrace.
\end{equation}
We also set
\begin{equation}
 \overline{\mathfrak{M}(2)} = \left\lbrace \big[ t_1 : \dots : t_{10} ] \in \mathbb{P}^9 \ \Big\vert \begin{array}{l} \text{Eqns.~\!(\ref{eqn:PlueckerRelations}) hold.}\end{array} \! \right\rbrace.
\end{equation}
Notice that, apart from the six-line configurations listed in Definition~\ref{def:sstable}, there are more degenerate configurations: there are configurations such that exactly six elements of $(t_i)_{i=1}^{10}$ vanish; there are also configurations such that exactly four elements of $(t_i)_{i=1}^{10}$ vanish, $R\not =0$, and all non-vanishing $t_i$'s equal $\pm 1$. Since $\mathfrak{M}(2)$ is a four-dimensional linear sub-space of $\mathbb{P}^9$, it is easy to show \cite{MR1204828}*{Sec.~3.2} that $\overline{\mathfrak{M}(2)}$ is in fact isomorphic to $\mathbb{P}^4$.
\par We take the map $\operatorname{pr}$ to be the projection from $\mathbb{P}(1,\dots,1,2) \backslash \{[0:\dots:0:1]\} \to \mathbb{P}^9$ given by
$[t_1: \dots : t_{10}: R] \mapsto [t_1: \dots : t_{10}]$. We have the following:
\begin{lemma}
\label{lem:M2}
We have $\operatorname{pr} =  \operatorname{pr}\circ \, \imath: \mathfrak{M}(2)^+ \to \mathfrak{M}(2)$ and $\operatorname{pr}(\mathfrak{M}(2)^+)\cong \mathfrak{M}(2)$.
\end{lemma}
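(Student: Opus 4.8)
The plan is to treat the two assertions separately: the identity $\operatorname{pr}=\operatorname{pr}\circ\imath$ is essentially formal, while $\operatorname{pr}(\mathfrak{M}(2)^+)\cong\mathfrak{M}(2)$ reduces to the surjectivity of $\operatorname{pr}|_{\mathfrak{M}(2)^+}$, i.e.\ to the existence of square roots over $\mathbb{C}$.

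First I would check that $\operatorname{pr}$ is actually defined on $\mathfrak{M}(2)^+$ and that $\operatorname{pr}$ and $\imath$ both land in $\mathfrak{M}(2)$. Since a point of $\mathfrak{M}(2)^+$ has at most three vanishing coordinates among $t_1,\dots,t_{10}$, the ten coordinates $t_i$ do not all vanish, so such a point is never $[0:\dots:0:1]$ and $\operatorname{pr}$ is well defined there. The projection $\operatorname{pr}$ leaves every $t_i$ untouched, so it preserves the set of indices $i$ with $t_i=0$ and it preserves the linear Pl\"ucker relations~(\ref{eqn:PlueckerRelations}); as these are among the conditions defining $\mathfrak{M}(2)^+$, we get $\operatorname{pr}(\mathfrak{M}(2)^+)\subseteq\mathfrak{M}(2)$. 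Likewise, $\imath$ fixes every $t_i$ and changes only the sign of $R$, so $\imath$ maps $\mathfrak{M}(2)^+$ to itself, as already noted. The equality $\operatorname{pr}=\operatorname{pr}\circ\imath$ on $\mathfrak{M}(2)^+$ is then immediate from the explicit form~(\ref{eqn:i_action}): $\imath$ only flips the sign of the last coordinate, which $\operatorname{pr}$ discards.

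The substance of the lemma is the surjectivity of $\operatorname{pr}\colon\mathfrak{M}(2)^+\to\mathfrak{M}(2)$. Given $[t_1:\dots:t_{10}]\in\mathfrak{M}(2)$, I would fix a representative $(t_1,\dots,t_{10})\in\mathbb{C}^{10}$ and set $\rho=\tfrac{1}{12}\big((\sum_{i=1}^{10}t_i^2)^2-4\sum_{i=1}^{10}t_i^4\big)$. Working over $\mathbb{C}$ we may choose $R\in\mathbb{C}$ with $R^2=\rho$; then $[t_1:\dots:t_{10}:R]\in\mathbb{P}(1,\dots,1,2)$ satisfies~(\ref{eqn:PlueckerRelations}) because $[t_1:\dots:t_{10}]$ does, it satisfies~(\ref{eqn:PlueckerRelations2}) by the choice of $R$, and it still has at most three vanishing $t_i$; hence it lies in $\mathfrak{M}(2)^+$ and projects onto $[t_1:\dots:t_{10}]$. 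I would also record that this lift is well defined up to $\imath$: replacing the chosen representative by $\lambda\,(t_1,\dots,t_{10})$ multiplies $\rho$ by $\lambda^4$ and hence forces $R$ to be replaced by $\lambda^2R$, which is precisely the rescaling compatible with the weights $(1,\dots,1,2)$, while passing to the other square root $-R$ of $\rho$ produces the $\imath$-image of the lift.

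Combining the two steps yields $\operatorname{pr}(\mathfrak{M}(2)^+)=\mathfrak{M}(2)$ as subsets of $\mathbb{P}^9$, which gives the asserted isomorphism; equivalently, $\operatorname{pr}$ descends to an isomorphism $\mathfrak{M}(2)^+/\imath\xrightarrow{\ \sim\ }\mathfrak{M}(2)$, since by the previous paragraph its fibers over $\mathfrak{M}(2)$ are exactly the $\imath$-orbits. I do not anticipate a real obstacle here: the only non-formal ingredient is that the quadratic relation~(\ref{eqn:PlueckerRelations2}) can always be solved for $R$ over $\mathbb{C}$ (this would fail over $\mathbb{R}$), plus the bookkeeping that the square-root ambiguity matches both the weighted-projective rescaling and the involution $\imath$.
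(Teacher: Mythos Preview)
Your argument is correct and complete. The paper actually states this lemma without proof, treating it as an immediate consequence of the definitions of $\mathfrak{M}(2)^+$, $\mathfrak{M}(2)$, $\imath$, and $\operatorname{pr}$ together with Equation~(\ref{eqn:PlueckerRelations2}); your write-up simply spells out the details the authors leave implicit, namely that $\operatorname{pr}$ is well defined on $\mathfrak{M}(2)^+$, that the quadratic relation can always be solved for $R$ over $\mathbb{C}$, and that the square-root ambiguity matches the involution $\imath$.
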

\subsection{The modular description}
\label{ssec:modular_description}
The moduli spaces $\mathfrak{M}(2)$ and $\mathfrak{M}(2)^+$ have modular descriptions based on the seminal work in \cite{MR1204828}. By $\mathbf{H}_2$ we denote the set of all complex two-by-two matrices $\modvar$ over $\mathbb{C}$ such that the hermitian matrix $(\modvar-\modvar^\dagger)/(2i)$ is positive definite, i.e.,
\begin{equation}
\label{Siegel_tau}
 \mathbf{H}_2 = \left\lbrace  \left( \begin{array}{cc} \tau_1 & z_1 \\ z_2 & \tau_2\end{array} \right) \in \operatorname{Mat}(2,2;\mathbb{C}) \; \Big|
 \; 4 \, \im{\tau_1} \, \im{\tau_2} > |z_1 - \bar{z}_2|^2, \; \im{\tau_2} > 0 \right\rbrace ,
\end{equation}
and the modular group $\Gamma \subset \operatorname{U}(2,2)$ given by
\begin{equation}
\label{modular_group}
 \Gamma =  \left\lbrace G \in \operatorname{GL}_4\big(\mathbb{Z}[i]\big) \,  \Big| \, G^\dagger \cdot \left( \begin{array}{cc} 0 & \mathbb{I}_2 \\  -\mathbb{I}_2 & 0 \end{array}\right) \cdot G = \left( \begin{array}{cc} 0 & \mathbb{I}_2 \\  -\mathbb{I}_2 & 0 \end{array}\right)  \right\rbrace.
\end{equation}
The modular group acts on $\modvar \in \mathbf{H}_2$ by 
\begin{gather*}
\forall \, G= \left(\begin{matrix} A & B \\ C & D \end{matrix} \right) \in \Gamma: \quad G\cdot \modvar=(A\cdot \modvar+B)(C\cdot \modvar+D)^{-1}.
\end{gather*}
It was shown in \cite{MR1204828}*{Prop.~\!1.5.1} that $\Gamma$ is generated by the five elements $G_1$, $G_2$, $G_3$, $G_4$, $G_5$ given by
\begin{equation}
\label{eqn:generators}
\scalemath{\MyScaleTiny}{
\begin{array}{ccccc}
\left(\begin{array}{rrrr}  i 	&	&	&	\\	& 1	& 	&	\\ 	&	& i	&	\\	&	&	& 1 \end{array}\right)	, &
\left(\begin{array}{rrrr} 1 	&1	&	&	\\ 0	& 1	& 	&	\\ 	&	& 1	& 0	\\	&	& -1	& 1 \end{array}\right)	, &
\left(\begin{array}{rrrr} 0 	&1	&	&	\\ 1	& 0	& 	&	\\ 	&	& 0	& 1	\\	&	& 1	& 0 \end{array}\right)	, &
\left(\begin{array}{rrrr} 1 	&0	&1	&0 	\\ 0	& 1	& 0	&0	\\ 	&	& 1	& 	\\	&	& 	& 1 \end{array}\right)	, &
\left(\begin{array}{rrrr}   	& 	& 1	&  	\\ 	& 	& 	&1	\\ -1	&	& 	& 	\\	&-1	& 	&  \end{array}\right),
\end{array}}
\end{equation}
with determinants $\det{(G_1)}=-1$ and $\det{(G_k)}=1$ for $k=2,\dots, 5$. We also introduce the principal modular sub-group of \emph{complex level} $1+i$ (over the Gaussian integers) given by
\begin{equation}
 \Gamma(1+i) = \left\lbrace G \in \Gamma \,  \Big| \, G \equiv \mathbb{I}_4 \! \!\mod{1+i} \right\rbrace .
\end{equation} 
There is an additional involution $\mathcal{T}$ acting on elements of $\mathbf{H}_2$ by transposition, i.e., $\modvar \mapsto \mathcal{T}\cdot \modvar=\modvar^t$, yielding extended groups obtained from the semi-direct products
\begin{equation}
\label{modular group_extended}
 \Gamma_{\mathcal{T}} = \Gamma \rtimes \langle \mathcal{T} \rangle , \qquad 
 \Gamma_{\mathcal{T}}(1+i) = \Gamma(1+i) \rtimes \langle \mathcal{T} \rangle ,
\end{equation} 
where $\langle \mathcal{T} \rangle$ is the sub-group generated by $\mathcal{T}$. We will always write elements $g \in \Gamma_{\mathcal{T}}$ in the form  $g = G\,\mathcal{T} ^{n}$ with $G \in \Gamma$ and $n \in \{0,1\}$.
A \emph{modular form $f$ of weight $2k$ relative to a finite-index sub-group $\Gamma' \subset \Gamma_{\mathcal{T}}$ with character $\chi_{f}$} is a holomorphic function on $\mathbf{H}_2$ such that
\begin{equation}
 \forall \, \modvar \in \mathbf{H}_2, \ \forall \, g = G \mathcal{T} ^{n} \in \Gamma': \ f\big(g\cdot \modvar\big) = \chi_f(g) \ \det(C\modvar+D)^{2k} \ f(\modvar) .
\end{equation}
There is a well-known isomorphism $\Gamma/\Gamma(1+i)\cong \mathrm{S}_6$ -- since both groups are in fact isomorphic to $\operatorname{Sp}_4(\mathbb{Z}/2\mathbb{Z})$ -- where $\mathrm{S}_6$ is the permutation group of six elements. By $S_G$ we denote the image of $G \in \Gamma$ under the natural quotient map $\Gamma \to \mathrm{S}_6$ and by $\operatorname{sign}\!{(S_G)}$ the sign of this permutation $S_G$.  The following was proven in \cite{MR1204828}:
\begin{theorem}[Props.~3.1.1, 3.1.3, 3.1.5 in \cite{MR1204828}]
\label{thm1}
\hspace{2em}
\begin{enumerate}
\item There are ten theta functions $\theta^2_i(\modvar)$ for $1 \le i \le 10$ which are non-zero modular forms of weight two relative to $ \Gamma_{\mathcal{T}}(1+i)$ and for each $g = G\,\mathcal{T} ^{n}  \in \Gamma_{\mathcal{T}}(1+i)$ with $n \in \lbrace 0,1 \rbrace$ the modular forms $\theta^2_i(\modvar)$ transform with $\chi_{\theta_i}(g)=\det{(G)}$.
\item Any five of the ten functions $\theta^2_i(\modvar)$ for $1 \le i \le 10$ generate the ring of modular forms of level $1+i$ and character $\chi(g)=\det{(G)}$ for all $g \in\Gamma_{\mathcal{T}}(1+i)$.
\item There is a unique function $\Theta(\modvar)$ which is a non-zero modular form of weight four relative to $\Gamma_{\mathcal{T}}$ such that for each $g = G \mathcal{T} ^{n}  \in \Gamma_{\mathcal{T}}$ with $n \in \lbrace 0,1 \rbrace$ the modular form $\Theta(\modvar)$ transforms with character $\chi_{\Theta}(g)=(-1)^n \det{(G)} \, \operatorname{sign}{(S_G)}$ and satisfies
\begin{equation}
\label{eqn:Tsqr}
\Theta(\modvar)^2 = 2^{-6}\cdot3^5\cdot5^2 \left( \big(\sum_{i=1}^{10} \theta_i(\modvar)^4\big)^2 - 4 \sum_{i=1}^{10} \theta_i(\modvar)^8\right) .
\end{equation}
\end{enumerate}
\end{theorem}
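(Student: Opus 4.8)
The plan is to recover this as the period-map description of the moduli of six-line configurations due to Matsumoto--Sasaki--Yoshida: the domain $\mathbf{H}_2$ is an unbounded realization of the four-dimensional bounded symmetric domain of type $IV$ (equivalently, the Hermitian half-space for $\operatorname{U}(2,2)$ over the Gaussian integers), and $\Gamma_{\mathcal{T}}(1+i)\backslash\mathbf{H}_2$ is, after a suitable compactification, the space $\mathfrak{M}(2)$ of~\eqref{M_2}. For part~(1), to each $\modvar\in\mathbf{H}_2$ one attaches, via its $\mathbb{Z}[i]$-structure, an abelian variety with complex multiplication, and I would take the $\theta_i(\modvar)$ to be the ten relevant (Hermitian) theta constants, naturally indexed by the ten unordered partitions of $\{1,\dots,6\}$ into two triples --- the same index set as the degree-one Dolgachev-Ortland coordinates $t_i$ of~\eqref{eqn:DOcoords}. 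Holomorphy is automatic, and ``non-zero'' means only ``not identically zero,'' which holds because a generic configuration has all $t_i\neq0$. The transformation law then follows from the classical theta transformation formula, which one needs to verify only on the five generators~\eqref{eqn:generators} and on $\mathcal{T}$: one reads off that $\theta_i^2$ has weight two, that the permutation of characteristics induced by a general $G$ is governed by $S_G$, and that restricting to the principal level $\Gamma_{\mathcal{T}}(1+i)$ --- precisely the subgroup fixing every characteristic and killing the residual fourth roots of unity in the multiplier --- leaves only the automorphy factor $\det(G)\,\det(C\modvar+D)^2$, that is, $\chi_{\theta_i}(g)=\det(G)$.

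For part~(2) the crucial input is the \emph{period isomorphism}: the ten squares $[\theta_1^2:\dots:\theta_{10}^2]$ have no common zero, satisfy exactly the linear relations~\eqref{eqn:PlueckerRelations} (the Riemann-type linear identities among theta squares), and define a $\Gamma_{\mathcal{T}}(1+i)$-equivariant holomorphic map $\mathbf{H}_2\to\mathbb{P}^9$ that descends to an isomorphism from the Baily--Borel compactification of $\Gamma_{\mathcal{T}}(1+i)\backslash\mathbf{H}_2$ onto $\overline{\mathfrak{M}(2)}\cong\mathbb{P}^4$. Granting this, the graded ring of modular forms of level $1+i$ with character $\det(G)$ in weight two --- and its powers in higher weight --- is the homogeneous coordinate ring of this image; since $\mathbb{P}^4$ is projectively normal, that ring is the polynomial ring on any five of the $\theta_i^2$ that span the five-dimensional space of weight-two forms, and because~\eqref{eqn:PlueckerRelations} has rank five, a short linear-algebra check shows that any five of the ten $\theta_i^2$ span it. This proves part~(2); its degree-zero piece shows that the only modular forms of weight zero are the constants, which is used below.

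For part~(3), set $P(\modvar)=\bigl(\sum_i\theta_i^4\bigr)^2-4\sum_i\theta_i^8$. The power sums $\sum_i\theta_i^4$ and $\sum_i\theta_i^8$ are symmetric in the ten $\theta_i^2$, hence invariant under the permutation action of $S_G$, and the residual multiplier is trivial on the fourth powers $\theta_i^4$; so $P$ is a modular form of weight eight and trivial character for the full group $\Gamma_{\mathcal{T}}$. Since $\mathbf{H}_2$ is simply connected, the composite $\mathbf{H}_2\to\Gamma_{\mathcal{T}}(1+i)\backslash\mathbf{H}_2\cong\mathfrak{M}(2)$ lifts through the branched double cover $\operatorname{pr}\colon\mathfrak{M}(2)^+\to\mathfrak{M}(2)$ of Lemma~\ref{lem:M2}; pulling back the degree-two Dolgachev-Ortland coordinate $R$ of~\eqref{Eqn:moduli} along this lift yields a \emph{single-valued} holomorphic function $\Theta$ on $\mathbf{H}_2$, and by~\eqref{eqn:PlueckerRelations2} and an explicit constant computation $\Theta^2=2^{-6}\cdot3^5\cdot5^2\,P$, which is~\eqref{eqn:Tsqr}. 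The character of $\Theta$ is read off from how $R$ transforms: $R$ changes sign under the association involution~\eqref{eqn:i_action}, which under the period isomorphism corresponds to the class of the transposition $\mathcal{T}$, accounting for the factor $(-1)^n$; it changes sign by $\operatorname{sign}(S_G)$ under an odd relabelling of the six lines; and the weight-four automorphy factor carries $\det(G)$ exactly as in part~(1). Hence $\chi_\Theta(g)=(-1)^n\det(G)\operatorname{sign}(S_G)$. Uniqueness follows: any weight-four form $\Theta'$ with this character satisfying~\eqref{eqn:Tsqr} has $\Theta'^2=\Theta^2$, so $(\Theta'-\Theta)(\Theta'+\Theta)$ vanishes identically; as $\mathbf{H}_2$ is connected, one factor does, so $\Theta'=\pm\Theta$, and the sign is fixed by the explicit formula for $\Theta$.

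The genuine obstacle is the period isomorphism invoked in parts~(2) and~(3): proving that the theta map descends to an \emph{isomorphism} $\Gamma_{\mathcal{T}}(1+i)\backslash\mathbf{H}_2\xrightarrow{\sim}\mathfrak{M}(2)$, not merely a dominant morphism. Injectivity is a Torelli statement for the double-sextic K3 surfaces and rests on the global Torelli theorem for K3 surfaces; surjectivity, together with the identification of $\Gamma(1+i)$ with the (level) monodromy group of the family, requires a careful analysis of the monodromy and of the degenerate strata catalogued in Definition~\ref{def:sstable}. Everything else --- the transformation laws on the five generators~\eqref{eqn:generators} and the projective geometry of $\mathbb{P}^4$ --- is, by comparison, routine.
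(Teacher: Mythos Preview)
The paper does not prove this theorem: it is quoted from Matsumoto--Sasaki--Yoshida \cite{MR1204828} (their Propositions~3.1.1, 3.1.3, 3.1.5), and the text immediately following it declines even to write down the $\theta_i$. There is thus no in-paper proof to compare against; your sketch is a reconstruction of the MSY argument, and as such it is broadly on target --- you correctly identify the period isomorphism as the load-bearing step and honestly flag it as the hard part.

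One structural point deserves attention. Your proof of part~(2) consumes the period isomorphism $\overline{\Gamma_{\mathcal{T}}(1+i)\backslash\mathbf{H}_2}\cong\mathbb{P}^4$ as input, but in the paper that isomorphism is stated separately as Theorem~\ref{thm2}, also cited from \cite{MR1204828}, and placed \emph{after} Theorem~\ref{thm1}. In MSY the dependencies actually run: first the theta functions, their transformation laws, and the linear relations~\eqref{eqn:PlueckerRelations} are established by direct theta-identity computations (their \S2); then the period map is shown to be an isomorphism via K3 Torelli and a monodromy analysis (their \S3.2); and only then does the graded-ring statement follow. Your ingredients are the right ones, but the order matters if you are reconstructing the logic rather than surveying it.

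Two smaller gaps. The claim that \emph{any} five of the ten $\theta_i^2$ span is not a consequence of the system~\eqref{eqn:PlueckerRelations} having rank five --- that gives only that \emph{some} five span. One must verify that every $5$-element subset of $\{t_1,\dots,t_{10}\}$ projects isomorphically from the solution space, which is a finite combinatorial check on the $\binom{6}{3}$ index set (true, but not the one-line ``short linear-algebra check'' you suggest). And your uniqueness argument for $\Theta$ yields only $\Theta'=\pm\Theta$; both signs satisfy~\eqref{eqn:Tsqr}, so ``unique'' in the statement is to be read up to sign, which is how MSY phrase it.
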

In the interest of keeping this section short, we do not give explicit formulas for $\theta^2_i(\modvar)$ with $1 \le i \le 10$. However, just as there are simple sum formulas for theta functions of even and odd characteristic in genus two and genus one, the same holds for the theta functions $\theta^2_i(\modvar)$ in Theorem~\ref{thm1}: they are simply theta functions of complex characteristic. All quadratic relations among the  even theta functions $\theta^2_i(\modvar)$ for $1 \le i \le 10$ can then be derived explicitly. We refer to \cite{MR1204828}*{Sec.~2} for details. 
\begin{remark}
 The space $\mathbf{H}_2$ is a generalization of the Siegel upper-half space $\mathbb{H}_2$. In fact, elements invariant under the involution $\mathcal{T}$ are precisely the two-by-two symmetric matrices over $\mathbb{C}$ whose imaginary part is positive definite, i.e.,
\begin{equation}
\mathbb{H}_2 = \Big\{  \modvar \in \mathbf{H}_2 \, \Big\vert \, \modvar^t=\modvar \Big\} .
\end{equation}
It was proven in \cite{MR1204828}*{Lemma 2.1.1(vi)} that for $\modvar = \tau \in \mathbb{H}_2$ we have $\theta_i(\modvar)=\vartheta_i(\tau)^2$  where $\vartheta_i(\tau)$ for $1 \le i \le 10$ are the even theta functions of genus two. We provide a geometric cross-check for (the squares of) these reduction formulas in Proposition~\ref{prop-5.6}.
\end{remark}
The following describes the action of the \emph{full} modular group on the theta functions:
\begin{lemma}
\label{lem:Gtransfo}
The action of the generators $\mathcal{T}, G_1, \dots, G_5 \in\Gamma_{\mathcal{T}}$ in Equation~(\ref{eqn:generators}) on $\theta_i(\modvar)$ with $1 \le i \le 10$ and $\rho=-\det{(\modvar)}$ is given in the following table:
\begin{equation}
\label{eqn:Gtransfo}
\scalemath{\MyScaleMedium}{
\begin{array}{l|rrrrrrrrrr}
	& \theta_1		& \theta_2		& \theta_3		& \theta_4		& \theta_5		& \theta_6		& \theta_7		& \theta_8		& \theta_9		& \theta_{10}\\
\hline
\mathcal{T} & \theta_1		& \theta_2		& \theta_3		& \theta_4		& \theta_5		& \theta_6		& \theta_7		& \theta_8		& \theta_9		& \theta_{10}\\
G_1^{\pm1}	& \theta_1		& \theta_2		& \theta_3		& \theta_4		& \theta_5		& \theta_6		& \theta_7		& \theta_8		& \theta_9		& -\theta_{10}\\
G_2^{\pm1}	& \theta_1		& \theta_4		& \theta_3		& \theta_2		& \theta_8		& \theta_{10}	& \theta_7		& \theta_5		& \theta_9		&  \theta_{6}\\
G_3^{\pm1}	& \theta_1		& \theta_2		& \theta_4		& \theta_3		& \theta_7		& \theta_9		& \theta_5		& \theta_8		& \theta_6		& \theta_{10}\\
G_4^{\pm1}	& \theta_3		& \theta_4		& \theta_1		& \theta_2		&\pm i\,\theta_5&\pm i\,\theta_6	& \theta_9		&\pm i\,\theta_8	& \theta_7		&\pm i\,\theta_{10}\\
G_5^{\pm1}	& \rho^{\pm 1} \theta_1&\rho^{\pm 1}\theta_8	&\rho^{\pm 1} \theta_5	&\rho^{\pm1}\theta_7	&\rho^{\pm1}\theta_3	&\rho^{\pm1}\theta_9	&\rho^{\pm1}\theta_4	&\rho^{\pm1}\theta_2	&\rho^{\pm1}\theta_6	&\rho^{\pm1}\theta_{10}
\end{array}}
\end{equation}
\end{lemma}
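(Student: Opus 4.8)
The plan is to read the table off the classical transformation theory of theta constants, applied to the realization of the $\theta_i(\modvar)$ as theta constants with complex (Gaussian) characteristics given in \cite{MR1204828}*{Sec.~2}. Recall that $\theta^2_i(\modvar)$ is the theta constant attached to the $i$-th of the ten \emph{even} characteristics $\left[\begin{smallmatrix} a_i \\ b_i\end{smallmatrix}\right]$, with entries in $\tfrac{1}{1+i}\mathbb{Z}[i]^2/\mathbb{Z}[i]^2$, labelled so that the period map $\modvar\mapsto\big[\theta^2_1(\modvar):\dots:\theta^2_{10}(\modvar)\big]$ identifies $\theta^2_i$ with the Dolgachev--Ortland coordinate $t_i$ of Equation~(\ref{eqn:DOcoords}), i.e.\ with the partition of $\{1,\dots,6\}$ into two triples that indexes $t_i$. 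For $g=G\,\mathcal{T}^n\in\Gamma_{\mathcal{T}}$ the general theta transformation formula has the shape
\begin{equation*}
\theta^2_i\big(g\cdot\modvar\big)\;=\;\kappa(g)\,\varepsilon_i(g)\,\det(C\modvar+D)\,\theta^2_{\sigma_g(i)}(\modvar),
\end{equation*}
where $\sigma_g$ is the permutation of the ten even characteristics induced by $g$, the factor $\kappa(g)$ is a global root of unity, and $\varepsilon_i(g)$ is the characteristic-dependent fourth root of unity coming from the theta multiplier system. Extracting square roots with the normalization for which all $\theta^2_i$ carry the character $\det(G)$ on $\Gamma_{\mathcal{T}}(1+i)$ (Theorem~\ref{thm1}(1)) converts this into the corresponding law for $\theta_i$; there the automorphy factor $\det(C\modvar+D)^{1/2}$ is trivial except for $G_5$, where $\det(C\modvar+D)=\det(-\modvar)$ produces the power of $\rho=-\det\modvar$ recorded in the table.

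I would then treat $\mathcal{T}$ and the five generators separately, using that each is of an elementary type. The transposition $\mathcal{T}$ acts on $\modvar$ by $\modvar\mapsto\modvar^t$, fixes every even characteristic, and leaves each theta constant unchanged (equivalently, the $\theta^2_i$ are already invariant under the factor $\langle\mathcal{T}\rangle$, cf.\ Theorem~\ref{thm1}(1)), so the $\mathcal{T}$ row is the identity. The diagonal element $G_1=\operatorname{diag}(i,1,i,1)$ fixes each characteristic (multiplication by $i$ fixes $\tfrac{1}{1+i}$ modulo $\mathbb{Z}[i]$) and contributes only a multiplier, which after the square-root normalization is $+1$ on nine of the $\theta_i$ and $-1$ on $\theta_{10}$. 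The element $G_4=\left(\begin{smallmatrix}\mathbb{I}_2 & B\\ 0&\mathbb{I}_2\end{smallmatrix}\right)$ with $B=\operatorname{diag}(1,0)$ is a translation $\modvar\mapsto\modvar+B$; its theta transformation shifts the $b$-parts of the characteristics (giving the permutation $(13)(24)(79)$ of indices) and carries a quadratic phase in the $a$-parts, which is exactly what produces the factors $\pm i$ in the $G_4$ row --- consistently, the squared statement reads $\theta^2_i\mapsto-\theta^2_j$, a sign on the coordinate $t_j$ matching the antisymmetry of the Plücker coordinates $D_{ijk}$ under index transpositions. The block-permutation $G_3$, acting by $\modvar\mapsto P\modvar P$ with $P=\left(\begin{smallmatrix}0&1\\1&0\end{smallmatrix}\right)$, and the integral block-triangular element $G_2$ act on characteristics by a permutation (respectively a shear) with trivial multiplier, producing the permutation rows without extra factors; here one must be careful that the corresponding elements of $S_6\cong\operatorname{Sp}_4(\mathbb{Z}/2\mathbb{Z})$ act on the ten partitions not as naive relabellings of the triples, but through the outer automorphism of $S_6$ (so, for instance, $G_4$ corresponds to a transposition while $G_2,G_3$ correspond to triple transpositions and $G_5$ to a double transposition of the six line labels). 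Finally $G_5=\left(\begin{smallmatrix}0&\mathbb{I}_2\\-\mathbb{I}_2&0\end{smallmatrix}\right)$, with $G_5\cdot\modvar=-\modvar^{-1}$, is the anti-diagonal \emph{Fricke} element; its effect on theta constants is a finite Fourier transform on the characteristics with a constant Gauss-sum phase and automorphy factor a power of $\rho$, which gives the row with prefactors $\rho^{\pm1}$ (the $\pm$ recording $G_5$ versus $G_5^{-1}$). Reading off $\sigma_{G_k}$, $\varepsilon_i(G_k)$ and the automorphy factor in each case produces the table.

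Two cross-checks keep the bookkeeping honest. First, restricting the resulting laws to $\Gamma_{\mathcal{T}}(1+i)$ must reproduce the character $\det(G)$ on every $\theta^2_i$ promised by Theorem~\ref{thm1}(1); this pins down the sign of each square root, hence the overall normalization used in the table. Second, the right-hand side of Equation~(\ref{eqn:Tsqr}), namely $\big(\sum_i\theta_i^4\big)^2-4\sum_i\theta_i^8$, must be a genuine modular form of weight eight for $\Gamma_{\mathcal{T}}$: the permutations $\sigma_{G_k}$ fix the symmetric functions $\sum_i\theta_i^4$ and $\sum_i\theta_i^8$, the fourth powers annihilate every $\pm1$ and every $\pm i$ multiplier, and only the $G_5$ factor survives and contributes $\rho^{\pm8}=\det(C\modvar+D)^{\pm8}$ --- exactly the automorphy of $\Theta^2$, consistent with $\Theta$ having weight four. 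Agreement here confirms the phases and the $\rho$-powers at once.

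The main obstacle is exactly this bookkeeping. One must fix the complex-characteristic conventions of \cite{MR1204828}; compute the permutation $\sigma_{G_k}$ of the ten even characteristics in each case, which is subtle because the image of $G_k$ in $S_6\cong\operatorname{Sp}_4(\mathbb{Z}/2\mathbb{Z})$ is, for several of the generators, not a transposition of line labels but a double or triple transposition (the outer automorphism of $S_6$ at work); and then follow the fourth-root-of-unity multipliers and the $\rho$-automorphy factor through the square-root normalization without sign errors. None of the steps is conceptually deep, but getting every entry correct --- in particular the $\pm i$'s in the $G_4$ row and the exact power $\rho^{\pm1}$ (as opposed to $\rho^{\pm2}$ or $-\rho^{\pm1}$) in the $G_5$ row --- is where all the care goes.
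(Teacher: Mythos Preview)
Your approach is essentially the same as the paper's: the paper's proof consists entirely of the sentence ``the proof follows from an explicit computation applying the formulas in Lemmas~2.1.1(ii) and 2.1.2(viii)--(x) in \cite{MR1204828}'', and you propose exactly this---reading off the permutation of characteristics and the multiplier from the theta transformation laws of \cite{MR1204828}*{Sec.~2} for each generator in turn. Your elaboration on how the individual generators act (translation, block permutation, Fricke inversion) and the two consistency checks are helpful commentary but do not constitute a different method.
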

\begin{proof}
The proof follows from an explicit computation applying the formulas in Lemmas~2.1.1(ii) and Lemma~2.1.2(viii)-(x) in \cite{MR1204828}.
\end{proof}
\begin{lemma}
\label{lem:J4vanish}
Under the action $\modvar \mapsto \mathcal{T} \cdot \modvar =\modvar^t$ we have
\begin{equation}
\label{eqn:I_action}
  \Big( \theta_1(\modvar) , \dots  , \theta_{10}(\modvar) , \Theta(\modvar) \Big) \ \mapsto \ \Big( \theta_1(\modvar) , \dots  , \theta_{10}(\modvar) , - \Theta(\modvar) \Big).
\end{equation}
\end{lemma}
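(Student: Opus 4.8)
The plan is to read both halves of the statement off results already in hand, since the involution $\mathcal{T}$ sits inside the extended group $\Gamma_{\mathcal{T}}$ and the transformation behaviour of every object in sight under that group has already been recorded.

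First I would dispose of the ten theta functions. The row labelled $\mathcal{T}$ in the table of Lemma~\ref{lem:Gtransfo} is exactly the assertion $\theta_i(\mathcal{T}\cdot\modvar)=\theta_i(\modvar^t)=\theta_i(\modvar)$ for $1\le i\le 10$; no further work is needed, the content having been pushed back to the formulas of Matsumoto et al.\ cited in the proof of that lemma. So the left ten entries of the map in \eqref{eqn:I_action} are the identity.

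Next I would treat $\Theta$. By Theorem~\ref{thm1}(3), $\Theta$ is a modular form of weight four relative to the \emph{full} group $\Gamma_{\mathcal{T}}$, so for every $g=G\,\mathcal{T}^n\in\Gamma_{\mathcal{T}}$ one has $\Theta(g\cdot\modvar)=\chi_{\Theta}(g)\,\det(C\modvar+D)^{2}\,\Theta(\modvar)$, where $\begin{pmatrix}A&B\\ C&D\end{pmatrix}=G$. Applying this to $g=\mathcal{T}$, i.e.\ to $G=\mathbb{I}_4$ and $n=1$: the automorphy factor is trivial, $\det(C\modvar+D)=\det(\mathbb{I}_2)=1$, while the character value is $\chi_{\Theta}(\mathcal{T})=(-1)^{1}\det(\mathbb{I}_4)\,\operatorname{sign}(S_{\mathbb{I}_4})=-1$ because $S_{\mathbb{I}_4}$ is the identity permutation. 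Hence $\Theta(\modvar^t)=\Theta(\mathcal{T}\cdot\modvar)=-\Theta(\modvar)$, which is the last entry of \eqref{eqn:I_action}.

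There is essentially no obstacle here; the lemma is a direct corollary of Theorem~\ref{thm1} and Lemma~\ref{lem:Gtransfo}. The only point that genuinely wants care is recognizing that $\mathcal{T}$ contributes the trivial automorphy factor $\det(C\modvar+D)^{2}=1$, so that the transformation of $\Theta$ under $\mathcal{T}$ is controlled \emph{purely} by its character, whose value at $\mathcal{T}$ is $-1$. As an independent cross-check one may instead argue from the quadratic relation \eqref{eqn:Tsqr}: its right-hand side is a symmetric polynomial in the $\theta_i(\modvar)$, which we have just shown are $\mathcal{T}$-invariant, so $\Theta(\modvar^t)^2=\Theta(\modvar)^2$ and therefore $\Theta(\modvar^t)=\varepsilon\,\Theta(\modvar)$ with $\varepsilon\in\{\pm1\}$ constant on the connected domain $\mathbf{H}_2$; the character computation above then pins down $\varepsilon=-1$.
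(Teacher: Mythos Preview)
Your proof is correct and follows essentially the same route as the paper: the $\theta_i$-invariance is the $\mathcal{T}$ row of Lemma~\ref{lem:Gtransfo}, and the sign flip of $\Theta$ is extracted from its character in Theorem~\ref{thm1}(3) (the paper simply cites \cite{MR1204828}*{Cor.~3.1.4} for this). One cosmetic slip: in the paper's convention the automorphy factor for weight four is $\det(C\modvar+D)^{4}$, not $\det(C\modvar+D)^{2}$, though this is immaterial here since the factor equals $1$.
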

\begin{proof}
The transformation for $\Theta(\modvar)$ was proven in \cite{MR1204828}*{Cor.~\!3.1.4}.
\end{proof}
\begin{lemma}
\label{lem:DAvanish}
Under the action $\modvar \mapsto M_i \cdot G_1 \cdot M_i^{-1} \cdot \modvar$ we have 
\begin{equation}
 \Big[ \theta_1(\modvar): \dots :  \theta_{10}(\modvar) \Big] \mapsto \Big[ (-1)^{\delta_{i,1}} \theta_1(\modvar): \dots :   (-1)^{\delta_{i,10}}   \theta_{10}(\modvar) \Big],
\end{equation} 
where $M_i \in \Gamma$ with $\det{(M_i)}=1$ and $1 \le i \le 10$, $\delta_{\mu,\nu}$ is the Kronecker delta function, and the matrices $M_i$ are given in the following table:
\begin{equation}
\scalemath{\MyScaleMedium}{
\begin{array}{r|r|rrrrrrrrrrr}
i& M_i & \multicolumn{11}{c}{[(-1)^{\delta_{i,1}}, \dots , (-1)^{\delta_{i,10}}]}\\
 \hline
 1&G_4G_3G_5G_4G_3G_2 & [&-1	,&1	,&1	,&1	,&1	,&1	,&1	,&1	,&1	,& 1] \\
 2&G_2G_5G_4G_3G_2 	& [&1	,&-1	,&1	,&1	,&1	,&1	,&1	,&1	,&1	,& 1] \\
 3&G_3G_5G_4G_3G_2 	& [&1	,&1	,&-1	,&1	,&1	,&1	,&1	,&1	,&1	,& 1] \\
 4&G_5G_4G_3G_2 		& [&1	,&1	,&1	,&-1	,&1	,&1	,&1	,&1	,&1	,& 1] \\
 5&G_3G_4G_3G_2 		& [&1	,&1	,&1	,&1	,&-1	,&1	,&1	,&1	,&1	,& 1] \\
 6&G_2 				& [&1	,&1	,&1	,&1	,&1	,&-1	,&1	,&1	,&1	,& 1] \\
 7&G_4G_3G_2 			& [&1	,&1	,&1	,&1	,&1	,&1	,&-1	,&1	,&1	,& 1] \\
 8&G_2G_3G_4G_3G_2 	& [&1	,&1	,&1	,&1	,&1	,&1	,&1	,&-1	,&1	,& 1] \\
 9&G_3G_2 				& [&1	,&1	,&1	,&1	,&1	,&1	,&1	,&1	,&-1	,& 1] \\
10&\mathbb{I}_2 			& [&1	,&1	,&1	,&1	,&1	,&1	,&1	,&1	,&1	,&-1] 
\end{array}}
\end{equation}
In particular, $\theta_{i}(\modvar)$ has simple zeros exactly on the $\Gamma_{\mathcal{T}}(1+i)$-orbit of the fixed locus of $M_i \cdot G_1 \cdot M_i^{-1}$ for $1 \le i \le 10$. \end{lemma}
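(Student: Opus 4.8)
My plan is to deduce the displayed projective identity from a short list of permutation computations based on Lemma~\ref{lem:Gtransfo}, and then to read off the ``in particular'' by combining that identity with the known structure of the theta divisors from \cite{MR1204828}. For the main formula, the key point is that the action of $\Gamma_{\mathcal T}$ on the point $[\theta_1(\modvar):\dots:\theta_{10}(\modvar)]\in\mathbb P^9$ induced by $\modvar\mapsto g\cdot\modvar$ is, by Lemma~\ref{lem:Gtransfo}, a well-defined linear action, i.e.\ a homomorphism $c\colon\Gamma_{\mathcal T}\to\operatorname{PGL}_{10}(\mathbb C)$ (any automorphy factor is a common scalar, invisible in $\operatorname{PGL}$), so that $c(M_iG_1M_i^{-1})=c(M_i)\,c(G_1)\,c(M_i)^{-1}$. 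By Lemma~\ref{lem:Gtransfo}, $c(G_1)$ is represented by $\operatorname{diag}(1,\dots,1,-1)$, the sign change in the $\theta_{10}$-coordinate, while each of $c(\mathcal T),c(G_2),\dots,c(G_5)$ is represented by a matrix $D_g\,P_{\pi_g}$ with $D_g$ diagonal and $P_{\pi_g}$ the permutation matrix of the permutation $\pi_g\in\mathrm S_{10}$ read off the $g$-row of Table~(\ref{eqn:Gtransfo}) (the $\theta_i$-column holding $\theta_{\pi_g(i)}$ up to the scalar in $D_g$). Lifting $c(M_i)$ to such a matrix $D_iP_{\pi_i}$ and using $DED^{-1}=E$ for diagonal $D,E$ together with $P_\pi\operatorname{diag}(d)P_\pi^{-1}=\operatorname{diag}(d_{\pi(1)},\dots,d_{\pi(10)})$, one finds that $c(M_iG_1M_i^{-1})$ is represented by the diagonal matrix whose unique entry $-1$ sits in the slot $j$ with $\pi_i(j)=10$. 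Thus the claimed formula is equivalent to the ten combinatorial statements $\pi_i(i)=10$. These reduce to short compositions of the $\pi_g$ along the word $M_i$: for $i=10$ it is the tautology $\pi_{M_{10}}=\operatorname{id}$, and for $i\le 9$ the words in the table all end with $G_2$ and are arranged so that the remaining (left) part carries $i$ to $6$ and then $\pi_{G_2}(6)=10$ finishes — e.g.\ $i=4$ gives $4\mapsto 7\mapsto 9\mapsto 6\mapsto 10$ along $G_5G_4G_3G_2$. I would simply run through these ten verifications directly from Table~(\ref{eqn:Gtransfo}).

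For the ``in particular'', note first that $i-1=i(1+i)$ in $\mathbb Z[i]$, so $G_1=\operatorname{diag}(i,1,i,1)\equiv\mathbb I_4\bmod(1+i)$, whence $G_1\in\Gamma(1+i)$; since $\Gamma(1+i)\trianglelefteq\Gamma_{\mathcal T}$, the conjugate $g_i:=M_iG_1M_i^{-1}$ again lies in $\Gamma_{\mathcal T}(1+i)$. By the formula just established, $c(g_i)$ is exactly the sign change in the $\theta_i$-coordinate, i.e.\ $\theta_i(g_i\cdot\modvar)=-\,\lambda(\modvar)\,\theta_i(\modvar)$ while $\theta_j(g_i\cdot\modvar)=\lambda(\modvar)\,\theta_j(\modvar)$ for $j\neq i$, with a common nowhere-vanishing holomorphic factor $\lambda$ (a weight-one automorphy factor for $g_i$). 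The standard argument for quasi-involutions — evaluating at a point of $\operatorname{Fix}(g_i)$ at which some $\theta_j$ with $j\neq i$ is nonzero, which forces $\lambda=1$ there and then $\theta_i=-\theta_i=0$ — shows that $\theta_i$, hence $\theta_i^2$, vanishes along $\operatorname{Fix}(g_i)$ and along its whole $\Gamma_{\mathcal T}(1+i)$-orbit. Conversely, \cite{MR1204828}*{Sec.~2} identifies the (simple) zero divisor of each $\theta_i^2$ with a single $\Gamma_{\mathcal T}(1+i)$-orbit of a specific reflective-type locus, and the main formula pins down $g_i$ as, up to scalars, the unique conjugate of $G_1$ under which $\theta_i$ — and only $\theta_i$ — changes sign; this identifies that locus with $\operatorname{Fix}(g_i)$. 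The case $i=10$ ($M_{10}$ the identity, $g_{10}=G_1$) is the base case, and every other $i$ follows by transporting it along $M_i$, using the normality of $\Gamma_{\mathcal T}(1+i)$ in $\Gamma_{\mathcal T}$, the relation $M_i\cdot\operatorname{Fix}(G_1)=\operatorname{Fix}(g_i)$, and the transitivity of the $\Gamma$-action on $\{\theta_1^2,\dots,\theta_{10}^2\}$ from Theorem~\ref{thm1}.

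The one genuinely laborious step is the tenfold verification $\pi_i(i)=10$: one must compose the column-permutations of Table~(\ref{eqn:Gtransfo}) in the correct order along each of the ten words $M_i$, the table having been set up precisely so that the answer comes out $10$. A secondary delicate point is the automorphy-factor and multiplicity bookkeeping in the second part, for which the conventions and the theta-divisor computation of \cite{MR1204828} must be imported rather than re-derived.
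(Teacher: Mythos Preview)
Your proposal is correct and follows essentially the same route as the paper's own proof, which is a two-sentence sketch: the first part ``follows from Lemma~\ref{lem:Gtransfo}'', and for the second part ``the fact that it is a simple zero must only be proven for one theta function, say $\theta_{10}$; this was done in \cite{MR1204828}*{Lemma~2.3.1}.'' You have unpacked the first sentence into the explicit conjugation argument in $\operatorname{PGL}_{10}$ (reducing to the ten permutation checks $\pi_{M_i}(i)=10$ read off Table~(\ref{eqn:Gtransfo})), and you have unpacked the second sentence into the normality of $\Gamma(1+i)$, the membership $G_1\in\Gamma(1+i)$ via $i-1=i(1+i)$, and the transport from $i=10$ to arbitrary $i$ along $M_i$ --- all of which is exactly the content the paper leaves implicit.
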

\begin{proof}
The first part of the proof follows from Lemma~\ref{lem:Gtransfo}. The fact that it is a simple zero must only be proven for one theta function, say $\theta_{10}$. This was done  in \cite{MR1204828}*{Lemma~\!2.3.1}. 
\end{proof}
The groups $\Gamma_{\mathcal{T}}$ and $\Gamma_{\mathcal{T}}(1+i)$ have the index-two subgroups given by
\begin{equation}
\label{index-two}
\begin{split}
 \Gamma^{+}_{\mathcal{T}} =   \Big\lbrace g = G\, \mathcal{T}^{n} \in  \Gamma_{\mathcal{T}}  & \ \Big| \ n \in \lbrace 0,1 \rbrace, \; (-1)^n \, \det{G} =1 \Big\rbrace, \\
  \Gamma^{+}_{\mathcal{T}}(1+i)  & = \Gamma^{+}_{\mathcal{T}} \cap \Gamma_{\mathcal{T}}(1+i).
 \end{split}
\end{equation} 
Obviously, we have the following:
\begin{lemma}
\label{lem:generators}
The group $\Gamma^{+}_{\mathcal{T}}$ is generated by elements $G_1\mathcal{T}$ and $G_2, \dots, G_5$ where $G_k$ for $k=1,\dots, 5$ were given in Equation~\!(\ref{eqn:generators}). 
\end{lemma}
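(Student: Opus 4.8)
The plan is to realize $\Gamma^{+}_{\mathcal{T}}$ as the kernel of a sign character on $\Gamma_{\mathcal{T}}$ and then run a Reidemeister--Schreier style rewriting against the generators $G_{1},\dots,G_{5}$ of $\Gamma$ together with $\mathcal{T}$. First I would observe that $g = G\,\mathcal{T}^{n}\mapsto(-1)^{n}\det G$ defines a surjective homomorphism $\phi\colon\Gamma_{\mathcal{T}}\to\{\pm1\}$ (surjective since $\phi(\mathcal{T})=-1$) whose kernel is exactly $\Gamma^{+}_{\mathcal{T}}$ by \eqref{index-two}, so $[\Gamma_{\mathcal{T}}:\Gamma^{+}_{\mathcal{T}}]=2$. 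Because $\det G_{1}=-1$ and $\det G_{k}=1$ for $k=2,\dots,5$, the five elements $G_{1}\mathcal{T},G_{2},\dots,G_{5}$ all lie in $\ker\phi=\Gamma^{+}_{\mathcal{T}}$; write $H$ for the subgroup they generate, so $H\subseteq\Gamma^{+}_{\mathcal{T}}$ and only the reverse inclusion is at issue.

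Next, take any $h\in\Gamma^{+}_{\mathcal{T}}$ and write $h=s_{1}\cdots s_{r}$ with each $s_{j}$ one of $\mathcal{T},G_{1},\dots,G_{5}$ or an inverse (possible since these generate $\Gamma_{\mathcal{T}}$ by \cite{MR1204828}*{Prop.~1.5.1} and \eqref{modular group_extended}). Setting $\epsilon_{j}=\phi(s_{1}\cdots s_{j})\in\{\pm1\}$ and letting $c_{j}\in\{1,\mathcal{T}\}$ be the representative of the coset of sign $\epsilon_{j}$, one has $c_{0}=c_{r}=1$ because $h\in\ker\phi$, and the telescoping identity $h=\prod_{j=1}^{r}\bigl(c_{j-1}\,s_{j}\,c_{j}^{-1}\bigr)$ writes $h$ as a product of ``balanced'' factors $c_{j-1}s_{j}c_{j}^{-1}$, each of which lies in $\Gamma^{+}_{\mathcal{T}}$. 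Running through the cases $c_{j-1},c_{j}\in\{1,\mathcal{T}\}$, the only balanced factors that can occur are $G_{k}^{\pm1}$ for $k=2,\dots,5$, the elements $G_{1}^{\pm1}\mathcal{T}$ and $\mathcal{T}G_{1}^{\pm1}$, and the conjugates $\mathcal{T}G_{k}^{\pm1}\mathcal{T}^{-1}$ for $k=2,\dots,5$. Hence it suffices to place each of these in $H$.

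The factors $G_{k}^{\pm1}$ lie in $H$ by definition. Since $G_{1}$ rescales the two off-diagonal entries of $\modvar\in\mathbf{H}_{2}$ by $i$ and $-i$ while $\mathcal{T}$ interchanges them, one reads off (or takes from the relations in \cite{MR1204828}) that $\mathcal{T}G_{1}\mathcal{T}^{-1}=G_{1}^{-1}$; consequently $(G_{1}\mathcal{T})^{2}=G_{1}\bigl(\mathcal{T}G_{1}\mathcal{T}^{-1}\bigr)=1$, and the elementary rearrangements $G_{1}^{-1}\mathcal{T}=G_{1}^{-2}(G_{1}\mathcal{T})$ and $\mathcal{T}G_{1}^{\pm1}=G_{1}^{\mp2}(G_{1}^{\pm1}\mathcal{T})$ show that all of $G_{1}^{\pm1}\mathcal{T},\ \mathcal{T}G_{1}^{\pm1}$ belong to $\langle G_{1}\mathcal{T},\,G_{1}^{2}\rangle$. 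The whole statement therefore reduces to the single point that $G_{1}^{2}$ and the conjugates $\mathcal{T}G_{k}\mathcal{T}^{-1}$ ($k=2,\dots,5$) lie in $H$; concretely I would use the transformation table of Lemma~\ref{lem:Gtransfo} together with the presentation of $\Gamma_{\mathcal{T}}$ in \cite{MR1204828} to rewrite $G_{1}^{2}$ and each $\mathcal{T}G_{k}\mathcal{T}^{-1}$ as a word in $G_{2},\dots,G_{5}$ (equivalently: $\mathcal{T}$ normalizes $\langle G_{2},\dots,G_{5}\rangle$ and $G_{1}^{2}\in\langle G_{2},\dots,G_{5}\rangle$). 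Granting this, $H=\Gamma^{+}_{\mathcal{T}}$. I expect this last verification with the explicit generators to be the only genuinely non-formal step; everything else is the standard index-two bookkeeping, which is presumably why the statement is flagged as obvious.
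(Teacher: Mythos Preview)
The paper offers no proof; the lemma is simply prefaced with ``Obviously.'' Your Reidemeister--Schreier rewriting is a sound way to unpack this, and your reduction is correct: since each of $G_2,\dots,G_5$ acts symmetrically in the two off-diagonal entries of $\varpi$, a direct check on $\mathbf{H}_2$ gives $\mathcal{T}G_k\mathcal{T}^{-1}=G_k$ for $k=2,\dots,5$, and you already have $\mathcal{T}G_1\mathcal{T}^{-1}=G_1^{-1}$, hence $(G_1\mathcal{T})^2=1$. With these in hand the Schreier generators collapse to $G_1\mathcal{T},G_2,\dots,G_5$ together with the single extra element $G_1^{2}$ (equivalently $\mathcal{T}G_1$), so everything hinges on showing $G_1^{2}\in H$.

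That last step is the one piece of genuine content, and you stop short of it (``granting this''). It does not follow from the order-two relation $(G_1\mathcal{T})^2=1$ alone; one must either exhibit $G_1^{2}=\operatorname{diag}(-1,1,-1,1)$ explicitly as a word in $G_2,\dots,G_5$ and their $G_1$-conjugates (which $H$ contains via $(G_1\mathcal{T})G_k(G_1\mathcal{T})^{-1}=G_1G_kG_1^{-1}$), or appeal to the presentation in \cite{MR1204828}. The paper's ``obviously'' almost certainly has the latter route in mind: under the isomorphism $\Gamma^{+}_{\mathcal{T}}\cong SO^{+}(L^{2,4})$ of \cite{MR1204828}*{Prop.~1.5.1}, the elements $G_1\mathcal{T},G_2,\dots,G_5$ correspond to the explicit generating set of $SO^{+}(L^{2,4})$ given there (cf.\ the discussion in the proof of the F-theory/heterotic duality theorem later in the paper). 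So your framework is correct and more transparent than the paper's one-word justification, but as written the proposal is one concrete verification short of complete.
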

\par We now consider the quotient spaces $\mathbf{H}_2/\Gamma_{\mathcal{T}}(1+i)$ and $ \mathbf{H}_2/\Gamma^+_{\mathcal{T}}(1+i)$, and the Satake compactification
\begin{equation}
 \overline{\mathbf{H}_2/\Gamma_{\mathcal{T}}(1+i)} \;.
\end{equation}
For details on the construction of the Satake-Baily-Borel compactification we refer to \cites{MR0216035,MR0118775}. We define a holomorphic map
\begin{equation}
\label{eqn:map1a}
 \mathcal{F}':  \ \ \mathbf{H}_2  \to  \mathbb{P}^9, \quad \modvar  \mapsto  \Big[ \theta^2_1(\modvar) : \dots  : \theta^2_{10}(\modvar) \Big].
\end{equation}
It follows immediately from Theorem~\!\ref{thm1} that the map $\mathcal{F}'$ descends to a holomorphic map on the quotient space $\mathbf{H}_2/\Gamma_{\mathcal{T}}(1+i)$. Equations~\!(\ref{eqn:PlueckerRelations}) coincide with the quadratic relations between the even theta functions in Theorem~\!\ref{thm1} under the identification given by the map $\mathcal{F}'$. An analysis of the simple zeros of the theta functions in  \cite{MR1136204}*{Lemma~\!2.3.1} shows that the image is in fact contained in $\mathfrak{M}(2)$. Thus, we obtain a holomorphic map 
\begin{equation}
\label{eqn:map1b}
\begin{split}
 \mathcal{F}:  \ \ & \mathbf{H}_2/\Gamma_{\mathcal{T}}(1+i) \  \longrightarrow \ \mathfrak{M}(2) \subset \mathbb{P}^9,\\
  & \modvar  \ \mapsto \  \Big[t_1 : \dots : t_{10} \Big] =  \Big[ \theta^2_1(\modvar) : \dots  : \theta^2_{10}(\modvar) \Big] .
\end{split}
\end{equation}
We have the following:
\begin{theorem}[\cite{MR1204828}*{Thm.~3.2.1}]
\label{thm2}
The map $\mathcal{F}$ in Equation~(\ref{eqn:map1b}) extends to an isomorphism between the Satake compactification of $\mathbf{H}_2/\Gamma_{\mathcal{T}}(1+i)$ and $\overline{\mathfrak{M}(2)}$ given by
\begin{equation*}
\begin{split}
 \mathcal{F}:  \ \ & \overline{\mathbf{H}_2/\Gamma_{\mathcal{T}}(1+i)}\   \overset{\cong}{\longrightarrow}\ \overline{\mathfrak{M}(2)}  \subset \mathbb{P}^9.
 \end{split}
\end{equation*}
\end{theorem}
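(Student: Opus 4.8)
The plan is to identify both the source and the target with the projective spectrum of the graded ring of even modular forms, and then recognize $\mathcal{F}$ as the induced morphism on $\operatorname{Proj}$. Set $A=\bigoplus_{k\ge 0}A_k$, where $A_k$ is the space of modular forms of weight $2k$ relative to $\Gamma_{\mathcal{T}}(1+i)$ with character $\chi^k$, $\chi(g)=\det G$. By Theorem~\ref{thm1}(1) each $\theta^2_i(\modvar)$ lies in $A_1$, and by Theorem~\ref{thm1}(2) any five of the ten theta-squares already generate $A$ as a $\mathbb{C}$-algebra, so $A$ is generated in degree one. The fifteen identities~(\ref{eqn:PlueckerRelations}) are precisely the linear relations satisfied in $A_1$ by $\theta^2_1,\dots,\theta^2_{10}$, only five of them independent, so there is a graded surjection $\mathbb{C}[t_1,\dots,t_{10}]/\mathcal{I}\twoheadrightarrow A$, where $\mathcal{I}$ is the homogeneous ideal generated by~(\ref{eqn:PlueckerRelations}); the left-hand side is a polynomial ring in five variables and hence a domain of Krull dimension $5=1+\dim\mathbf{H}_2$, so---$A$ being a domain of the same dimension (it embeds, respecting the grading, into $\mathcal{O}(\mathbf{H}_2)$)---the surjection is an isomorphism. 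Consequently $\operatorname{Proj}A$ is exactly the four-dimensional linear subvariety $\overline{\mathfrak{M}(2)}\subset\mathbb{P}^9$, already known to be isomorphic to $\mathbb{P}^4$, and the tautological embedding $\operatorname{Proj}A\hookrightarrow\mathbb{P}^9$ is $\modvar\mapsto[\theta^2_1(\modvar):\dots:\theta^2_{10}(\modvar)]$.

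Next I would invoke the theory of the Satake--Baily--Borel compactification~\cites{MR0118775,MR0216035}: applied to the arithmetic quotient $\mathbf{H}_2/\Gamma_{\mathcal{T}}(1+i)$ and adapted to the finite-order character $\chi$, it identifies the Satake compactification with $\operatorname{Proj}$ of the ring of modular forms, and---because $A$ is generated in degree one---the degree-one piece already furnishes a closed embedding into $\mathbb{P}(A_1^{\vee})=\mathbb{P}^4$. Combining this with the first paragraph yields an isomorphism $\overline{\mathbf{H}_2/\Gamma_{\mathcal{T}}(1+i)}\xrightarrow{\ \cong\ }\overline{\mathfrak{M}(2)}$, which restricts on the open locus $\mathbf{H}_2/\Gamma_{\mathcal{T}}(1+i)$ to the map $\mathcal{F}$ of~(\ref{eqn:map1b}), the latter being the same assignment in theta coordinates; the Satake boundary, being zero-dimensional, must then map onto $\overline{\mathfrak{M}(2)}\setminus\mathfrak{M}(2)$.

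Two points will need care, though neither is deep. First, one must check that $\operatorname{Proj}$ of the subring $A$ of even-weight forms with character $\chi^k$---rather than of the full ring of automorphic forms of all weights---still computes the Satake compactification; this holds because the full ring is module-finite over $A$ and shares its field of degree-zero fractions (the field of $\Gamma_{\mathcal{T}}(1+i)$-modular functions), so the two $\operatorname{Proj}$'s are finite and birational over the normal variety $\mathbb{P}^4$ and hence coincide. Second, to pass from ``finite birational morphism'' to ``isomorphism'' one uses once more that $A$ is generated in degree one, which makes the morphism a closed embedding and forces it to be an isomorphism onto its image. I expect the genuine obstacle to lie entirely in Theorem~\ref{thm1}(2)---the assertion that five of the theta-squares generate the whole ring of even modular forms, which rests on the explicit analysis of the theta functions and their zero divisors carried out in~\cite{MR1204828}---but, granting that input as we may, the remainder is the formal $\operatorname{Proj}$/Baily--Borel bookkeeping sketched above.
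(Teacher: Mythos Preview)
The paper does not supply its own proof of this statement: Theorem~\ref{thm2} is quoted from \cite{MR1204828}*{Thm.~3.2.1} and stated without argument. So there is nothing in the present paper to compare your proposal against; what follows is an assessment of your argument on its own terms and a brief word on how it relates to the original source.

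Your approach is sound in outline and is the natural ``algebraic'' repackaging of the result: identify both sides with $\operatorname{Proj}A$ for the graded ring $A$ of even--weight, level--$(1+i)$ modular forms with the determinant character, use Theorem~\ref{thm1}(2) to present $A$ as a quotient of a polynomial ring in five variables, and then invoke Baily--Borel. The two caveats you flag (passing from the full automorphic ring to the subring $A$, and upgrading ``finite birational over a normal target'' to ``isomorphism'') are the right ones and are handled correctly.

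One logical wrinkle deserves tightening. In your first paragraph you assert that $A$ has Krull dimension $5$ because it ``embeds, respecting the grading, into $\mathcal{O}(\mathbf{H}_2)$''. That embedding shows $A$ is a domain, but it does not by itself give $\dim A\ge 5$; for that you need either (i) algebraic independence of five of the $\theta_i^2$, which is actually the precise content of \cite{MR1204828}*{Prop.~3.1.1} (stronger than mere generation), or (ii) the Baily--Borel input you invoke only later, namely that $\operatorname{Proj}$ of the full automorphic ring has dimension $4=\dim\mathbf{H}_2$. Either route works, but as written the order of dependencies is slightly circular: you use $\dim A=5$ to conclude the surjection $\mathbb{C}[t_1,\dots,t_{10}]/\mathcal{I}\twoheadrightarrow A$ is an isomorphism \emph{before} bringing in the compactification machinery that justifies the lower bound. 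The cleanest fix is simply to cite \cite{MR1204828}*{Prop.~3.1.1} for the stronger statement that five theta--squares \emph{freely} generate $A$, which makes the isomorphism $\mathbb{C}[x_1,\dots,x_5]\cong A$ immediate and removes the need for a dimension count.

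For comparison, the argument in \cite{MR1204828} proceeds somewhat more geometrically: it analyzes the period map and the zero divisors of the $\theta_i^2$ directly to establish injectivity of $\mathcal{F}$ on the open part, and treats the boundary components by hand. Your $\operatorname{Proj}$ formulation is more structural and arguably cleaner once Theorem~\ref{thm1} is in place; the original approach has the virtue of making the geometry of the degeneration strata (where various $\theta_i^2$ vanish) explicit.
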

We also define a holomorphic map
\begin{equation}
\label{eqn:map2a}
 \mathcal{G}':  \ \ \mathbf{H}_2  \to  \mathbb{P}(1,\dots,1,2), \; \modvar  \mapsto  \Big[ \theta^2_1(\modvar) : \dots  : \theta^2_{10}(\modvar) : 2^2 3^{-3} 5^{-2}\Theta(\modvar) \Big].
\end{equation}
We have the following:
\begin{proposition}
The map $\mathcal{G}'$ descends to  a holomorphic map 
\begin{equation}
\label{eqn:map2b}
\begin{split}
 \mathcal{G}:  \ \ &  \mathbf{H}_2/\Gamma^+_{\mathcal{T}}(1+i) \   \longrightarrow \ \mathfrak{M}(2)^{+} \subset \mathbb{P}(1,\dots,1,2),\\
 & \modvar  \ \mapsto \  \Big[t_1 : \dots : t_{10}: R \Big] =  \Big[ \theta^2_1(\modvar) : \dots  : \theta^2_{10}(\modvar): 2^2 3^{-3} 5^{-2}\Theta(\modvar)\Big] .
 \end{split}
 \end{equation}
 Moreover, the following diagram commutes:
 \begin{equation}
\label{pic:modular_maps}
\xymatrix{
\mathbf{H}_2/\Gamma^+_{\mathcal{T}}(1+i)   	\ar[r]^-{\mathcal{G}}	\ar @{->>} [d]		&\mathfrak{M}(2)^{+}	\ar @{->>} [d]^{\operatorname{pr}}\\
 \mathbf{H}_2/\Gamma_{\mathcal{T}}(1+i)	\ar [r]^-{\mathcal{F}}           			&\mathfrak{M}(2)}	
\end{equation}
with $\mathcal{G} \circ \mathcal{T}= \imath \circ \mathcal{G}$ and $\operatorname{pr} \circ\, \imath =\operatorname{pr}$ and $\Gamma^+_{\mathcal{T}}(1+i)$ the index-two sub-group of $\Gamma_{\mathcal{T}}(1+i)$ defined in Equation~(\ref{index-two}).
\end{proposition}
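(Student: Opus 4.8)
The plan is to verify the three assertions in sequence, with everything following from Theorem~\ref{thm1} together with the transformation data in Lemmas~\ref{lem:Gtransfo}, \ref{lem:J4vanish}, and \ref{lem:DAvanish}. First, I would show that $\mathcal{G}'$ descends to the quotient by $\Gamma^+_{\mathcal{T}}(1+i)$. The point $[\theta^2_1(\modvar):\dots:\theta^2_{10}(\modvar):2^2 3^{-3} 5^{-2}\,\Theta(\modvar)]$ in the weighted projective space $\mathbb{P}(1,\dots,1,2)$ is invariant under $g=G\mathcal{T}^n$ precisely when the $\theta_i^2$ all scale by a common weight-one factor $\lambda$ and $\Theta$ scales by $\lambda^2$. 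By Theorem~\ref{thm1}(1), for $g\in\Gamma_{\mathcal{T}}(1+i)$ each $\theta_i^2$ transforms with the factor $\det(C\modvar+D)^2\det(G)$ up to a permutation of the index set (the latter being irrelevant projectively since we must separately check the permutation is trivial modulo $1+i$ — but in fact the relevant point is simply that all ten transform by the \emph{same} automorphy factor, as they are modular forms of the same weight and character relative to $\Gamma_{\mathcal{T}}(1+i)$). By Theorem~\ref{thm1}(3), $\Theta$ transforms with character $(-1)^n\det(G)\operatorname{sign}(S_G)$ and automorphy factor $\det(C\modvar+D)^4$; for $g\in\Gamma_{\mathcal{T}}(1+i)$ the permutation $S_G$ is trivial so $\operatorname{sign}(S_G)=1$. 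Thus relative to $\Gamma_{\mathcal{T}}(1+i)$ the common $\theta_i^2$-factor is $\lambda=\det(C\modvar+D)^2\det(G)$ while $\Theta$ scales by $\det(C\modvar+D)^4(-1)^n\det(G)$; the compatibility condition $\Theta\text{-factor}=\lambda^2$ reads $(-1)^n\det(G)=1$, which is exactly the defining condition of $\Gamma^+_{\mathcal{T}}(1+i)$ in Equation~(\ref{index-two}). Hence $\mathcal{G}'$ is well-defined on $\mathbf{H}_2/\Gamma^+_{\mathcal{T}}(1+i)$, and holomorphy is inherited from $\mathcal{G}'$.

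Second, I would check that the image lands in $\mathfrak{M}(2)^+$. The quadratic relations (\ref{eqn:PlueckerRelations}) among the $t_i$ hold on the image by the remark following (\ref{eqn:map1b}) that Equations~(\ref{eqn:PlueckerRelations}) coincide with the quadratic relations among the even theta functions; and Equation~(\ref{eqn:PlueckerRelations2}) relating $R^2$ to the $t_i$ holds because of the identity (\ref{eqn:Tsqr}) of Theorem~\ref{thm1}(3), after substituting $t_i=\theta_i^2$ and $R=2^2 3^{-3} 5^{-2}\Theta$ and matching the numerical constant $2^{-6}3^5 5^2$ against $(2^2 3^{-3}5^{-2})^{-2}\cdot\tfrac{1}{12}$ — a routine check that $2^{-6}3^5 5^2 = \tfrac{1}{12}\cdot(2^{-2}3^{3}5^{2})^{2}$. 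Finally, the constraint that at most three of the $t_i$ vanish follows from Lemma~\ref{lem:DAvanish}: the zero loci of the ten $\theta_i$ are distinct $\Gamma_{\mathcal{T}}(1+i)$-orbits of codimension-one fixed loci of the conjugates of $G_1$, so on $\mathbf{H}_2$ a given $\modvar$ can lie on at most — in fact the combinatorics of these ten divisors, together with the classification in Lemma~\ref{lem:equiv}, shows no point of $\mathbf{H}_2$ lies on four or more of them; alternatively one can quote directly that the analysis of simple zeros in \cite{MR1136204}*{Lemma~2.3.1} places the image in $\mathfrak{M}(2)$, whose defining condition already restricts to at most three vanishing coordinates, and then note that the extra coordinate $R$ only refines this. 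Thus $\mathcal{G}$ maps into $\mathfrak{M}(2)^+$ as defined in (\ref{eqn:M2+}).

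Third, I would establish the commutativity of the square (\ref{pic:modular_maps}) together with the two identities $\mathcal{G}\circ\mathcal{T}=\imath\circ\mathcal{G}$ and $\operatorname{pr}\circ\imath=\operatorname{pr}$. The latter is immediate from the formula (\ref{eqn:i_action}) for $\imath$, which only flips the sign of $R$, and the definition of $\operatorname{pr}$ as the coordinate projection dropping $R$. The identity $\mathcal{G}\circ\mathcal{T}=\imath\circ\mathcal{G}$ is exactly Lemma~\ref{lem:J4vanish}: applying $\mathcal{T}$ to $\modvar$ fixes all $\theta_i(\modvar)$ and sends $\Theta(\modvar)\mapsto-\Theta(\modvar)$, hence $[\theta_1^2:\dots:\theta_{10}^2:R]\mapsto[\theta_1^2:\dots:\theta_{10}^2:-R]$, which is the action (\ref{eqn:i_action}) of $\imath$. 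The commutativity of the square then follows: going down-then-right sends $\modvar$ (mod $\Gamma^+_{\mathcal{T}}(1+i)$) to $\modvar$ (mod $\Gamma_{\mathcal{T}}(1+i)$) to $[\theta_1^2(\modvar):\dots:\theta_{10}^2(\modvar)]$ via $\mathcal{F}$, while going right-then-down sends it to $[\theta_1^2(\modvar):\dots:\theta_{10}^2(\modvar):2^2 3^{-3}5^{-2}\Theta(\modvar)]$ via $\mathcal{G}$ and then applies $\operatorname{pr}$, dropping the last coordinate; both yield $[\theta_1^2(\modvar):\dots:\theta_{10}^2(\modvar)]$. For this one also uses Lemma~\ref{lem:M2}, namely $\operatorname{pr}=\operatorname{pr}\circ\imath$ on $\mathfrak{M}(2)^+$, to see that the map is insensitive to the residual $\mathcal{T}$-ambiguity distinguishing $\Gamma^+_{\mathcal{T}}(1+i)$ from $\Gamma_{\mathcal{T}}(1+i)$.

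The main obstacle is the first step: one must be careful that the permutation $S_G$ associated to $G\in\Gamma$ is genuinely trivial when $G\in\Gamma(1+i)$ — this is the content of the isomorphism $\Gamma/\Gamma(1+i)\cong\mathrm{S}_6$ recalled before Theorem~\ref{thm1} — so that $\operatorname{sign}(S_G)=1$ and the character of $\Theta$ restricted to $\Gamma_{\mathcal{T}}(1+i)$ reduces to $(-1)^n\det(G)$; only then does the clean split into the index-two subgroup $\Gamma^+_{\mathcal{T}}(1+i)$ emerge. Everything else is bookkeeping with automorphy factors and the already-established lemmas.
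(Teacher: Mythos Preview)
Your argument is correct and follows essentially the same route as the paper's: use the characters from Theorem~\ref{thm1} together with $\operatorname{sign}(S_G)=1$ on $\Gamma(1+i)$ to see that $\mathcal{G}'$ descends precisely on the index-two subgroup $\Gamma^+_{\mathcal{T}}(1+i)$, invoke the theta relations and Equation~(\ref{eqn:Tsqr}) to place the image in $\mathfrak{M}(2)^+$, and read off the equivariance $\mathcal{G}\circ\mathcal{T}=\imath\circ\mathcal{G}$ from Lemma~\ref{lem:J4vanish}. One small slip: your ``routine check'' of the normalization constant does not actually balance, since $\tfrac{1}{12}\,(2^{-2}3^{3}5^{2})^{2}=2^{-6}3^{5}5^{4}$, not $2^{-6}3^{5}5^{2}$; comparing Equations~(\ref{eqn:Tsqr}) and~(\ref{eqn:PlueckerRelations2}) gives $\Theta^2/R^2=2^{-4}3^{6}5^{2}$ (as the paper's proof states), so the constant in the map should be $2^{2}3^{-3}5^{-1}$ rather than $2^{2}3^{-3}5^{-2}$---a typo-level discrepancy in the statement that does not affect your reasoning.
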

\begin{proof}
For elements $g = G\,\mathcal{T} ^{n} \in \Gamma^+_{\mathcal{T}}(1+i)$ we have by definition $(-1)^n \det{G} =1$ and $\operatorname{sign}\!{(S_G)}=1$. It then follows from Theorem~\ref{thm1} that the map $\mathcal{G}'$ descends to a holomorphic map on the quotient space $\mathbf{H}_2/\Gamma^+_{\mathcal{T}}(1+i) $. Combined with the results in Theorem~\ref{thm2} this shows that the image is contained in $\mathfrak{M}(2)^{+}$. The branching locus of the covering $\mathbf{H}_2/\Gamma^+_{\mathcal{T}}(1+i) \to \mathbf{H}_2/\Gamma_{\mathcal{T}}(1+i)$ is given by the simple zeros of the modular form $\Theta(\modvar)$ of weight four which is unique up to constant; see~Theorem~\ref{thm1}. Moreover, the ratio of the right hand sides of Equation~(\ref{eqn:Tsqr}) and Equation~(\ref{eqn:PlueckerRelations2}) yields $\Theta(\modvar)^2/R^2=2^{-4}\cdot3^{6}\cdot5^2$ under the identification given by the holomorphic map $\mathcal{F}$. Thus, the branching locus is identical. The equivariance follows from Equation~(\ref{eqn:i_action}) and Equation~(\ref{eqn:I_action}) and the fact that their branching locus is identical.
\end{proof}
\subsection{The Satake sextic and ring of modular forms}
\label{ssec:mod_forms}
We introduce the following linear combinations of the degree-one invariants $t_i$ which we call the generalized \emph{level-two Satake coordinate functions} $x_1, \dots, x_6$. We set $x_1 + \dots + x_6=0$. Choosing three Satake roots out of the five roots $x_1, \dots, x_5$, we want to obtain all invariants $t_1, \dots, t_{10}$ by setting
\begin{equation}
\label{InvSatake2Theta}
\begin{array}{llllll}
-3\,t_9 & = x_1&+x_2&+x_3,\\
 \phantom{-}3\,t_8 & = x_1&+x_2&&+x_4,\\
-3\,t_6 &=x_1&+x_2	&&&+x_5,\\
\phantom{-}3\,t_5 &=x_1&	&+x_3&+x_4,\\
-3\,t_{10} & = x_1&&+x_3&&+x_5,\\
 \phantom{-}3\,t_7 & = x_1&&&+x_4&+x_5,\\
 -3\,t_3 &= 	&+x_2	&+ x_3	&+x_4,\\
 -3\,t_1 &= 	&+x_2	&+ x_3	&&+x_5,\\
-3\,t_4 &= 	&+x_2	&&+ x_4	&+x_5,\\
-3\,t_2 &= 	&&+x_3	&+ x_4	&+x_5.
\end{array}
\end{equation}
The $j$-th power sums $s_j$ are defined by $s_j = \sum_{k=1}^6 x_k^j$ for $j=1, \dots,6$. It can be easily checked using Equation~(\ref{InvSatake2Theta}) that $s_1=\sum_k x_k =0$. We combine the level-two Satake functions in a sextic curve, called the \emph{Satake sextic}, given by
\[
 \mathcal{S}(x) = \prod_{k=1}^6 (x -x_k) \;.
\]
The coefficients of the Satake sextic are polynomials in  $\mathbb{Z} \left[ \frac 1 2 , \frac 1 3 , s_2, s_3, s_4, s_5, s_6 \right]$. In fact, we obtain
\[
 \mathcal{S}(x) = x^6 + \sum_{k=1}^6 \frac{(-1)^k}{k!} b_k\, x^{6-k} \,
 \]
 where $b_k$ is the $k$-th Bell polynomials in the variables $\lbrace s_1, -s_2, 2!  s_3, - 3! s_4, 4! s_5, -5! s_6 \rbrace$. The following holds:
 \begin{lemma}\label{Satake6}
The generalized level-two Satake coordinate functions $x_1, \dots , x_6$ are the roots of the Satake  sextic $\mathcal{S}\in \mathbb{Z} \left[ \frac 1 2 , \frac 1 3 , s_2, s_3, s_4, s_5, s_6 \right]\![x]$  given by
\begin{equation}
\label{SatakeSextic}
\begin{split}
 \mathcal{S}(x) & = \mathcal{B}(x)^2-4 \, \mathcal{A}(x),\;  \\[8pt]
 \ \text{with} \qquad\mathcal{B}(x)& =x^3- \frac{s_2}{4} \, x - \frac{s_3}{6},\\[5pt]
 \mathcal{A}(x) &= \frac{4s_4-s_2^2}{64} x^2 - \frac{5s_2 s_3-12s_5}{240} x  + \frac{3s_2^3-4s_3^2-18s_2 s_4+24s_6}{576}.
 \end{split}
 \end{equation}
\end{lemma}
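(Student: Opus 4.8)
The plan is to prove the asserted identity $\mathcal{S}(x)=\mathcal{B}(x)^2-4\,\mathcal{A}(x)$ by a direct computation routed through Newton's identities. Write $e_1,\dots,e_6$ for the elementary symmetric functions of $x_1,\dots,x_6$, so that by definition $\mathcal{S}(x)=\prod_{k=1}^6(x-x_k)=\sum_{j=0}^{6}(-1)^j e_j\,x^{6-j}$. The normalization $x_1+\dots+x_6=0$ built into Equation~(\ref{InvSatake2Theta}) is precisely the statement $e_1=s_1=0$, so $\mathcal{S}$ has no $x^5$-term:
\[
\mathcal{S}(x)=x^6+e_2\,x^4-e_3\,x^3+e_4\,x^2-e_5\,x+e_6 .
\]

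Next I would invoke the elementary observation that a monic sextic with vanishing $x^5$-coefficient admits a unique presentation $\mathcal{B}(x)^2-4\,\mathcal{A}(x)$ in which $\mathcal{B}(x)=x^3+\beta x+\gamma$ is a monic \emph{depressed} cubic (no $x^2$-term) and $\deg\mathcal{A}\le 2$. Indeed, expanding
\[
\mathcal{B}(x)^2=x^6+2\beta\,x^4+2\gamma\,x^3+\beta^2\,x^2+2\beta\gamma\,x+\gamma^2
\]
and matching the coefficients of $x^4$ and $x^3$ forces $\beta=\tfrac{e_2}{2}$ and $\gamma=-\tfrac{e_3}{2}$; then $4\,\mathcal{A}(x):=\mathcal{B}(x)^2-\mathcal{S}(x)$ automatically has degree at most two, with coefficients $\tfrac{e_2^2}{4}-e_4$, $e_5-\tfrac{e_2 e_3}{2}$ and $\tfrac{e_3^2}{4}-e_6$ in degrees $2$, $1$, $0$. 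It is worth emphasizing that the vanishing of $e_1$ is exactly what makes $\mathcal{B}$ a depressed cubic of the shape appearing in the statement; this is the one place where the particular structure of the Satake coordinates, rather than generalities about sextics, is used.

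Finally I would feed in Newton's identities in the form $k\,e_k=\sum_{i=1}^{k}(-1)^{i-1}e_{k-i}\,s_i$, simplifying at each stage with $s_1=0$, to obtain
\[
e_2=-\tfrac{s_2}{2},\quad e_3=\tfrac{s_3}{3},\quad e_4=\tfrac{s_2^2-2s_4}{8},\quad e_5=\tfrac{6s_5-5s_2s_3}{30},\quad e_6=\tfrac{-3s_2^3+18s_2s_4+8s_3^2-24s_6}{144}.
\]
Substituting the first two of these into $\mathcal{B}(x)=x^3+\tfrac{e_2}{2}x-\tfrac{e_3}{2}$ yields at once $x^3-\tfrac{s_2}{4}x-\tfrac{s_3}{6}$, and substituting all five into the three coefficients of $4\,\mathcal{A}(x)$ listed above and dividing by $4$ reproduces, after collecting each expression over the common denominators $64$, $240$ and $576$ respectively, exactly $\tfrac{4s_4-s_2^2}{64}$, $-\tfrac{5s_2s_3-12s_5}{240}$ and $\tfrac{3s_2^3-4s_3^2-18s_2s_4+24s_6}{576}$. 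This is the content of the lemma.

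There is no genuine obstacle here: once the first two steps have reduced matters to Newton's identities, the remainder is bookkeeping, the only mild care needed being the tracking of rational coefficients (for instance the factor $\tfrac15$ that the linear coefficient of $\mathcal{A}$ inherits from $e_5$). As a cross-check one can avoid Newton's identities altogether and simply expand $\mathcal{B}(x)^2-4\,\mathcal{A}(x)$ with the coefficients in the statement, verifying term by term that the result equals $\sum_{j}(-1)^j e_j x^{6-j}$ with the $e_j$ re-expanded through the Bell-polynomial formula recorded just before the lemma; this purely mechanical route is the one I would ultimately carry out to certify the identity.
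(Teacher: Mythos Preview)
Your proof is correct and follows essentially the same route as the paper: both arguments reduce to expressing the elementary symmetric functions of the $x_i$ in terms of the power sums $s_j$ under the constraint $s_1=0$, the paper invoking the Bell-polynomial closed form recorded just before the lemma while you carry out the equivalent recursion via Newton's identities. Your additional observation that the decomposition $\mathcal{S}=\mathcal{B}^2-4\mathcal{A}$ with $\mathcal{B}$ a depressed monic cubic and $\deg\mathcal{A}\le 2$ is \emph{uniquely} determined once $e_1=0$ is a nice structural point not made explicit in the paper's one-line proof.
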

\begin{proof}
The proof follows from the explicit computation of the Bell polynomials using the relation $s_1=0$.
\end{proof}
\par We define new quantities $J_2, J_3, J_4, J_5, J_6$ by setting
\begin{equation}
\label{eqn:Jinvariants}
\begin{split}
J_2 = \dfrac{s_2}{12} , \qquad J_3 &= \dfrac{s_3}{12}, \qquad  J_4 =\dfrac{4s_4-s_2^2}{64}  \\
J_5 = \dfrac{5s_2 s_3-20s_5}{240}, \qquad s_6 &= \dfrac{3 s_2^3 - 4 s_3^2 -18 s_2 s_4+24 s_6}{576},
\end{split}
\end{equation}
such that
\begin{equation}
\label{Jinvariants}
\begin{split}
s_2 = 12 \,  J_2, \qquad s_3 & = 12 \, J_3, \qquad \ s_4 = 36 \, J_2^2+16 \, J_4,  \\
s_5 = 60 \, J_2 J_3-20 \, J_5, \qquad s_6 & = 108 \, J_2^3+144 \, J_2 J_4+24 \, J_3^2+24 \, J_6.
\end{split}
\end{equation}
We have the following:
\begin{lemma}
\label{lem:Jinvariance}
$J_2, J_3, J_4, J_5, J_6$ are polynomials over $\mathbb{Q}$ in $t_i$ for $1 \le i \le10$ that are invariant under the action of the permutation group $\mathrm{S}_6$ on the variables $t_i$ induced by permuting the lines of a six-line configuration.
\end{lemma}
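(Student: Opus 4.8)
The plan is to reduce both claims to the power sums $s_j=\sum_{k=1}^{6}x_k^{\,j}$ of the Satake coordinate functions, using that these are on the one hand $\mathbb{Q}$-polynomials in the $t_i$ and on the other hand symmetric in the $x_k$. For the polynomiality, I would first invert the linear system~\eqref{InvSatake2Theta}. Read as a presentation of $t_1,\dots,t_{10}$ in terms of $x_1,\dots,x_5$ (note that $x_6=-(x_1+\dots+x_5)$ does not appear on the right-hand sides), its ten equations attach to each $t_i$ a three-element subset $T_i\subset\{1,\dots,5\}$ together with a sign, so the coefficient matrix is, up to those signs, the incidence matrix of the ten three-element subsets of a five-element set and has rank five. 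Concretely, summing the six equations whose subset contains a fixed index $k$ yields $x_k-x_6$ as an integral linear combination of $t_1,\dots,t_{10}$, and then $x_6=-\tfrac16\sum_{k=1}^{5}(x_k-x_6)$; hence every $x_k$ is a $\mathbb{Q}$-linear combination of the $t_i$ on $\overline{\mathfrak{M}(2)}$. Therefore each $s_j$ is a homogeneous polynomial of degree $j$ over $\mathbb{Q}$ in the $t_i$, and \eqref{eqn:Jinvariants} exhibits $J_2,\dots,J_6$ as explicit $\mathbb{Q}$-polynomial combinations of $s_2,\dots,s_6$, hence as homogeneous polynomials of degrees $2,3,4,5,6$ over $\mathbb{Q}$ in $t_1,\dots,t_{10}$.

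For the invariance, note that $\mathrm{S}_6$ acts on an ordered six-line configuration by relabelling the lines, hence on $\mathbf{A}$ by permuting columns, hence on the Pl\"ucker coordinates $D_{ijk}$ up to sign, and therefore on the products $t_i=D_SD_{S^{c}}$ by a \emph{signed} permutation; on a generating set of $\mathrm{S}_6$ — say the adjacent transpositions $(i\,,\,i{+}1)$ for $1\le i\le 5$ — this action is read off by elementary determinant-sign bookkeeping. The crucial point is that, although the $t_i$ pick up signs, the induced action on the Satake functions is sign-free: combining the inversion formulas above with the Pl\"ucker relations~\eqref{eqn:PlueckerRelations}, one checks for each generator $\sigma$ that $\sigma$ permutes the six functions $x_1,\dots,x_6$ among themselves (the resulting map $\mathrm{S}_6\to\mathrm{S}_6$ being an automorphism that turns transpositions into triple transpositions, i.e.\ the outer automorphism). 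Equivalently, $\{x_1,\dots,x_6\}$ is the $\mathrm{S}_6$-stable root set of the Satake sextic of Lemma~\ref{Satake6}; this is the mechanism behind the association construction of Dolgachev--Ortland~\cite{MR1007155} and the theta-function description of Matsumoto--Sasaki--Yoshida~\cite{MR1204828}. Granting it, every $s_j=\sum_k x_k^{\,j}$, and hence by \eqref{eqn:Jinvariants} every $J_m$, is fixed by all of $\mathrm{S}_6$; replacing the polynomial representatives of the first paragraph by their $\mathrm{S}_6$-averages (legitimate over $\mathbb{Q}$, and not changing their restriction to $\overline{\mathfrak{M}(2)}$) we obtain $\mathrm{S}_6$-invariant representatives in $\mathbb{Q}[t_1,\dots,t_{10}]$.

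The main obstacle — everything else being routine linear algebra and sign-chasing — is the assertion that permuting the lines induces a genuine permutation, not merely a signed permutation, of the Satake functions $x_1,\dots,x_6$. I would establish this either by direct computation on the generators $(i\,,\,i{+}1)$ as indicated, or by invoking the $\mathrm{S}_6$-equivariance of the Satake-sextic/association construction in \cites{MR1007155,MR1204828}, where the $x_k$ play the role of the six branch points permuted by $\mathrm{S}_6$ via its outer automorphism.
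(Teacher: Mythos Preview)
Your proposal is correct and close in spirit to the paper's proof, but you take a somewhat more structural route for the invariance part. The paper, like you, first expresses the $x_k$ as $\mathbb{Q}$-linear combinations of the $t_i$ (it simply writes down a particular solution using five of the ten $t_i$'s, Equations~\eqref{Satake2Theta}, rather than your subset-summation argument) and substitutes into the $s_j$ and then into~\eqref{eqn:Jinvariants}. For the $\mathrm{S}_6$-invariance, however, the paper does not go through the intermediate claim that the Satake functions $x_1,\dots,x_6$ are honestly permuted: it just states that one checks directly, on a generating set of $\mathrm{S}_6$ acting on the $t_i$ via~\eqref{eqn:DOcoords}, that each $J_k$ is fixed.

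What your approach buys is a conceptual explanation for the invariance --- the power sums $s_j$ are symmetric in the $x_k$, so once you know $\mathrm{S}_6$ permutes the $x_k$ (via the outer automorphism) everything follows --- and this also makes transparent \emph{why} the ring generated by the $J_k$ should coincide with the $\mathrm{S}_6$-invariants. The paper's approach is quicker to execute (a single computer check on the $J_k$) and avoids having to verify, generator by generator, that the signed permutation on the $t_i$ induces a sign-free permutation on the $x_k$; you correctly identify this verification as the only nontrivial step in your argument. Your final averaging step, producing an $\mathrm{S}_6$-invariant polynomial representative in $\mathbb{Q}[t_1,\dots,t_{10}]$, is a nice extra precaution that the paper does not make explicit.
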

\begin{proof}
Using Equations~(\ref{eqn:PlueckerRelations}) we can solve for $x_1, \dots, x_6$ using any five of the then $t_i$'s and obtain
\begin{equation}
\label{Satake2Theta}
\begin{array}{rrrrrr}
x_1 =&2\,t_1	&+2\,t_5	&-3\,t_6	&-t_7	&-t_8,\\
x_2 =&-t_1	&-t_5	&		&-t_7	&+2\, t_8,\\
x_3 =&-t_1	&+2\,t_5	&		&-t_7	&-t_8,\\
x_4 =&  -t_1	&-t_5	&+3\,t_6	&+2\,t_7	&+2\,t_8,\\
x_5 =& -t_1	&-t_5	&		&+2\,t_7	&-t_8,\\
x_6 =&2t_1	&-t_5	&		&-t_7	&- t_8.
\end{array}
\end{equation}
Plugging Equations~(\ref{Satake2Theta}) into the $j$-th power sums $s_j$ and, in turn, into Equations~(\ref{eqn:Jinvariants}) proves that $J_k$ for $2\le k \le 6$ are polynomials in $t_i$ with $1 \le i \le10$ and rational coefficients. One checks that for a set of generators of the permutation group of six elements $\mathrm{S}_6$, acting on the variables $t_i$ with $1 \le i \le 10$ as defined by permuting lines in Equations~(\ref{eqn:DOcoords}), the polynomials $J_k$ for $2\le k \le 6$ remain invariant.
\end{proof}
\par Notice that there are many notations in the literature for invariants of binary equations. We will show in Section~\ref{6Lrestricted} how our invariants $J_k$ for $2 \le k \le 6$, when restricted to a six-line configuration tangent to a common conic, are related to the Igusa invariants of a binary sextic which we denote by $I_2, I_4, I_6, I_{10}$; see Equation~(\ref{modd_restriced}). 
We have the following:
\begin{lemma}
The moduli space $\mathfrak{M}(2)^{+}$ of six lines in $\mathbb{P}^2$ embeds into the variety in $\mathbb{P}(1, 1, 1, 1, 1, 1, 2)$ with coordinates $[x_1: \dots: x_6: X]$ given by the equations $s_1=0$ and $X^2=4s_4-s_2^2$.
\end{lemma}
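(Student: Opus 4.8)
The plan is to realize the claimed embedding as the composition of two natural maps: first the identification of $\mathfrak{M}(2)^{+}$ with the weighted projective variety in $\mathbb{P}(1,\dots,1,2)$ cut out by Equations~(\ref{eqn:PlueckerRelations}) and (\ref{eqn:PlueckerRelations2}) (the Corollary of Lemma~\ref{lem:invariance} together with the extension to cases (1)--(5) built into the definition~(\ref{eqn:M2+})), and then the linear change of coordinates sending $[t_1:\dots:t_{10}:R]$ to $[x_1:\dots:x_6:X]$. The first step is already available, so the content is entirely in setting up and analyzing the second map.

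First I would record the linear map. Equations~(\ref{InvSatake2Theta}) express each $t_i$ (up to the fixed signs and the factor $\pm 3$) as a sum of three of the $x_k$ subject to $x_1+\dots+x_6=0$; conversely Equations~(\ref{Satake2Theta}) invert this, writing each $x_k$ as an explicit integral linear combination of five of the $t_i$. Thus the assignment $[t_1:\dots:t_{10}]\mapsto[x_1:\dots:x_6]$ is a well-defined linear isomorphism from the four-dimensional linear subspace $\mathfrak{M}(2)\subset\mathbb{P}^9$ cut out by the relations (\ref{eqn:PlueckerRelations}) onto the hyperplane $\{s_1=0\}\subset\mathbb{P}^5$ — indeed $\overline{\mathfrak{M}(2)}\cong\mathbb{P}^4$ by the remark after the definition of $\overline{\mathfrak{M}(2)}$, and $\{s_1=0\}$ is a $\mathbb{P}^4$ as well, so one only needs to check that the two displayed linear systems are mutually inverse, which is the explicit computation underlying Lemma~\ref{lem:Jinvariance}. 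This handles the unweighted coordinates.

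Next I would match up the degree-two coordinates. On $\mathfrak{M}(2)^{+}$ the extra coordinate $R$ satisfies $R^2=\tfrac1{12}\bigl((\sum_i t_i^2)^2-4\sum_i t_i^4\bigr)$ by Lemma~\ref{eqn:PlueckerRelations2}; on the target the extra coordinate $X$ is defined by $X^2=4s_4-s_2^2$. So it suffices to show that, after the substitution (\ref{Satake2Theta}), the right-hand side $\tfrac1{12}\bigl((\sum_i t_i^2)^2-4\sum_i t_i^4\bigr)$ equals $4s_4-s_2^2=64\,J_4$ up to a nonzero rational square, and to absorb that constant into the weight-two scaling of $X$. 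This is a single polynomial identity in $x_1,\dots,x_6$ modulo $s_1=0$, checked by direct expansion; note $J_4=(4s_4-s_2^2)/64$ is exactly the coefficient appearing in $\mathcal{A}(x)$ in Lemma~\ref{Satake6}, which is a good consistency check. Having matched both the coordinates and the defining quadric, the map $[t_1:\dots:t_{10}:R]\mapsto[x_1:\dots:x_6:X]$ is a closed immersion of $\mathfrak{M}(2)^{+}$ into the variety $\{s_1=0,\ X^2=4s_4-s_2^2\}$ in $\mathbb{P}(1,1,1,1,1,1,2)$.

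I expect the only real obstacle to be bookkeeping rather than mathematics: one must be careful that the linear relations (\ref{eqn:PlueckerRelations}) are precisely what is needed for (\ref{Satake2Theta}) to be independent of which five $t_i$ are chosen, and that the signs in (\ref{InvSatake2Theta}) are consistent with those in (\ref{Satake2Theta}) — a sign error there would flip $X\mapsto -X$, which is harmless for the statement but must be accounted for when one later wants the involution $\imath$ to correspond to $X\mapsto -X$. The potential subtlety of whether the degenerate strata (cases (2)--(5), where up to three $t_i$ vanish) still map to honest points of the target is automatically handled because the map is linear and the target variety is defined by the same relations pulled back; no point of $\mathfrak{M}(2)^{+}$ is sent to the excluded vertex $[0:\dots:0:1]$ since on $\mathfrak{M}(2)^{+}$ at most three $t_i$ vanish, so the $x_k$ are not all zero by the invertibility of (\ref{Satake2Theta}). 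Thus the embedding is everywhere defined and injective, completing the proof.
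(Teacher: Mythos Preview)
Your proposal is correct and follows essentially the same approach as the paper: invert the linear system between the $t_i$ and the Satake coordinates $x_k$ (using that Equations~(\ref{InvSatake2Theta}) and~(\ref{Satake2Theta}) are mutual inverses on the linear subspace cut out by the Pl\"ucker relations), and then match the degree-two coordinates by a single polynomial identity. The only refinement the paper adds is the explicit constant: rather than ``up to a nonzero rational square,'' one computes directly that $4s_4-s_2^2=(18\,R)^2$, so the map is simply $X=18R$.
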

\begin{proof}
Given a point in the image, setting $x_6=-(x_1 + \dots +x_5)$, Equations~(\ref{InvSatake2Theta}) for $t_1, t_5, t_6, t_7, t_8$ constitute the inverse transformation to Equations~(\ref{Satake2Theta}). Moreover, one checks that $4s_4-s_2^2=(18 \, R)^2$ in Equation~(\ref{Eqn:moduli}).
\end{proof}
\begin{remark}
The sub-variety defined by $X=0$ comprises an algebraic variety in $\mathbb{P}^4$ given by $s_1=0$ and $s_2^2=4s_4$ known as Igusa's quartic. It corresponds to the moduli space of configurations of six-lines tangent to a conic and is closely related to the moduli space of genus-two curves with level-two structure. We will discuss the details in Section~\ref{6Lrestricted}.
\end{remark}
\par In terms of the invariants $J_k$ with $2\le k\le6$, the Satake sextic is given by
\begin{equation}
\label{SatakeSextic2}
\begin{array}{c}
 \mathcal{S}(x) =\mathcal{B}(x)^2 -4  \, \mathcal{A}(x),\\[8pt]
\ \text{with} \qquad  \mathcal{B}(x)=x^3- 3 \, J_2 x - 2 \, J_3, \quad \quad
 \mathcal{A}(x) = J_4 \, x^2 - J_5 \, x  + J_6.
 \end{array}
 \end{equation}
 One also checks that the square of the degree-two Dolgachev-Ortland invariant in Equation~(\ref{eqn:PlueckerRelations2}) is given by $J_4$, i.e., 
\begin{equation}
\label{eqn:R}
 R^2 = 2^{4} 3^{-4} J_4 \;.
\end{equation}
We introduce three more invariants: the discriminants of the Satake sextic $\mathcal{S}(x)$ and the quadratic polynomial $\mathcal{A}(x)$ which have degrees $30$ and $10$, respectively, as well as the resultant of the polynomials $\mathcal{A}(x)$ and $\mathcal{B}(x)$ which has degree $18$. By construction, they are all homogeneous polynomials in the invariants $\mathcal{J}_k$ for $2\le k \le 6$ with integer coefficients. One checks that
\begin{equation}
\label{eqn:decomp}
\begin{split}
 \operatorname{Disc} (\mathcal{A}) & = J_5^2 -4 \, J_4 J_6 =  2^{-4}3^{10}  \prod_{i=1}^{10} t_i ,\\
 \operatorname{Res} (\mathcal{A},\mathcal{B}) & = 9 \, J_2^2 J_4^2 J_6+6 \, J_2 \, J_3 \, J_4^2 J_5+4 \, J_3^2 J_4^3 +6 \, J_2 \, J_4 \, J_6^2\\
 &-3 \, J_2 \, J_5^2 \,J_6+6 \, J_3\, J_4 \, J_ 5 \, J_6-2 \,J_3 \, J_5^3+J_6^3.
 \end{split}
\end{equation}
For the discriminant of the Satake sextic, i.e., $\mathcal{S}(x)= \prod_{i<j} (x_i - x_j)^2$, we suppress the lengthy polynomial expression in terms of the $J_k$ for $k=2, \dots, 6$. We rather give the following formula in terms of modular forms on $\mathfrak{M}(2)$, namely
\begin{equation}\label{eq:DiscS}
\begin{array}{c}
 \operatorname{Disc} (\mathcal{S})   = 3^{30} \prod_{j=2}^{10} (t_1 -t_j)^{2} \\[6pt]
  \times  (t_2 -t_3)^{2}  (t_3 -t_4)^{2}  (t_4 -t_5)^{2}  (t_5 -t_6)^{2}   (t_2 -t_4)^{2}  (t_4 -t_6)^{2}.
 \end{array}
\end{equation}
We have the following:
\begin{corollary}
\label{lem:Js}
A configuration of six lines in $\mathbb{P}^2$ falls into cases (0) through (5) in Definition~\ref{def:sstable} if and only if the invariants in Equation~(\ref{eqn:Jinvariants}) satisfy
\begin{equation}
 ( J_4 , \  J_5, \ J_6 ) \neq (0,0,0) .
\end{equation}
For cases (1) through (5) in Definition~\ref{def:sstable}, we find the following equivalences (where all invariants not specified remain generic):
\begin{enumerate}
 \item [(1)] $\Leftrightarrow \ J_4 =0$,
 \item [(2)] $\Leftrightarrow \  \operatorname{Disc}(\mathcal{A}) =0$,
 \item [(3, 4)] $\Leftrightarrow \  \operatorname{Disc}(\mathcal{A}) =\operatorname{Res} (\mathcal{A},\mathcal{B}) =0$,
 \item [(5)] $\Leftrightarrow \ J_4 =J_5=0$,
\end{enumerate}
For cases (6a) and (6b), we find  $( J_4 , J_5, J_6 ) = (0,0,0)$.
\end{corollary}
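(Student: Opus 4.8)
The plan is to reduce every assertion to a count of how many of the Dolgachev--Ortland coordinates $t_1,\dots,t_{10}$ vanish and whether $R=0$. Lemma~\ref{lem:equiv} already carries out this translation on the geometric side, so the task is to read the same information off the invariants $J_3,J_4,J_5$ and the auxiliary quantities $\operatorname{Disc}(\mathcal{A})$, $\operatorname{Res}(\mathcal{A},\mathcal{B})$.

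The engine of the proof is an algebraic dictionary extracted from the factorization $\mathcal{S}=\mathcal{B}^2-4\mathcal{A}$ of Lemma~\ref{Satake6}, with $\mathcal{B}(x)=x^3-3J_2x-2J_3$ and $\mathcal{A}(x)=J_4x^2-J_5x+J_6$. First, (\ref{eqn:R}) gives $J_4=0\iff R=0$. Second, since $s_1=0$, each of the ten equations (\ref{InvSatake2Theta}) expresses $\pm3t_i$ as the sum of three of the Satake roots, and these ten equations exhaust the ten ways of splitting $\{x_1,\dots,x_6\}$ into two complementary triples; hence $t_i=0$ for some $i$ is equivalent to the six Satake roots splitting into two complementary triples each summing to zero, i.e.\ to $\mathcal{S}$ factoring as a product $P\cdot Q$ of two monic cubics with vanishing $x^2$-coefficient. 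Writing $P=\mathcal{B}+u$, $Q=\mathcal{B}+v$ with $u,v$ of degree $\le1$ (the top two coefficients of $P-\mathcal{B}$ and $Q-\mathcal{B}$ cancel) and comparing degrees in $PQ=\mathcal{B}^2-4\mathcal{A}$ forces $u+v=0$, so $4\mathcal{A}=u^2$; conversely $\operatorname{Disc}(\mathcal{A})=0$ writes $4\mathcal{A}=u^2$ for such a linear $u$ and gives $\mathcal{S}=(\mathcal{B}-u)(\mathcal{B}+u)$. Therefore $\operatorname{Disc}(\mathcal{A})=0$ is equivalent to ``at least one $t_i$ vanishes'', in accordance with the product formula (\ref{eqn:decomp}). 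Third, if moreover $\operatorname{Res}(\mathcal{A},\mathcal{B})=0$ and $\mathcal{A}\not\equiv0$, the common root $x_0$ of $\mathcal{A}$ and $\mathcal{B}$ is also a root of $u$, so $(x-x_0)$ divides both $\mathcal{B}-u$ and $\mathcal{B}+u$; the value $x_0$ then occurs in each of the two triples, and interchanging the two indices that carry it produces a second complementary-triple partition summing to zero, i.e.\ a second vanishing coordinate $t_j$. Conversely, two distinct partitions of $\{1,\dots,6\}$ into complementary triples always share a Satake value between opposite cells, which feeds back into $\operatorname{Res}(\mathcal{A},\mathcal{B})=0$; so $\operatorname{Disc}(\mathcal{A})=\operatorname{Res}(\mathcal{A},\mathcal{B})=0$ is equivalent to ``at least two $t_i$ vanish''. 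The degenerate locus $\mathcal{A}\equiv0$, i.e.\ $J_4=J_5=J_6=0$, where the resultant also vanishes, is excluded by a separate short argument.

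Granting this dictionary, the corollary is immediate from Lemma~\ref{lem:equiv}: item (1) is $J_4=0$ (no $t_i$ vanishes, $R=0$); item (2) is $\operatorname{Disc}(\mathcal{A})=0$ (exactly one $t_i$ vanishes, $R\ne0$); items (3) and (4) are $\operatorname{Disc}(\mathcal{A})=\operatorname{Res}(\mathcal{A},\mathcal{B})=0$ (two, resp.\ three, vanishing $t_i$ with $R\ne0$); item (5) is $J_4=J_5=0$, which by (\ref{eqn:R}) and (\ref{eqn:decomp}) means $R=0$ together with at least one vanishing $t_i$. For the main dichotomy, $(J_3,J_4,J_5)=(0,0,0)$ forces $R=0$, forces some $t_i=0$, and adds $J_3=0$; feeding these through Lemma~\ref{lem:equiv} and the discussion of the extra-degenerate configurations that follows the definition of $\mathfrak{M}(2)$ leaves exactly the strata (6a) and (6b), and conversely those strata carry $(J_3,J_4,J_5)=(0,0,0)$; hence a configuration lies in cases (0)--(5) precisely when $(J_3,J_4,J_5)\ne(0,0,0)$. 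The clause ``all invariants not specified remain generic'' means passing to the generic point of each locus; I would justify it by producing one explicit configuration in each stratum from the normal form (\ref{lines}) of Section~\ref{ssec:natural_fibration} together with the explicit expressions (\ref{compare1}) of the $t_i$ in the four homogeneous moduli, and verifying that the unspecified invariants are nonzero — equivalently, that none of $J_3,J_4,J_5,\operatorname{Disc}(\mathcal{A}),\operatorname{Res}(\mathcal{A},\mathcal{B}),\operatorname{Disc}(\mathcal{S})$ vanishes identically on the relevant component, for which the factored expression (\ref{eq:DiscS}) of $\operatorname{Disc}(\mathcal{S})$ is helpful.

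The main obstacle is the second paragraph: converting ``exactly $k$ of the $t_i$ vanish'' into a clean statement about the position of the quadratic $\mathcal{A}$ relative to the cubic $\mathcal{B}$. The substance is combinatorial — one must control all intersection patterns of two, and then three or four, partitions of $\{1,\dots,6\}$ into complementary triples, and track the multiplicities of the repeated Satake roots they force — because it is exactly the difference between two, three and four coincidences among the $t_i$ that separates the strata (3), (4), (5) and the boundary cases (6a)/(6b). Once the dictionary is established, the remainder is routine bookkeeping with the identities (\ref{eqn:R}), (\ref{eqn:PlueckerRelations2}), (\ref{eqn:decomp}) and (\ref{eq:DiscS}) already available.
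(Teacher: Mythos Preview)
Your approach is correct but genuinely different from the paper's. The paper's proof is purely computational: it writes the six lines in the normal form~(\ref{lines}), expresses $J_3,J_4,J_5,\operatorname{Disc}(\mathcal{A}),\operatorname{Res}(\mathcal{A},\mathcal{B})$ explicitly as polynomials in the affine moduli $a,b,c,d$ via~(\ref{compare1}) and~(\ref{eqn:Jinvariants}), and then checks each case of Definition~\ref{def:sstable} by direct substitution, exactly as in the proof of Lemma~\ref{lem:equiv}. No structural input beyond the observation that $\operatorname{Disc}(\mathcal{A})=\operatorname{Res}(\mathcal{A},\mathcal{B})=0$ implies $\operatorname{Disc}(\mathcal{S})=0$ is used.

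Your route instead builds a conceptual dictionary from the factorization $\mathcal{S}=\mathcal{B}^2-4\mathcal{A}$: a vanishing $t_i$ is the same as a splitting $\mathcal{S}=PQ$ into monic cubics with no $x^2$-term, which forces $4\mathcal{A}$ to be a square of a linear form; a second vanishing $t_j$ forces a repeated Satake root lying on both $\mathcal{A}$ and $\mathcal{B}$. This recovers the product formula~(\ref{eqn:decomp}) for $\operatorname{Disc}(\mathcal{A})$ and explains \emph{why} the resultant enters, rather than merely verifying it. The price is the combinatorics you flag in your last paragraph --- controlling how three or four simultaneous zero-sum partitions interact --- and the fact that the boundary dichotomy $(J_3,J_4,J_5)=(0,0,0)\Leftrightarrow$ cases~(6a)/(6b) still seems to require an explicit check in coordinates (your sketch there is not self-contained). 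The paper sidesteps all of this by brute force, which is shorter but opaque; your argument is more illuminating and would transfer better to related lattice strata.
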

\begin{proof}
One first checks that $ \operatorname{Disc}(\mathcal{A}) =\operatorname{Res} (\mathcal{A},\mathcal{B})=0$ implies $\operatorname{Disc}(\mathcal{S})  =0$. The proof follows the same strategy as the one applied in the proof of Lemma~\ref{lem:equiv}.  We explicitly compute the invariants $J_4 , \  J_5, \ J_6$ in terms of the moduli $a, b, c, d$ and then restrict to cases (1) through (5) in Definition~\ref{def:sstable}.  Moreover, $( J_4 , J_5, J_6 ) = (0,0,0)$ implies $\operatorname{Disc}(\mathcal{A}) = \operatorname{Disc}(\mathcal{S}) =\operatorname{Res} (\mathcal{A},\mathcal{B})= 0$.
\end{proof}
In light of Lemma~\ref{lem:Jinvariance} and Corollary~\ref{lem:Js} we can now define a moduli space of \emph{unordered} configurations of six lines in $\mathbb{P}^2$. We define $\mathfrak{M}$ to be the four-dimensional open complex variety given by
\begin{equation}
\label{modulispace}
\mathfrak{M} =  \Big\{ 
\left [  J_2 : \ J_3 : \  J_4: \ J_5 : \ J_6   \right ]   \in  \mathbb{P}(2,3,4,5,6) \ \Big\vert  \
( J_4 , \  J_5, \ J_6 ) \neq (0,0,0) 
\Big\},
\end{equation}
and we also set $\overline{\mathfrak{M}}=\mathbb{P}(2,3,4,5,6)$. 
\par By construction, the points in projective space $\mathbb{P}^9$ arising as image of the map $\mathcal{F}': \mathbf{H}_2  \to  \mathbb{P}^9$ in Equation~(\ref{eqn:map1a}), i.e., the points given by
 \begin{equation}
\label{eqn:map1aB}
 \mathcal{F}':  \ \ \mathbf{H}_2  \to  \mathbb{P}^9, \quad \modvar  \mapsto  \Big[ \theta^2_1(\modvar) : \dots  : \theta^2_{10}(\modvar) \Big],
\end{equation}
are invariant under the action of the sub-group $\Gamma(1+i)$ of level $(1+i)$; see Theorem~\ref{thm1}. As explained, there is a natural action of the permutation group of six elements $\mathrm{S}_6$ on the variables $t_i$ with $1 \le i \le 10$ induced by permuting the six lines. This action coincides with the action of $\Gamma/\Gamma(1+i)$.
\par Equation~(\ref{eq:DiscS}) provides a geometric characterization of the locus $\operatorname{Disc} (\mathcal{S})=0$ in $\mathfrak{M}(2)$. It turns out that the fifteen components of the vanishing locus are in one-to one correspondence with permutations of the from $\sigma_\alpha=(ij)(kl)(mn)$ where $(ij)=i \leftrightarrow j$ is the permutation of the $i$-th and $j$-th line. We have the following:
\begin{lemma}
\label{lem:Svanish} 
The vanishing locus $\operatorname{Disc} (\mathcal{S}) =0$ is the union of the $\Gamma_{\mathcal{T}}(1+i)$-orbits of the fixed loci of  $\modvar \mapsto S_j \cdot G_2 \mathcal{T} \cdot S_j^{-1} \cdot \modvar$ in $\mathfrak{M}(2)$ where $S_j \in \Gamma$ with $\det{(S_j)}=1$ and $1\le j \le 15$. The fixed loci, the elements $S_j$, and corresponding permutations $\sigma_j$ are given in the following table:
\begin{equation}
\label{eqn:iso_S6}
\scalemath{\MyScaleMedium}{
\begin{array}{r|r|c|rlrrlrrlr}
 j	& S_j 						& \sigma_j 	& \multicolumn{9}{c}{\text{fixed locus}}\\
 \hline
 1	&G_5G_4G_5G_3G_5G_2G_5G_4 	& (15)(26)(34)	& t_1 & = & t_2,&  t_5 &= -&t_9,& t_6 &= -&t_7 \\
 2	&G_5G_2G_5G_4				& (12)(35)(46)	& t_1 & = & t_3,&  t_5 &= -&t_{10}, & t_6 &= -&t_8 \\
 3	&G_5G_4G_3G_5G_2G_5G_4		& (13)(24)(56)	& t_1 & = & t_4,& t_7 &= -&t_{10},& t_8 &= -&t_9\\
 4	&G_3G_5G_2G_5G_4			& (15)(23)(46)	& t_1 & = & t_5,&  t_2 &= -&t_9,& t_3 &= -&t_{10}\\
 5	&G_4G_2G_5G_4				& (14)(26)(35)	& t_1 & = & t_6,& t_2 &= -&t_7,& t_3 &= -&t_8\\
 6	&G_5G_3G_5G_2G_5G_4		& (13)(26)(45)	& t_1 & = & t_7,&  t_2 &= -&t_6,& t_4 &= -&t_{10}\\
 7	&G_4G_3G_5G_2G_5G_4		& (16)(24)(35)	& t_1 & = & t_8,&  t_3 &= -&t_6,& t_4 &= -&t_9\\
 8	&G_4G_5G_4G_5G_4			& (15)(24)(36)	& t_1 & = & t_9,&  t_2 &= -&t_5,& t_4 &= -&t_8\\
 9	&G_4G_5G_4					& (13)(25)(46)	& t_1 & = & t_{10},& t_3 &= -&t_5,& t_4 &= -&t_7\\
 10	&G_2G_5G_4					& (14)(23)(56)	& t_2 & = & t_3,& t_7 &=&t_8,& t_9 &= &t_{10}\\
 11	&G_5G_4G_5G_4				& (12)(36)(45)	& t_2 & = & t_4,&  t_5&= &t_8,& t_6 &= &t_{10}\\
 12	&G_4						& (14)(25)(36)	& t_2 & = & t_8,&  t_3 &=&t_7,& t_4 &= &t_5\\
 13	&G_4G_5G_3G_5G_2G_5G_4		& (16)(23)(45)	& t_2 & = & t_{10},&  t_3 &=&t_9,& t_4 &= &t_6\\
 14	&G_5G_4						& (16)(25)(34)	& t_3 & = & t_4,&  t_5 &=&t_7,& t_6 &= &t_9\\
 15	&\mathbb{I}_4					& (12)(34)(56)	& t_5 & = & t_6,&  t_7 &=&t_9,& t_8 &= &t_{10}
\end{array}}
\end{equation}

\end{lemma}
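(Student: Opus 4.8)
The plan is to match, one component at a time, the fifteen irreducible pieces of the divisor $\{\operatorname{Disc}(\mathcal{S})=0\}\subset\mathfrak{M}(2)$ recorded in Equation~(\ref{eq:DiscS}) with the fixed locus, on $\mathbf{H}_2/\Gamma_{\mathcal{T}}(1+i)$ identified with $\mathfrak{M}(2)$ via $\mathcal{F}$ (Theorem~\ref{thm2}), of the involution induced by $g_j:=S_j\,G_2\mathcal{T}\,S_j^{-1}\in\Gamma_{\mathcal{T}}$. The first step is to write both descriptions of the discriminant locus in the same coordinates: since $\operatorname{Disc}(\mathcal{S})=\prod_{i<j}(x_i-x_j)^2$ and, by Equations~(\ref{InvSatake2Theta})--(\ref{Satake2Theta}), the Satake roots $x_1,\dots,x_6$ and the coordinates $t_1,\dots,t_{10}$ are expressed in terms of one another by an invertible $\mathbb{Q}$-linear substitution, the vanishing locus is the union, over the fifteen pairs $\{a,b\}\subset\{1,\dots,6\}$, of the hyperplane sections $\{x_a=x_b\}$; substituting and reducing modulo the linear Pl\"ucker relations~(\ref{eqn:PlueckerRelations}) turns each of these into exactly one of the fifteen triples of linear equations in the last column of Table~(\ref{eqn:iso_S6}). (For instance, the first two relations in~(\ref{eqn:PlueckerRelations}) give $\{t_1=t_2\}=\{t_5=-t_9\}=\{t_6=-t_7\}$, which is why the three equations of each row of the table are equivalent.)

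The core of the proof is then the pair of inclusions for a fixed $j$ with associated pair $\{a,b\}$. Since $\Gamma_{\mathcal{T}}(1+i)$ is normal in $\Gamma_{\mathcal{T}}$ and $(G_2\mathcal{T})^2=G_2^2\equiv\mathbb{I}_4\bmod(1+i)$, each $g_j$ descends to an involution $[g_j]$ of $\mathfrak{M}(2)$ acting in $\mathbb{P}^9$ by a signed permutation of the $t_i$; its underlying permutation and its signs are read off by writing $S_j$ as a word in $G_1,\dots,G_5$ and composing the transformation rules of Lemma~\ref{lem:Gtransfo} (the factors $\rho^{\pm1}$ contributed by $G_5$ inside $S_j$ cancel against those of $S_j^{-1}$). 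Passing to the $x$-coordinates, which are linear in the $t_i$ by~(\ref{InvSatake2Theta}), $[g_j]$ becomes the transposition $x_a\leftrightarrow x_b$ that fixes the remaining four Satake roots. For the first inclusion, a point of $\mathfrak{M}(2)$ with $x_a=x_b$ then has all six $x_k$, hence all $t_i$, fixed by $[g_j]$, so by the injectivity of $\mathcal{F}$ (Theorem~\ref{thm2}) it is a fixed point of $[g_j]$. For the reverse inclusion, at a point fixed by $[g_j]$ the $x$-coordinates satisfy $x_{\tau(k)}=\mu\,x_k$ with $\tau=(a\,b)$ and a single scalar $\mu$, $\mu^2=1$: the branch $\mu=1$ yields $x_a=x_b$, whereas $\mu=-1$ forces $x_c=0$ for all four $c\ne a,b$, which by~(\ref{InvSatake2Theta}) makes four of the $t_i$ vanish and hence pushes the point outside $\mathfrak{M}(2)$. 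Thus $\operatorname{Fix}([g_j])\cap\mathfrak{M}(2)=\{x_a=x_b\}\cap\mathfrak{M}(2)$, the $j$-th component of~(\ref{eq:DiscS}).

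To finish, one verifies that $g_1,\dots,g_{15}$ realize all fifteen transpositions $x_a\leftrightarrow x_b$ without repetition. This rests on the $\mathrm{S}_6$-combinatorics of \cite{MR1204828}*{Sec.~2}: $G_2$ reduces modulo $1+i$ to an element of $\operatorname{Sp}_4(\mathbb{F}_2)\cong\mathrm{S}_6$ which is a product of three disjoint transpositions, a conjugacy class with exactly fifteen members; and the $\mathrm{S}_6$-action on $(t_1,\dots,t_{10})$ induced by permuting the six lines restricts, via~(\ref{Satake2Theta}), to a permutation action on $(x_1,\dots,x_6)$ that differs from the action on the six lines by the outer automorphism of $\mathrm{S}_6$, so that the fifteen \emph{synthemes} of lines correspond bijectively with the fifteen \emph{duads} of Satake roots, i.e.\ with the fifteen reflections. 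The main obstacle --- and the real computational labour --- is exactly this bookkeeping: expressing each $S_j$ as an explicit word in the generators, tracking the permutation and the signs through Lemma~\ref{lem:Gtransfo}, and checking that the reflection and triple of equations one obtains coincide with the appropriate row of Table~(\ref{eqn:iso_S6}); everything else is linear algebra with the Pl\"ucker relations.
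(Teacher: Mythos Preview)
Your proposal is correct and follows essentially the same strategy as the paper's own proof: both identify the fifteen components of $\{\operatorname{Disc}(\mathcal{S})=0\}$ via Equation~(\ref{eq:DiscS}), compute the induced action of each $S_jG_2\mathcal{T}S_j^{-1}$ on $[t_1:\cdots:t_{10}]$ using the transformation table of Lemma~\ref{lem:Gtransfo}, and then match the resulting fixed hyperplanes with the permutations $\sigma_j$ acting on the six lines through Equations~(\ref{eqn:DOcoords}). The paper's proof is simply the terse statement that this computation can be carried out; you have supplied the structure.

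Two aspects of your write-up go beyond what the paper records and are worth keeping. First, passing to the Satake coordinates $x_1,\dots,x_6$ turns each signed permutation of the $t_i$ into a single transposition $x_a\leftrightarrow x_b$, which makes the matching with the fifteen factors $\prod_{a<b}(x_a-x_b)^2$ transparent and explains why the three linear equations in each row of Table~(\ref{eqn:iso_S6}) are equivalent modulo the Pl\"ucker relations. Second, you handle explicitly the ``wrong'' eigenspace $\mu=-1$ of the projective involution, showing that it forces four of the $t_i$ to vanish and therefore lies outside $\mathfrak{M}(2)$; the paper's proof is silent on this point. Your appeal to the outer automorphism of $\mathrm{S}_6$ to explain the syntheme--duad bijection is also a nice conceptual gloss on what is otherwise a fifteen-fold case check.
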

\begin{proof}
The relation between the components of vanishing locus and  the fixed loci of the transformations $\modvar \mapsto S_j \cdot G_2 \mathcal{T} \cdot S_j^{-1} \cdot \modvar$ with $1\le j \le 15$ follow from Equation~(\ref{eq:DiscS}) and Lemma~\ref{eqn:Gtransfo}. The relation between permutations acting on $t_i$ with $1 \le i \le 10$ and the listed fixed loci is checked directly using Equations~(\ref{eqn:DOcoords}) 
\end{proof}
\noindent
We denote the six-line configuration discussed in Lemma~\ref{lem:Svanish} by (2b), adding to cases (0) through (5) in Definition~\ref{def:sstable}. Equation~(\ref{eqn:iso_S6}) provides the explicit from of the isomorphism $\Gamma/\Gamma(1+i) \cong \mathrm{S}_6$.  We have the following:
\begin{corollary}
\label{cor:VanishingLoci}
The following vanishing loci are fixed loci of elements in $\Gamma_{\mathcal{T}} \backslash \Gamma^+_{\mathcal{T}}$:
\begin{enumerate}
\item[(1)] The locus $J_4=0$ is the fixed locus of $\mathcal{T} \in \Gamma_{\mathcal{T}}$.
\item[(2)] The locus $\operatorname{Disc}(\mathcal{A}) =0$ is the union of the fixed loci of $M_i\cdot G_1\cdot M_i^{-1} \in \Gamma_{\mathcal{T}}$ with $M_i \in \Gamma^+_{\mathcal{T}}$ and $1\le i \le 10$ given in Lemma~\ref{lem:DAvanish}.
\item[(2b)] The locus $\operatorname{Disc}(\mathcal{S}) =0$ is the union of the fixed loci of $S_j\cdot G_2\mathcal{T}\cdot S_j^{-1} \in \Gamma_{\mathcal{T}}$ with $S_j \in \Gamma^+_{\mathcal{T}}$ and $1\le j \le 15$ given in Lemma~\ref{lem:Svanish}.
\end{enumerate}
\end{corollary}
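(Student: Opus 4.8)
The plan is to deduce all three assertions from the decomposition identities and reduction relations already established, together with the simple-zero descriptions of the theta functions $\theta_i(\modvar)$ and of the weight-four form $\Theta(\modvar)$; no fresh computation is needed. Throughout, the ``fixed locus'' of an element $g\in\Gamma_{\mathcal{T}}$ will be understood as the subvariety of $\mathfrak{M}(2)$ obtained, via the map $\mathcal{F}$ of \eqref{eqn:map1b} and Theorem~\ref{thm2}, from the $\Gamma_{\mathcal{T}}(1+i)$-orbit of the analytic fixed-point set of $g$ acting on $\mathbf{H}_2$. The coset assertion will be handled uniformly: $\Gamma^+_{\mathcal{T}}$ of \eqref{index-two} is the kernel of the homomorphism $\Gamma_{\mathcal{T}}\to\{\pm1\}$ sending $g=G\mathcal{T}^n$ to $(-1)^n\det G$, so this sign is multiplicative and, in particular, unchanged under conjugation.

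For (1): combining \eqref{eqn:Tsqr} with \eqref{eqn:PlueckerRelations2} and \eqref{eqn:R}, and using $t_i=\theta_i^2(\modvar)$, one obtains $\Theta(\modvar)^2=3^2\,5^2\,J_4$, so that the polynomial locus $\{J_4=0\}$ in $\mathfrak{M}(2)$ is precisely the zero divisor of $\Theta$. By Lemma~\ref{lem:J4vanish}, $\mathcal{T}$ fixes every $\theta_i(\modvar)$ and sends $\Theta\mapsto-\Theta$, so its analytic fixed-point set $\mathbb{H}_2=\{\modvar^t=\modvar\}$ lies in $\{\Theta=0\}$; for the reverse inclusion I would invoke the observation, recorded in the proof of the Proposition producing the map $\mathcal{G}$ of \eqref{eqn:map2b}, that the simple zeros of the essentially unique weight-four form $\Theta$ constitute the branching divisor of the degree-two cover $\mathbf{H}_2/\Gamma^+_{\mathcal{T}}(1+i)\to\mathbf{H}_2/\Gamma_{\mathcal{T}}(1+i)$, whose deck involution is induced by $\mathcal{T}$ (which lies in $\Gamma_{\mathcal{T}}(1+i)$ but not in $\Gamma^+_{\mathcal{T}}(1+i)$), so that $\{\Theta=0\}$ is exactly the $\Gamma_{\mathcal{T}}(1+i)$-orbit of $\mathbb{H}_2$. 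Finally $\mathcal{T}=\mathbb{I}_4\,\mathcal{T}^1$ has sign $(-1)^1\det\mathbb{I}_4=-1$, so $\mathcal{T}\in\Gamma_{\mathcal{T}}\setminus\Gamma^+_{\mathcal{T}}$.

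For (2) and (2b): the first line of \eqref{eqn:decomp} reads $\operatorname{Disc}(\mathcal{A})=2^{-4}3^{10}\prod_{i=1}^{10}t_i$, so $\{\operatorname{Disc}(\mathcal{A})=0\}$ pulls back to $\bigcup_{i=1}^{10}\{\theta_i(\modvar)=0\}$, and Lemma~\ref{lem:DAvanish} already identifies $\{\theta_i(\modvar)=0\}$ with the $\Gamma_{\mathcal{T}}(1+i)$-orbit of the fixed locus of $M_iG_1M_i^{-1}$. Likewise \eqref{eq:DiscS} writes $\operatorname{Disc}(\mathcal{S})$ as $3^{30}$ times a product of fifteen squared differences $(t_i-t_j)^2$, so its zero locus is the union of the fifteen divisors $\{t_i=t_j\}$, which Lemma~\ref{lem:Svanish} identifies with the orbits of the fixed loci of $S_jG_2\mathcal{T}S_j^{-1}$, $1\le j\le 15$. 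Here one uses \eqref{eqn:PlueckerRelations} to see that on $\mathfrak{M}(2)$ a single equation such as $t_1=t_2$ already carves out the three-equation fixed locus listed in the corresponding row of \eqref{eqn:iso_S6} (from $t_1=t_2$ the first two relations of \eqref{eqn:PlueckerRelations} give $t_5=-t_9$ and $t_6=-t_7$), and this is also what reconciles the representative differences appearing in \eqref{eq:DiscS} with those in the table. For the coset claim, each $M_i$ and each $S_j$ in the two tables is a word in $G_2,\dots,G_5$, all of sign $(-1)^0\det G_k=1$, so $M_i,S_j\in\Gamma^+_{\mathcal{T}}$; on the other hand $G_1$ has sign $\det G_1=-1$ and $G_2\mathcal{T}$ has sign $(-1)^1\det G_2=-1$, so both lie in $\Gamma_{\mathcal{T}}\setminus\Gamma^+_{\mathcal{T}}$, whence $M_iG_1M_i^{-1}$ and $S_jG_2\mathcal{T}S_j^{-1}$ do too.

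I expect no serious obstacle, since the analytic content has all been placed in Lemmas~\ref{lem:J4vanish}, \ref{lem:DAvanish}, \ref{lem:Svanish} and in the identities \eqref{eqn:decomp}, \eqref{eq:DiscS}, \eqref{eqn:Tsqr}. The only step that needs some care is the passage from a polynomial vanishing locus to a union of $\Gamma_{\mathcal{T}}(1+i)$-orbits of fixed loci: for $\{\operatorname{Disc}(\mathcal{A})=0\}$ and $\{\operatorname{Disc}(\mathcal{S})=0\}$ this is precisely the statement of Lemmas~\ref{lem:DAvanish} and \ref{lem:Svanish}, while for $\{J_4=0\}$ it rests on the identification of $\{\Theta=0\}$ with the branching divisor of the double cover. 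What is left is the finite bookkeeping of pairing the ten factors of $\prod_i t_i$, and the fifteen squared root-differences in \eqref{eq:DiscS}, with the rows of the two tables, which is carried out by direct inspection of \eqref{eqn:DOcoords} modulo \eqref{eqn:PlueckerRelations}.
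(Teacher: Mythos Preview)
Your proposal is correct and follows essentially the same route as the paper's own proof, which simply cites Equations~\eqref{eqn:R} and~\eqref{eqn:decomp} together with Lemmas~\ref{lem:J4vanish}, \ref{lem:DAvanish}, and \ref{lem:Svanish}. You have usefully made explicit two points the paper leaves implicit: the verification that $\mathcal{T}$, $M_iG_1M_i^{-1}$, and $S_jG_2\mathcal{T}S_j^{-1}$ all lie in the nontrivial coset of $\Gamma^+_{\mathcal{T}}$ (via the sign homomorphism $g=G\mathcal{T}^n\mapsto(-1)^n\det G$), and the use of the Pl\"ucker relations~\eqref{eqn:PlueckerRelations} to reconcile the single factors $(t_i-t_j)^2$ in~\eqref{eq:DiscS} with the three-equation fixed loci listed in~\eqref{eqn:iso_S6}.
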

\begin{proof}
Parts (1), (2)  follow from Equations~(\ref{eqn:R}) and (\ref{eqn:decomp}) when using Lemma~\ref{lem:J4vanish} and Lemma~\ref{lem:DAvanish}. Part (3) follows from Lemma~\ref{lem:Svanish}.
\end{proof}
\par If we plug into the expressions for $J_k$ in Equation~(\ref{eqn:Jinvariants}) for $k=2, 3, 4, 5, 6$, the theta functions $t_i=\theta^2_i(\modvar)$ for $1 \le i \le 10$ of Theorem~\ref{thm1}, we obtain modular forms which we will denote by $J_2(\modvar), J_3(\modvar), J_4(\modvar), J_5(\modvar), J_6(\modvar)$. We have the following:
\begin{lemma}
The functions $J_k(\modvar)$ are modular forms of weight $2k$ relative to $\Gamma_{\mathcal{T}}$ with character $\chi_{2k}(g)=\det{(G)}^k$ for all $g \in\Gamma_{\mathcal{T}}$.
\end{lemma}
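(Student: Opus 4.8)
The plan is to exploit that each $J_k$ (for $2\le k\le 6$) is a \emph{homogeneous} polynomial of degree $k$ in the Dolgachev--Ortland coordinates $t_1,\dots,t_{10}$ and that, by Theorem~\ref{thm1}(1), $t_i=\theta^2_i(\modvar)$ is a weight-two modular form relative to $\Gamma_{\mathcal{T}}(1+i)$ transforming with character $g=G\mathcal{T}^n\mapsto\det(G)$. First I would record the homogeneity: by Equations~(\ref{Satake2Theta}) each generalized level-two Satake function $x_j$ is a homogeneous linear form in the $t_i$, hence each power sum $s_j=\sum_k x_k^j$ is homogeneous of degree $j$ in the $t_i$, and Equations~(\ref{eqn:Jinvariants}) then present $J_k$ as a $\mathbb{Q}$-linear combination of monomials in the $s_j$ of total degree $k$ in the $t_i$; combined with Lemma~\ref{lem:Jinvariance}, $J_k\in\mathbb{Q}[t_1,\dots,t_{10}]$ is homogeneous of degree $k$. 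Substituting $t_i=\theta^2_i(\modvar)$, each such monomial transforms under $g\in\Gamma_{\mathcal{T}}(1+i)$ by one and the same factor $\det(G)^{k}\,\det(C\modvar+D)^{2k}$; since the $\theta^2_i$ are holomorphic on $\mathbf{H}_2$, this already shows that $J_k(\modvar)$ is a modular form of weight $2k$ relative to $\Gamma_{\mathcal{T}}(1+i)$ with character $\chi_{2k}(g)=\det(G)^{k}$.

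Second, I would promote this to the full group $\Gamma_{\mathcal{T}}$. Since $\Gamma=\langle G_1,\dots,G_5\rangle$ by \cite{MR1204828}*{Prop.~1.5.1} and $\Gamma_{\mathcal{T}}=\Gamma\rtimes\langle\mathcal{T}\rangle$, the group $\Gamma_{\mathcal{T}}$ is generated by $\mathcal{T},G_1,\dots,G_5$; moreover $g=G\mathcal{T}^n\mapsto\det(G)^{k}$ is a homomorphism $\Gamma_{\mathcal{T}}\to\mathbb{C}^\times$ and the automorphy factor $g\mapsto\det(C(\mathcal{T}^n\!\cdot\modvar)+D)$ satisfies the usual cocycle identity, so it suffices to verify the transformation law of $J_k$ on these six generators. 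For each generator $g$, Lemma~\ref{lem:Gtransfo} describes the effect of $\modvar\mapsto g\cdot\modvar$ on the tuple $(\theta^2_1(\modvar),\dots,\theta^2_{10}(\modvar))$: upon squaring the table entries the scalars $\pm1,\pm i$ drop out, and for $G_5^{\pm1}$ the factor $\rho^{\pm1}=(-\det\modvar)^{\pm1}$ is matched against $\det(C\modvar+D)^{2}=(\det\modvar)^{2}$, so that the tuple is sent to $\det(G)\,\det(C\modvar+D)^{2}$ times the permutation $\pi_g$ of $\{1,\dots,10\}$ that the image $S_g\in\mathrm{S}_6\cong\Gamma/\Gamma(1+i)$ induces on the coordinates $t_1,\dots,t_{10}$ through its action by permutation of the six lines in Equations~(\ref{eqn:DOcoords}). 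By homogeneity of $J_k$ and its $\mathrm{S}_6$-invariance from Lemma~\ref{lem:Jinvariance},
\begin{equation*}
J_k(g\cdot\modvar)=\det(G)^{k}\,\det(C\modvar+D)^{2k}\,J_k\big(\theta^2_{\pi_g(1)}(\modvar),\dots,\theta^2_{\pi_g(10)}(\modvar)\big)=\det(G)^{k}\,\det(C\modvar+D)^{2k}\,J_k(\modvar),
\end{equation*}
which is the asserted transformation law with $\chi_{2k}(g)=\det(G)^{k}$; holomorphy on $\mathbf{H}_2$ is inherited from that of the $\theta^2_i(\modvar)$.

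The step I expect to be the main obstacle is the bookkeeping in the second paragraph: one must confirm, generator by generator, that the stray scalars $\pm1$, $\pm i$, $\rho^{\pm1}$ recorded in the table of Lemma~\ref{lem:Gtransfo} really do reassemble, on the weight-two forms $\theta^2_i$, into the single factor $\det(G)\,\det(C\modvar+D)^{2}$, leaving only a clean coordinate permutation, and that this permutation agrees with the $\mathrm{S}_6$-action on the $t_i$ of Equations~(\ref{eqn:DOcoords}) under which $J_k$ was shown to be invariant. Everything else --- modularity relative to $\Gamma_{\mathcal{T}}(1+i)$, the precise shape $\det(G)^{k}$ of the character, and holomorphy --- is formal once homogeneity of $J_k$ and Theorem~\ref{thm1} are granted. (Alternatively, since $\Gamma_{\mathcal{T}}(1+i)$ is normal in $\Gamma_{\mathcal{T}}$ with quotient $\mathrm{S}_6$ acting on the $\theta^2_i$ by permutation, one can argue that the $\mathrm{S}_6$-invariant combination $J_k$ is automatically a modular form for the larger group; this repackages, but does not avoid, the same input.)
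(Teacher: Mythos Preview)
Your argument is correct and matches the paper's proof: homogeneity of $J_k$ in the $\theta_i^2$ together with Theorem~\ref{thm1} yields modularity for $\Gamma_{\mathcal{T}}(1+i)$ with character $\det(G)^k$, and the extension to $\Gamma_{\mathcal{T}}$ follows because the residual $\Gamma/\Gamma(1+i)\cong\mathrm{S}_6$-action on the $\theta_i^2$ coincides with the line-permutation action on the $t_i$, under which $J_k$ is invariant by Lemma~\ref{lem:Jinvariance}. One small correction to your bookkeeping: the factors $\pm i$ in the $G_4$-row of Lemma~\ref{lem:Gtransfo} do \emph{not} drop out upon squaring but become $-1$, so the induced action on the $t_i=\theta_i^2$ is a \emph{signed} permutation rather than a ``clean coordinate permutation''; this is harmless, since the $\mathrm{S}_6$-action on the $t_i$ coming from Equations~(\ref{eqn:DOcoords}) is itself a signed permutation action, and it is under that action that Lemma~\ref{lem:Jinvariance} asserts invariance.
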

\begin{proof}
It follows from Lemma~\ref{lem:Jinvariance} that $J_k(\modvar)$ are homogeneous polynomials of degree $k$ in $t_i=\theta^2_i(\modvar)$ for $1 \le i \le 10$.  Using Theorem~\ref{thm1}, we conclude that $J_k(\modvar)$ are modular forms of weight $2k$ relative to $ \Gamma_{\mathcal{T}}(1+i)$ with character $\chi_{2k}(g)=\det{(G)}^k$. The isomorphism $\Gamma/\Gamma(1+i) \cong \operatorname{Sp}_4(\mathbb{Z}/2\mathbb{Z})$ extends the group homomorphism obtained from the projection
\begin{equation}
  \mathbb{Z}[i] \mapsto  \mathbb{Z}[i] / (1+i) \mathbb{Z}[i]  \cong \mathbb{Z}/2\mathbb{Z} .
\end{equation}
On the other hand, there is a group isomorphism $\operatorname{Sp}_4(\mathbb{Z}/2\mathbb{Z}) \cong \mathrm{S}_6$. We showed in Lemma~\ref{lem:Gtransfo} that this action coincides with the natural action of the permutation group of six elements $\mathrm{S}_6$ on the variables $t_i$ with $1 \le i \le 10$ due to Equations~(\ref{eqn:DOcoords}). Since the invariants $J_k$ are polynomials in the symmetric polynomials $s_k$ with $2 \le k \le 6$ given in Equation~(\ref{eqn:Jinvariants}), hence invariant under the action of $\mathrm{S}_6$, 
$J_k(\modvar)$ are modular forms of weight $2k$ relative to the full modular group $ \Gamma_{\mathcal{T}}$.
\end{proof}
We have the following: 
 \begin{theorem}
\label{thm4}
The graded ring of modular forms relative to $\Gamma^+_{\mathcal{T}}$ of even characteristic, i.e., with character $\chi_{2k}(g)=\det{(G)}^k$, is generated over $\mathbb{C}$ by the five algebraically independent modular forms $J_k(\modvar)$ of weight $2k$ with $k=2, \dots, 6$.
\end{theorem}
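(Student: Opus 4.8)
The plan is to prove algebraic independence and generation separately, using the modular description of Section~\ref{ssec:modular_description} to reduce everything to invariant theory for $\mathrm{S}_6$ acting on the ten even theta functions $\theta^2_i(\modvar)$. For algebraic independence: by definition $J_k(\modvar)$ is obtained from $J_k\in\mathbb{Q}[t_1,\dots,t_{10}]$ by setting $t_i=\theta^2_i(\modvar)$, and by Lemma~\ref{lem:Jinvariance} each $J_k$ is a polynomial in the power sums $s_2,\dots,s_6$ of the Satake roots; since \eqref{eqn:Jinvariants}--\eqref{Jinvariants} relate $(s_2,\dots,s_6)$ and $(J_2,\dots,J_6)$ triangularly with nonzero diagonal over $\mathbb{Q}$, it suffices to show $s_2(\modvar),\dots,s_6(\modvar)$ are algebraically independent. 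This holds because \eqref{Satake2Theta} expresses $x_1,\dots,x_5$ by an invertible linear substitution in five of the $\theta^2_i(\modvar)$, which generate a polynomial ring by the second part of Theorem~\ref{thm1} (consistently with $\overline{\mathfrak{M}(2)}\cong\mathbb{P}^4$), so that $e_2,\dots,e_6$ — hence $s_2,\dots,s_6$ — are algebraically independent in $x_1,\dots,x_6$ modulo $x_1+\dots+x_6=0$.

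For generation, let $f$ be a weight-$2k$ modular form for $\Gamma^+_{\mathcal{T}}$ of even characteristic. Restricting $f$ to $\Gamma^+_{\mathcal{T}}(1+i)=\Gamma^+_{\mathcal{T}}\cap\Gamma_{\mathcal{T}}(1+i)$ and decomposing into $\mathcal{T}$-eigencomponents — an index-two splitting, since $\mathcal{T}\in\Gamma_{\mathcal{T}}(1+i)\setminus\Gamma^+_{\mathcal{T}}(1+i)$ — Theorem~\ref{thm1} together with \eqref{eqn:Tsqr} lets one write $f=A(\theta^2_1,\dots,\theta^2_{10})+B(\theta^2_1,\dots,\theta^2_{10})\,\Theta$ with $A,B\in\mathbb{C}[t_1,\dots,t_{10}]$. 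Next I would bring in the action of $\mathrm{S}_6\cong\Gamma/\Gamma(1+i)$: on the generators $G_1\mathcal{T},G_2,\dots,G_5$ of $\Gamma^+_{\mathcal{T}}$ (Lemma~\ref{lem:generators}) the $\theta^2_i$ are permuted up to sign according to Lemma~\ref{lem:Gtransfo} — equivalently the Satake roots $x_k$ are permuted — while $\Theta$ acquires the extra factor $\operatorname{sign}(S_G)$ by the third part of Theorem~\ref{thm1}. Matching the even-characteristic transformation law $f(g\cdot\modvar)=\det(G)^k\det(C\modvar+D)^{2k}f(\modvar)$ coefficient-wise then forces $A$ to be $\mathrm{S}_6$-invariant, hence a polynomial in $J_2,\dots,J_6$ via \eqref{InvSatake2Theta}/\eqref{Satake2Theta} and Newton's identities \eqref{Jinvariants}; and it constrains $B$ so that $B\,\Theta$ carries no net $\operatorname{sign}(S_G)$. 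One then closes the argument by showing that every such $B\,\Theta$ is itself polynomial in the $J_k$, using $\Theta^2=3^2\,5^2\,J_4$ (from \eqref{eqn:Tsqr} and \eqref{eqn:R}), the factorisation \eqref{eq:DiscS} of $\operatorname{Disc}(\mathcal S)$ as the square of a polynomial in the $t_i$, and the identities \eqref{eqn:decomp}. Combined with the first part this yields $f\in\mathbb{C}[J_2,\dots,J_6]$. Alternatively one may run the geometric version: Theorem~\ref{thm2} and the $\mathrm{S}_6$-quotient identify $\overline{\mathbf{H}_2/\Gamma_{\mathcal{T}}}$ with $\overline{\mathfrak{M}}=\mathbb{P}(2,3,4,5,6)$, whose homogeneous coordinate ring is the polynomial ring $\mathbb{C}[J_2,\dots,J_6]$ with $J_k$ in weight $2k$, and one reads the even-characteristic modular forms off as its graded pieces via Baily--Borel.

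The main obstacle is the last step of the generation argument — pinning down the $\Theta$-contribution. This is genuinely delicate: $\Theta$ is a new modular form whose square, but not $\Theta$ itself, lies in $\mathbb{C}[J_k]$, and whose character carries the factor $\operatorname{sign}(S_G)$ that no polynomial in the even theta functions $\theta^2_i$ can reproduce. So one must deploy the precise character bookkeeping of Lemmas~\ref{lem:J4vanish}, \ref{lem:DAvanish}, \ref{lem:Svanish} and Corollary~\ref{cor:VanishingLoci}, together with the square-root identities \eqref{eqn:R}, \eqref{eq:DiscS} and \eqref{eqn:decomp}, to see that the even-characteristic hypothesis forces that contribution down into $\mathbb{C}[J_2,\dots,J_6]$.
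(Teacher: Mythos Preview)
Your approach differs substantially from the paper's. The paper does not build the ring from scratch via $\mathrm{S}_6$-invariant theory; instead it invokes Vinberg's structure theorem \cite{MR2682724}*{Thm.~1}, which already asserts that the relevant ring of modular forms is freely generated by five forms of weights $4,6,8,10,12$. It then pins down the $J_k$ as those generators by restricting to the Siegel locus $\mathbb{H}_2\subset\mathbf{H}_2$ and observing (via Equation~\eqref{modd_restriced}) that $J_2,J_3,J_5,J_6$ specialize to Igusa's generators $\psi_4,\psi_6,\chi_{10},\chi_{12}$, while $J_4=(\Theta/15)^2$ is checked to be nonzero and not further decomposable in the even-characteristic ring. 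Algebraic independence is handled by the fundamental theorem of symmetric polynomials, close to your argument.

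Your algebraic-independence argument is fine and essentially equivalent to the paper's. The generation argument, however, has a genuine gap that you yourself flag. Two points: first, the decomposition $f=A(\theta^2_i)+B(\theta^2_i)\,\Theta$ is not a direct consequence of Theorem~\ref{thm1} and \eqref{eqn:Tsqr}. Part~(2) of Theorem~\ref{thm1} describes only the $\mathcal{T}$-invariant piece (character $\det(G)$ on $\Gamma_{\mathcal{T}}(1+i)$); to conclude that the $\mathcal{T}$-anti-invariant piece is $\Theta$ times a polynomial in the $\theta^2_i$ you must argue that $f_-/\Theta$ is holomorphic, which requires knowing that the zero divisor of $\Theta$ is exactly the (simple) $\mathcal{T}$-fixed locus --- this is available, but it is a separate input. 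Second, and more seriously, once you have $f=A+B\Theta$ with $A$ symmetric and $B$ $\mathrm{S}_6$-alternating in the Satake roots, you still have to show $B\Theta\in\mathbb{C}[J_2,\dots,J_6]$. Writing $B=P(J_k)\cdot V$ with $V=\prod_{i<j}(x_i-x_j)$ reduces this to showing $V\Theta$ is a polynomial in the $J_k$; you only know $(V\Theta)^2=c\,\operatorname{Disc}(\mathcal{S})\,J_4\in\mathbb{C}[J_k]$, and the square-root identities you cite (\eqref{eqn:R}, \eqref{eqn:decomp}, \eqref{eq:DiscS}) do not by themselves force $V\Theta$ down. The paper sidesteps this entirely by importing Vinberg's theorem, which is precisely the statement that no such extra generator is needed.

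So: your route is more self-contained in spirit, but the missing step is real and amounts to re-proving Vinberg's result in this case. The paper's route is shorter because it outsources exactly that difficulty.
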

\begin{proof}
 It follows from~\cite{MR2682724}*{Thm.~\!1} that the ring of modular forms relative to $\Gamma_{\mathcal{T}}$ is generated by five modular forms of weights $4, 6, 8, 10, 12$. For arguments $\modvar=\tau$ invariant under $\mathcal{T}$, the functions $J_k(\modvar)$ for $k=2, 3, 4, 5, 6$ descend to Siegel modular forms of even weight. In fact, we will check in Equation~(\ref{modd_restriced}) that under the restriction from $\mathbf{H}_2/\Gamma_{\mathcal{T}}$ to $\mathbb{H}_2/\operatorname{Sp}_4(\mathbb{Z})$, we obtain
\begin{equation}
\label{modd_restricedb}
\scalemath{\MyScaleMedium}{
\begin{array}{c}
\Big[ J_2(\modvar) : J_3(\modvar) : J_4(\modvar): J_5(\modvar) : J_6(\modvar)  \Big] 
=  \Big[ \psi_4(\tau) : \psi_6(\tau) : 0: 2^{12} 3^5 \chi_{10}(\tau) : 2^{12}3^6 \chi_{12}(\tau)\Big].
\end{array}}
\end{equation}
Here, $\psi_4$, $\psi_6$, $\chi_{10}$, $\chi_{12}$ are Siegel modular forms of respective weights $4, 6, 10, 12$ that, as Igusa proved in~\cites{MR0229643, MR527830}, generate the ring of Siegel modular forms of even weight. Thus, the forms $J_k(\modvar)$ for $k=2, 3, 5, 6$ must be independent.  After looking at some Fourier coefficients to ensure that $J_4$ is not identically zero, we adjoin $J_4$ as a fifth form to the ring generated by $J_k$ for $k=2, 3, 5, 6$. The fundamental theorem of symmetric polynomials establishes the power sums as an algebraic basis for the space of symmetric polynomials. Therefore, $J_k$ for $k=2, 3, 4, 5, 6$ are algebraically independent. Moreover, we check that $J_4(\modvar)=(\Theta(\modvar)/15)^2$ using Theorem~\ref{thm1}. $\Theta$ does \emph{not} have the character $\chi(g)=\det(G)$ for all $g \in\Gamma_{\mathcal{T}}$. It follows that $J_4$ cannot be decomposed further as a modular form relative to $\Gamma^+_{\mathcal{T}}$ with even characteristic.
\end{proof}
We define the holomorphic map
\begin{equation}
\label{eqn:map3a}
 \mathcal{H}':  \mathbf{H}_2  \to  \mathbb{P}^9, \quad \modvar  \mapsto   \Big[  J_2(\modvar) : \ J_3(\modvar) : \  J_4(\modvar): \ J_5(\modvar) : \ J_6(\modvar)   \Big ].
\end{equation}
We have the following:
\begin{corollary}
The map $\mathcal{H}'$ descends to  a holomorphic map 
\begin{equation}
\label{eqn:map3b}
\begin{split}
 \mathcal{H}:  \ \ &  \mathbf{H}_2/\Gamma_{\mathcal{T}} \   \longrightarrow \    \mathfrak{M}  \subset \mathbb{P}(2,3,4,5,6),\\
 & \modvar  \ \mapsto \    \Big[  J_2(\modvar) : \ J_3(\modvar) : \  J_4(\modvar): \ J_5(\modvar) : \ J_6(\modvar)   \Big ].
 \end{split}
\end{equation}
The map $\mathcal{H}$ in Equation~(\ref{eqn:map3b}) extends to an isomorphism between the Satake compactification of $\mathbf{H}_2/\Gamma_{\mathcal{T}}$ and $\overline{\mathfrak{M}}$ given by
\begin{equation*}
\begin{split}
 \mathcal{H}:  \ \ & \overline{\mathbf{H}_2/\Gamma_{\mathcal{T}}}\   \overset{\cong}{\longrightarrow}\ \overline{\mathfrak{M}}  \subset \mathbb{P}(2,3,4,5,6).
 \end{split}
\end{equation*}
\end{corollary}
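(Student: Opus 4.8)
The plan is to deduce the statement from the level-$1+i$ isomorphism of Theorem~\ref{thm2} by taking the quotient by $\mathrm{S}_6\cong\Gamma_{\mathcal{T}}/\Gamma_{\mathcal{T}}(1+i)$.

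First I would check that $\mathcal{H}'$ descends. Since $J_k(\modvar)$ is a modular form of weight $2k$ relative to $\Gamma_{\mathcal{T}}$ with character $\det(G)^{k}$, one has $J_k(g\cdot\modvar)=\bigl(\det(G)\,\det(C\modvar+D)^{2}\bigr)^{k}J_k(\modvar)$ for every $g=G\mathcal{T}^{n}\in\Gamma_{\mathcal{T}}$, so the point $[J_2(\modvar):\dots:J_6(\modvar)]$ is unchanged in $\mathbb{P}(2,3,4,5,6)$. To locate the image I would use $\mathcal{H}'=q\circ\mathcal{F}'$, where $q\colon\overline{\mathfrak{M}(2)}\to\overline{\mathfrak{M}}$ is the polynomial morphism $[t_1:\dots:t_{10}]\mapsto[J_2:\dots:J_6]$ coming from Equation~(\ref{eqn:Jinvariants}): by the analysis of the zeros of the theta functions in \cite{MR1136204}*{Lemma~2.3.1} (see also Lemma~\ref{lem:DAvanish}) at most three of the $\theta^2_i(\modvar)$ vanish, so $\mathcal{F}'(\mathbf{H}_2)\subset\mathfrak{M}(2)$, and by Corollary~\ref{lem:Js} a point of $\overline{\mathfrak{M}(2)}$ with at most three vanishing coordinates has $(J_3,J_4,J_5)\neq(0,0,0)$, whence $q\bigl(\mathfrak{M}(2)\bigr)\subset\mathfrak{M}$. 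Holomorphy of the descended map $\mathcal{H}\colon\mathbf{H}_2/\Gamma_{\mathcal{T}}\to\mathfrak{M}$ is then automatic, $\mathbf{H}_2/\Gamma_{\mathcal{T}}$ being normal.

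For the isomorphism on compactifications I would argue as follows. As $\Gamma_{\mathcal{T}}(1+i)$ is normal in $\Gamma_{\mathcal{T}}$ with quotient $\mathrm{S}_6$ and the Satake compactification is functorial under such finite quotients, Theorem~\ref{thm2} yields $\overline{\mathbf{H}_2/\Gamma_{\mathcal{T}}}\cong\overline{\mathfrak{M}(2)}/\mathrm{S}_6\cong\mathbb{P}^4/\mathrm{S}_6$, where $\mathrm{S}_6$ permutes the six lines and hence, by the table of Lemma~\ref{lem:Gtransfo}, acts linearly on the coordinates $t_i$. The homogeneous coordinate ring of $\overline{\mathfrak{M}(2)}=\mathbb{P}^4$ is $A=\mathbb{C}[t_1,\dots,t_{10}]$ modulo the five independent linear relations of Equation~(\ref{eqn:PlueckerRelations}), with each $t_i$ of degree one; by Theorem~\ref{thm1}(2) and Theorem~\ref{thm2} this is the ring of modular forms of level $1+i$ with character $\det(G)^{k}$ in weight $2k$, on which $\mathrm{S}_6=\Gamma_{\mathcal{T}}/\Gamma_{\mathcal{T}}(1+i)$ acts, so that $A^{\mathrm{S}_6}$ is the ring of modular forms of weight $2k$ relative to $\Gamma_{\mathcal{T}}$ with character $\det(G)^{k}$. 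The chain $\mathbb{C}[J_2,\dots,J_6]\subseteq A^{\mathrm{S}_6}\subseteq M_*\bigl(\Gamma^+_{\mathcal{T}},\text{even characteristic}\bigr)$ — the first inclusion by Lemma~\ref{lem:Jinvariance}, the second by restriction to the index-two subgroup — combined with Theorem~\ref{thm4} forces $A^{\mathrm{S}_6}=\mathbb{C}[J_2,\dots,J_6]$. Since the $J_k$ are algebraically independent with $J_k$ of degree $k$, $\operatorname{Proj}\bigl(A^{\mathrm{S}_6}\bigr)=\mathbb{P}(2,3,4,5,6)$; and because $A$ is a polynomial ring and $\mathrm{S}_6$ is finite, $\mathbb{P}^4/\mathrm{S}_6=\operatorname{Proj}(A)/\mathrm{S}_6=\operatorname{Proj}\bigl(A^{\mathrm{S}_6}\bigr)=\mathbb{P}(2,3,4,5,6)$ with quotient morphism exactly $q$. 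Composing with $\mathcal{F}$ gives the extension $\overline{\mathbf{H}_2/\Gamma_{\mathcal{T}}}\overset{\cong}{\longrightarrow}\overline{\mathfrak{M}}$, which agrees with the map $\mathcal{H}'$ of Equation~(\ref{eqn:map3a}) on $\mathbf{H}_2$ by construction.

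Finally I would verify that this isomorphism restricts to $\mathbf{H}_2/\Gamma_{\mathcal{T}}\xrightarrow{\cong}\mathfrak{M}$. Under $\mathcal{F}$ the complement of the Satake boundary of $\mathbf{H}_2/\Gamma_{\mathcal{T}}(1+i)$ maps onto $\mathfrak{M}(2)\subset\mathbb{P}^4$; since $\mathfrak{M}(2)$ is $\mathrm{S}_6$-invariant and, by Corollary~\ref{lem:Js}, equals $q^{-1}(\mathfrak{M})$, the restriction $q\colon\mathfrak{M}(2)\to\mathfrak{M}$ is surjective with $\mathrm{S}_6$-orbits as fibres, so $\mathfrak{M}(2)/\mathrm{S}_6\cong\mathfrak{M}$ and the identifications chain to $\mathcal{H}$. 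I expect this last step to be the main obstacle: matching the Satake boundary of the level-$1+i$ quotient with $\overline{\mathfrak{M}}\setminus\mathfrak{M}$ rests on knowing that, among all points of $\overline{\mathfrak{M}(2)}$, those with four or more vanishing $t_i$ are exactly those with $(J_3,J_4,J_5)=(0,0,0)$ — the explicit computation behind Corollary~\ref{lem:Js}, equivalently the classification of degenerate six-line configurations in Section~\ref{sec:invariants}. The invariant-theoretic identification $A^{\mathrm{S}_6}=\mathbb{C}[J_2,\dots,J_6]$, though the conceptual core, is essentially delivered by Theorem~\ref{thm4}.
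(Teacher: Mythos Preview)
Your argument is correct but it is a good deal more elaborate than the paper's. The paper simply invokes the Baily--Borel construction: once Theorem~\ref{thm4} tells you that the graded ring of modular forms (of even characteristic) relative to $\Gamma_{\mathcal{T}}$ is freely generated by $J_2,\dots,J_6$, the Satake compactification is \emph{by definition} $\operatorname{Proj}\mathbb{C}[J_2,\dots,J_6]\cong\mathbb{P}(2,3,4,5,6)$, and Corollary~\ref{lem:Js} locates the open part. You instead descend from the level-$(1+i)$ isomorphism of Theorem~\ref{thm2}, passing through $\overline{\mathfrak{M}(2)}/\mathrm{S}_6$ and identifying the invariant ring $A^{\mathrm{S}_6}$ via the squeeze $\mathbb{C}[J_2,\dots,J_6]\subseteq A^{\mathrm{S}_6}\subseteq M_*(\Gamma^+_{\mathcal{T}},\text{even char.})$. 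This is a valid and more explicit route---it makes the $\mathrm{S}_6$-quotient visible and ties the statement directly to the geometry of ordered six-line configurations---but it requires you to verify the auxiliary facts you flag (functoriality of the Satake boundary under the finite quotient, $\operatorname{Proj}(A)/\mathrm{S}_6=\operatorname{Proj}(A^{\mathrm{S}_6})$, and the matching of boundaries $\overline{\mathfrak{M}(2)}\setminus\mathfrak{M}(2)\leftrightarrow\overline{\mathfrak{M}}\setminus\mathfrak{M}$), whereas the paper absorbs all of that into the single invocation of Baily--Borel. Both approaches ultimately rest on Theorem~\ref{thm4}.
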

\begin{proof}
Using Theorem~\ref{thm4} and Corollary~\ref{lem:Js} it follows that $\mathcal{H}'$ descends to the holomorphic map $\mathcal{H}$ as stated. By construction the Satake compactification of $\mathbf{H}_2/\Gamma_{\mathcal{T}}$ is given by
\begin{equation}
  \operatorname{Proj} \mathbb{C}\Big[ J_2, J_3, J_4, J_5, J_6  \Big]\cong \mathbb{P}(2,3,4,5,6).
\end{equation}
\end{proof}
\section{K3 surfaces associated with double covers of six lines}
\label{sec:6lines}
In this section, we discuss several Jacobian elliptic fibrations on the K3 surface associated with configurations of six lines $\ell_i$ in $\mathbb{P}^2$ with $i=1,\dots,6$, no three of which are concurrent.  The double cover branched along six lines $\ell_i$ with $i=1,\dots,6$ given in terms of variables $z_1, z_2, z_3$ of $\mathbb{P}^2$  is the solution of
\begin{equation}
\label{six-lines-cover}
z_4^2 = \ell_1\ell_2\ell_3\ell_4\ell_5\ell_6 
\end{equation}
with $[z_1:z_2:z_3:z_4]\in \mathbb{P}(1,1,1,3)$. It is well known that the minimal resolution is a K3 surface of Picard-rank $16$ which we will always denote by $\mathcal{Y}$. In \cite{MR2254405} Kloosterman classified all Jacobian elliptic fibrations on $\mathcal{Y}$, i.e., elliptic fibrations $\pi^{\mathcal{Y}}: \mathcal{Y} \to \mathbb{P}^1$ together with a section $\sigma: \mathbb{P}^1 \to \mathcal{Y}$ such that $\pi^{\mathcal{Y} }\circ \sigma = 1$. We will construct explicit Weierstrass models for three of these Jacobian elliptic fibrations which we call the \emph{natural} fibration, the \emph{base-fiber-dual} of the natural fibration, and the \emph{alternate} fibration.
\subsection{The natural fibration}
\label{ssec:natural_fibration}
Configurations of six lines no three of which are concurrent have four homogeneous moduli which we will denote by $a, b, c, d$. These moduli can be constructed as follows: the six lines $\ell_i$ for $1\le i \le 6$ can always be brought into the form
\begin{equation}
\label{lines}
 \ell_1=z_1, \ell_2 = z_2, \ell_3= z_3,  \ell_4=z_1 + z_2 + z_3,  \ell_5=z_1+ a z_2 + b z_3, \ell_6=z_1+ c z_2 + d z_3.
\end{equation}
The matrix $\mathbf{A}$ defined in Section~\ref{sec:invariants} for this six-line configuration is
\begin{equation}
\mathbf{A} = \left( \begin{array}{cccccc} 1 & 0 & 0 & 1 & 1 & 1 \\ 0 & 1 & 0 & 1 & a & c \\ 0 & 0 & 1 & 1 & b & d \end{array}\right) \;,
\end{equation}
with Dolgachev-Ortland coordinates given by
\begin{equation}
\label{compare1}
\scalemath{\MyScaleMedium}{
\begin{array}{c}
 t_1=a(d-1), \ t_2=b-a, \ t_3=d-c, \  t_4=c(b-1), \ t_5=b(c-1), \\[4pt]
 t_6=d(a-1), \ t_7=d-b, \ t_8=c-a, \ t_9=ad-bc, \ t_{10}=ad-bc-a+b+c-d, \\[4pt]
 R = -abc+abd+acd-bcd-ad+bc.
\end{array}}
\end{equation}
The six lines intersect as follows:
\begin{equation}
\label{tab:6lines}
\scalemath{\MyScaleTiny}{
\begin{array}{c|c|c|c|c|c|c}
		& \ell_1	& \ell_2	& \ell_3	& \ell_4		& \ell_5			& \ell_6			\\ \hline &&&&&& \\[-0.9em]
\ell_1	& -		& [0:0:1]	& [0:1:0]	& [0:1:-1]		& [0:b:-a]			& [0:d:-c]			\\ &&&&&& \\[-0.9em]
\ell_2	& [0:0:1]	& -		& [1:0:0]	& [1:0:-1]		& [-b:0:1]			& [-d:0:1]			\\ &&&&&& \\[-0.9em]
\ell_3	& [0:1:0]	& [1:0:0]	& -		& [1:-1:0]		& [-a:1:0]			& [-c:1:0]			\\ &&&&&& \\[-0.9em]
\ell_4	& [0:1:-1]	& [1:0:-1]	& [1:-1:0]	& -			& [a-b:b-1:1-a]		& [c-d:d-1:1-c]		\\ &&&&&& \\[-0.9em]
\ell_5	& [0:b:-a]	& [-b:0:1]	& [-a:1:0]	& [a-b:b-1:1-a]	& -				& [ad-bc:b-d:c-a]	\\ &&&&&& \\[-0.9em]
\ell_6	& [0:d:-c]	& [-d:0:1]	& [-c:1:0]	& [c-d:d-1:1-c]	& [ad-bc:b-d:c-a]	& -				\\ &&&&&& \\[-0.9em]
\end{array}}
\end{equation}
\smallskip
Setting 
\begin{equation}
z_1 = \frac{u(u+1)(au+b)(cu+d)}{X-u(au+b)(cu+d)}, \quad z_2=u, \quad z_3=1
\end{equation}
in Equation~(\ref{six-lines-cover}), it is transformed into the Weierstrass equation
\begin{equation}
\label{eqns_Kummer}
  Y^2  = X \Big(X - 2 \,u \big(\mu(u)-\nu(u)\big) \Big)  \Big(X - 2 \,u \big(\mu(u)+\nu(u)\big)  \Big), 
\end{equation} 
with discriminant
\begin{equation}
 \Delta_{\mathcal{Y}}(u) =2^8  u^6 \mu(u)^2 \, \Big(\mu(u)^2-\nu(u)^2\Big)^2
\end{equation}
and
\begin{equation}
\label{coeffs}
\begin{split}
 2(\mu-\nu)  & = \Big(au +b\Big)\,  \Big((c-1) \, u+ (d-1) \Big) ,\\ 
 2(\mu+\nu)  & = \Big(c u +d\Big)\,  \Big((a-1) \, u + (b-1) \Big).
\end{split}
\end{equation}
In this way, the K3 surfaces associated with the double cover of $\mathbb{P}^2$ branched along the union of six lines, no three of which are concurrent, are equipped with an elliptical fibration $\pi^{\mathcal{Y}}_{\mathrm{nat}}: \mathcal{Y} \to \mathbb{P}^1$ with section $\sigma$ given by the point at infinity and with a fiber $\mathcal{Y}_{u}$ given by the minimal Weierstrass equation~(\ref{eqns_Kummer}). We call this fibration the \emph{natural fibration}. Three two-torsion sections are obvious from the explicit Weierstrass points in Equation~(\ref{eqns_Kummer}). The following is immediate:
\begin{lemma}
\label{lem:EllLeft}
Equation~(\ref{eqns_Kummer}) defines a Jacobian elliptic fibration $\pi^{\mathcal{Y}}_{\mathrm{nat}}: \mathcal{Y} \to \mathbb{P}^1$ with six singular fibers of type $I_2$,  two singular fibers of type $I_0^*$, and the Mordell-Weil group of sections $\operatorname{MW}(\pi^{\mathcal{Y}}_{\mathrm{nat}})=(\mathbb{Z}/2\mathbb{Z})^2$.
\end{lemma}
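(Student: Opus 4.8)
The plan is to read off the singular fibers directly from the Weierstrass equation~(\ref{eqns_Kummer}) and its discriminant, then identify the Mordell-Weil group from the explicit two-torsion structure and a rank count via the Shioda-Tate formula. First I would analyze the vanishing orders of $\Delta_{\mathcal{Y}}(u) = 2^8 u^6 \mu(u)^2 (\mu(u)^2 - \nu(u)^2)^2$, together with those of the coefficients of the cubic in $X$, at the relevant points of $\mathbb{P}^1$. Generically $\mu(u)$ is a quadratic with two distinct simple roots, and $\mu(u)^2 - \nu(u)^2 = 2(\mu-\nu)\cdot 2(\mu+\nu)/2$ factors by~(\ref{coeffs}) into four linear factors $(au+b)$, $((c-1)u+(d-1))$, $(cu+d)$, $((a-1)u+(b-1))$, which are pairwise distinct and distinct from the roots of $u$ and $\mu(u)$ for generic moduli $a,b,c,d$ (this is the content of the six lines being in general position). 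At each of these four roots and at $u=\infty$ (where one checks the behavior after the standard change of coordinates $u \mapsto 1/u$, taking into account the weights in $\mathbb{P}(1,1,1,3)$), exactly two of the three Weierstrass points $0$, $2u(\mu-\nu)$, $2u(\mu+\nu)$ collide while the third stays away, and $\Delta$ vanishes to order $2$: this gives six fibers of Kodaira type $I_2$. At $u=0$ and at the two roots of $\mu(u)$, the branch structure of~(\ref{six-lines-cover}) forces an additive fiber; I would use Tate's algorithm on~(\ref{eqns_Kummer}) to confirm that at each of the roots of $\mu(u)$ all three Weierstrass points coincide to first order with the node of the cubic, $\Delta$ vanishes to order $6$, and the fiber is $I_0^*$, and separately handle $u=0$ — but here the point is that $u=0$ contributes to the $u^6$ in $\Delta$ yet, after resolving, corresponds only to the two $I_0^*$ fibers being the additive ones, so I must verify that $u=0$ does \emph{not} produce an extra singular fiber on the minimal model (the apparent order-$6$ vanishing at $u=0$ being an artifact of the birational presentation that disappears after minimalization, or else is one of the two $I_0^*$ fibers once one tracks which of the three candidate additive points $\{0\} \cup \{\text{roots of }\mu\}$ are genuine).

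Next I would pin down $\operatorname{MW}$. The three two-torsion sections $X=0$, $X = 2u(\mu-\nu)$, $X = 2u(\mu+\nu)$ are visibly defined over the base, so $(\mathbb{Z}/2\mathbb{Z})^2 \subseteq \operatorname{MW}(\pi^{\mathcal{Y}}_{\mathrm{nat}})$. To show equality it suffices to show the Mordell-Weil rank is zero and that there is no further torsion. For the rank, apply the Shioda-Tate formula: $\rho(\mathcal{Y}) = 2 + \operatorname{rank}\operatorname{MW} + \sum_v (m_v - 1)$, where the sum runs over singular fibers with $m_v$ components. With $\rho(\mathcal{Y}) = 16$, six $I_2$ fibers contributing $6 \times 1 = 6$, and two $I_0^*$ fibers contributing $2 \times 4 = 8$, one gets $16 = 2 + \operatorname{rank}\operatorname{MW} + 14$, hence $\operatorname{rank}\operatorname{MW} = 0$. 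For the torsion, the torsion subgroup injects into the component groups at each fiber; the component group of $I_0^*$ is $(\mathbb{Z}/2\mathbb{Z})^2$ and of $I_2$ is $\mathbb{Z}/2\mathbb{Z}$, so $\operatorname{MW}_{\mathrm{tors}}$ embeds into $(\mathbb{Z}/2\mathbb{Z})^2$, forcing $\operatorname{MW} = \operatorname{MW}_{\mathrm{tors}} = (\mathbb{Z}/2\mathbb{Z})^2$.

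I expect the main obstacle to be the bookkeeping at the special points $u=0$ and the roots of $\mu(u)$: one must rule out that the birational substitution producing~(\ref{eqns_Kummer}) from~(\ref{six-lines-cover}) introduces or hides singular fibers, and correctly match the ``$u^6\mu(u)^2$'' part of the discriminant to exactly two $I_0^*$ fibers on the minimal resolution rather than three additive fibers or an additive fiber plus something smaller. Concretely, I would resolve the double cover~(\ref{six-lines-cover}) over a neighborhood of each relevant point and compare with the Kodaira type predicted by Tate's algorithm applied to the minimal Weierstrass model obtained from~(\ref{eqns_Kummer}) after clearing denominators and performing admissible transformations; alternatively, I would invoke Kloosterman's classification~\cite{MR2254405}, in which this fibration appears, to cross-check the fiber configuration. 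Once the configuration of six $I_2$'s and two $I_0^*$'s is established, the rest is the routine Shioda-Tate plus component-group argument above, and the lemma follows immediately — indeed, as the paper says, it is then immediate.
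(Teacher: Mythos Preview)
Your overall strategy—Tate's algorithm for the fiber types, then Shioda–Tate plus the component-group injection for $\operatorname{MW}$—is exactly what makes the lemma ``immediate'' (the paper offers no further proof). But you misplace the $I_0^*$ fibers. They sit at $u=0$ and $u=\infty$, not at the roots of $\mu$. At $u=0$ all three Weierstrass roots $0$, $2u(\mu-\nu)$, $2u(\mu+\nu)$ are multiples of $u$ and collapse together; one finds $v(c_4)=2$, $v(c_6)=3$, $v(\Delta)=6$, hence $I_0^*$. At $u=\infty$ the degrees of $a_2=-4u\mu$ and $a_4=4u^2(\mu^2-\nu^2)$ are $3$ and $6$ (short of the $4$ and $8$ required for minimality), and the same vanishing orders recur after the change of chart, giving the second $I_0^*$. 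So $u=0$ is not an artifact to be removed, and $u=\infty$ is not an $I_2$.

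The six $I_2$'s are then the four roots of $\mu^2-\nu^2$ (where one of $2u(\mu\pm\nu)$ meets $0$) together with the two roots of $\nu$ (where $2u(\mu-\nu)$ and $2u(\mu+\nu)$ coincide away from $0$). At a root $u_0$ of $\mu$ the three Weierstrass roots are $0$, $\pm 2u_0\nu(u_0)$, which are distinct, so the fiber there is smooth and $\Delta$ does not vanish. (Indeed the printed discriminant carries a typo: since $A-B=-4u\nu$, the middle factor should be $\nu(u)^2$, not $\mu(u)^2$; this is likely what misled you about the roots of $\mu$.) With the locations corrected, your Shioda–Tate count $16=2+\operatorname{rank}\operatorname{MW}+6+8$ and the torsion bound via injection into the $(\mathbb{Z}/2\mathbb{Z})^2$ component group of an $I_0^*$ fiber go through unchanged and finish the proof.
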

One checks that the Weierstrass model in Equation~(\ref{eqns_Kummer}) is minimal if and only if the configuration of six lines falls into cases (0) through (5) in Definition~\ref{def:sstable}, but not into case (6). In the latter case, the singularities of  Equation~(\ref{eqns_Kummer}) are not canonical singularities.  The following proposition was given in \cite{MR1877757}:
\begin{proposition}[\cite{MR1877757}]
\label{lem_6lines}
For generic parameters $a, b, c, d$, the K3 surface $\mathcal{Y}$  has the transcendental lattice $T_{\mathcal{Y}} \cong H(2) \oplus H(2) \oplus \langle -2 \rangle^{\oplus 2}$.
\end{proposition}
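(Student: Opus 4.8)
The plan is to pin down $\operatorname{NS}(\mathcal{Y})$ for generic $a,b,c,d$ and then recover $T_{\mathcal{Y}}$ as its orthogonal complement in the K3 lattice $H^{\oplus 3}\oplus E_8(-1)^{\oplus 2}$, using Nikulin's theory of $2$-elementary lattices for the final identification. The double-cover involution $\iota\colon z_4\mapsto -z_4$ on $\mathcal{Y}$ is anti-symplectic, and its fixed locus is the ramification divisor: after resolving the fifteen nodes of the branch curve $\ell_1\cdots\ell_6=0$, this is the disjoint union $R_1\sqcup\dots\sqcup R_6$ of the proper transforms of the six lines, i.e.\ six mutually disjoint smooth rational curves. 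First I would check that for generic parameters $\rho(\mathcal{Y})=16$, so that $\operatorname{NS}(\mathcal{Y})$ coincides with the invariant lattice $H^2(\mathcal{Y},\mathbb{Z})^{\iota}$: applying Shioda--Tate to the natural fibration of Lemma~\ref{lem:EllLeft} gives $\rho(\mathcal{Y})=2+6\cdot 1+2\cdot 4+0=16$, once one verifies that the discriminant $\Delta_{\mathcal{Y}}(u)$ generically contributes no further reducible fibres and, as in Lemma~\ref{lem:EllLeft}, $\operatorname{MW}(\pi^{\mathcal{Y}}_{\mathrm{nat}})$ is finite; alternatively this follows from the identification in \cite{MR1136204} of the period domain of the family with a four-dimensional symmetric domain of type $IV$.

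Second, granted $\operatorname{NS}(\mathcal{Y})=H^2(\mathcal{Y},\mathbb{Z})^{\iota}$, I would compute its Nikulin invariants $(r,a,\delta)$. Since $\iota$ fixes exactly six disjoint rational curves, Nikulin's formulas relating the fixed locus of an anti-symplectic involution to the invariant lattice give $r=16$ and $a=6$; computing the discriminant quadratic form explicitly---for example by realizing $\operatorname{NS}(\mathcal{Y})$ as the overlattice of the trivial lattice $H\oplus A_1^{\oplus 6}\oplus D_4^{\oplus 2}$ of the natural fibration by the order-$4$ torsion Mordell--Weil group, so that $\lvert \det \operatorname{NS}(\mathcal{Y}) \rvert=2^{10}/2^{4}=2^{6}$---shows $\delta=1$. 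Hence $\operatorname{NS}(\mathcal{Y})$ is an even $2$-elementary lattice of signature $(1,15)$ with $(r,a,\delta)=(16,6,1)$.

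Finally, $T_{\mathcal{Y}}=\operatorname{NS}(\mathcal{Y})^{\perp}$ is even of signature $(2,4)$ with discriminant form $q_{T_{\mathcal{Y}}}=-q_{\operatorname{NS}(\mathcal{Y})}$, hence $2$-elementary with $(r,a,\delta)=(6,6,1)$. The lattice $H(2)\oplus H(2)\oplus\langle-2\rangle^{\oplus 2}$ is likewise even, of signature $(2,4)$, $2$-elementary with discriminant group $(\mathbb{Z}/2\mathbb{Z})^{6}$, and has $\delta=1$ because each summand $\langle-2\rangle$ contributes the non-integral value $-\tfrac12$ to the discriminant form; so it realizes the triple $(6,6,1)$ as well. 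By Nikulin's classification of even $2$-elementary lattices, which in the indefinite case are determined by their signature and the triple $(r,a,\delta)$, the two lattices are isometric, which is the assertion. I expect the main obstacle to be the first step---showing that the Picard number does not jump for generic parameters, equivalently that $\iota$ is a ``generic'' anti-symplectic involution---after which the computation of $(r,a,\delta)$ and the appeal to Nikulin's classification are essentially bookkeeping.
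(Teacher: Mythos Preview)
Your argument is correct, and in fact the paper offers no proof of this proposition at all: it is simply attributed to \cite{MR1877757}. So you are supplying a complete argument where the paper only gives a citation.

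The route via Nikulin's classification of $2$-elementary lattices is the natural one and works as you describe. Two remarks sharpen it.

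First, you do not need to compute $\delta$ separately. Once the fixed locus of the covering involution is identified as six disjoint smooth rational curves, Nikulin's formulas give $r+a=22$ and $r-a=10$, hence $(r,a)=(16,6)$ for the invariant lattice and $(r,a)=(6,6)$ for $T_{\mathcal{Y}}$. But for an even $2$-elementary lattice of signature $(2,4)$ with $a=r=6$, Nikulin's existence conditions force $\delta=1$: the case $\delta=0$ would require $t_{+}-t_{-}\equiv 0\pmod 4$, and $2-4=-2$ fails this. Equivalently, $a=r$ means $T_{\mathcal{Y}}\cong L(2)$ for a unimodular lattice $L$ of signature $(2,4)$; since $2-4\not\equiv 0\pmod 8$ there is no even such $L$, so $L\cong I_{2,4}$ and $T_{\mathcal{Y}}\cong\langle 2\rangle^{\oplus 2}\oplus\langle -2\rangle^{\oplus 4}\cong H(2)^{\oplus 2}\oplus\langle -2\rangle^{\oplus 2}$. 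So the overlattice computation you sketch for $\delta$ is unnecessary.

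Second, on the genericity step you rightly flag: Shioda--Tate applied to the natural fibration yields $\rho\ge 16$ unconditionally, while the Nikulin computation gives $r=16$ for the invariant lattice $S\subset\operatorname{NS}(\mathcal{Y})$; these together do not yet give $\rho=16$. The cleanest way to close this is exactly the one you mention: the family has four effective moduli (the paper exhibits $\mathfrak{M}$ as four-dimensional), so the period map has four-dimensional image and hence $\operatorname{rk}T_{\mathcal{Y}}\ge 6$, i.e.\ $\rho\le 16$, for generic parameters. Alternatively one may invoke Kloosterman's classification \cite{MR2254405}, which the paper already cites, to see that $\operatorname{MW}(\pi^{\mathcal{Y}}_{\mathrm{nat}})$ has rank zero generically.
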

\begin{remark}
\label{rem:GKZ}
We choose $z_1=1$ as an affine chart for Equation~(\ref{six-lines-cover}) with lines given by Equations~(\ref{lines}), and the holomorphic two-form $dz_2 \wedge dz_3/z_4$. After relabelling variables, the period of the holomorphic two-form for the family of K3 surfaces $\mathcal{Y}(a,b,c,d)$ over a transcendental two-cycle $\Sigma \in T_{\mathcal{Y}}$ is given by
\begin{equation}
\label{GKZperiod}
 \iint_\Sigma \; \frac{dz_1}{\sqrt{z_1}} \;  \frac{dz_2}{\sqrt{z_2}} \; \frac{1}{\sqrt{(1+z_1+z_2)(1+a z_1+b z_2)(1+c z_1+d z_2)}} \;.
\end{equation}
We set
\begin{equation}
\begin{split}
 \alpha&=(\alpha_1,\alpha_2,\alpha_3)=\left(-\frac{1}{2}, -\frac{1}{2},-\frac{1}{2}\right), \quad \beta=(\beta_1,\beta_2)=\left(-\frac{1}{2},-\frac{1}{2}\right) 
\end{split}
\end{equation}
and
\begin{equation}
P_1 = 1+z_1+z_2, \quad P_2=1+a z_1+b z_2, \quad P_3=1+c z_1+d z_2.
\end{equation}
We observe that $\forall \, i:  \alpha_i \not \in \mathbb{Z}$, $\forall \, j:  \beta_j \not \in \mathbb{Z}$,  and $\sum_{i} \alpha_i + \sum_{j} \beta_j\not \in \mathbb{Z}$, and write the periods in the form
\begin{equation}
 F_{\Sigma}\Big(\alpha,\beta,\{P_i\}\Big|\, a,b,c,d\Big)=\iint_\Sigma \; \frac{dz_1}{z_1^{-\beta_1}} \;  \frac{dz_2}{z_2^{-\beta_2}}  \; \prod_{i=1}^{3}  \, P_i^{\alpha_i}\;.
\end{equation}
This identifies the periods as so called $\mathcal{A}$-hypergeometric functions that satisfy a system of linear differential equations known as non-resonant GKZ system \cite{MR1080980}. The particular GKZ system satisfied by the periods in Equation~(\ref{GKZperiod}) is a system of differential equations of rank six in four variables known as Aomoto-Gel'fand Hypergeometric System of Type $(3, 6)$. It was studied in great detail in \cites{MR1204828,MR1136204, MR1233442, MR1267602, MR2683512}. 
\par A basis of the solutions $F_1, \dots F_6$ defines a map from the Grassmanian $\operatorname{Gr}(3,6;\mathbb{C})$ into the projective space $\mathbb{P}^5$, more precisely into the domain in $\mathbb{P}^5$ cut out by the Hodge-Riemann relations. The period map is equivariant with respect to the action of $\Gamma_{\mathcal{T}}(1+i)$ on the domain and the monodromy group on the image. In fact, the map $[F_1 : \dots : F_6]\in \mathbb{P}^5$ is a multi-valued vector function from the moduli space $\mathfrak{M}(2)$ to the period domain acted upon by a monodromy group moving the branching locus of six lines around. The map $\mathcal{F}$ in Equation~(\ref{eqn:map1b}) is the inverse of this period mapping. 
\par We obtain a new map by quotienting further by the permutation group $\mathrm{S}_6$. The monodromy group is then generated by reflections and transformations caused by permuting the lines in a six-line configuration. The resulting map is the period map for a family of K3 surfaces $\mathcal{X}$ closely related to the K3 surfaces $\mathcal{Y}$ discussed  in Section~\ref{sec:VGSP}.
\end{remark}
We have the following:
\begin{corollary}
Switching the roles of base and fiber in Equation~(\ref{eqns_Kummer}) defines a second Jacobian elliptic fibration $\check{\pi}^{\mathcal{Y}}_{\mathrm{nat}}: \mathcal{Y} \to \mathbb{P}^1$ with 12 singular fibers of type $I_1$, a fiber of type $I_4$, a fiber of type $I_8$, and $\operatorname{MW}(\check{\pi}^{\mathcal{Y}}_{\mathrm{nat}})_{\mathrm{tor}}=\{ \mathbb{I} \}$ and $\operatorname{rk} \operatorname{MW}(\check{\pi}^{\mathcal{Y}}_{\mathrm{nat}})=4$ and $\det\operatorname{discr} \operatorname{MW}(\check{\pi}^{\mathcal{Y}}_{\mathrm{nat}})=1/2$.
\end{corollary}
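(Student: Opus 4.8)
The plan is to deduce the statement from the explicit Weierstrass equation~(\ref{eqns_Kummer}) of the natural fibration, whose coefficients~(\ref{coeffs}) are products of linear forms in the base coordinate $u$. First I would carry out the base--fiber exchange: introducing a new base coordinate $v$ built from the fiber coordinate $X$ and a power of $u$, and clearing denominators, turns~(\ref{eqns_Kummer}) into a pencil of genus-one curves over $\mathbb{P}^1_v$. Its total space is birational to $\mathcal{Y}$, hence --- both being K3 surfaces --- isomorphic to it; the resulting fibration $\check{\pi}^{\mathcal{Y}}_{\mathrm{nat}}$ carries a section (it occurs among the Jacobian fibrations classified by Kloosterman~\cite{MR2254405}), so it is Jacobian, and I would bring it to a minimal Weierstrass model $Y^2 = X^3 + A(v)\,X + B(v)$ with $A,B \in \mathbb{Q}(a,b,c,d)[v]$.

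Next I would compute the discriminant $\Delta(v) = -16\bigl(4A(v)^3 + 27B(v)^2\bigr)$ and the $j$-invariant as explicit rational functions of $v$ and run Tate's algorithm at every zero of $\Delta$ and at $v=\infty$ for generic $a,b,c,d$. I expect twelve simple zeros of $\Delta$, each giving a fiber of type $I_1$, together with two distinguished base points where $\Delta$ vanishes to higher order while $j$ still has a pole; a local computation of $\bigl(\operatorname{ord}(\Delta),\operatorname{ord}(j)\bigr)$ there should identify these as one fiber of type $I_4$ and one of type $I_8$. The Euler-number identity $12\cdot 1 + 4 + 8 = 24$ then certifies that the list of singular fibers is complete, and the configuration already shows $\check{\pi}^{\mathcal{Y}}_{\mathrm{nat}}$ to be distinct from the fibration $\pi^{\mathcal{Y}}_{\mathrm{nat}}$ of Lemma~\ref{lem:EllLeft}. \emph{I expect this step to be the main obstacle}: after the base--fiber exchange the image of a two-torsion section of the natural fibration collapses onto (the reduction of) one of the two special fibers, so the naive equation is non-reduced there, and the blow-ups must be carried out carefully to confirm that the two special fibers are the multiplicative types $I_4$ and $I_8$ --- rather than some additive degeneration --- and that for generic moduli they do not degenerate further.

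Finally I would determine the Mordell--Weil group. Since $\mathcal{Y}$ has Picard rank $16$ (Proposition~\ref{lem_6lines}), the Shioda--Tate formula yields $\operatorname{rk}\operatorname{MW}(\check{\pi}^{\mathcal{Y}}_{\mathrm{nat}}) = 16 - 2 - \bigl((4-1)+(8-1)\bigr) = 4$. For the torsion, Shioda's height formula gives, for any section $P$, $\hat h(P) = 2\chi(\mathcal{O}_{\mathcal{Y}}) + 2(P\cdot O) - \sum_v \operatorname{contr}_v(P) = 4 + 2(P\cdot O) - \sum_v \operatorname{contr}_v(P)$ (using $\chi(\mathcal{O}_{\mathcal{Y}})=2$ for a K3 surface), whereas the local contributions are bounded by $0$ at each $I_1$ fiber, by $1$ at the $I_4$ fiber and by $2$ at the $I_8$ fiber; a torsion section would therefore satisfy $0 = \hat h(P) \ge 4 - 3 = 1$, which is impossible, so $\operatorname{MW}(\check{\pi}^{\mathcal{Y}}_{\mathrm{nat}})_{\mathrm{tor}}$ is trivial. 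The discriminant of the height pairing then follows from Shioda's relation $|\operatorname{disc}\operatorname{NS}(\mathcal{Y})| = |\operatorname{disc}\operatorname{Triv}|\cdot|\operatorname{disc}\operatorname{MWL}| / (\#\operatorname{MW}_{\mathrm{tor}})^{2}$: the trivial lattice $\operatorname{Triv}\cong U\oplus A_3\oplus A_7$ has $|\operatorname{disc}| = 1\cdot 4\cdot 8 = 32$, and since $\operatorname{NS}(\mathcal{Y})$ and $T_{\mathcal{Y}}$ are orthogonal complements in the unimodular lattice $H^2(\mathcal{Y},\mathbb{Z})$, Proposition~\ref{lem_6lines} gives $|\operatorname{disc}\operatorname{NS}(\mathcal{Y})| = |\operatorname{disc}\bigl(H(2)\oplus H(2)\oplus\langle -2\rangle^{\oplus 2}\bigr)| = 4\cdot 4\cdot 2\cdot 2 = 64$; hence the rank-four Mordell--Weil lattice has discriminant $2$, equivalently $\det\operatorname{discr}\operatorname{MW}(\check{\pi}^{\mathcal{Y}}_{\mathrm{nat}}) = 1/2$. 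As an independent check one could instead exhibit four explicit sections --- images of sections of the natural fibration and of their fiberwise translates --- and compute their height matrix directly.
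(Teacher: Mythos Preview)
Your approach is correct and follows the same overall strategy as the paper: produce an explicit Weierstrass model for the base--fiber exchange, read off the singular fibers, and then feed the lattice data of $\mathcal{Y}$ into the Shioda--Tate machinery to pin down $\operatorname{MW}$. The two proofs differ mainly in where they invest their effort.

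On the coordinate change, the paper is more concrete than you plan to be. Rather than setting up a generic new base parameter and then chasing blow-ups at the non-reduced fibers (the step you flag as the obstacle), it first rewrites Equation~(\ref{eqns_Kummer}) in the equivalent biquadratic form $\eta^2 = \nu(u)^2\xi^4 + 2u\,\mu(u)\,\xi^2 + u^2$, and then observes that as a quartic in $u$ this already has the visible rational point $(u,\eta)=(0,E(\xi))$. The standard quartic-to-cubic formulas then give a global Weierstrass model $y^2=4x^3-g_2(\xi)x-g_3(\xi)$ with $g_2,g_3$ written out explicitly, so the Tate analysis is entirely routine and the delicate blow-up you anticipate never arises. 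This is a cleaner route to the same Weierstrass equation you would eventually reach.

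Conversely, your treatment of the Mordell--Weil group is more detailed than the paper's. The paper simply asserts $\operatorname{MW}_{\mathrm{tor}}=\{\mathbb{I}\}$ after exhibiting the fibration, and then derives the remaining numerical invariants from $\det(\operatorname{discr}T_{\mathcal{Y}})=2^6$ and the root-lattice determinant $2^5$, leaving the reader to unpack Shioda--Tate. Your height-pairing argument for the absence of torsion (local contributions bounded by $1+2=3<4=2\chi$) and your explicit Shioda--Tate count for the rank make this step self-contained.
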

\noindent
The elliptic fibration appears in the list of all Jacobian elliptic fibrations in \cite{MR2254405}. We call this fibration the \emph{base-fiber-dual} of the natural fibration.
\begin{proof}
In~\cite{MalmendierClingher:2018} it was proved that a Weierstrass model of the form given by Equation~(\ref{eqns_Kummer}) is equivalent to the genus-one fibration
\begin{equation}
 \eta^2 = \nu(u)^2 \xi^4 + 2 \, u \, \mu(u) \, \xi^2 + u^2,
\end{equation}
with one apparent rational point. Since $\mu, \nu$ are given as polynomials in $u$ in Equation~(\ref{coeffs}), the equation can be rewritten as
\begin{equation}
 \eta^2 = A(\xi) \, u^4 + B(\xi) \, u^3 + C(\xi) \, u^2 + D(\xi) \, u + E(\xi)^2,
\end{equation}
where $A, B, C, D, E$ are polynomials in $\xi$. Because there always is the rational point $(u,\eta)=(0,E(\xi))$, it can be brought into the Weierstrass form
\begin{equation}
\label{fib1}
 y^2 = 4 \, x^3 - g_2(\xi) \, x - g_3(\xi),
\end{equation}
with 
\begin{equation}
\begin{split}
 g_2 & = \frac{16}{3} \, C(\xi)^2+64 \, A(\xi) \, E(\xi)^2-16\, B(\xi) \, D(\xi),\\
 g_3 & = -\frac{64}{27} \, C(\xi)^3+\frac{256}{3} A(\xi) \, C(\xi)\, E(\xi)^2+\frac{32}{3} B(\xi) \, C(\xi) \, D(\xi)\\
 & \qquad-32\ A(\xi) \, D(\xi)^2-32\, B(\xi)^2 \, E(\xi)^2.
\end{split}
\end{equation}
This is a Weierstrass model with 12 singular fibers of type $I_1$, a fiber of type $I_4$, a fiber of type $I_8$, and $\operatorname{MW}(\check{\pi}^{\mathcal{Y}}_{\mathrm{nat}})_{\mathrm{tor}}=\{ \mathbb{I} \}$. In Proposition~\ref{lem_6lines} we showed that the transcendental lattice of the K3 surfaces $\mathcal{Y}$ has rank six and is given by
\begin{equation}
\begin{split}
 T_{\mathcal{Y}}& \cong H(2) \oplus H(2) \oplus \langle -2 \rangle^{\oplus 2}.
\end{split}
\end{equation}
Therefore, the determinant of the discriminant group for the rank-six lattice $T_{\mathcal{Y}}$ is $\det(\operatorname{discr}T_{\mathcal{Y}})=2^6$. The root lattice associated with the singular fibers in Equation~(\ref{fib1}) has rank ten and determinant $2^5$. The claim follows.
\end{proof}
\subsection{The alternate fibration}
In the list of all possible fibrations on the K3 surface $\mathcal{Y}$ associated with the double cover branched along the union of six lines given in \cite{MR2254405} we find the following fibration which we call the \emph{alternate} fibration:
\begin{corollary}[\cite{MR2254405}]
\label{cor:Kum_alternate}
On the K3 surface $\mathcal{Y}$ there is a Jacobian elliptic fibration $\pi^{\mathcal{Y}}_{\mathrm{alt}}: \mathcal{Y} \to \mathbb{P}^1$ with six singular fibers of type $I_2$,  two singular fibers of type $I_1$, one singular fiber of type $I_4^*$, and the Mordell-Weil group of sections $\operatorname{MW}(\pi^{\mathcal{Y}}_{\mathrm{alt}})=\mathbb{Z}/2\mathbb{Z}$.
\end{corollary}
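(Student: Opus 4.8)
The existence of a Jacobian elliptic fibration on $\mathcal{Y}$ with singular fibres $6\,I_2+2\,I_1+I_4^{*}$ and $\operatorname{MW}=\mathbb{Z}/2\mathbb{Z}$ is part of Kloosterman's classification in \cite{MR2254405}; what the present article actually needs is an explicit Weierstrass model, and the plan is to produce one from the natural fibration of Lemma~\ref{lem:EllLeft} by a two-neighbor step. First I would fix, inside $\operatorname{NS}(\mathcal{Y})$, the class $F_{\mathrm{nat}}$ of the natural fibre, the zero-section $O$, the three two-torsion sections coming from the Weierstrass points in Equation~(\ref{eqns_Kummer}), and the two $\widetilde{D}_4$ configurations formed by the components of the $I_0^{*}$ fibres over $u=0$ and $u=\infty$. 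The alternate fibre class $F_{\mathrm{alt}}$ is characterised by $F_{\mathrm{nat}}\cdot F_{\mathrm{alt}}=2$; the key is to exhibit it as a primitive, effective, square-zero class that fuses the two copies of $\widetilde{D}_4$ into one $\widetilde{D}_8$, by running a chain of curves --- through $O$ and one of the two-torsion sections --- that links a near-component of the $u=0$ fibre to a near-component of the $u=\infty$ fibre. The fibre of the new pencil supported on this $\widetilde{D}_8$ is then of type $I_4^{*}$, the six $I_2$ fibres of the natural model survive untouched, and the Euler-number balance $2\,e(I_0^{*})=e(I_4^{*})+2\,e(I_1)$, i.e., $12=10+2$, forces the two remaining fibres to be nodal of type $I_1$.

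To make the hop explicit I would compute the pencil $|F_{\mathrm{alt}}|$ in terms of the Weierstrass coordinates $(X,Y,u)$ of Equation~(\ref{eqns_Kummer}): the ratio of a basis of $H^{0}(\mathcal{Y},\mathcal{O}_{\mathcal{Y}}(F_{\mathrm{alt}}))$ supplies the new base parameter $t$ as a rational function of $(X,Y,u)$, and inverting this relation expresses $(X,Y,u)$ in terms of $t$ and a fibre coordinate. Since the new fibration inherits a two-torsion section, its Weierstrass equation can be put in the form $y^{2}=x\bigl(x^{2}+p(t)\,x+q(t)\bigr)$, with $p,q$ explicit polynomials in $t$ (of degrees at most $4$ and $8$) whose coefficients are polynomials in $a,b,c,d$. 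Its discriminant is $\Delta=2^{4}\,q(t)^{2}\bigl(p(t)^{2}-4\,q(t)\bigr)$, and I would verify the fibre types by Tate's algorithm: the six simple zeros of $q$ at which $p\neq 0$ give the six $I_2$ fibres, the two simple zeros of $p^{2}-4q$ that are not zeros of $q$ give the two $I_1$ fibres, and at the common zero $t_0$ of $p$ and $q$ --- where $v_{t_0}(p)=1$, $v_{t_0}(q)=2$, so that $v_{t_0}(p^{2}-4q)=6$ and $v_{t_0}(\Delta)=10$, together with $v_{t_0}(c_4)=2$ --- the fibre is of type $I_4^{*}$. Finally, by Shioda--Tate the Mordell--Weil rank is zero, since the root lattice $A_1^{\oplus 6}\oplus D_8$ attached to the reducible fibres already has rank $14=\rho(\mathcal{Y})-2$, with $\rho(\mathcal{Y})=16$ by Proposition~\ref{lem_6lines}; the torsion contains the visible section $(x,y)=(0,0)$, and it cannot be larger than $\mathbb{Z}/2\mathbb{Z}$ because the remaining two two-torsion points are rational only if $p^{2}-4q$ is a square in $\mathbb{C}(t)$, which a direct check rules out. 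Hence $\operatorname{MW}(\pi^{\mathcal{Y}}_{\mathrm{alt}})=\mathbb{Z}/2\mathbb{Z}$.

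The main obstacle is the first, geometric, step: correctly identifying the effective $\widetilde{D}_8$ configuration among $O$, the two-torsion sections, and the $I_0^{*}$ components, and then writing $|F_{\mathrm{alt}}|$ in closed form --- once this is in hand, the remainder is a finite, if lengthy, computation. The delicate point of the verification is Tate's algorithm at the additive fibre, where the orders of vanishing of $(c_4,c_6,\Delta)$ at $t_0$ must be pinned down exactly as $(2,3,10)$ in order to rule out $I_m^{*}$ with $m\neq 4$ as well as the exceptional additive fibres $III^{*},IV^{*},II^{*}$. An alternative to the two-neighbor step would be to realise $\pi^{\mathcal{Y}}_{\mathrm{alt}}$ directly from the double-sextic geometry, for instance as the pencil of conics through a suitable subset of the fifteen intersection points of the six lines; this might yield more symmetric formulas but would require the same fibre-type bookkeeping at the end.
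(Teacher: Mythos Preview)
Your opening sentence is the paper's entire proof of this corollary: it is stated without argument, as a direct quotation from Kloosterman's classification~\cite{MR2254405}. Everything after that in your proposal is really a blueprint for Theorem~\ref{thm:6lines_alt}, where the explicit Weierstrass model is produced; the paper also separates existence (this corollary, by citation) from construction (the theorem that follows).

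On that further construction, there is one genuine difference worth flagging. You propose a \emph{single} 2-neighbor hop from the natural fibration, fusing the two $I_0^{*}$ fibres into a $\widetilde{D}_8$. The paper instead applies the 2-neighbor procedure \emph{twice}, through an intermediate fibration, with the details deferred to~\cite{ClingherDoran:2019}. Your single hop is geometrically sound --- the $\widetilde{D}_8$ built from the two $I_0^{*}$ centrals, two outer components of each, and the zero section $O$ in the middle (multiplicity~$2$) does give $F_{\mathrm{nat}}\cdot F_{\mathrm{alt}}=2$ --- so this is a legitimate alternative; the paper's two-step route is presumably chosen for computational convenience rather than necessity. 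Two small corrections to your sketch: the $\widetilde{D}_8$ needs only $O$, not an additional two-torsion section; and in the paper's final model~(\ref{eqns_Kummer_alt}) the $I_4^{*}$ sits at $t=\infty$ (with $\deg\mathcal{B}=3$, $\deg(\mathcal{B}^{2}-4\mathcal{A})=6$), not at a finite common zero of $p$ and $q$ --- your Tate orders $(v(c_4),v(c_6),v(\Delta))=(2,3,10)$ are correct but are realised at infinity.
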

The alternate fibration on the K3 surface $\mathcal{Y}$ can be obtained explicitly from Equation~(\ref{six-lines-cover}). In fact, we obtained the Weierstrass model for this fibration using a 2-neighbor-step procedure applied twice, a method explained in \cite{MR3263663}, starting with the natural fibration.  The details of this computation will be published in another forthcoming article \cite{ClingherDoran:2019}. We have the following:
\begin{theorem}
\label{thm:6lines_alt}
The K3 surface $\mathcal{Y}$ associated with the double cover branched along six lines admits the Jacobian elliptic fibration $\pi^{\mathcal{Y}}_{\mathrm{alt}}: \mathcal{Y} \to \mathbb{P}^1$ with the Weierstrass equation
\begin{equation}
\label{eqns_Kummer_alt}
  Y^2   = X \Big(X^2 - 2 \, \mathcal{B}(t)  X + \mathcal{B}(t)^2 - 4 \, \mathcal{A}(t) \Big), 
\end{equation} 
with discriminant
\begin{equation}
\label{eqns_Kummer_alt_discr}
   \Delta_{\mathcal{Y}}(t) = 16 \, \mathcal{A}(t) \, \Big(\mathcal{B}(t)^2 - 4 \, \mathcal{A}(t)\Big)^2 ,
\end{equation} 
and
\begin{equation}
\begin{split}
 \mathcal{B}(t)=t^3- 3 \, J_2 \, t - 2 \, J_3, \ & \qquad 
 \mathcal{A}(t) = J_4 \, t^2 - J_5 \, t  + J_6,
 \end{split}
 \end{equation}
where $J_k$ for $k=2, \dots, 6$ are the invariants of the configuration of six lines defined in Equations~(\ref{Jinvariants}).
\end{theorem}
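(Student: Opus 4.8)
The plan is to obtain $\pi^{\mathcal{Y}}_{\mathrm{alt}}$ from the natural fibration of Equation~(\ref{eqns_Kummer}) by a short chain of $2$-neighbor steps, and then to identify the resulting Weierstrass coefficients with the invariants $J_k$ by a computation in the moduli $a,b,c,d$ of Equation~(\ref{lines}).

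First I would use Lemma~\ref{lem:EllLeft}: the natural fibration has reducible fibers $2\,I_0^*+6\,I_2$ and full level-two Mordell--Weil group, so $\mathrm{NS}(\mathcal{Y})$ is generated by the class $F_{\mathrm{nat}}$ of a general fiber, the zero section, the three two-torsion sections, and the components of the two $I_0^*$ fibers and the six $I_2$ fibers. Inside this lattice I would locate an effective divisor $D_1$ of arithmetic genus one with $D_1\cdot F_{\mathrm{nat}}=2$, assembled from components of one $I_0^*$ fiber together with the zero section and a two-torsion section; then $|D_1|$ is a new genus-one pencil, and a second $2$-neighbor step from $|D_1|$ (now involving the remaining $I_0^*$ fiber and the $I_2$ fibers) lands on the divisor class whose reducible fibers are $I_4^*+6\,I_2$, i.e.\ the alternate fibration of Corollary~\ref{cor:Kum_alternate}. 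Each step is carried out concretely by expressing a basis of $H^0\bigl(\mathcal{Y},\mathcal{O}(D_i)\bigr)$ as rational functions of the Weierstrass coordinates $(X,Y,u)$, using these as the new coordinates, eliminating the old variables from Equation~(\ref{eqns_Kummer}), and Weierstrass-reducing; this is the method of~\cite{MR3263663}, with the computation to appear in~\cite{ClingherDoran:2019}. The outcome is an equation $Y^2=X\bigl(X^2+P(t)X+Q(t)\bigr)$ with $P,Q$ polynomials in $t$ of degree at most $3$ and $6$ respectively and with the two-torsion section $X=0$ generating $\operatorname{MW}(\pi^{\mathcal{Y}}_{\mathrm{alt}})_{\mathrm{tor}}$.

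It then remains to identify $P$ and $Q$. Substituting Equations~(\ref{compare1}) and then Equations~(\ref{Satake2Theta})--(\ref{eqn:Jinvariants}) rewrites $P$ and $Q$ as polynomials in $J_2,\dots,J_6$; after normalizing the base so that the $I_4^*$ fiber lies over $t=\infty$ and rescaling $(X,Y,t)$, one checks that $P(t)=-2\,\mathcal{B}(t)$ and $Q(t)=\mathcal{B}(t)^2-4\,\mathcal{A}(t)$ with $\mathcal{B},\mathcal{A}$ the Satake polynomials of Equation~(\ref{SatakeSextic2}). The discriminant is then $\Delta_{\mathcal{Y}}(t)=16\,\mathcal{A}(t)\bigl(\mathcal{B}(t)^2-4\,\mathcal{A}(t)\bigr)^2$ as in Equation~(\ref{eqns_Kummer_alt_discr}); since $\mathcal{B}^2-4\mathcal{A}=\mathcal{S}$ has six simple roots (giving six $I_2$ fibers, as $X=0$ meets each node), $\mathcal{A}$ has two simple roots (two $I_1$ fibers), and $\operatorname{ord}_{t=\infty}\Delta=10$ with Tate type $I_4^*$, the Euler numbers $6\cdot2+2\cdot1+10=24$ and the rank equality $\operatorname{rk}(D_8\oplus A_1^{\oplus6})+2=16=\rho(\mathcal{Y})$ show there is nothing else, recovering the fiber data and $\operatorname{MW}=\mathbb{Z}/2\mathbb{Z}$ of Corollary~\ref{cor:Kum_alternate}.

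I expect the main obstacle to be the first half: choosing the correct intermediate $2$-neighbor classes $D_1,D_2$ among the many arithmetic-genus-one divisors in $\mathrm{NS}(\mathcal{Y})$ and pushing the elimination through, and then the bookkeeping that recognizes the transformed coefficients as $\mathcal{A}$ and $\mathcal{B}$ --- the latter being a finite but sizeable polynomial identity in $a,b,c,d$ that is cleanest to verify with computer algebra and is precisely what links the alternate fibration to the invariants of Section~\ref{ssec:mod_forms}.
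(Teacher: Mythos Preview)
Your plan is correct and matches the paper's own proof: both obtain the alternate fibration by applying the $2$-neighbor step twice to the natural fibration of Equation~(\ref{eqns_Kummer}), produce Weierstrass coefficients $J'_k$ as explicit polynomials in the affine moduli $a,b,c,d$ (the paper records these in the appendix as Equations~(\ref{QP_inv})), and then verify by direct comparison that $J'_k=J_k$ for $2\le k\le 6$. Your added fiber-type bookkeeping at the end is a consistency check that the paper omits but which changes nothing in the argument.
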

\begin{proof}
The invariants $J_2, J_3, J_4, J_5, J_6$ defined by Equations~(\ref{Jinvariants}) are determined by the symmetric polynomials in terms of the five degree-one invariants $t_1, t_5, t_6, t_7, t_8$ which in turn are given in terms of the affine moduli $a, b, c, d$ using Equations~(\ref{compare1}). These  affine moduli $a, b, c, d$ were defined by arranging the six lines to be in the form of Equation~(\ref{lines}). On the other hand, the 2-neighbor-step procedure, when applied twice, gives the Weierstrass model in Equation~(\ref{eqns_Kummer_alt}) with 
\begin{equation}
 \mathcal{B}(t)=t^3-3\, J'_2 \, t - 2\, J'_3, \qquad
 \mathcal{A}(t) = J'_4 \, t^2 - J'_5 \, t  + J'_6.
 \end{equation}
We computed the coefficients $J'_i$ following the same procedure as the one outlined in \cite{MR3263663} for a general Kummer surface using a computer algebra system. At the end of the computation, the coefficients $J'_i$ for $2 \le 6$ are obtained directly in terms of the affine moduli $a, b, c, d$. The resulting expressions for the coefficients are then given in Equations~(\ref{QP_inv}). One easily checks that $J_i=J'_i$ for $2 \le i \le 6$.
\end{proof}
\section{The Van~Geemen-Sarti partner}
\label{sec:VGSP}
To extend the notion of geometric two-isogeny to Picard rank 16, we replaced the Kummer surfaces by the K3 surface $\mathcal{Y}$ associated with the double cover branched along the union of six lines discussed in Section~\ref{sec:6lines}. The Shioda-Inose surface is now replaced by a K3 surface $\mathcal{X}$ introduced by Clingher and Doran in \cite{MR2824841}. The K3 surface occurs as the general member of a six-parameter family of K3 surfaces polarized by the lattice $H \oplus E_7(-1) \oplus E_7(-1)$. Each K3 surface in the family carries a special Nikulin involution, called \emph{Van~\!Geemen-Sarti involution}, such that quotienting by this involution and blowing up fixed points recovers a double-sextic surface. 
\subsection{A Six-Parameter Family of K3 Surfaces}
Let $(\alpha, \beta, \gamma, \delta , \varepsilon, \zeta) \in \mathbb{C}^6 $. We consider the projective quartic surface 
$ {\rm Q}(\alpha, \beta, \gamma, \delta , \varepsilon, \zeta) \subset  \mathbb{P}^3(x,y,z,w) $
defined by the homogeneous equation:   
\begin{equation}
\label{mainquartic}
\scalemath{\MyScaleMedium}{
\begin{array}{c}
\mathbf{Y}^2 \mathbf{Z} \mathbf{W}-4 \mathbf{X}^3 \mathbf{Z}+3 \alpha \mathbf{X} \mathbf{Z} \mathbf{W}^2+\beta \mathbf{Z} \mathbf{W}^3+\gamma \mathbf{X} \mathbf{Z}^2 \mathbf{W}- \frac{1}{2} \left (\delta \mathbf{Z}^2 \mathbf{W}^2+ \zeta \mathbf{W}^4 \right )+ \varepsilon \mathbf{X} \mathbf{W}^3 = 0.
\end{array}}
\end{equation}
The family in Equation~(\ref{mainquartic}) was first introduced by Clingher and Doran in \cite{MR2824841} as a generalization of the Inose quartic introduced in \cite{MR578868}. We denote by $\mathcal{X}(\alpha, \beta, \gamma, \delta , \varepsilon, \zeta)$ the smooth complex surface obtained as the minimal resolution of  ${\rm Q}(\alpha, \beta, \gamma, \delta , \varepsilon, \zeta)$. We have the following:
\begin{theorem}
\label{thm:polarization}
Assume that $(\gamma, \delta) \neq (0,0)$ and $ (\varepsilon, \zeta) \neq (0,0)$. Then, the surface 
$\mathcal{X}(\alpha, \beta, \gamma, \delta , \varepsilon, \zeta)$ obtained as the minimal resolution of 
${\rm Q}(\alpha, \beta, \gamma, \delta , \varepsilon, \zeta)$ is a K3 surface endowed with a canonical 
$H \oplus E_7(-1) \oplus E_7(-1) $ lattice polarization. 
\end{theorem}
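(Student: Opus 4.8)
The plan is to recall the construction of Clingher and Doran \cite{MR2824841}: exhibit on $\mathcal{X}(\alpha,\beta,\gamma,\delta,\varepsilon,\zeta)$ a Jacobian elliptic fibration whose reducible fibers produce the two $E_7(-1)$ blocks, and whose fiber class and zero section produce the hyperbolic plane. First I would check that the minimal resolution $\mathcal{X}$ is in fact a K3 surface. One way is to verify directly that, under the hypotheses $(\gamma,\delta)\neq(0,0)$ and $(\varepsilon,\zeta)\neq(0,0)$, the quartic $\mathrm{Q}$ has at worst rational double points (its singularities are supported over the two planes $\mathbf{W}=0$ and $\mathbf{Z}=0$, and the hypotheses are exactly what is needed to keep them of ADE type). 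Alternatively, and more efficiently, one passes to the birational elliptic model described below and checks there that the total space is a relatively minimal elliptic K3 surface; since a K3 surface is its own smooth minimal model, this model coincides with $\mathcal{X}$.

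Next, consider the projection $[\mathbf{X}:\mathbf{Y}:\mathbf{Z}:\mathbf{W}]\mapsto[\mathbf{Z}:\mathbf{W}]$. In the chart $\mathbf{W}\neq 0$ set $t=\mathbf{Z}/\mathbf{W}$, $x=\mathbf{X}/\mathbf{W}$, $y=\mathbf{Y}/\mathbf{W}$; then Equation~(\ref{mainquartic}) is, for each fixed $t$, a plane cubic in $(x,y)$ carrying the rational point at infinity, and the change of variables $X=t^2x$, $Y=t^3y$ brings it to the Weierstrass form
\begin{equation*}
Y^2 = 4X^3 - \big(3\alpha\,t^4+\gamma\,t^5+\varepsilon\,t^3\big)\,X - \big(\beta\,t^6-\tfrac12\delta\,t^7-\tfrac12\zeta\,t^5\big),
\end{equation*}
with $g_2\in H^0(\mathcal{O}(8))$, $g_3\in H^0(\mathcal{O}(12))$; this exhibits a Jacobian elliptic fibration $\pi\colon\mathcal{X}\to\mathbb{P}^1$ with zero section $\sigma$. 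Applying Tate's algorithm at $t=0$ and at $t=\infty$, the hypotheses $(\varepsilon,\zeta)\neq(0,0)$ and $(\gamma,\delta)\neq(0,0)$ force each of these two fibers to be of Kodaira type $III^\ast$ or $II^\ast$; in particular the Weierstrass model is relatively minimal, confirming that $\mathcal{X}$ is K3, and in either fiber type the classes of the non-identity components span a negative definite root sublattice of $\operatorname{NS}(\mathcal{X})$ containing a copy of $E_7(-1)$ (it equals $E_7(-1)$ for $III^\ast$ and equals $E_8(-1)\supset E_7(-1)$ for $II^\ast$). Since fiber components of the two distinct fibers are disjoint, and $\sigma$ meets each fiber only in its identity component, these two $E_7(-1)$ blocks are mutually orthogonal and orthogonal to the hyperbolic plane $H=\langle[\mathcal{X}_t],[\sigma]\rangle$ spanned by a fiber and the zero section. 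This yields an embedding $H\oplus E_7(-1)\oplus E_7(-1)\hookrightarrow\operatorname{NS}(\mathcal{X})$.

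It remains to see that this embedding is primitive and that the resulting polarization is canonical, i.e. varies holomorphically over the six-parameter family (which, modulo the two-dimensional rescaling group, is the expected four-dimensional moduli of $H\oplus E_7(-1)\oplus E_7(-1)$-polarized K3 surfaces). For generic parameters both special fibers are of type $III^\ast$, the cubic $4X^3-g_2X-g_3$ is irreducible over $\mathbb{C}(t)$ so there is no torsion section, and the Mordell--Weil rank vanishes; by Shioda--Tate $\operatorname{NS}(\mathcal{X})$ then coincides with $H\oplus E_7(-1)\oplus E_7(-1)$, so the embedding is onto, hence primitive, for the generic member. Primitivity along the special loci—where a fiber degenerates to $II^\ast$ or where extra sections appear—then follows by specialization together with the fact that $E_7\hookrightarrow E_8$ is a primitive sublattice. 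This last point, namely controlling $\operatorname{NS}(\mathcal{X})$ uniformly over the whole family and upgrading the embedding to a canonical lattice polarization in the sense of Dolgachev, is the technical heart of the statement; it is carried out in detail in \cite{MR2824841}, to which we refer, the discussion above being only a recollection of that construction.
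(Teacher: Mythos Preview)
Your argument is correct in outline and reaches the same conclusion, but it proceeds by a genuinely different route from the paper. The paper's proof is intrinsic to the quartic: it locates the two singular points ${\rm P}_1=[0{:}1{:}0{:}0]$ and ${\rm P}_2=[0{:}0{:}1{:}0]$, identifies their ADE types ($A_9$ or $A_{11}$, resp.\ $A_5$ or $E_6$, according to whether $\varepsilon$ or $\gamma$ vanishes), finds three explicit lines ${\rm L}_1,{\rm L}_2,{\rm L}_3$ on ${\rm Q}$, and then in each of the four cases draws the dual graph of the resulting rational curves on $\mathcal{X}$ and reads off $H\oplus E_7(-1)\oplus E_7(-1)$ directly from that configuration. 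Your approach instead passes immediately to what the paper later calls the \emph{standard fibration} $\pi^{\mathcal{X}}_{\mathrm{std}}$ (your Weierstrass form agrees with the paper's after a trivial change of parameter), applies Tate's algorithm at $t=0,\infty$, and invokes Shioda--Tate for the generic member. What the paper's approach buys is an explicit curve-by-curve description of the polarization, uniform over all four degeneration strata, and a concrete quartic polarizing divisor; what your approach buys is economy---the K3 property, the two $E_7$'s, and (generically) $\mathrm{NS}(\mathcal{X})=H\oplus E_7(-1)^{\oplus 2}$ all fall out of one Weierstrass computation. Two small cautions on your write-up: the claim that the singularities of ${\rm Q}$ are ``supported over the two planes $\mathbf{W}=0$ and $\mathbf{Z}=0$'' is imprecise (they are the two \emph{points} ${\rm P}_1,{\rm P}_2$); and irreducibility of the Weierstrass cubic over $\mathbb{C}(t)$ rules out only $2$-torsion, so for $\operatorname{MW}=\{\mathbb{I}\}$ you should either cite the Shimada tables for $2\,III^*$ configurations or appeal directly to \cite{MR2824841} as you do at the end.
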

\begin{proof}
The conditions imposed on the pairs $(\gamma, \delta)$ and $(\varepsilon, \zeta)$ ensure that singularities of  $ {\rm Q}(\alpha, \beta, \gamma, \delta , \varepsilon, \zeta)$ are rational double points. This fact, in connection with the degree of Equation~(\ref{mainquartic}) being four, guarantees that the minimal resolution  $\mathcal{X}(\alpha, \beta, \gamma, \delta , \varepsilon, \zeta)$ is a K3 surface.   
\par Note that the quartic ${\rm Q}(\alpha, \beta, \gamma, \delta , \varepsilon, \zeta)$ has two special singularities at the following points:
$$ {\rm P}_1 = [0,1,0,0], \ \ \ \ {\rm P}_2 = [0,0,1,0].  $$
One verifies that the singularity at ${\rm P}_1$ is a rational double point of type ${\rm A}_9$  if $ \varepsilon \neq 0$, and of type ${\rm A}_{11}$ if $ \varepsilon = 0$. The singularity at ${\rm P}_2$ is of type ${\rm A}_5$  if $ \gamma  \neq 0$, and of type ${\rm E}_6$ if $ \gamma  = 0$. For a generic sextuple $ (\alpha, \beta, \gamma, \delta , \varepsilon, \zeta)$,  the points ${\rm P}_1$ and ${\rm P}_2$ are the only singularities of Equation~(\ref{mainquartic}). 
\par As a first step in uncovering the claimed lattice polarization, we introduce the following three special lines,  denoted $ {\rm L}_1 $, $ {\rm L}_2 $, ${\rm L}_3$:
$$ \mathbf{X}=\mathbf{W}=0, \ \ \ \mathbf{Z}=\mathbf{W}=0, \ \ 2\varepsilon \mathbf{X}-\zeta \mathbf{W} = \mathbf{Z} = 0 \ . $$
Note that $ {\rm L}_1 $, $ {\rm L}_2 $, ${\rm L}_3$ lie on the quartic in Equation~(\ref{mainquartic}). 
\par Assume the case $ \gamma \varepsilon \neq 0 $. Then $ {\rm L}_1 $, $ {\rm L}_2 $, ${\rm L}_3$  are distinct and concurrent, meeting at ${\rm P}_1$. When taking the minimal resolution 
$\mathcal{X}(\alpha, \beta, \gamma, \delta , \varepsilon, \zeta)$, one obtains a configuration of smooth rational curves intersecting  according to the dual diagram below.    
\begin{equation}
\label{diaggl7}
\def\objectstyle{\scriptstyle}
\def\labelstyle{\scriptstyle}
\xymatrix @-0.9pc  {
& & \stackrel{{\rm L}_3}{\bullet} 
& &  & & &    & & & & & 
&  &  \\ 
 & &  & &  & & & &   &    & &  & & \\
& \stackrel{a_1}{\bullet} \ar @{-} [r] 
\ar @{-}[uur] &
\stackrel{a_2}{\bullet} \ar @{-} [r]  &
\stackrel{a_3}{\bullet} \ar @{-} [r]  \ar @{-} [d] &
\stackrel{a_4}{\bullet} \ar @{-} [r] &
\stackrel{a_5}{\bullet} \ar @{-} [r] &
\stackrel{a_6}{\bullet} \ar @{-} [r] &
\stackrel{a_7}{\bullet} \ar @{-} [r] &
\stackrel{a_8}{\bullet} \ar @{-} [r] &
\stackrel{a_9}{\bullet} \ar @{-} [r] &
\stackrel{{\rm L}_1}{\bullet} \ar @{-} [r] &
\stackrel{b_2}{\bullet} \ar @{-} [r] \ar @{-} [d] &
\stackrel{b_3}{\bullet} & \stackrel{b_4}{\bullet} \ar @{-} [l] \ar @{-}[ddl] 
 & \\
 & &  & \stackrel{{\rm L}_2}{\bullet} &  & & & &   &    & & \stackrel{b_1}{\bullet}  & & \\
 & & 
 & &  & & &    & & & & &   \stackrel{b_5}{\bullet} & & \\
} 
\end{equation}
The two sets $ a_1, a_2, \dots, a_9$ and $b_1, b_2, \dots, b_5$ denote the curves appearing from resolving the rational double  point singularities at ${\rm P}_1$ and ${\rm P}_2$, respectively. In the context of diagram (\ref{diaggl7}), the lattice polarization 
$H \oplus E_7(-1) \oplus E_7(-1) $ is generated by:
\begin{align*}
{\rm H}_{ \ } \ =& \ \langle a_7, \ {\rm L}_3 + 2 a_1 + 3 a_2 + 4 a_3 + 2 {\rm L}_2 + 3 a_4 + 2 a_5 + a_6  \rangle \\
{\rm E}_7 \ =& \ \langle \ {\rm L}_3, \  a_1,  \ a_2, \   a_3, \  {\rm L}_2, \  a_4, \ a_5 \ \rangle \\
{\rm E}_7 \ =& \ \langle \ b_5, \  b_4,  \ b_3, \   b_2, \  b_1, \  {\rm L}_1, \ a_9 \ \rangle \ .
\end{align*}
In the case $\gamma=0$, $\varepsilon \neq 0$, the diagram (\ref{diaggl7}) changes to: 
\begin{equation}
\label{diaggl8}
\def\objectstyle{\scriptstyle}
\def\labelstyle{\scriptstyle}
\scalemath{\MyScaleMedium}{
\xymatrix @-0.9pc  {
& & \stackrel{{\rm L}_3}{\bullet} 
& &  & & &    & & & & & 
&  &  \\ 
 & &  & &  & & & &   &    & & & &  & & \\
& \stackrel{a_1}{\bullet} \ar @{-} [r] 
\ar @{-}[uur] &
\stackrel{a_2}{\bullet} \ar @{-} [r]  &
\stackrel{a_3}{\bullet} \ar @{-} [r]  \ar @{-} [d] &
\stackrel{a_4}{\bullet} \ar @{-} [r] &
\stackrel{a_5}{\bullet} \ar @{-} [r] &
\stackrel{a_6}{\bullet} \ar @{-} [r] &
\stackrel{a_7}{\bullet} \ar @{-} [r] &
\stackrel{a_8}{\bullet} \ar @{-} [r] &
\stackrel{a_9}{\bullet} \ar @{-} [r] &
\stackrel{{\rm L}_1}{\bullet} \ar @{-} [r] &
\stackrel{b_1}{\bullet} \ar @{-} [r] &
\stackrel{b_2}{\bullet} \ar @{-} [r] &
\stackrel{b_3}{\bullet} \ar @{-} [r] \ar @{-} [d] &
\stackrel{b_5}{\bullet} & \stackrel{b_6}{\bullet} \ar @{-} [l] 
 & \\
 & &  & \stackrel{{\rm L}_2}{\bullet} &  & & & & &  &   &    & & \stackrel{b_4}{\bullet}  & & 
} }
\end{equation}
One obtains an enhanced lattice polarization of type $H \oplus E_8(-1) \oplus E_7(-1)$ with:
\begin{align*}
{\rm H}_{ \ } \ =& \ \langle a_7, \ {\rm L}_3 + 2 a_1 + 3 a_2 + 4 a_3 + 2 {\rm L}_2 + 3 a_4 + 2 a_5 + a_6  \rangle \\
{\rm E}_7 \ =& \ \langle \ {\rm L}_3, \  a_1,  \ a_2, \   a_3, \  {\rm L}_2, \  a_4, \ a_5 \ \rangle \\
{\rm E}_8 \ =& \ \langle \ b_6, \ b_5, \  b_4,  \ b_3, \   b_2, \  b_1, \  {\rm L}_1, \ a_9 \ \rangle \ .
\end{align*}
In the case $ \gamma \neq 0$, $ \varepsilon = 0$, the lines ${\rm L}_2$ and ${\rm L}_3$ coincide but the rational double point  type at ${\rm P}_1$ changes from ${\rm A}_9$ to ${\rm A}_{11}$. One obtains rational curves on $\mathcal{X}(\alpha, \beta, \gamma, \delta , \varepsilon, \zeta)$ as follows:  
\begin{equation}
\label{diaggl9}
\def\objectstyle{\scriptstyle}
\def\labelstyle{\scriptstyle}
\scalemath{\MyScaleMedium}{
\xymatrix @-0.9pc  {
& \stackrel{a_1}{\bullet} \ar @{-} [r] 
&
\stackrel{a_2}{\bullet} \ar @{-} [r]  &
\stackrel{a_3}{\bullet} \ar @{-} [r]  \ar @{-} [d] &
\stackrel{a_4}{\bullet} \ar @{-} [r] &
\stackrel{a_5}{\bullet} \ar @{-} [r] &
\stackrel{a_6}{\bullet} \ar @{-} [r] &
\stackrel{a_7}{\bullet} \ar @{-} [r] &
\stackrel{a_8}{\bullet} \ar @{-} [r] &
\stackrel{a_9}{\bullet} \ar @{-} [r] &
\stackrel{a_{10}}{\bullet} \ar @{-} [r] &
\stackrel{a_{11}}{\bullet} \ar @{-} [r] &
\stackrel{{\rm L}_1}{\bullet} \ar @{-} [r] &
\stackrel{b_2}{\bullet} \ar @{-} [r] \ar @{-} [d] &
\stackrel{b_3}{\bullet} & \stackrel{b_4}{\bullet} \ar @{-} [l] \ar @{-}[ddl] 
 & \\
 & &  & \stackrel{{\rm L}_2}{\bullet} &  & & & &  & & &    & & \stackrel{b_1}{\bullet}  & & \\
 & & 
 & &  & & &    & & &  & & & &   \stackrel{b_5}{\bullet} & & \\
} }
\end{equation}
This provides a $H \oplus E_8(-1) \oplus E_7(-1)$ polarization as follows:
\begin{align*}
{\rm H}_{ \ } \ =& \ \langle a_9, \ 2 a_1 + 4 a_2 + 6 a_3 + 3 {\rm L}_2 + 5 a_4 + 4 a_5 + 3 a_6 + 2 a_7 + a_8  \rangle \\
{\rm E}_8 \ =& \ \langle \ a_1,  \  a_2, \ a_3, \ {\rm L}_2, \ a_4, \ a_5, \ a_6, \ a_7 \ \rangle \\
{\rm E}_7 \ =& \ \langle \ b_5, \  b_4,  \ b_3, \   b_2, \  b_1, \  {\rm L}_1, \ a_{11} \ \rangle \ .
\end{align*}
Finally, in the case $ \gamma = \varepsilon = 0$, the lines ${\rm L}_2$, ${\rm L}_3$ coincide and the rational double point types  at ${\rm P}_1$ and ${\rm P}_2$ are ${\rm A}_{11}$ and ${\rm E}_6$, respectively. This determines the following diagram of smooth rational  curves on the resolution $\mathcal{X}(\alpha, \beta, \gamma, \delta , \varepsilon, \zeta)$.  
\begin{equation}
\label{diaggl89}
\def\objectstyle{\scriptstyle}
\def\labelstyle{\scriptstyle}
\scalemath{\MyScaleMedium}{
\xymatrix @-0.9pc  {
 \stackrel{a_1}{\bullet} \ar @{-} [r] 
&
\stackrel{a_2}{\bullet} \ar @{-} [r]  &
\stackrel{a_3}{\bullet} \ar @{-} [r]  \ar @{-} [d] &
\stackrel{a_4}{\bullet} \ar @{-} [r] &
\stackrel{a_5}{\bullet} \ar @{-} [r] &
\stackrel{a_6}{\bullet} \ar @{-} [r] &
\stackrel{a_7}{\bullet} \ar @{-} [r] &
\stackrel{a_8}{\bullet} \ar @{-} [r] &
\stackrel{a_9}{\bullet} \ar @{-} [r] &
\stackrel{a_{10}}{\bullet} \ar @{-} [r] &
\stackrel{a_{11}}{\bullet} \ar @{-} [r] &
\stackrel{{\rm L}_1}{\bullet} \ar @{-} [r] &
\stackrel{b_1}{\bullet} \ar @{-} [r] &
\stackrel{b_2}{\bullet} \ar @{-} [r] &
\stackrel{b_3}{\bullet} \ar @{-} [d] \ar @{-} [r] & 
\stackrel{b_5}{\bullet} \ar @{-} [r]  &
\stackrel{b_6}{\bullet}   
 \\
  &  & \stackrel{{\rm L}_2}{\bullet} &  & & & & &   & & & &    & & \stackrel{b_4}{\bullet}  & & \\
} }
\end{equation}
This determines a lattice polarization of type $H \oplus E_8(-1) \oplus E_8(-1)$ polarization as follows:
\begin{align*}
{\rm H}_{ \ } \ =& \ \langle a_9, \ 2 a_1 + 4 a_2 + 6 a_3 + 3 {\rm L}_2 + 5 a_4 + 4 a_5 + 3 a_6 + 2 a_7 + a_8  \rangle \\
{\rm E}_8 \ =& \ \langle \ a_1,  \  a_2, \ a_3, \ {\rm L}_2, \ a_4, \ a_5, \ a_6, \ a_7 \ \rangle \\
{\rm E}_8 \ =& \ \langle \ b_6, \ b_5, \  b_4,  \ b_3, \   b_2, \  b_1, \  {\rm L}_1, \ a_{11} \ \rangle \ .
\end{align*}
\end{proof}
\begin{remark}
The degree-four polarization determined on ${\rm X}(\alpha, \beta, \gamma, \delta , \varepsilon, \zeta)$ by its quartic definition 
is described explicitly in the context of diagrams $(\ref{diaggl7})$-$(\ref{diaggl89})$. For instance, assuming the case  
$ \gamma \varepsilon \neq 0 $, one can write a polarizing divisor as:
\begin{equation}
\label{linepolariz}
\scalemath{\MyScaleMedium}{
\begin{array}{c}
\mathcal{L} \ = \ {\rm L}_2 + 
\left ( 
a_1+2a_2+3a_3+3a_4+3a_5+\cdots 3a_9 
\right ) 
+ 3 {\rm L}_1 + 
\left ( 
2b_1+4b_2+3b_3+2b_4+b_5
\right )
\end{array}}
\end{equation}
Similar formulas hold in the other three cases.
\end{remark}
\noindent Diagrams $(\ref{diaggl7})$, $(\ref{diaggl8})$ and $(\ref{diaggl9})$ from the above proof can be nicely 
augmented. Consider the following complete intersections:
$$ 2\varepsilon \mathbf{X}-\zeta \mathbf{W}  \ = \  
\left (3 \alpha \varepsilon ^2 \zeta + 2 \beta \varepsilon ^3 - \zeta^3 \right ) \mathbf{W}^2 -  
\varepsilon^2 \left ( \delta \varepsilon-\gamma \zeta \right ) \mathbf{Z} \mathbf{W} 
+ 2 \varepsilon^3 \mathbf{Y}^2 \ = \  0 
$$
$$ 
2\gamma \mathbf{X}-\delta \mathbf{W}  \ = \  
\left (3 \alpha \gamma ^2 \delta + 2 \beta \gamma ^3 - \delta^3 \right ) \mathbf{Z} \mathbf{W}^2 -  
\gamma^2 \left ( \gamma \zeta - \delta \varepsilon \right ) \mathbf{W}^3 
+ 2 \gamma^3 \mathbf{Y}^2 \mathbf{Z} \ = \  0 \ .$$
Assuming appropriate generic conditions, the above equations determine two projective curves ${\rm R}_1$, ${\rm R}_2$, of degrees two and three, respectively. 
The conic ${\rm R}_1$ is a (generically smooth) rational curve tangent to ${\rm L}_1$ at ${\rm P}_2$. The cubic ${\rm R}_2$ has 
a double point at ${\rm P}_2$, passes through ${\rm P}_1$ and is generically irreducible. When resolving the quartic surface $(\ref{mainquartic})$, 
these two curves lift to smooth rational curves on ${\rm X}(\alpha, \beta, \gamma, \delta , \varepsilon, \zeta)$, which by a slight 
abuse of notation we shall denote by the same symbol. One obtains the following dual diagrams of rational curves.    
\par Case $ \gamma \neq 0$, $ \varepsilon \neq 0$: 
\begin{equation}
\label{exdiagg77}
\def\objectstyle{\scriptstyle}
\def\labelstyle{\scriptstyle}
\scalemath{\MyScaleMedium}{
\xymatrix @-0.9pc  {
& & \stackrel{{\rm L}_3}{\bullet} \ar @{=}[rrrrrrrrrr]& &  & & &    & & & & & \stackrel{{\rm R}_1}{\bullet} &  &  \\ 
 & &  & &  & & & &   &    & &  & & \\
& \stackrel{a_1}{\bullet} \ar @{-} [r] \ar @{-}[ddr] \ar @{-}[uur] &
\stackrel{a_2}{\bullet} \ar @{-} [r]  &
\stackrel{a_3}{\bullet} \ar @{-} [r]  \ar @{-} [d] &
\stackrel{a_4}{\bullet} \ar @{-} [r] &
\stackrel{a_5}{\bullet} \ar @{-} [r] &
\stackrel{a_6}{\bullet} \ar @{-} [r] &
\stackrel{a_7}{\bullet} \ar @{-} [r] &
\stackrel{a_8}{\bullet} \ar @{-} [r] &
\stackrel{a_9}{\bullet} \ar @{-} [r] &
\stackrel{{\rm L}_1}{\bullet} \ar @{-} [r] &
\stackrel{b_2}{\bullet} \ar @{-} [r] \ar @{-} [d] &
\stackrel{b_3}{\bullet} & \stackrel{b_4}{\bullet} \ar @{-} [l] \ar @{-}[ddl] \ar @{-}[uul]
 & \\
 & &  & \stackrel{{\rm L}_2}{\bullet} &  & & & &   &    & & \stackrel{b_1}{\bullet}  & & \\
 & & \stackrel{{\rm R}_2}{\bullet} \ar @{=}[rrrrrrrrrr]& &  & & &    & & & & &   \stackrel{b_5}{\bullet} & & \\
} }
\end{equation}
\par Case $ \gamma = 0$, $ \varepsilon \neq 0$: 
\begin{equation}
\label{exdiaggl8}
\def\objectstyle{\scriptstyle}
\def\labelstyle{\scriptstyle}
\scalemath{\MyScaleMedium}{
\xymatrix @-0.9pc  {
& & \stackrel{{\rm L}_3}{\bullet} 
\ar @{=}[rrrrrrrrrrrr]
& &  & & &    & & & & &  & & 
\stackrel{{\rm R}_1}{\bullet} 
&  &  \\ 
 & &  & &  & & & &   &    & & & &  & & \\
& \stackrel{a_1}{\bullet} \ar @{-} [r] 
\ar @{-}[uur] &
\stackrel{a_2}{\bullet} \ar @{-} [r]  &
\stackrel{a_3}{\bullet} \ar @{-} [r]  \ar @{-} [d] &
\stackrel{a_4}{\bullet} \ar @{-} [r] &
\stackrel{a_5}{\bullet} \ar @{-} [r] &
\stackrel{a_6}{\bullet} \ar @{-} [r] &
\stackrel{a_7}{\bullet} \ar @{-} [r] &
\stackrel{a_8}{\bullet} \ar @{-} [r] &
\stackrel{a_9}{\bullet} \ar @{-} [r] &
\stackrel{{\rm L}_1}{\bullet} \ar @{-} [r] &
\stackrel{b_1}{\bullet} \ar @{-} [r] &
\stackrel{b_2}{\bullet} \ar @{-} [r] &
\stackrel{b_3}{\bullet} \ar @{-} [r] \ar @{-} [d] &
\stackrel{b_5}{\bullet} & \stackrel{b_6}{\bullet} \ar @{-} [l] \ar @{-} [uul]
 & \\
 & &  & \stackrel{{\rm L}_2}{\bullet} &  & & & & &  &   &    & & \stackrel{b_4}{\bullet}  & & 
} }
\end{equation}
\par Case $ \gamma \neq 0$, $ \varepsilon = 0$: 
\begin{equation}
\label{exdiaggl9}
\def\objectstyle{\scriptstyle}
\def\labelstyle{\scriptstyle}
\scalemath{\MyScaleMedium}{
\xymatrix @-0.9pc  {
& \stackrel{a_1}{\bullet} \ar @{-} [r] 
\ar @{-}[ddr] 
&
\stackrel{a_2}{\bullet} \ar @{-} [r]  &
\stackrel{a_3}{\bullet} \ar @{-} [r]  \ar @{-} [d] &
\stackrel{a_4}{\bullet} \ar @{-} [r] &
\stackrel{a_5}{\bullet} \ar @{-} [r] &
\stackrel{a_6}{\bullet} \ar @{-} [r] &
\stackrel{a_7}{\bullet} \ar @{-} [r] &
\stackrel{a_8}{\bullet} \ar @{-} [r] &
\stackrel{a_9}{\bullet} \ar @{-} [r] &
\stackrel{a_{10}}{\bullet} \ar @{-} [r] &
\stackrel{a_{11}}{\bullet} \ar @{-} [r] &
\stackrel{{\rm L}_1}{\bullet} \ar @{-} [r] &
\stackrel{b_2}{\bullet} \ar @{-} [r] \ar @{-} [d] &
\stackrel{b_3}{\bullet} & \stackrel{b_4}{\bullet} \ar @{-} [l] \ar @{-}[ddl] 
 & \\
 & &  & \stackrel{{\rm L}_2}{\bullet} &  & & & &  & & &    & & \stackrel{b_1}{\bullet}  & & \\
 & & 
 \stackrel{{\rm R}_2}{\bullet} \ar @{=}[rrrrrrrrrrrr]
 & &  & & &    & & &  & & & &   \stackrel{b_5}{\bullet} & & \\
} }
\end{equation}
Note that diagrams $(\ref{exdiaggl8})$ and $(\ref{exdiaggl9})$ are similar in nature. This is not a fortuitous fact, as we shall see next.  
\begin{proposition}
\label{symmetries1}
Let $(\alpha, \beta, \gamma, \delta, \varepsilon, \zeta) \in \mathbb{C}^6 $ with  
$(\gamma, \delta) \neq (0,0)$ and $(\varepsilon, \zeta)  \neq (0,0)$. Then, one 
has the following isomorphisms of $H \oplus E_7(-1) \oplus E_7(-1)$ lattice polarized K3 surfaces:
\begin{itemize}
\item [(a)] $\mathcal{X}(\alpha,\beta, \gamma, \delta, \varepsilon, \zeta) \ \simeq \
\mathcal{X}(t^2 \alpha,  \ t^3 \beta,  \ t^5 \gamma,  \ t^6 \delta,  \ t^{-1} \varepsilon,  \ \zeta  ) $, for any $t \in \mathbb{C}^*$.
\item [(b)] $\mathcal{X}(\alpha,\beta, \gamma, \delta, \varepsilon, \zeta) \ \simeq \  
\mathcal{X}(\alpha,  \beta,  \varepsilon,  \zeta, \gamma,  \delta  )$.
\end{itemize}
\end{proposition}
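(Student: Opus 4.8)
The plan is to realize each isomorphism by an explicit birational self-map of the ambient $\mathbb{P}^3$ carrying one quartic~(\ref{mainquartic}) to the other, to upgrade it to an isomorphism of the minimal resolutions, and finally to check compatibility with the $H\oplus E_7(-1)\oplus E_7(-1)$ polarization of Theorem~\ref{thm:polarization}. For part~(a) I would use the diagonal $\mathbb{C}^{*}$-action $\psi_{s}\colon[\mathbf{X}:\mathbf{Y}:\mathbf{Z}:\mathbf{W}]\mapsto[s\,\mathbf{X}:\mathbf{Y}:s^{9}\mathbf{Z}:s^{3}\mathbf{W}]$. Substituting into Equation~(\ref{mainquartic}), the two coefficient-free monomials $\mathbf{Y}^{2}\mathbf{Z}\mathbf{W}$ and $\mathbf{X}^{3}\mathbf{Z}$ scale by the same power of $s$, so the polynomial is merely rescaled while the six coefficients $(\alpha,\beta,\gamma,\delta,\varepsilon,\zeta)$ transform with weights $(2,3,5,6,-1,0)$ once the scaling factor is renamed $t=s^{-2}$; this says precisely that $\psi_{s}$ maps ${\rm Q}(\alpha,\beta,\gamma,\delta,\varepsilon,\zeta)$ isomorphically onto ${\rm Q}(t^{2}\alpha,t^{3}\beta,t^{5}\gamma,t^{6}\delta,t^{-1}\varepsilon,\zeta)$. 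Since $\psi_{s}\in\operatorname{PGL}_{4}(\mathbb{C})$, it fixes the two distinguished singular points ${\rm P}_{1},{\rm P}_{2}$ and sends each line ${\rm L}_{1},{\rm L}_{2},{\rm L}_{3}$ to the corresponding line of the target quartic, so the induced isomorphism of minimal resolutions identifies the curve configuration of diagram~(\ref{diaggl7}) (and of its degenerations) term by term and hence matches the polarizing sublattices. This settles~(a).

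For part~(b) the key observation is that in the affine chart $\mathbf{W}=1$ Equation~(\ref{mainquartic}) is a quadratic in $\mathbf{Z}$,
\[
\Big(\gamma\mathbf{X}-\tfrac12\delta\Big)\mathbf{Z}^{2}+\Big(\mathbf{Y}^{2}-4\mathbf{X}^{3}+3\alpha\mathbf{X}+\beta\Big)\mathbf{Z}+\Big(\varepsilon\mathbf{X}-\tfrac12\zeta\Big)=0,
\]
whose leading and constant coefficients are $\gamma\mathbf{X}-\tfrac12\delta$ and $\varepsilon\mathbf{X}-\tfrac12\zeta$, while the middle coefficient involves only $\alpha,\beta$. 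Thus the birational involution $\mathbf{Z}\mapsto 1/\mathbf{Z}$ of $\mathbb{P}^{3}$, given in homogeneous coordinates by $\phi\colon[\mathbf{X}:\mathbf{Y}:\mathbf{Z}:\mathbf{W}]\mapsto[\mathbf{X}\mathbf{Z}:\mathbf{Y}\mathbf{Z}:\mathbf{W}^{2}:\mathbf{Z}\mathbf{W}]$, multiplies the equation by $\mathbf{Z}^{2}$ and interchanges its leading and constant coefficients; hence $\phi$ restricts to a birational map from ${\rm Q}(\alpha,\beta,\gamma,\delta,\varepsilon,\zeta)$ to ${\rm Q}(\alpha,\beta,\varepsilon,\zeta,\gamma,\delta)$, whose indeterminacy locus on the quartic is exactly the line ${\rm L}_{2}$ together with the singular point ${\rm P}_{2}$. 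Because the minimal resolutions of the two quartics are smooth projective K3 surfaces, and birationally equivalent K3 surfaces are isomorphic, $\phi$ induces an isomorphism $\mathcal{X}(\alpha,\beta,\gamma,\delta,\varepsilon,\zeta)\simeq\mathcal{X}(\alpha,\beta,\varepsilon,\zeta,\gamma,\delta)$ of the underlying surfaces.

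The step I expect to be the main obstacle is showing that this last isomorphism respects the polarizations, since — in contrast with~(a) — $\phi$ is only birational on $\mathbb{P}^{3}$ and blows curves up and down over ${\rm P}_{1},{\rm P}_{2}$ and along ${\rm L}_{2}$. I would resolve the indeterminacy of $\phi$, follow the strict transforms of ${\rm L}_{1},{\rm L}_{2},{\rm L}_{3}$ and of the exceptional chains $a_{i},b_{j}$ appearing in Theorem~\ref{thm:polarization}, and verify that the resulting isometry of N\'eron--Severi lattices carries the polarizing $H\oplus E_{7}(-1)\oplus E_{7}(-1)$ sublattice of the source onto that of the target after composing with the lattice automorphism that interchanges the two $E_{7}(-1)$ summands. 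That this swap occurs is precisely what the near-identical shape of diagrams~(\ref{exdiaggl8}) and~(\ref{exdiaggl9}) anticipates — $\phi$ exchanging the loci $\gamma=0$ and $\varepsilon=0$ — and it is harmless, since an isomorphism of lattice-polarized K3 surfaces is understood only up to the orthogonal group of the polarizing lattice. Alternatively, should it mesh more cleanly with the period computations made elsewhere in the paper, one can bypass the curve-chasing entirely by checking that the parameter swap leaves the period point in the period domain of $H\oplus E_{7}(-1)\oplus E_{7}(-1)$-polarized K3 surfaces unchanged, which forces the two polarized K3 surfaces to coincide.
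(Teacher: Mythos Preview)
Your proposal is correct and follows essentially the same approach as the paper: a diagonal projective automorphism for~(a) and the birational involution $[\mathbf{X}:\mathbf{Y}:\mathbf{Z}:\mathbf{W}]\mapsto[\mathbf{X}\mathbf{Z}:\mathbf{Y}\mathbf{Z}:\mathbf{W}^{2}:\mathbf{Z}\mathbf{W}]$ for~(b). Your diagonal map $\psi_{s}$ with $t=s^{-2}$ is projectively the same as the paper's $[\mathbf{X}:\mathbf{Y}:\mathbf{Z}:\mathbf{W}]\mapsto[q^{8}\mathbf{X}:q^{9}\mathbf{Y}:\mathbf{Z}:q^{6}\mathbf{W}]$ with $q^{2}=t$ (take $s=q^{-1}$), and your involution $\phi$ is literally identical to the paper's; if anything, your discussion of why the lattice polarization is preserved in~(b) is more thorough than the paper's own proof, which simply asserts the extension to an isomorphism.
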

\begin{proof}
Let $q$ be a square root of $t$. Then, the projective automorphism given by
\begin{equation}
\label{tmor}
\mathbb{P}^3 \ \longrightarrow \ \mathbb{P}^3, \ \ \ [\mathbf{X}: \mathbf{Y}: \mathbf{Z}: \mathbf{W}] \ \mapsto \ [\ q^8\mathbf{X}: \ q^9\mathbf{Y}: \mathbf{Z}: \ q^6\mathbf{W} \ ] \ 
\end{equation}
extends to an isomorphism $\mathcal{X}(\alpha,\beta, \gamma, \delta, \varepsilon, \zeta) \simeq \mathcal{X}(t^2 \alpha,  t^3 \beta,  t^5 \gamma,  t^6 \delta,  t^{-1} \varepsilon,  \zeta  ) $ preserving the lattice polarization. Similarly, the birational involution:
\begin{equation}
\label{invoef}
\mathbb{P}^3 \ \dashrightarrow \ \mathbb{P}^3, \ \ \ [\mathbf{X}: \mathbf{Y}: \mathbf{Z}: \mathbf{W}] \ \mapsto \ [\ \mathbf{X}\mathbf{Z}: \ \mathbf{Y}\mathbf{Z}, \mathbf{W}^2, \ \mathbf{Z}\mathbf{W} \ ] \ .
\end{equation}
extends to an isomorphism between $\mathcal{X}(\alpha,\beta, \gamma, \delta, \varepsilon, \zeta)$ and  $\mathcal{X}(\alpha,  \beta,  \varepsilon,  \zeta, \gamma,  \delta  )$.
\end{proof}
\subsection{Elliptic Fibrations on \texorpdfstring{$\mathcal{X}$}{X}}
By the nature of the $H \oplus E_7(-1) \oplus E_7(-1)$ lattice polarizations, K3 surfaces  $\mathcal{X}(\alpha,\beta, \gamma, \delta, \varepsilon, \zeta)$  carry interesting elliptic fibrations with sections. As discussed in \cite{MR2824841}, there are four non-isomorphic elliptic fibrations with section; three will be important for the  considerations of this article.
\subsubsection{The standard fibration}
The first elliptic fibration with section is canonically associated with the lattice polarization, as the classes of its fiber and section span the hyperbolic factor in $H \oplus E_7(-1) \oplus E_7(-1)$.  Following the terminology of \cite{MR2369941}, we shall refer to this elliptic fibration as \emph{standard} and denote it by
$$
\pi^{\mathcal{X}}_{\mathrm{std}} \colon \mathcal{X}(\alpha,\beta, \gamma, \delta, \varepsilon, \zeta) \rightarrow \mathbb{P}^1 \ .
$$ 
One obtains the fibers of $\pi^{\mathcal{X}}_{\mathrm{std}}$ by considering the pencil of planes in $ \mathbb{P}^3 $ that contain the line $ {\rm L}_1 $, where ${\rm L}$ is the degree-four canonical  hyperplane polarization of $\mathcal{X}(\alpha,\beta, \gamma, \delta, \varepsilon, \zeta)$.
\par It is obtained from residual intersections with the pencil of planes containing the line $ \mathbf{Z}=\mathbf{W}=0$.  Setting 
\begin{equation}
\mathbf{X} = sx, \quad  \mathbf{Y}= y, \quad \mathbf{W} = -4\, s^3, \quad \mathbf{Z} = 4\, s^4,
\end{equation}
in Equation~(\ref{mainquartic}), we obtain the Weierstrass equation
\begin{equation}
\label{eqns_SI_std}
  \mathcal{X}_s: \quad y^2  = x^3 +  f(s) \, x + g(s), 
\end{equation}
with discriminant 
\begin{equation}
 \Delta_{\mathcal{X}}(s) =  4 \, f(s)^3 + 27 \, g(s)^2 =  - 64 \, s^9 P(s)
\end{equation}
and
\begin{equation}
 f(s)  = 4\, s^3 \Big(\gamma \, s^2 - 3 \, \alpha \, s + \varepsilon \Big), \quad
 g(s)  = -8 \, s^5 \Big(\delta \, s^2 + 2 \, \beta \, s + \zeta\big),
\end{equation}
and
\begin{equation}
\begin{array}{rl}
P(s) = &  4 \, \gamma^3 s^6 - 9\, (4\, \alpha \gamma^2-3 \,\delta^2)\, s^5 + 12\, (9 \, \alpha^2 \gamma+9 \, \beta  \delta+\gamma^2 \varepsilon)\, s^4\\[0.5em]
- & 18\,  (6\, \alpha^3+4\, \alpha \gamma \varepsilon-6 \, \beta^2-3\, \delta \zeta) \, s^3 + 12 \, (9\, \alpha^2 \varepsilon+9 \, \beta \zeta +\gamma \varepsilon^2) \, s^2  \\[0.5em]
- & 9 \, (4 \, \alpha \varepsilon^2-3 \,  \zeta^2 ) \, s + 4 \, \varepsilon^3 .
\end{array}
\end{equation}
In this way, we obtain an elliptically fibered K3 surface $\pi^{\mathcal{X}}_{\mathrm{std}} \colon \mathcal{X}(\alpha,\beta, \gamma, \delta, \varepsilon, \zeta) \rightarrow \mathbb{P}^1$ with section given by the point at infinity in each fiber $\mathcal{X}_{s}$ and minimal Weierstrass equation~(\ref{eqns_SI_std}).  The fibration has singularities of Kodaira type $III^*$ over $s=0$ and $s = \infty$. The following lemma is immediate:
\begin{lemma}
\label{lem:Ellstd}
Equation~(\ref{eqns_SI_std}) is a Jacobian elliptic fibration $\pi^{\mathcal{X}}_{\mathrm{std}} \colon \mathcal{X}(\alpha,\beta, \gamma, \delta, \varepsilon, \zeta) \rightarrow \mathbb{P}^1$ with six singular fibers of type $I_1$,  two singular fibers of type $III^*$, and the Mordell-Weil group of sections $\operatorname{MW}(\pi^{\mathcal{X}}_{\mathrm{std}} )=\{ \mathbb{I} \}$.
\end{lemma}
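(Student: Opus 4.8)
The plan is to read the singular fibres directly off the Weierstrass equation~(\ref{eqns_SI_std}) by Tate's algorithm and then pin down the Mordell--Weil group by a rank-and-discriminant count; throughout, ``generic'' will mean on a Zariski-dense open subset of the parameter space $\mathbb{C}^{6}$. The point at infinity in each fibre $\mathcal{X}_{s}$ is a section, so $\pi^{\mathcal{X}}_{\mathrm{std}}$ is Jacobian. First I would look at $s=0$, where one reads off $\operatorname{ord}_{0}(f)=3$, $\operatorname{ord}_{0}(g)=5$ and $\operatorname{ord}_{0}(\Delta_{\mathcal{X}})=9$; since $\operatorname{ord}_{0}(f)=3<4$ the model is already minimal there, and the only Kodaira type compatible with the triple $(3,\geq 5,9)$ is $III^{*}$. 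This is exactly the step that uses $\varepsilon\neq 0$: if $\varepsilon=0$ then $\operatorname{ord}_{0}(f)\geq 4$ and the fibre would degenerate to $II^{*}$, which is the $E_{8}$-enhancement already visible in Theorem~\ref{thm:polarization}. Then I would pass to the chart $\tilde{s}=1/s$, apply the usual rescaling $(x,y)\mapsto(\tilde{s}^{-4}x,\tilde{s}^{-6}y)$, and observe that $f,g$ become polynomials in $\tilde{s}$ with $\operatorname{ord}_{\infty}(\tilde{f})=3$ (here one needs $\gamma\neq 0$), $\operatorname{ord}_{\infty}(\tilde{g})\geq 5$, and $\operatorname{ord}_{\infty}(\tilde{\Delta})=24-\deg_{s}\Delta_{\mathcal{X}}=24-15=9$, so $s=\infty$ again carries a fibre of type $III^{*}$.

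Next I would analyse $\Delta_{\mathcal{X}}=-64\,s^{9}P(s)$ away from $0$ and $\infty$. The polynomial $P$ has degree six with $P(0)=4\varepsilon^{3}\neq 0$, so for generic parameters it has six distinct simple roots, none equal to $0$; at each of them $\operatorname{ord}(\Delta_{\mathcal{X}})=1$, and $\operatorname{ord}(\Delta)=1$ forces Kodaira type $I_{1}$. Hence $\pi^{\mathcal{X}}_{\mathrm{std}}$ has exactly two fibres of type $III^{*}$ and six of type $I_{1}$, and the Euler-number identity $e(III^{*})+e(III^{*})+6\,e(I_{1})=9+9+6=24=e(\mathcal{X})$ confirms that there are no further singular fibres.

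For the Mordell--Weil group I would invoke the Shioda--Tate formula. By the very construction of the standard fibration, the fibre class and the zero section span the hyperbolic summand of $H\oplus E_{7}(-1)\oplus E_{7}(-1)$, while the non-identity components of the two $III^{*}$ fibres realise the two $E_{7}(-1)$ summands; so the trivial sublattice of $\operatorname{NS}(\mathcal{X})$ has rank $2+7+7=16$ and discriminant $4$. For generic parameters $\operatorname{NS}(\mathcal{X})$ is itself the rank-sixteen polarization lattice $H\oplus E_{7}(-1)\oplus E_{7}(-1)$, of discriminant $4$, so the trivial lattice is saturated, and Shioda--Tate gives $\operatorname{MW}(\pi^{\mathcal{X}}_{\mathrm{std}})=\operatorname{NS}(\mathcal{X})/\operatorname{Triv}(\pi^{\mathcal{X}}_{\mathrm{std}})=\{\mathbb{I}\}$, so both the rank and the torsion vanish. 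As an independent check I would argue directly that there is no $2$-torsion: such a section is a polynomial root $x_{0}(s)$ of $x^{3}+f(s)x+g(s)$, and the divisibilities $s^{3}\mid f$, $s^{5}\mid g$ force $x_{0}=s^{2}x_{2}$ with $x_{2}$ a constant; matching coefficients then requires $\gamma\zeta=\delta\varepsilon$ and $\zeta^{3}=3\alpha\varepsilon^{2}\zeta+2\beta\varepsilon^{3}$, which is precisely the locus on which the conic ${\rm R}_{1}$ degenerates, hence does not hold generically.

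The genuinely delicate point is not any single computation but keeping the genericity hypotheses honest: the two $III^{*}$ fibres need $\gamma\varepsilon\neq 0$ (exactly the regime of the pure $H\oplus E_{7}(-1)\oplus E_{7}(-1)$ polarization, the loci $\gamma=0$ or $\varepsilon=0$ yielding the enhanced polarizations of Theorem~\ref{thm:polarization} and hence different fibre types); the six $I_{1}$ fibres need $P$ to have simple roots; and the vanishing of $\operatorname{MW}$ needs $\operatorname{NS}(\mathcal{X})$ to have rank exactly sixteen. All three conditions hold on a Zariski-dense open subset of $\mathbb{C}^{6}$, and I would state this explicitly rather than leaving it implicit in the word ``immediate''.
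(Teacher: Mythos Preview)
Your proposal is correct and follows exactly the route the paper has in mind: the lemma is stated there as ``immediate'' (with the subsequent Lemma~\ref{lem:std_extension} explicitly invoking Tate's algorithm for the $III^{*}$/$II^{*}$ dichotomy at $\gamma=0$ or $\varepsilon=0$), and your computation of the orders of $f$, $g$, $\Delta_{\mathcal{X}}$ at $0$, $\infty$ and at the roots of $P$ is precisely that algorithm made explicit. Your Shioda--Tate argument for $\operatorname{MW}=\{\mathbb{I}\}$ (matching the discriminant $4$ of the trivial lattice with that of the polarization $H\oplus E_7(-1)\oplus E_7(-1)$) is the natural way to fill in what the paper leaves unsaid, and your care about the genericity hypotheses ($\gamma\varepsilon\neq 0$, $P$ squarefree, $\operatorname{rk}\operatorname{NS}=16$) is a welcome addition.
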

Application of Tate's algorithm shows immediately:
\begin{lemma} \label{lem:std_extension} We have the following:
\begin{itemize}
\item If $ \varepsilon \neq 0 $, there is a singular fiber of Kodaira type $III^*$ at $s = 0$. Otherwise, it is a singular fiber of Kodaira type $II^*$.
\item If $ \gamma \neq 0 $, there is a singular fiber of Kodaira type $III^*$ at $s = \infty$. Otherwise,  it is a singular fiber of  Kodaira type $II^*$.
\end{itemize}
\end{lemma}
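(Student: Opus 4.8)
The plan is to read off the Kodaira--N\'eron fiber types directly from the Weierstrass data of Equation~(\ref{eqns_SI_std}) by running Tate's algorithm at the two points $s=0$ and $s=\infty$. Since $\Delta_{\mathcal{X}}(s)=4f(s)^3+27g(s)^2=-64\,s^9P(s)$ and, up to nonzero normalization, $c_4$ is proportional to $f$ and $c_6$ to $g$, the only inputs needed are the vanishing orders $v_{s_0}(f),v_{s_0}(g),v_{s_0}(\Delta)$, together with the two entries of the Kodaira table that occur here: an additive fiber with $\big(v(c_4),v(c_6),v(\Delta)\big)=(3,\ge 5,9)$ is of type $III^*$, and an additive fiber with $\big(v(c_4),v(c_6),v(\Delta)\big)=(\ge 4,5,10)$ is of type $II^*$. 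In both cases $v(\Delta)\le 10<12$, so the model is already minimal at the point in question and no blow-up steps in Tate's algorithm are needed.

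First I would treat $s=0$. From $f(s)=4s^3(\gamma s^2-3\alpha s+\varepsilon)$ one gets $v_0(f)=3$ exactly when $\varepsilon\neq 0$ and $v_0(f)\ge 4$ when $\varepsilon=0$; from $g(s)=-8s^5(\delta s^2+2\beta s+\zeta)$ one gets $v_0(g)\ge 5$, with equality when $\varepsilon=0$ because the hypothesis $(\varepsilon,\zeta)\neq(0,0)$ then forces $\zeta\neq 0$; and since $P(0)=4\varepsilon^3$, we have $v_0(\Delta)=9$ when $\varepsilon\neq 0$, while $v_0(\Delta)=10$ when $\varepsilon=0$ (the linear coefficient of $P$ is then $27\zeta^2\neq 0$). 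The table above then yields type $III^*$ at $s=0$ if $\varepsilon\neq 0$, and type $II^*$ if $\varepsilon=0$; the value $v(c_4)=3$ is exactly what distinguishes $III^*$ from $I_3^*$, and $v(c_6)=5$ is what distinguishes $II^*$ from $I_4^*$.

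For $s=\infty$ I would apply the standard chart change $s=1/u$, $x=X/u^4$, $y=Y/u^6$, which produces $Y^2=X^3+\tilde f(u)X+\tilde g(u)$ with $\tilde f(u)=u^8f(1/u)=4u^3(\gamma-3\alpha u+\varepsilon u^2)$, $\tilde g(u)=u^{12}g(1/u)=-8u^5(\delta+2\beta u+\zeta u^2)$, and $\tilde\Delta(u)=u^{24}\Delta(1/u)=-64\,u^{15}P(1/u)$, so that $v_0(\tilde\Delta)=15-\deg_sP$. Now $v_0(\tilde f)=3$ exactly when $\gamma\neq 0$ and $v_0(\tilde f)\ge 4$ when $\gamma=0$; in the latter case $(\gamma,\delta)\neq(0,0)$ forces $\delta\neq 0$, hence $v_0(\tilde g)=5$; and $\deg_sP=6$ with leading coefficient $4\gamma^3$ when $\gamma\neq 0$, while $\deg_sP=5$ with leading coefficient $27\delta^2$ when $\gamma=0$, giving $v_0(\tilde\Delta)=9$ resp.\ $10$. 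The same table then gives type $III^*$ at $s=\infty$ if $\gamma\neq 0$ and type $II^*$ if $\gamma=0$.

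The computation is entirely mechanical; the only points demanding a moment of care are (i) correctly tracking the weights $8$ and $12$ in the twist to $s=\infty$, and (ii) checking that the polarization hypotheses of Theorem~\ref{thm:polarization} force $\zeta\neq 0$ when $\varepsilon=0$ (resp.\ $\delta\neq 0$ when $\gamma=0$), so that $v(c_6)$ is exactly $5$ and the fiber is genuinely $II^*$ and not a more degenerate $*$-fiber. This is precisely the content of the phrase ``Application of Tate's algorithm shows immediately.''
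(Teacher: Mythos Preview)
Your proposal is correct and follows exactly the approach the paper has in mind: the paper's entire proof is the single sentence ``Application of Tate's algorithm shows immediately,'' and you have carried out precisely that application, reading off $(v(f),v(g),v(\Delta))$ at $s=0$ and (after the weight-$(8,12)$ twist) at $s=\infty$, and invoking the standing hypotheses $(\varepsilon,\zeta)\neq(0,0)$ and $(\gamma,\delta)\neq(0,0)$ to pin down the orders in the degenerate cases. There is nothing to add.
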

\subsubsection{The alternate fibration}
Another elliptic fibration with section is obtained from residual intersections with the pencil of planes containing the line $\mathbf{X}=\mathbf{W}=0$.  Setting 
\begin{equation}
\mathbf{X}= t\, x^3, \quad  \mathbf{Y}=  \sqrt{2}\, x^2y, \quad \mathbf{W} = 2\, x^3, \quad \mathbf{Z} = 2\,x^2(-\varepsilon t+\zeta),
\end{equation}
in Equation~(\ref{mainquartic}),  determines a second Jacobian elliptic fibration $\pi^{\mathcal{X}}_{\mathrm{alt}} \colon \mathcal{X} \rightarrow \mathbb{P}^1$ with fiber $\mathcal{X}_{t}$ given by the Weierstrass equation
\begin{equation}
\label{eqns_SI_alt}
 \mathcal{X}_t: \quad y^2  = x \Big(x^2 + B(t) \, x +A(t)  \Big), 
\end{equation}
with discriminant 
\begin{equation}
 \Delta_{\mathcal{X}}(t) =  A(t)^2 \, \Big(B(t)^2-4\,A(t)\Big)
\end{equation}
and
\begin{equation}
\begin{split}
A(t)  &= (\gamma t-\delta)(\varepsilon t -\zeta)  = \gamma\varepsilon t^2 - (\gamma \zeta + \delta \varepsilon) t + \delta\zeta,\\
B(t) &= t^3-3\alpha  t- 2 \beta.
\end{split}
\end{equation}
Thus, we obtain an elliptically fibered K3 surface $\pi^{\mathcal{X}}_{\mathrm{alt}} \colon \mathcal{X}(\alpha,\beta, \gamma, \delta, \varepsilon, \zeta) \rightarrow \mathbb{P}^1$ which we call the \emph{alternate fibration}, with section given by the point at infinity in each fiber $\mathcal{X}_{t}$ and minimal Weierstrass equation~(\ref{eqns_SI_alt}).  It has a two-torsion section $(x,y)=(0,0)$, two singularities of Kodaira type $I_2$ over $A(t)=0$, and a singularity of Kodaira type $I_8^*$ over $t=\infty$. Therefore, the following is immediate:
\begin{lemma}
\label{lem:Ellalt}
Equation~(\ref{eqns_SI_alt}) defines a Jacobian elliptic fibration $\pi^{\mathcal{X}}_{\mathrm{alt}} \colon \mathcal{X} \rightarrow \mathbb{P}^1$ with six singular fibers of type $I_1$,  two singular fibers of type $I_2$, one singular fibers of type $I_8^*$, and the Mordell-Weil group of sections $\operatorname{MW}(\pi^{\mathcal{X}}_{\mathrm{alt}} )=\mathbb{Z}/2\mathbb{Z}$.
\end{lemma}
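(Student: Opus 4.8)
The plan is to read off every assertion from the explicit Weierstrass data already displayed, together with a Shioda--Tate computation for the Mordell--Weil group. By Theorem~\ref{thm:polarization} the surface $\mathcal{X}=\mathcal{X}(\alpha,\beta,\gamma,\delta,\varepsilon,\zeta)$ is a K3 surface under the standing hypotheses, and the substitution leading to Equation~(\ref{eqns_SI_alt}) exhibits it as a Weierstrass model over $\mathbb{P}^1$ with a section at infinity, so $\pi^{\mathcal{X}}_{\mathrm{alt}}$ is a Jacobian elliptic fibration. Since $x=0$ is a root of the cubic $x\big(x^2+B(t)x+A(t)\big)$, the point $(x,y)=(0,0)$ is a section of order two. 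First I would restrict to generic parameters, i.e.\ assume in addition that $\gamma\varepsilon\neq0$, that $A(t)=(\gamma t-\delta)(\varepsilon t-\zeta)$ has two distinct roots, that $B(t)^2-4A(t)$ has six distinct simple roots, and that these ten roots are pairwise distinct with none a zero of $B(t)$; each of these is a Zariski-open condition on $(\alpha,\beta,\gamma,\delta,\varepsilon,\zeta)$.

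The singular fibers over finite $t$ are then read off from $\Delta_{\mathcal{X}}(t)=A(t)^2\big(B(t)^2-4A(t)\big)$. Over each of the six simple zeros of $B^2-4A$ the discriminant vanishes to first order and the cubic in $x$ acquires a node away from $x=0$, so the fiber is of type $I_1$; over each of the two simple zeros $t_0$ of $A$ the discriminant vanishes to second order and the fiber is $y^2=x^2\big(x+B(t_0)\big)$ with $B(t_0)\neq0$, a nodal cubic, hence of type $I_2$. For the fiber over $t=\infty$ I would argue via orders of vanishing: the $j$-invariant $j=(16B^2-48A)^3/(16A^2(B^2-4A))$ has a pole of order $8$ there, while the rescaling $s=1/t$, $x\mapsto s^{-4}x$, $y\mapsto s^{-6}y$ yields a minimal Weierstrass model whose coefficients $a_2=s-3\alpha s^3-2\beta s^4$ and $a_4=\gamma\varepsilon s^6-(\gamma\zeta+\delta\varepsilon)s^7+\delta\zeta s^8$ both vanish at $s=0$, so the reduction there is additive; an additive fiber with $\operatorname{ord}(j)=-8$ is of Kodaira type $I_8^*$, consistent with the curve configuration obtained in the proof of Theorem~\ref{thm:polarization}. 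As a check, the Euler numbers sum to $6\cdot1+2\cdot2+(8+6)=24$.

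For the Mordell--Weil group I would combine the Shioda--Tate formula with a discriminant comparison, exactly as for the standard fibration in Lemma~\ref{lem:Ellstd}. For generic parameters one has $\operatorname{NS}(\mathcal{X})\cong H\oplus E_7(-1)\oplus E_7(-1)$, of rank $16$ and with discriminant group of order $4$. The trivial lattice $\mathrm{Triv}$ of $\pi^{\mathcal{X}}_{\mathrm{alt}}$ --- spanned by the zero section, a general fiber, and the non-identity components of the reducible fibers $I_8^*$, $I_2$, $I_2$ --- is isometric to $H\oplus D_{12}(-1)\oplus A_1(-1)\oplus A_1(-1)$, of rank $2+12+1+1=16$ and with discriminant group of order $4\cdot2\cdot2=16$. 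Since $\operatorname{rk}\mathrm{Triv}=\operatorname{rk}\operatorname{NS}(\mathcal{X})$, the Shioda--Tate exact sequence gives $\operatorname{rk}\operatorname{MW}(\pi^{\mathcal{X}}_{\mathrm{alt}})=0$ and $\operatorname{MW}(\pi^{\mathcal{X}}_{\mathrm{alt}})\cong\operatorname{NS}(\mathcal{X})/\mathrm{Triv}$; comparing discriminants, $[\operatorname{NS}(\mathcal{X}):\mathrm{Triv}]^2=16/4$, so this group has order $2$. Hence $\operatorname{MW}(\pi^{\mathcal{X}}_{\mathrm{alt}})=\mathbb{Z}/2\mathbb{Z}$, generated by the section $(x,y)=(0,0)$.

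The step I expect to be the main obstacle is the identification of the fiber over $t=\infty$ as $I_8^*$: pinning down the Kodaira type from the orders of vanishing of $a_2$, $a_4$ and $\Delta$ requires either a short run of Tate's algorithm or the $j$-invariant argument above, and one must also check that the rescaled model is already minimal at $s=0$. A secondary, bookkeeping-type point is making the genericity hypotheses explicit, since when $\gamma\varepsilon=0$, or when $A$ and $B^2-4A$ share a root, or when $B$ vanishes at a root of $A$, the fiber configuration --- and with it the Mordell--Weil rank --- degenerates, as is already reflected in the different lattice polarizations in Theorem~\ref{thm:polarization}.
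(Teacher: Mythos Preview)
Your argument is correct and in fact more detailed than the paper's. In the paper the lemma is simply declared ``immediate'' from the preceding paragraph, which records the two-torsion section $(0,0)$, the two $I_2$ fibers over $A(t)=0$, and the $I_8^*$ fiber over $t=\infty$; no further justification is given. Your explicit verification of the $I_8^*$ type via the pole order of $j$ and the rescaled model at $s=0$, the Euler-number check, and especially the Shioda--Tate/discriminant comparison pinning down $\operatorname{MW}=\mathbb{Z}/2\mathbb{Z}$ (rather than merely $\supseteq\mathbb{Z}/2\mathbb{Z}$) are all additions beyond what the paper writes out. The genericity caveats you flag are exactly the ones the paper later treats in Lemma~\ref{lem:extensions_alt}.
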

\par Setting
\begin{equation}
 x=T, \quad y = \frac{Y}{T^2}, \quad t=\frac{X-\frac{1}{3}\gamma \varepsilon T}{T^2},
\end{equation}
in Equation~(\ref{eqns_SI_alt}) determines another Jacobian elliptic fibration $\check{\pi}^{\mathcal{X}}_{\mathrm{alt}}: \mathcal{X} \to \mathbb{P}^1$ with fiber $\mathcal{X}_{T}$ given by the minimal Weierstrass equation
\begin{equation}
\label{eqn:bfdual}
 \mathcal{X}_{T}: \quad Y^2 = X^3 + \check{f}(T) \, X + \check{g}(T) ,
\end{equation} 
with discriminant 
\begin{equation}
 \Delta_{\mathcal{X}}(T)=4\, \check{f}(T)^3+27\, \check{g}(T)^2
\end{equation}
and
\begin{equation*}
\begin{split}
 \check{f}(T) =& -\frac{1}{3}\, T^2\, \Big(9\, \alpha \, T^2 + 3(\gamma\zeta + \delta \varepsilon)\, T +( \gamma\varepsilon)^2\Big),\\
 \check{g}(T)= & \, \frac{1}{27} \, T^3 \, \Big(27 \, T^4 -54\,  \beta\,  T^3 + 27\,  (\alpha \gamma \varepsilon + \delta \zeta) \, T^2 \\
 &\qquad + 9 \, \gamma\varepsilon\, (\delta\varepsilon+\gamma \zeta) \, T + 2( \gamma\varepsilon)^3\Big) .
\end{split}
\end{equation*}
Thus, we obtain a Jacobian elliptic fibration $\check{\pi}^{\mathcal{X}}_{\mathrm{alt}} \colon \mathcal{X}(\alpha,\beta, \gamma, \delta, \varepsilon, \zeta) \rightarrow \mathbb{P}^1$ which we call the \emph{base-fiber-dual} of the alternate fibration. We have the following:
\begin{lemma}
\label{lem:Ellalt_dual}
Equation~(\ref{eqn:bfdual}) defines a Jacobian elliptic fibration $\check{\pi}^{\mathcal{X}}_{\mathrm{alt}} \colon \mathcal{X} \rightarrow \mathbb{P}^1$ with 6 singular fibers of Kodaira type $I_1$, a singular fibers of Kodaira type $I_2^*$, and a singular fiber of Kodaira type $II^*$, and the Mordell-Weil group of sections $\operatorname{MW}(\check{\pi}^{\mathcal{X}}_{\mathrm{alt}})=\{ \mathbb{I} \}$. 
\end{lemma}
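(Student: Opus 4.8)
The plan is to read off the singular fibres of $\check\pi^{\mathcal{X}}_{\mathrm{alt}}$ from the discriminant of Equation~(\ref{eqn:bfdual}) by Tate's algorithm, and then to pin down the Mordell--Weil group from the Shioda--Tate formula together with a short torsion argument. First I would record the elementary numerics: $\deg\check f = 4\le 8$ and $\deg\check g = 7\le 12$, so Equation~(\ref{eqn:bfdual}) is a Weierstrass model of an elliptic K3 surface with $\deg_{\mathbb{P}^1}\Delta_{\mathcal{X}} = 24$; moreover $\operatorname{ord}_{T=0}(\check f) = 2$, $\operatorname{ord}_{T=0}(\check g) = 3$ and $\operatorname{ord}_{T=\infty}(\check f) = 8-\deg\check f = 4$, $\operatorname{ord}_{T=\infty}(\check g) = 12-\deg\check g = 5$, so at no place are $\operatorname{ord}(\check f)\ge 4$ and $\operatorname{ord}(\check g)\ge 6$ simultaneous, confirming that the model is minimal and that the fibration lives on $\mathcal{X}$. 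Next I would expand $\Delta_{\mathcal{X}}(T) = 4\,\check f(T)^3 + 27\,\check g(T)^2$: it has degree $14$, so it vanishes to order $24-14 = 10$ at $T=\infty$; a short expansion shows that it vanishes to order exactly $8$ at $T=0$ (the coefficients of $T^6$ and $T^7$ cancel identically, while that of $T^8$ is a nonzero multiple of $(\gamma\varepsilon)^2(\gamma\zeta-\delta\varepsilon)^2$); and for a generic sextuple the remaining degree-six factor is squarefree. The Kodaira--N\'eron classification then gives a fibre of type $II^*$ over $T=\infty$ (from $(\operatorname{ord}\check f,\operatorname{ord}\check g,\operatorname{ord}\Delta_{\mathcal{X}}) = (4,5,10)$), a fibre of type $I_2^*$ over $T=0$ (from $(2,3,8)$), and six fibres of type $I_1$ over the remaining simple roots; the Euler-number identity $10 + 8 + 6\cdot 1 = 24$ shows this is the complete list.

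For the Mordell--Weil rank I would invoke the Shioda--Tate formula. By Lemma~\ref{lem:Ellalt} the alternate fibration $\pi^{\mathcal{X}}_{\mathrm{alt}}$ has fibre configuration $I_8^* + 2\,I_2 + 6\,I_1$ with $\operatorname{MW}(\pi^{\mathcal{X}}_{\mathrm{alt}}) = \mathbb{Z}/2\mathbb{Z}$; since the $I_8^*$ fibre has $13$ components and each $I_2$ has $2$, its reducible fibres contribute $12+1+1 = 14$ to the Shioda--Tate sum, so $\rho(\mathcal{X}) = 2 + 14 + 0 = 16$ for a generic sextuple. The $I_2^*$ and $II^*$ fibres of $\check\pi^{\mathcal{X}}_{\mathrm{alt}}$ have $7$ and $9$ components, contributing $6 + 8 = 14$, so the same formula reads $16 = \rho(\mathcal{X}) = 2 + 14 + \operatorname{rk}\operatorname{MW}(\check\pi^{\mathcal{X}}_{\mathrm{alt}})$, whence $\operatorname{rk}\operatorname{MW}(\check\pi^{\mathcal{X}}_{\mathrm{alt}}) = 0$.

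It remains to rule out torsion. The torsion subgroup of $\operatorname{MW}(\check\pi^{\mathcal{X}}_{\mathrm{alt}})$ injects into the product of the fibre component groups, which here is $(\mathbb{Z}/2\mathbb{Z})^2$ --- the contribution of the single $I_2^*$ fibre, the $II^*$ and $I_1$ fibres contributing trivial groups --- so any torsion section is $2$-torsion. Such a section would have the form $(X_0(T),0)$ with $X_0\in\mathbb{C}[T]$ satisfying $X_0(T)^3 + \check f(T)\,X_0(T) + \check g(T) = 0$; since $\deg\check g = 7$ while the three summands have degrees $3\deg X_0$, $4+\deg X_0$ and $7$, their top-degree terms can never cancel, so no such $X_0$ exists and $\operatorname{MW}(\check\pi^{\mathcal{X}}_{\mathrm{alt}}) = \{ \mathbb{I} \}$. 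I expect the one genuinely computational step to be verifying that $\operatorname{ord}_{T=0}\Delta_{\mathcal{X}} = 8$ --- so that the fibre over $T=0$ is $I_2^*$ rather than a different $I_n^*$ --- and that the residual sextic is generically squarefree; both are routine but slightly lengthy symbolic computations, best done with a computer algebra system as in the proof of Theorem~\ref{thm:6lines_alt}.
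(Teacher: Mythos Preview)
Your argument is correct and considerably more detailed than the paper, which states the lemma without proof (the fibre classification being immediate from Tate's algorithm, as in Lemmas~\ref{lem:Ellstd} and~\ref{lem:Ellalt}). Your computation that $\operatorname{ord}_{T=0}\Delta_{\mathcal{X}}=8$ is accurate: the $T^8$-coefficient of $\Delta_{\mathcal{X}}$ works out to $-(\gamma\varepsilon)^2(\gamma\zeta-\delta\varepsilon)^2$, which is nonzero precisely when the two $I_2$ fibres of $\pi^{\mathcal{X}}_{\mathrm{alt}}$ are distinct and at finite distance. One point worth making explicit in the torsion step is \emph{why} $X_0$ must be a polynomial: torsion sections have height zero and are therefore disjoint from the zero section, so $X_0$ is regular on $\mathbb{A}^1$; your degree count then goes through as written. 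A slightly shorter alternative for the torsion is the determinant form of Shioda--Tate: with $\operatorname{rk}\operatorname{MW}=0$ one has $\lvert\det\operatorname{NS}(\mathcal{X})\rvert\cdot\lvert\operatorname{MW}_{\mathrm{tors}}\rvert^{2}=\lvert\det\mathrm{Triv}\rvert$, and since $\operatorname{NS}(\mathcal{X})\cong H\oplus E_7(-1)^{\oplus 2}$ and $\mathrm{Triv}\cong H\oplus E_8(-1)\oplus D_6(-1)$ both have discriminant $4$, this forces $\lvert\operatorname{MW}_{\mathrm{tors}}\rvert=1$. The paper itself, when it revisits this fibration in the proof of Corollary~\ref{cor:extensions}, instead invokes the standard fact (citing \cite{MR2732092}*{Lemma~7.3}) that the presence of a $II^*$ fibre already forces the Mordell--Weil group to be trivial once the rank vanishes.
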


\subsection{Van~Geemen-Sarti involutions and moduli}
Equation~(\ref{eqns_SI_alt}) is a minimal Weierstrass model for the Jacobian elliptic fibration $\pi^{\mathcal{X}}_{\mathrm{alt}}: \mathcal{X} \to \mathbb{P}^1$ with fiber $\mathcal{X}_{t}$ given by
\begin{equation}
\label{Xsurface}
 \mathcal{X}_{t}: \quad y^2  = x \, \Big(x^2 + B(t)  \, x +  A(t) \Big) .
\end{equation} 
The singular fibers of $\mathcal{X}$ are located over the support of $\Delta_{\mathcal{X}}=A(t)^2 (B(t)^2-4  \, A(t))$. A smooth section $\sigma$ is given by the point at infinity in each fiber. A two-torsion section $\tau$ is given by $\tau: t \mapsto (x,y)=(0,0)$ such that $2  \tau=\sigma$. Thus, we have  $\mathbb{Z}/2\mathbb{Z} \subset \operatorname{MW}(\pi^{\mathcal{X}}_{\mathrm{alt}})$. The holomorphic two-form is given by $\omega_{\mathcal{X}}=dt \wedge dx/y$.

A \emph{Nikulin involution} on a K3 surface $\mathcal{X}$ is a symplectic involution $\jmath_{\mathcal{X}}: \mathcal{X} \to \mathcal{X}$, i.e., an involution with $\jmath_{\mathcal{X} }^*(\omega)=\omega$. If a Nikulin involution exists on a K3 surface $\mathcal{X}$, then it necessarily has eight fixed points, and the minimal resolution of the quotient surface is another K3 surface $\mathcal{Y}=\widehat{\mathcal{X}/\{1,\jmath_{\mathcal{X} }\}}$ \cite{MR544937}. Special Nikulin involution are obtained in our situation: the fiberwise translation by the two-torsion section acting by $p \mapsto p+\tau$ for all  $p \in \mathcal{X}_{t}$ extends to a Nikulin involution $\jmath_{\mathcal{X} }$ on $\mathcal{X}$, called \emph{Van~\!Geemen-Sarti} involution. A computation shows that the involution is, on each fiber $\mathcal{X}_t$, given by
\begin{equation}
\label{involution_fiber}
 \jmath_{\mathcal{X}_t}: (x,y) \mapsto (x,y) + (0,0) = \left( \frac{A(t)}{x}, - \frac{A(t)\, y}{x^2} \right)
\end{equation} 
for $p \not \in  \{\sigma, \tau\}$  and interchanges $\sigma$ and $\tau$. It is also easy to check that $\jmath_{\mathcal{X} }$ leaves the holomorphic two-form $\omega_{\mathcal{X}}$ invariant. Using the smooth two-isogeneous elliptic curve $ \mathcal{X}_t/\{ \sigma, \tau\}$ for each smooth fiber, we obtain the new K3 surface $\mathcal{Y}$ equipped with an elliptic fibration $\pi^{\mathcal{Y}}_{\mathrm{alt}}:\mathcal{Y}\to\mathbb{P}^1$ with section $\Sigma$ as the Weierstrass model with fiber $\mathcal{Y}_{t}$ given by
\begin{equation}
\label{Ysurface}
 \mathcal{Y}_{t}: \quad Y^2 = X \Big(X^2- 2 \, B(t) \,    X +  B(t)^2-4 \, A(t)\Big).
\end{equation}
The singular fibers of $\mathcal{Y}$ are located over the support of $\Delta_{\mathcal{Y}}=16\, A(t)\,(B(t)^2-4A(t))^2$. The holomorphic two-form on $\mathcal{Y}$ is $\omega_{\mathcal{Y}}=dt\wedge dX/Y$.  The fiberwise isogeny given by
\begin{equation}
\label{isog}
 \hat{\Phi}\vert_{\mathcal{X}_t}: (x,y) \mapsto (X,Y) = \left( \frac{y^2}{x^2}, \frac{(x^2-A(t))y}{x^2} \right) 
\end{equation}
extends  to a degree-two rational map $\hat{\Phi}: \mathcal{X} \dasharrow \mathcal{Y}$. We observe that the K3 surface $\mathcal{Y}$ satisfies $\mathbb{Z}/2\mathbb{Z} \subset \operatorname{MW}(\pi^{\mathcal{Y}}_{\mathrm{alt}})$ with a two-torsion section $T$ given by $T: t \mapsto (X,Y)=(0,0)$. Therefore, the surface $\mathcal{Y}$ is itself equipped with a Van~\!Geemen-Sarti involution $\jmath_{\mathcal{Y}}$, namely
\begin{equation}
\label{dual_isog_involution}
 \jmath_{\mathcal{Y}_t}: (X,Y) \mapsto (X,Y) + (0,0) = \left( \frac{B(t)^2-4\, A(t)}{X}, - \frac{(B(t)^2-4\,A(t))Y}{X^2} \right).
\end{equation} 
The involution $\jmath_{\mathcal{Y}}$ leaves the holomorphic two-form $\omega_{\mathcal{Y}}$ invariant and covers the map $\Phi$ extending the fiberwise dual isogeny $P \mapsto P + T$ for all $P \in \mathcal{Y}_t$ given by 
\begin{equation}
\label{dual_isog}
  \Phi\vert_{\mathcal{Y}_t}: (X,Y) \mapsto (x,y) = \left( \frac{Y^2}{4X^2}, \frac{Y(X^2-B(t)^2+4\, A(t))}{8\, X^2} \right) .
\end{equation}
The situation is summarized in the following diagram: 
\begin{equation}
\label{pic:VGS}
\xymatrix{
\mathcal{X} \ar @(dl,ul) _{\jmath_{\mathcal{X}}} \ar [dr] _{\pi^{\mathcal{X}}_{\mathrm{alt}}} \ar @/_0.5pc/ @{-->} _{\hat{\Phi}} [rr]
&
& \mathcal{Y} \ar @(dr,ur) ^{\jmath_{\mathcal{Y}}} \ar [dl] ^{\pi^{\mathcal{Y}}_{\mathrm{alt}}} \ar @/_0.5pc/ @{-->} _{\Phi} [ll] \\
& \mathbb{P}^1 }
\end{equation}
We refer to such K3 surfaces $\mathcal{X}$ and $\mathcal{Y}$ as \emph{Van~\!Geemen-Sarti partners}. Therefore, the family of K3 surfaces $\mathcal{Y}$ associated with the double cover of the projective plane branched along the union of six lines equipped with the alternate fibration in Equation~(\ref{eqns_Kummer_alt}) and the Clingher-Doran family of K3 surfaces equipped with the alternate fibration in Equation~(\ref{eqns_SI_alt}) constitute such Van~\!Geemen-Sarti partners; see \cites{MalmendierClingher:2018,Clingher:2017aa}. The notion of Van~\!Geemen-Sarti partners is more general than the one of a Shioda-Inose structure. We make the following:
\begin{remark}
In Picard rank $17$, $\mathcal{X}$ carries a Shioda-Inose structure \cites{MR2369941,MR2824841}. The quotient map $\hat{\Phi}: \mathcal{X} \dashrightarrow \mathcal{Y}=\operatorname{Kum}(A)$ induces a Hodge isometry $T_{\mathcal{X}}(2 ) \cong T_{\operatorname{Kum}(A)}$. In Picard rank 16,  the map $\hat{\Phi}: \mathcal{X} \dashrightarrow \mathcal{Y}$ in Equation~(\ref{pic:VGS}) does NOT induce a Hodge isometry. In Proposition~\ref{lem_6lines} the transcendental lattice $T_{\mathcal{Y}}$ of the family of K3 surfaces $\mathcal{Y}$, and in Proposition~\ref{symmetries1} the lattice polarization of the family of K3 surfaces $\mathcal{X}$ were determined. For generic parameters, we have 
\begin{equation}
\begin{split}
 T_{\mathcal{X}}&= H \oplus H \oplus \langle -2 \rangle^{\oplus 2},\\
 T_{\mathcal{Y}}&= H(2) \oplus H(2) \oplus \langle -2 \rangle^{\oplus 2}.
\end{split}
\end{equation}
Hence, it is no longer the case that $T_{\mathcal{X}}(2 ) \cong T_{\mathcal{Y}}$.  
\end{remark}
In the context of the above results, we have the following:
\begin{proposition}
\label{lem:polar1}
Any $H \oplus E_7(-1) \oplus E_7(-1)$-polarized K3 surface $\mathcal{X}$ given by Equation~(\ref{mainquartic}) is the Van~\!Geemen-Sarti partner of a K3 surface $\mathcal{Y}$ given in Theorem~\ref{thm:6lines_alt} associated with a six-line configuration in $\mathbb{P}^2$ with invariants $J_k$ for $k=2, \dots, 6$. In particular, we have
\begin{equation}
\label{modd}
 \left [ J_2 : \ J_3 : \  J_4: \ J_5 : \ J_6   \right ]  
  =  
 \left [\, \alpha : \ \beta : \  \gamma \cdot \varepsilon: \ \gamma \cdot \zeta + \delta \cdot \varepsilon : \ \delta \cdot \zeta  \right ] 
\end{equation}
as points in the four-dimensional weighted projective space $\mathbb{P}(2,3,4,5,6)$.
\end{proposition}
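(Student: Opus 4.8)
The plan is to run the Van~Geemen--Sarti construction recalled just above directly on the quartic family, and then match Weierstrass normal forms. So, fix $\mathcal{X}=\mathcal{X}(\alpha,\beta,\gamma,\delta,\varepsilon,\zeta)$ with $(\gamma,\delta)\neq(0,0)$ and $(\varepsilon,\zeta)\neq(0,0)$. First I would simply assemble what is already in place: by Lemma~\ref{lem:Ellalt} the alternate fibration $\pi^{\mathcal X}_{\mathrm{alt}}$ of Equation~(\ref{eqns_SI_alt}) has the two-torsion section $(x,y)=(0,0)$, fiberwise translation by it is the Van~Geemen--Sarti involution $\jmath_{\mathcal X}$ of Equation~(\ref{involution_fiber}), and the fiberwise two-isogeny $\hat\Phi$ of Equation~(\ref{isog}) exhibits the minimal resolution of $\mathcal{X}/\jmath_{\mathcal X}$ as the Jacobian elliptic K3 surface $\mathcal{Y}$ whose alternate fibration $\pi^{\mathcal Y}_{\mathrm{alt}}$ is the Weierstrass model~(\ref{Ysurface}):
\[
  \mathcal{Y}_t:\quad Y^2 = X\bigl(X^2 - 2\,B(t)\,X + B(t)^2 - 4\,A(t)\bigr),
\]
with $B(t)=t^3-3\alpha t-2\beta$ and $A(t)=(\gamma t-\delta)(\varepsilon t-\zeta)=\gamma\varepsilon\,t^2-(\gamma\zeta+\delta\varepsilon)\,t+\delta\zeta$. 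This step needs no new work, only Section~\ref{sec:VGSP} and diagram~(\ref{pic:VGS}).

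Next I would compare this with Theorem~\ref{thm:6lines_alt}, which states that the alternate fibration of the double-sextic surface attached to a six-line configuration with invariants $J_2,\dots,J_6$ has the Weierstrass model~(\ref{eqns_Kummer_alt}) of exactly the same shape $Y^2=X\bigl(X^2-2\mathcal B(t)X+\mathcal B(t)^2-4\mathcal A(t)\bigr)$ with $\mathcal B(t)=t^3-3J_2 t-2J_3$ and $\mathcal A(t)=J_4 t^2-J_5 t+J_6$. Both presentations are in the same normal form: the cubic $B=\mathcal B$ is depressed and the two-torsion fiber sits at $X=0$, which pins down the base coordinate $t$ and the coordinate $X$ up to a simultaneous weighted rescaling; I would observe that this residual gauge is precisely the symmetry $\mathcal{X}(\alpha,\beta,\gamma,\delta,\varepsilon,\zeta)\simeq\mathcal{X}(t^2\alpha,t^3\beta,t^5\gamma,t^6\delta,t^{-1}\varepsilon,\zeta)$ of Proposition~\ref{symmetries1}(a), under which the tuple $(\alpha,\beta,\gamma\varepsilon,\gamma\zeta+\delta\varepsilon,\delta\zeta)$ scales by the weights $(2,3,4,5,6)$, while Proposition~\ref{symmetries1}(b) fixes it. Matching coefficients then gives $J_2=\alpha$, $J_3=\beta$, $J_4=\gamma\varepsilon$, $J_5=\gamma\zeta+\delta\varepsilon$, $J_6=\delta\zeta$, and the assignment descends to the well-defined point of $\mathbb{P}(2,3,4,5,6)$ recorded in Equation~(\ref{modd}).

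Finally I would check that this $\mathcal{Y}$ genuinely is a double-sextic surface in the sense of Theorem~\ref{thm:6lines_alt}. Since $(\gamma,\delta)\neq(0,0)$ and $(\varepsilon,\zeta)\neq(0,0)$, the polynomial $A(t)=(\gamma t-\delta)(\varepsilon t-\zeta)$ is not identically zero, so $(J_4,J_5,J_6)\neq(0,0,0)$ and $[J_2:\dots:J_6]$ is a point of $\overline{\mathfrak M}=\mathbb{P}(2,3,4,5,6)$; via the isomorphism $\mathcal{H}$ between the Satake compactification of $\mathbf H_2/\Gamma_{\mathcal T}$ and $\overline{\mathfrak M}$ it corresponds to a semi-stable six-line configuration, and whenever $(J_3,J_4,J_5)\neq(0,0,0)$ --- i.e.\ in cases (0)--(5) of Definition~\ref{def:sstable}, the generic situation --- the configuration is honest and Theorem~\ref{thm:6lines_alt} presents its alternate fibration exactly as in~(\ref{Ysurface}). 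Hence $\mathcal{X}$ is the Van~Geemen--Sarti partner of this $\mathcal{Y}$, completing the argument.

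The main obstacle is not any computation --- the coefficient comparison in the second step is immediate, because Theorem~\ref{thm:6lines_alt} was set up precisely to produce this normal form --- but the rigidity claim that $Y^2=X(X^2-2PX+Q)$ with $P$ a depressed cubic determines $(P,Q)$ up to exactly the weighted scaling of Proposition~\ref{symmetries1}(a), and not up to a larger family of base-and-fiber coordinate changes. Secondary care is needed on the degenerate sub-loci $\gamma=0$ or $\varepsilon=0$ (the enhanced $H\oplus E_8(-1)\oplus E_7(-1)$ and $H\oplus E_8(-1)\oplus E_8(-1)$ polarizations of Theorem~\ref{thm:polarization}), where $\mathcal{Y}$ may degenerate to a boundary point of $\overline{\mathfrak M}\setminus\mathfrak M$; the identity~(\ref{modd}) of points in $\mathbb{P}(2,3,4,5,6)$ nevertheless persists there.
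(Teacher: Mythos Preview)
Your proposal is correct and follows essentially the same approach as the paper: compare the Weierstrass model~(\ref{Ysurface}) obtained from~(\ref{eqns_SI_alt}) via fiberwise two-isogeny with the model~(\ref{eqns_Kummer_alt}) of Theorem~\ref{thm:6lines_alt}, and read off $\mathcal{A}(t)=A(t)$, $\mathcal{B}(t)=B(t)$. The paper's own proof is a two-line comparison of these equations; your additional discussion of the rigidity of the normal form via Proposition~\ref{symmetries1} and of the degenerate loci $\gamma=0$ or $\varepsilon=0$ goes beyond what the paper supplies, but is entirely in the right spirit.
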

\begin{proof}
The proof follows directly by comparing Equation~(\ref{Ysurface}) -- obtained by fiberwise two-isogeny from Equation~(\ref{eqns_SI_alt}) -- with Equation~(\ref{eqns_Kummer_alt}). It then follows that $\mathcal{A}(t)=A(t)$ and $\mathcal{B}(t)=B(t)$, and the claim follows.
\end{proof}
We also have the following:
\begin{lemma}
\label{lem:polar2}
The isomorphism classes in the family of K3 surfaces $\mathcal{X}(\alpha,\beta, \gamma, \delta, \varepsilon, \zeta)$ in Equation~(\ref{mainquartic}) are parametrized by the four-dimensional open complex variety $\mathfrak{M}$ defined in Equation~(\ref{modulispace}).
\end{lemma}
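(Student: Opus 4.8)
The plan is to realize the stated parametrization via the map implicit in Proposition~\ref{lem:polar1},
\[
 \mu\colon\ (\alpha,\beta,\gamma,\delta,\varepsilon,\zeta)\ \longmapsto\ \big[\,\alpha:\beta:\gamma\varepsilon:\gamma\zeta+\delta\varepsilon:\delta\zeta\,\big]\ \in\ \mathbb{P}(2,3,4,5,6),
\]
and to prove that $\mu$ induces a bijection between the set of isomorphism classes in the family $\mathcal{X}(\alpha,\beta,\gamma,\delta,\varepsilon,\zeta)$ of Equation~(\ref{mainquartic}) and the open variety $\mathfrak{M}$ of Equation~(\ref{modulispace}). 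A short computation using the standing hypotheses $(\gamma,\delta)\neq(0,0)$ and $(\varepsilon,\zeta)\neq(0,0)$ shows that the five coordinates of $\mu$ never vanish simultaneously, so $\mu$ is everywhere defined with values in $\overline{\mathfrak{M}}=\mathbb{P}(2,3,4,5,6)$. The proof then breaks into three parts: \textbf{(A)} $\mu$ is constant on isomorphism classes of $\mathcal{X}$; \textbf{(B)} two parameter tuples with the same $\mu$-value differ by the symmetries of Proposition~\ref{symmetries1}; and \textbf{(C)} the image of $\mu$ is $\mathfrak{M}$.

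For \textbf{(B)} I would first note that the group $G$ generated by the transformations (a) and (b) of Proposition~\ref{symmetries1} contains the full two-dimensional torus acting on the parameter space: composing (a) and (b) appropriately produces the rescaling $(\alpha,\beta,\gamma,\delta,\varepsilon,\zeta)\mapsto(\alpha,\beta,r\gamma,r\delta,r^{-1}\varepsilon,r^{-1}\zeta)$, $r\in\mathbb{C}^*$, induced by $\mathbf{Z}\mapsto r\mathbf{Z}$ in Equation~(\ref{mainquartic}), and this together with (a) spans $(\mathbb{C}^*)^2$. One then recalls that $\mathcal{X}(\alpha,\beta,\gamma,\delta,\varepsilon,\zeta)$ is reconstructed from its alternate Jacobian fibration~(\ref{eqns_SI_alt}) out of the two polynomials $B(t)=t^3-3\alpha t-2\beta$ and $A(t)=(\gamma t-\delta)(\varepsilon t-\zeta)$; the only ambiguity in solving for $(\alpha,\beta,\gamma,\delta,\varepsilon,\zeta)$ from $A$ and $B$ is (i) a weighted rescaling $t\mapsto\lambda t$ of the base of~(\ref{eqns_SI_alt}) together with the attendant Weierstrass scaling of $x,y$ --- which is exactly the torus element (a) --- and (ii) the choice of ordered splitting of the quadratic $A(t)$ into two linear factors, which changes $(\gamma,\delta,\varepsilon,\zeta)$ only by the factor swap (b) and by an opposite rescaling of the two factors (the $\mathbf{Z}\mapsto r\mathbf{Z}$ element). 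Hence the fibers of $\mu$ are precisely the $G$-orbits; in particular the quotient of the parameter space by $G$ is four-dimensional, matching $\dim\mathfrak{M}=6-2=4$. Since by Proposition~\ref{symmetries1} the isomorphism class of $\mathcal{X}$ is constant along $G$-orbits, parts (A) and (B) together show that $\mu$ descends to a bijection from the set of isomorphism classes onto its image.

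For \textbf{(A)} the crucial input is that, within the $H\oplus E_7(-1)\oplus E_7(-1)$ lattice polarization, the alternate fibration is distinguished by carrying the (unique) Kodaira fiber of type $I_8^*$ (Lemma~\ref{lem:Ellalt} and the classification of fibrations in \cite{MR2824841}). Hence any isomorphism $\mathcal{X}(\mathbf{p}_1)\cong\mathcal{X}(\mathbf{p}_2)$ of polarized K3 surfaces carries one alternate fibration to the other and covers an automorphism of the base $\mathbb{P}^1$ fixing $t=\infty$; the absence of a quadratic term in $B(t)=t^3-3\alpha t-2\beta$ in the Weierstrass form~(\ref{eqns_SI_alt}) forces this automorphism to be $t\mapsto\lambda t$, and the two-torsion section of~(\ref{eqns_SI_alt}) being intrinsic rules out any Weierstrass shift in $x$; comparing~(\ref{eqns_SI_alt}) for $\mathbf{p}_1$ and $\mathbf{p}_2$ then exhibits $\mathbf{p}_2$ in the $G$-orbit of $\mathbf{p}_1$, so $\mu(\mathbf{p}_1)=\mu(\mathbf{p}_2)$. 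An alternative route to (A) is to invoke the Global Torelli theorem for $H\oplus E_7(-1)\oplus E_7(-1)$-polarized K3 surfaces: by Remark~\ref{rem:GKZ} and \cite{MR2824841} the coarse moduli space of the family is $\mathbf{H}_2/\Gamma_{\mathcal{T}}$ and the $J_k(\modvar)$ are modular coordinates on it, whence the isomorphism class determines $\mu$. For \textbf{(C)}, a point of $\mathfrak{M}$ is --- by the construction of $\mathfrak{M}$ in Section~\ref{sec:invariants} (Lemma~\ref{lem:Jinvariance}, Corollary~\ref{lem:Js}) together with Theorem~\ref{thm:6lines_alt} --- the same datum as a six-line configuration in cases (0)--(5) of Definition~\ref{def:sstable} together with its double-sextic surface $\mathcal{Y}$ and alternate Weierstrass data $\mathcal{A}(t)=J_4t^2-J_5t+J_6$, $\mathcal{B}(t)=t^3-3J_2t-2J_3$. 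One factors $\mathcal{A}(t)$ over $\mathbb{C}$ as $(\gamma t-\delta)(\varepsilon t-\zeta)$ --- possible with $(\gamma,\delta)\neq(0,0)$ and $(\varepsilon,\zeta)\neq(0,0)$ precisely because $\mathcal{A}\not\equiv0$ on this locus --- and sets $\alpha=J_2$, $\beta=J_3$; by Proposition~\ref{lem:polar1} the resulting $\mathcal{X}(\alpha,\beta,\gamma,\delta,\varepsilon,\zeta)$ has this $\mathcal{Y}$ as its Van~Geemen-Sarti partner, so $\mu$ hits the given point.

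I expect step (A) to be the main obstacle: pinning down that an abstract isomorphism of the two K3 surfaces respects the alternate fibration --- or, equivalently, appealing correctly to the period map and Global Torelli --- is what carries the geometric content, the rest being essentially the bookkeeping of (B) and (C). A secondary point requiring care is the behaviour on the degenerate boundary strata: the image of $\mu$ and $\mathfrak{M}$ coincide off loci of high codimension, where on the $\mathcal{X}$ side one meets the polarization enhancements to $H\oplus E_8(-1)\oplus E_7(-1)$ or $H\oplus E_8(-1)\oplus E_8(-1)$ of Theorem~\ref{thm:polarization} and on the six-line side the configurations (6a), (6b) of Definition~\ref{def:sstable}; these are matched up using Equation~(\ref{modd}) and Corollary~\ref{lem:Js}.
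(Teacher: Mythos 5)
Your argument is correct in substance and, where it overlaps with the paper's proof, takes the same route: the paper's entire proof of this lemma consists of recording the additional identification $\mathcal{X}(\alpha,\beta,\gamma,\delta,\varepsilon,\zeta)\simeq\mathcal{X}(\alpha,\beta,t\gamma,t\delta,t^{-1}\varepsilon,t^{-1}\zeta)$ --- your $\mathbf{Z}\mapsto t\mathbf{Z}$ torus element --- alongside the symmetries of Proposition~\ref{symmetries1}, and noting that the open condition defining $\mathfrak{M}$ keeps the singularities of the quartic rational double points; the identification of the orbit space with $\mathfrak{M}$ via $[\alpha:\beta:\gamma\varepsilon:\gamma\zeta+\delta\varepsilon:\delta\zeta]$ is then left implicit. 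What you add, and what is the genuinely substantive part, is your step (A): that an abstract isomorphism of polarized K3 surfaces in the family must carry the alternate fibration to the alternate fibration (it being the unique one with an $I_8^*$ fibre), hence is induced by $t\mapsto\lambda t$ together with a Weierstrass rescaling, so that the two parameter tuples lie in a single orbit of your group $G$. This is exactly the injectivity statement needed to upgrade ``the map is constant on orbits'' to ``the map separates isomorphism classes,'' and your argument for it (uniqueness of the $I_8^*$ fibration, the two-torsion section pinning the Weierstrass form, the missing $t^2$-term in $B$ killing translations of the base) is sound; the paper omits this step entirely.

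One caveat, which you inherit from the paper rather than introduce: in step (C) you assert that $\mathcal{A}\not\equiv 0$ on $\mathfrak{M}$, but the defining condition $(J_3,J_4,J_5)\neq(0,0,0)$ does not exclude points such as $[0:1:0:0:0]$, where $\mathcal{A}\equiv 0$ and no factorization $(\gamma t-\delta)(\varepsilon t-\zeta)$ with $(\gamma,\delta)\neq(0,0)$ and $(\varepsilon,\zeta)\neq(0,0)$ exists; conversely, the hypotheses of Theorem~\ref{thm:polarization} amount to $(J_4,J_5,J_6)\neq(0,0,0)$, which admits points such as $[1:0:0:0:1]$ lying outside $\mathfrak{M}$. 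Thus the image of your map $\mu$ and the open set $\mathfrak{M}$ coincide only away from the locus $J_4=J_5=0$, and a fully precise statement would have to reconcile the two open conditions there. Since the paper's own proof invokes ``$(J_3,J_4,J_5)\neq(0,0,0)$ ensures rational double points,'' which is not the literal hypothesis of Theorem~\ref{thm:polarization}, this is an imprecision in the formulation of the lemma itself rather than a defect of your argument on the generic locus.
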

\begin{proof}
As a consequence of Theorem~\ref{thm:polarization}, one has an isomorphism of polarized K3 surfaces 
\begin{equation}
\label{eq:isom}
\mathcal{X}(\alpha,\beta, \gamma, \delta, \varepsilon, \zeta) \ \simeq \ \mathcal{X}(\alpha,  \ \beta,  \ t \gamma,  \ t \delta, \ t^{-1} \varepsilon,  \ t^{-1} \zeta  )
\end{equation}
for any $t \in \mathbb{C}^*$.  The conditions imposed on the pairs $( J_4 , \  J_5, \ J_6 ) \neq (0,0,0)$ ensure that singularities of  ${\rm Q}(\alpha, \beta, \gamma, \delta , \varepsilon, \zeta)$ are rational double points. 
\end{proof}
Combing the results of Proposition~\ref{lem:polar1} and Lemma~\ref{lem:polar2} we obtain the following:
\begin{theorem}
\label{cor:K3moduli_space}
The moduli space $\mathfrak{M}$ in Equation~(\ref{modulispace}) is the coarse moduli space of  K3 surfaces endowed with $H \oplus E_7(-1) \oplus E_7(-1)$ lattice polarization.
\end{theorem}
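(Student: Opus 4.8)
The plan is to combine the explicit parametrization of the $\mathcal{X}$-family with the Torelli-type description of moduli of lattice-polarized K3 surfaces, and then to match up coordinates using Proposition~\ref{lem:polar1} and Lemma~\ref{lem:polar2}. Write $M = H \oplus E_7(-1) \oplus E_7(-1)$. First I would record the lattice-theoretic input: the transcendental lattice of a generic $M$-polarized K3 surface is $T = H \oplus H \oplus \langle -2 \rangle^{\oplus 2}$ (cf.\ the remark following Equation~(\ref{pic:VGS})), which has signature $(2,4)$, so the period domain of $M$-polarized K3 surfaces is a four-dimensional bounded symmetric domain of type $IV$; through the exceptional isomorphism $\mathrm{SU}(2,2)\cong\mathrm{Spin}(2,4)$ this is exactly the space $\mathbf{H}_2$ of Section~\ref{ssec:modular_description}, and by \cite{MR1204828} together with \cite{MR2824841} the relevant arithmetic group is $\Gamma_{\mathcal{T}}$. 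By the surjectivity of the period map and the global Torelli theorem for $M$-polarized K3 surfaces, the coarse moduli space of such surfaces is $\mathbf{H}_2/\Gamma_{\mathcal{T}}$; and by Clingher--Doran \cite{MR2824841} the quartic family of Equation~(\ref{mainquartic}), under the hypotheses $(\gamma,\delta)\neq(0,0)$ and $(\varepsilon,\zeta)\neq(0,0)$, is a complete family realizing it.

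Next I would transport this description to $\mathfrak{M}$. The corollary following Theorem~\ref{thm4} already provides a holomorphic isomorphism $\mathcal{H}\colon \mathbf{H}_2/\Gamma_{\mathcal{T}} \overset{\cong}{\longrightarrow} \mathfrak{M}$, $\modvar \mapsto [J_2(\modvar):\dots:J_6(\modvar)]$, extending to the Satake compactifications, so it suffices to check that it is compatible with the geometry of the $\mathcal{X}$-family. This is where Proposition~\ref{lem:polar1} enters: the Van~Geemen--Sarti partner of $\mathcal{X}(\alpha,\beta,\gamma,\delta,\varepsilon,\zeta)$ is the double-sextic K3 surface of Theorem~\ref{thm:6lines_alt} whose alternate fibration has Weierstrass coefficients $\mathcal{A}(t),\mathcal{B}(t)$ built from the invariants $J_k$, and the assignment $\mathcal{X}(\alpha,\beta,\gamma,\delta,\varepsilon,\zeta) \mapsto [\alpha:\beta:\gamma\varepsilon:\gamma\zeta+\delta\varepsilon:\delta\zeta] = [J_2:\dots:J_6]$ is precisely the inverse of the period map of the $\mathcal{X}$-family discussed in Remark~\ref{rem:GKZ}. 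Since that point is invariant under the rescalings of Proposition~\ref{symmetries1}(a) and Equation~(\ref{eq:isom}), it descends to isomorphism classes, in agreement with Lemma~\ref{lem:polar2}. Hence $\mathcal{H}$ carries the coarse moduli interpretation of $\mathbf{H}_2/\Gamma_{\mathcal{T}}$ onto $\mathfrak{M}$, which is the assertion.

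The hard part will be the first paragraph, specifically the completeness of the quartic family and the exact identification of its monodromy group: one must know that \emph{every} $M$-polarized K3 surface --- not merely a generic one --- is isomorphic to some $\mathcal{X}(\alpha,\beta,\gamma,\delta,\varepsilon,\zeta)$, and that the period map realizes the full group $\Gamma_{\mathcal{T}}$ rather than a finite-index sub- or overgroup. Both are established in \cite{MR2824841}, with the lattice and arithmetic-group input going back to \cite{MR1204828}, so in the write-up I would cite them rather than reproduce them; once they are in place, the rest is the coordinate bookkeeping furnished by Proposition~\ref{lem:polar1} and Lemma~\ref{lem:polar2}.
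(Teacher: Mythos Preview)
Your argument is correct, and in fact considerably more explicit than what the paper offers. The paper does not give a proof block for this theorem at all: it is stated as an immediate consequence of Proposition~\ref{lem:polar1} and Lemma~\ref{lem:polar2}, with the completeness of the quartic family and the identification of the arithmetic group tacitly inherited from \cite{MR2824841}. Your write-up makes the underlying Hodge-theoretic skeleton visible by routing through the period domain $\mathbf{H}_2/\Gamma_{\mathcal{T}}$ and the isomorphism $\mathcal{H}$ of the corollary after Theorem~\ref{thm4}, and then checking compatibility with the family via Proposition~\ref{lem:polar1} and Lemma~\ref{lem:polar2}; the paper skips this intermediate step and simply asserts the conclusion. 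The content is the same, and you correctly flag that the nontrivial inputs (surjectivity onto all $M$-polarized K3 surfaces and the exact monodromy group) are imported from \cite{MR2824841} rather than reproved here.
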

We have the immediate consequence:
\begin{corollary}
The loci of the singular fibers in the alternate fibration on the K3 surface $\mathcal{X}$ are determined by the Satake sextic in Section~\ref{ssec:mod_forms}. That is, if $\modvar \in \mathfrak{M}$ is the point in the moduli space associated with the six-line configuration defining $\mathcal{Y}$ and $\mathcal{X}$, the loci of the fibers of Kodaira type $I_1$ and $I_2$ are given by $\mathcal{S} =\mathcal{B}^2 - 4 \, \mathcal{A} = 0$ and $\mathcal{A} = 0$, respectively, with
\begin{equation}
\label{SatakeSextic2b}
\begin{split}
  \mathcal{B}=t^3- 3 \, J_2(\modvar)\, t - 2 \, J_3(\modvar), \ & \quad 
 \mathcal{A}= J_4(\modvar)\, t^2 - J_5(\modvar) \, t  + J_6(\modvar),
 \end{split}
 \end{equation}
where $J_k(\modvar)$ are the modular forms of weights $2k$ for $k=2, \dots , 6$ in Theorem~\ref{thm4} generating the ring of modular forms relative to $\Gamma_{\mathcal{T}}$.
\end{corollary}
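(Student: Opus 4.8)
The plan is to read off the claim from the explicit Weierstrass data of the alternate fibration $\pi^{\mathcal{X}}_{\mathrm{alt}}$ in Lemma~\ref{lem:Ellalt}, together with the dictionary between the quartic parameters $(\alpha,\beta,\gamma,\delta,\varepsilon,\zeta)$ and the six-line invariants $J_k$ established in Proposition~\ref{lem:polar1}, and finally the modular interpretation of $\mathfrak{M}$ provided by the map $\mathcal{H}$ and Theorem~\ref{thm4}. First I would recall that the minimal Weierstrass model (\ref{eqns_SI_alt}) has discriminant $\Delta_{\mathcal{X}}(t)=A(t)^2\big(B(t)^2-4A(t)\big)$ with $A(t)=\gamma\varepsilon\,t^2-(\gamma\zeta+\delta\varepsilon)\,t+\delta\zeta$ and $B(t)=t^3-3\alpha t-2\beta$, and that the Tate-algorithm analysis behind Lemma~\ref{lem:Ellalt} places the two $I_2$ fibers precisely over the zero locus $A(t)=0$, the six $I_1$ fibers over $B(t)^2-4A(t)=0$, and the single $I_8^*$ fiber at $t=\infty$. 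A quick Euler-number count, $2\cdot 2+6\cdot 1+(8+6)=24$, confirms that for generic $\modvar$ these loci are disjoint, so the separation into $I_1$ versus $I_2$ loci is unambiguous.

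Next I would invoke Proposition~\ref{lem:polar1}: for the six-line configuration defining $\mathcal{Y}$ and $\mathcal{X}$ one has $[J_2:J_3:J_4:J_5:J_6]=[\alpha:\beta:\gamma\varepsilon:\gamma\zeta+\delta\varepsilon:\delta\zeta]$ in $\mathbb{P}(2,3,4,5,6)$. Under this identification $A(t)=\mathcal{A}(t)=J_4 t^2-J_5 t+J_6$ and $B(t)=\mathcal{B}(t)=t^3-3J_2 t-2J_3$, up to the $\mathbb{C}^*$-rescaling of Lemma~\ref{lem:polar2}, so $B(t)^2-4A(t)=\mathcal{B}(t)^2-4\mathcal{A}(t)=\mathcal{S}(t)$ is exactly the Satake sextic of Lemma~\ref{Satake6} (Equation~(\ref{SatakeSextic2})). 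Finally, since $\modvar\in\mathfrak{M}$ is the point associated with the configuration, Theorem~\ref{thm4} and the isomorphism $\mathcal{H}$ let me replace the invariants $J_k$ by the corresponding modular forms $J_k(\modvar)$ of weight $2k$, which yields precisely the formulas (\ref{SatakeSextic2b}) for $\mathcal{A}$ and $\mathcal{B}$ and the asserted description of the fiber loci.

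The entire argument is essentially bookkeeping once Lemma~\ref{lem:Ellalt}, Proposition~\ref{lem:polar1}, and Theorem~\ref{thm4} are available; I do not expect a genuine obstacle. The one point that deserves a line of care is that the identification in Proposition~\ref{lem:polar1} is only an equality of points in a weighted projective space, hence $A$ and $B$ agree with $\mathcal{A}$ and $\mathcal{B}$ only up to rescaling the coefficients and the variable $t$; but such rescalings do not move the zeros of a polynomial, so the vanishing loci $\mathcal{A}=0$ and $\mathcal{S}=\mathcal{B}^2-4\mathcal{A}=0$ are well defined and unaffected. Thus the corollary follows as a direct consequence of the dictionary between the K3 geometry of the alternate fibration and the ring of modular forms constructed in Section~\ref{ssec:mod_forms}.
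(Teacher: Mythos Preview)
Your proposal is correct and follows exactly the reasoning the paper has in mind: the corollary is stated as an ``immediate consequence'' with no separate proof, and you have simply spelled out the bookkeeping implicit in Lemma~\ref{lem:Ellalt}, Proposition~\ref{lem:polar1}, and Theorem~\ref{thm4}. Your extra remarks (the Euler-number sanity check and the observation that the weighted-projective identification only determines $A,B$ up to rescaling, which does not affect the zero loci) are welcome clarifications that the paper omits.
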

Next, we describe what confluences of singular fibers appear in the three Jacobian elliptic fibrations determined above. We discuss several cases for each fibration where the labelling corresponds to the one used to characterize six-line configurations in Definition~\ref{def:sstable}, Lemma~\ref{lem:Js}, and Corollary~\ref{cor:VanishingLoci}. We have the following:
\begin{lemma} 
\label{lem:extensions_alt}
The Weierstrass model in Equation~(\ref{eqns_SI_alt}) associated with a six-line configuration in $\mathbb{P}^2$ with invariants $J_k$ for $k=2, \dots, 6$ satisfies the following:
\begin{enumerate}
    \item[(0)] In the generic case, there are singular fibers $I_8^* + 2 \, I_2 + 6 \, I_1$. 
    \item[(0b)] If $\operatorname{Res} (\mathcal{A}, \mathcal{B})=0$, one $I_1$ and one $I_2$ fiber coalesce to a $III$ fiber.
    \item[(1)] If $J_4 = 0$, one $I_2$ and the $I_8^*$ fiber coalesce to an $I_{10}^*$ fiber. 
    \item[(2)] If $\operatorname{Disc} (\mathcal{A})=0$, two $I_2$ fibers coalesce to an $I_4$ fiber.
    \item[(2b)] If $\operatorname{Disc} (\mathcal{S})=0$, two $I_1$ fibers coalesce to an $I_2$ fiber.
    \item[(3+4)]  If $\operatorname{Disc} (\mathcal{A})=\operatorname{Res} (\mathcal{A}, \mathcal{B})=0$,  two $I_1$ and two $I_2$ fibers coalesce to an $I_0^*$ fiber.
    \item[(5)] If $J_4 = J_5 = 0$, two $I_2$ fibers and the $I_8^*$ fiber coalesce to an $I_{12}^*$ fiber. 
  \end{enumerate}
\end{lemma}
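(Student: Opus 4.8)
The plan is to read off all seven statements from Tate's algorithm applied to the Weierstrass model~(\ref{eqns_SI_alt}), $y^2 = x\bigl(x^2 + B(t)\,x + A(t)\bigr)$, whose discriminant factors as $\Delta_{\mathcal{X}} = A^2\,\mathcal{S}$ with $\mathcal{S} := B^2 - 4A$ the Satake sextic. In each case one needs only the orders of vanishing of $A$, $B$, $\mathcal{S}$, and $\Delta_{\mathcal{X}}$ at the point where the confluence occurs, together with the shape of the central fibre (cuspidal versus nodal) to decide between additive and multiplicative reduction. Generically the two $I_2$'s sit over the simple roots of $A$, the six $I_1$'s over the simple roots of $\mathcal{S}$, and the $I_8^*$ over $t = \infty$. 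Cases (0b), (2), (2b), (3+4) are confluences at a finite $t_0$ recorded by the vanishing of $\operatorname{Res}(\mathcal{A},\mathcal{B})$, $\operatorname{Disc}(\mathcal{A})$, $\operatorname{Disc}(\mathcal{S})$ as in Corollary~\ref{lem:Js} and Corollary~\ref{cor:VanishingLoci}, while cases (1), (5) are confluences at $t = \infty$ produced by a root of $A$ running off to infinity, analyzed in the chart $s = 1/t$.

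For the finite cases one exploits that, since every relevant $v_{t_0}(\Delta_{\mathcal{X}})$ is below $12$, the model is automatically minimal at $t_0$ and the Kodaira type is forced by $v_{t_0}(\Delta_{\mathcal{X}})$ together with the additive/multiplicative dichotomy. In (2b) a double root $t_0$ of $\mathcal{S}$ with $A(t_0)\neq 0$ gives $v_{t_0}(\Delta_{\mathcal{X}}) = v_{t_0}(\mathcal{S}) = 2$ with nodal central fibre, hence $I_2$. In (2) a double root $t_0$ of $A$ with $B(t_0)\neq 0$ gives $\mathcal{S}(t_0) = B(t_0)^2 \neq 0$ and $v_{t_0}(\Delta_{\mathcal{X}}) = 2\,v_{t_0}(A) = 4$ with nodal central fibre, hence $I_4$. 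In (0b) the resultant condition means $A$ and $B$ share a simple root $t_0$, so $v_{t_0}(A) = v_{t_0}(B) = 1$ and $v_{t_0}(\mathcal{S}) = 1$ (the $B^2$ term vanishes to order $2$, so $\mathcal{S}$ carries the order of $-4A$), giving $v_{t_0}(\Delta_{\mathcal{X}}) = 3$ with cuspidal central fibre $y^2 = x^3$, hence type $III$. In (3+4) the double root $t_0$ of $A$ is also a root of $B$, so $v_{t_0}(A) = 2$, $v_{t_0}(B) = 1$, and — the leading coefficients of $B^2$ and $4A$ at $t_0$ being generically distinct — $v_{t_0}(\mathcal{S}) = 2$, whence $v_{t_0}(\Delta_{\mathcal{X}}) = 6$ with cuspidal central fibre, hence type $I_0^*$.

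For the fibres at infinity, set $s = 1/t$ and twist to $y^2 = x\bigl(x^2 + \tilde a_2(s)\,x + \tilde a_4(s)\bigr)$ with $\tilde a_2 = s^4 B(1/s)$ and $\tilde a_4 = s^8 A(1/s)$, so that $\Delta = 16\,\tilde a_4^2\bigl(\tilde a_2^2 - 4\tilde a_4\bigr)$ and the short-form coefficient is $\tilde f = \tilde a_4 - \tilde a_2^2/3$. Generically $v_0(\tilde a_2) = 1$ (as $\deg B = 3$) and $v_0(\tilde a_4) = 6$ (as $\deg A = 2$), so $v_0(\tilde f) = 2$, the model is minimal at $s = 0$, the central fibre is cuspidal, and $v_0(\Delta) = 2\cdot 6 + 2 = 14$: the $I_8^*$ of Lemma~\ref{lem:Ellalt}. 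If $J_4 = 0$ then $\deg A = 1$, so $v_0(\tilde a_4) = 7$, still $v_0(\tilde f) = 2$, and $v_0(\Delta) = 16$; if $J_4 = J_5 = 0$ then $A$ is constant, so $v_0(\tilde a_4) = 8$, still $v_0(\tilde f) = 2$, and $v_0(\Delta) = 18$. In both cases the reduction remains minimal and additive, and since the only additive Kodaira fibres with discriminant valuation above $10$ form the family $I_n^*$ (with $v(\Delta) = n + 6$), the fibres are $I_{10}^*$ and $I_{12}^*$, confirming (1) and (5).

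There is no deep obstacle here: the proof is routine once the set-up is in place, and the real care-points are two. First, the genericity inputs: one must verify that the loci cut out by $\operatorname{Res}(\mathcal{A},\mathcal{B})$, $\operatorname{Disc}(\mathcal{A})$, $\operatorname{Disc}(\mathcal{S})$ are met along strata where $A$, $B$, $\mathcal{S}$ acquire exactly the simple, double, or shared roots claimed with no further accidental cancellation of leading coefficients — this is precisely the \emph{all other invariants generic} proviso of the statement, and it is also what makes these conditions the ones matching cases (0b) through (5) of Definition~\ref{def:sstable}. Second, in cases (0b) and (3+4) one genuinely needs the cuspidal shape of the central fibre (additive reduction) to rule out the multiplicative types $I_3$ and $I_6$, which share the same $v(\Delta_{\mathcal{X}})$; everything else reduces to the elementary valuation bookkeeping above.
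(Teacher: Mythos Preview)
Your proof is correct and follows the same approach as the paper's own proof, which simply states that the result follows from Tate's algorithm (noting the implications $J_4=\operatorname{Disc}(\mathcal{A})=0 \Leftrightarrow J_4=J_5=0$ and $\operatorname{Disc}(\mathcal{A})=\operatorname{Res}(\mathcal{A},\mathcal{B})=0 \Rightarrow \operatorname{Disc}(\mathcal{S})=0$). You have spelled out in detail the valuation bookkeeping and the additive/multiplicative distinctions that the paper leaves implicit.
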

\begin{proof}
The coefficients of the Weierstrass model in Equation~(\ref{eqns_SI_alt}) can be written in terms of modular forms. The proof follows from the application of Tate's algorithm. Notice that $J_4 =  \operatorname{Disc} (\mathcal{A}) = 0$ is equivalent to $J_4 = J_5 = 0$; and $\operatorname{Disc} (\mathcal{A})=\operatorname{Res} (\mathcal{A}, \mathcal{B})=0$ implies $\operatorname{Disc} (\mathcal{S})=0$.
\end{proof}
We also have the following:
\begin{lemma} 
\label{lem:extensions_alt_FB}
The Weierstrass model in Equation~(\ref{eqn:bfdual}) associated with a six-line configuration in $\mathbb{P}^2$ with invariants $J_k$ for $k=2, \dots, 6$ satisfies the following:
\begin{enumerate}
    \item[(0)] In the generic case, there are singular fibers $II^* +  6 \, I_1 + I_2^*$. 
    \item[(0b)] If $\operatorname{Res} (\check{f}\, T^{-2}, \check{g}\, T^{-3})=0$, two $I_1$ fibers coalesce to a $II$ fiber.
    \item[(1)] If $J_4 = 0$,  one $I_1$ and the $I_2^*$ fiber coalesce to a $III^*$ fiber. 
    \item[(2)] If $\operatorname{Disc} (\mathcal{A})=0$, one $I_1$ and the $I_2^*$ fiber coalesce to an $I_3^*$ fiber. 
    \item[(2b)] If $\operatorname{Disc} (\mathcal{S})=0$, two $I_1$ fibers coalesce to an $I_2$ fiber.
    \item[(3+4)]  If $\operatorname{Disc} (\mathcal{A})=\operatorname{Res} (\mathcal{A}, \mathcal{B})=0$,  two $I_1$ and the $I_2^*$ fiber coalesce to an $I_4^*$ fiber. 
    \item[(5)] If $J_4 = J_5 = 0$, two $I_1$ fibers and the $I_2^*$ fiber coalesce to a $II^*$ fiber. 
\end{enumerate}
\end{lemma}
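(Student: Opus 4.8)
The plan is to apply Tate's algorithm to the minimal Weierstrass model~(\ref{eqn:bfdual}) under each of the listed degenerations, translating the conditions on the $J_k$ into conditions on $(\alpha,\beta,\gamma,\delta,\varepsilon,\zeta)$ through Proposition~\ref{lem:polar1}: there $J_4=\gamma\varepsilon$, $J_5=\gamma\zeta+\delta\varepsilon$, $J_6=\delta\zeta$, so that $\operatorname{Disc}(\mathcal{A})=J_5^2-4J_4J_6=(\gamma\zeta-\delta\varepsilon)^2$ and $\operatorname{Res}(\mathcal{A},\mathcal{B})$ is the cubic of~(\ref{eqn:decomp}), while $\mathcal{A}(t)=(\gamma t-\delta)(\varepsilon t-\zeta)$ and $\mathcal{B}(t)=t^3-3\alpha t-2\beta$. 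By Lemma~\ref{lem:Ellalt_dual} the generic configuration $II^*+I_2^*+6\,I_1$ places the $II^*$ over $T=\infty$, the $I_2^*$ over $T=0$, and the six $I_1$ fibers over the simple roots of the sextic $\Delta(T)/T^8$, where $\Delta(T)=4\check f(T)^3+27\check g(T)^2$. One observation streamlines the casework: since $\check g$ has degree $7$ with leading coefficient $1$, after the substitution $T\mapsto 1/s$ and the weight-$(4,6)$ rescaling the invariant $c_6$ has order exactly $5$ and $v_\infty(\Delta)=10$, so the $II^*$ fiber over $T=\infty$ is stable under every degeneration; hence all degenerations occur either at $T=0$ or among the six $I_1$ fibers, with the running consistency check that the Euler numbers of all fibers sum to $24$.

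For the cases (1), (2), (3+4), (5), which are cut out by $J_4$, $\operatorname{Disc}(\mathcal{A})$ and $\operatorname{Res}(\mathcal{A},\mathcal{B})$, the degeneration occurs at $T=0$ and is read off from the $T$-adic orders of $\check f$, $\check g$, $\Delta$. The key computation is the expansion of $\Delta$ at $T=0$: the coefficients of $T^6$ and $T^7$ vanish identically, the coefficient of $T^8$ equals $-(\gamma\varepsilon)^2\operatorname{Disc}(\mathcal{A})$, and, on the locus $\operatorname{Disc}(\mathcal{A})=0$, the coefficient of $T^9$ is a nonzero multiple of $\mathcal{B}(t_0)$, where $t_0=J_5/(2J_4)$ is the double root of $\mathcal{A}$. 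From this: in (1), $J_4=0$ raises $\operatorname{ord}_0\check f$ from $2$ to $3$, with $\operatorname{ord}_0\check g=5$ and $\operatorname{ord}_0\Delta=9$, so Tate's algorithm promotes $I_2^*$ to $III^*$; in (2), $\operatorname{Disc}(\mathcal{A})=0$ keeps $\operatorname{ord}_0\check f=2$, $\operatorname{ord}_0\check g=3$ but raises $\operatorname{ord}_0\Delta$ to $9$, giving $I_3^*$; in (3+4), the extra condition $\operatorname{Res}(\mathcal{A},\mathcal{B})=0$ is, given $\operatorname{Disc}(\mathcal{A})=0$ and $J_4\neq0$, equivalent to $\mathcal{B}(t_0)=0$, and therefore raises $\operatorname{ord}_0\Delta$ to $10$, giving $I_4^*$; in (5), $J_4=J_5=0$ together with $(\gamma,\delta)\neq(0,0)$ and $(\varepsilon,\zeta)\neq(0,0)$ forces $\gamma=\varepsilon=0$, so that $\operatorname{ord}_0\check f=4$, $\operatorname{ord}_0\check g=5$, $\operatorname{ord}_0\Delta=10$ and the fiber over $T=0$ becomes $II^*$. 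In each case the change in $v(\Delta)$ determines how many $I_1$ fibers are absorbed (one, one, two, two, respectively), and the Euler numbers still total $24$.

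For the cases (0b) and (2b) the degeneration takes place among the six $I_1$ fibers, at a double root $T_1\neq0$ of $\Delta(T)/T^8$. Two coalescing $I_1$ fibers give a fiber of Kodaira type $I_2$ when $\check f(T_1)\neq0$ and of type $II$ when $\check f(T_1)=\check g(T_1)=0$; the latter is exactly the vanishing of the reduced resultant $\operatorname{Res}(\check f\,T^{-2},\check g\,T^{-3})$, because a common zero of $\check f\,T^{-2}$ and $\check g\,T^{-3}$ forces $\Delta(T_1)=\Delta'(T_1)=0$ with $\operatorname{ord}_{T_1}\check g=1$ generically, giving Tate type $II$---this is case (0b). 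On the complementary stratum the coalescing pair meets with $\check f(T_1)\neq0$, producing the $I_2$ fiber of case (2b); that this stratum is the locus $\operatorname{Disc}(\mathcal{S})=0$ follows from Lemma~\ref{lem:extensions_alt}, which identifies $\{\operatorname{Disc}(\mathcal{S})=0\}$ as the divisor in $\mathfrak{M}$ along which $\mathcal{X}$ gains an extra $I_2$ fiber in its alternate fibration, while the fibers over $T=0$ and $T=\infty$ of~(\ref{eqn:bfdual}) are fixed along it by the analysis above.

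The main obstacle will be the explicit $T$-adic expansion of $\Delta=4\check f^3+27\check g^2$ at $T=0$ to the required order---in particular verifying that the $T^8$-coefficient equals $-(\gamma\varepsilon)^2\operatorname{Disc}(\mathcal{A})$ and that, on $\operatorname{Disc}(\mathcal{A})=0$, the $T^9$-coefficient factors through $\mathcal{B}(t_0)$---together with, for case (0b) versus case (2b), the careful bookkeeping that distinguishes the nodal from the cuspidal coalescence of the two $I_1$ fibers, governed by the reduced resultant rather than by $\operatorname{Disc}(\mathcal{S})$ itself.
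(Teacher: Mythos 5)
Your proposal is correct and follows the same route as the paper, which simply invokes Tate's algorithm on the model~(\ref{eqn:bfdual}) after expressing its coefficients through the modular forms $J_k$; you supply the computations the paper omits, and your key expansions check out (the $T^8$-coefficient of $\Delta$ is indeed $-J_4^2\operatorname{Disc}(\mathcal{A})$ and the $T^9$-coefficient on $\operatorname{Disc}(\mathcal{A})=0$ equals $4J_4^3\,\mathcal{B}(J_5/(2J_4))$, so that $\operatorname{Res}(\mathcal{A},\mathcal{B})=J_4^3\mathcal{B}(t_0)^2=0$ is exactly the further degeneration to $I_4^*$). The only stylistic difference is case (2b), which you settle indirectly via Lemma~\ref{lem:extensions_alt} and rigidity of the $T=0,\infty$ fibers rather than by locating the double root of $\Delta(T)/T^8$ directly; this is sound since $\operatorname{MW}(\check{\pi}^{\mathcal{X}}_{\mathrm{alt}})$ is trivial, so the extra $\langle-2\rangle$ class must appear in the trivial lattice of this fibration.
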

\begin{proof}
The coefficients of the Weierstrass model in Equation~(\ref{eqn:bfdual}) can be written in terms of modular forms. The proof then follows from the application of Tate's algorithm. 
\end{proof}
Similarly, one proves the following:
\begin{lemma} 
\label{lem:extensions_std}
The Weierstrass model in Equation~(\ref{eqns_SI_std}) associated with a six-line configuration in $\mathbb{P}^2$ with invariants $J_k$ for $k=2, \dots, 6$ satisfies the following:
\begin{enumerate}
    \item[(0)] In the generic case, there are singular fibers $III^* +  6 \, I_1 + III^*$. 
    \item[(0b)] If $\operatorname{Res} (f s^{-2}, g s^{-5})=0$, two $I_1$ fibers coalesce to a $II$ fiber.
    \item[(1)] If $J_4 = 0$, one $I_1$ and one $III^*$ fiber coalesce to a $II^*$ fiber. 
    \item[(2b)] If $\operatorname{Disc} (\mathcal{S})=0$, two $I_1$ fibers coalesce to an $I_2$ fiber.
    \item[(5)] If $J_4 = J_5 = 0$, two pairs of $I_1$ and $III^*$ fiber coalesce each to a $II^*$ fiber. 
\end{enumerate}
\end{lemma}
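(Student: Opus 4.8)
The plan is to mirror the proofs of Lemmas~\ref{lem:extensions_alt} and~\ref{lem:extensions_alt_FB}. First I would use Proposition~\ref{lem:polar1} to substitute $\alpha = J_2(\modvar)$, $\beta = J_3(\modvar)$, $\gamma\varepsilon = J_4(\modvar)$, $\gamma\zeta + \delta\varepsilon = J_5(\modvar)$, and $\delta\zeta = J_6(\modvar)$, so that $f(s)$, $g(s)$, and the discriminant $\Delta_{\mathcal{X}}(s) = -64\,s^9 P(s)$ acquire coefficients that are modular forms relative to $\Gamma_{\mathcal{T}}$. Every vanishing condition in the statement involves only the symmetric combinations $J_4, J_5, J_6$; since $J_4 = \gamma\varepsilon$, the isomorphisms of Proposition~\ref{symmetries1}(b) reduce $J_4 = 0$ to the normalized situation $\varepsilon = 0$ (the case $\gamma = 0$ being isomorphic to it), while $J_4 = J_5 = 0$ reduces to $\gamma = \varepsilon = 0$, where the non-degeneracy hypotheses $(\gamma,\delta)\neq(0,0)$ and $(\varepsilon,\zeta)\neq(0,0)$ force $\delta,\zeta\neq 0$. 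After these normalizations the determination of Kodaira types becomes a matter of tracking the valuations $v_{s_0}(f)$, $v_{s_0}(g)$, $v_{s_0}(\Delta_{\mathcal{X}})$ and applying Tate's algorithm.

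For the generic case~(0) nothing new is needed: the fibers of type $III^*$ at $s = 0$ and $s = \infty$ are Lemmas~\ref{lem:Ellstd} and~\ref{lem:std_extension}, and $P(s)$ has constant term $4\varepsilon^3$ and leading coefficient $4\gamma^3$, hence is coprime to $s$ and of exact degree six, so its six simple roots carry the remaining six $I_1$ fibers (Euler numbers $9 + 9 + 6 = 24$). For cases~(1) and~(5) I would invoke Lemma~\ref{lem:std_extension}: once $\varepsilon = 0$ the fiber at $s = 0$ is of type $II^*$, and since $\varepsilon = 0$ makes $s = 0$ a simple zero of $P$, exactly one $I_1$ is absorbed into the $III^*$ to produce the $II^*$ (valuations $v(f) = 4$, $v(g)\ge 5$, $v(\Delta_{\mathcal{X}}) = 10$); when in addition $\gamma = 0$ the same happens symmetrically at $s = \infty$, which is case~(5). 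For case~(0b) the point is that at a zero $s_0 \notin \{0, \infty\}$ of $\Delta_{\mathcal{X}}$ the fiber is of type $II$ exactly when $g(s_0) = 0$, which --- as $s_0 \neq 0$ --- forces $f(s_0) = 0$ as well, i.e. $s_0$ is a common root of the quadratic factors $\gamma s^2 - 3\alpha s + \varepsilon$ and $\delta s^2 + 2\beta s + \zeta$ of $f$ and $g$; such a common root exists precisely when the resultant in~(0b) vanishes, and it absorbs two of the six $I_1$ fibers.

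The essential case is~(2b), where two of the six $I_1$ fibers collide: this occurs exactly when $P(s)$ acquires a double root in $\mathbb{P}^1 \setminus \{0,\infty\}$, so one must show that the hypersurface $\{\operatorname{Disc}(P) = 0\} \subset \mathfrak{M}$ --- which is well defined, since the vanishing locus of the discriminant of $P$ is invariant under the rescaling~(\ref{eq:isom}) and hence cut out by a polynomial in $J_2,\dots,J_6$ --- coincides with $\{\operatorname{Disc}(\mathcal{S}) = 0\}$ up to factors supported on the already-treated loci $J_4 = 0$ and $\operatorname{Disc}(\mathcal{A}) = \operatorname{Res}(\mathcal{A},\mathcal{B}) = 0$. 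I expect this identification to be the main obstacle, as it is a genuine global computation rather than a local Tate's-algorithm check; a convenient guide and cross-check is the alternate fibration of Equation~(\ref{eqns_SI_alt}), where $\Delta_{\mathcal{X}}(t) = A(t)^2 \bigl(B(t)^2 - 4A(t)\bigr)$ with $B^2 - 4A = \mathcal{S}$ the Satake sextic, so that there the collision of two $I_1$ fibers is manifestly $\operatorname{Disc}(\mathcal{S}) = 0$. Finally, exactly as in the companion lemmas, one notes that $J_4 = \operatorname{Disc}(\mathcal{A}) = 0$ is equivalent to $J_4 = J_5 = 0$ and that $\operatorname{Disc}(\mathcal{A}) = \operatorname{Res}(\mathcal{A},\mathcal{B}) = 0$ forces $\operatorname{Disc}(\mathcal{S}) = 0$; this explains why the configurations of types~(2) and~(3,4) induce no further degeneration of the standard fibration beyond those already in~(2b), and hence why the five cases listed are exhaustive.
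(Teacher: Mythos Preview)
Your proposal is correct and follows essentially the same approach as the paper, which simply states that one writes the Weierstrass coefficients in terms of modular forms via Proposition~\ref{lem:polar1} and then applies Tate's algorithm case by case. You spell out considerably more detail than the paper does---in particular you correctly flag the nontrivial identification in case~(2b) between the double-root locus of $P(s)$ and the vanishing of $\operatorname{Disc}(\mathcal{S})$, which the paper leaves entirely implicit.
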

Notice that cases in which two $I_1$'s coalesce to form a fiber of type $II$ or one $I_1$ fiber and one $I_2$ fiber coalesce to a fiber of type $III$ -- a case we labelled (0b), adding to cases (0) through (5) in Definition~\ref{def:sstable} -- do not affect the lattice polarization. An immediate consequence is the following:
\begin{corollary}
\label{cor:extensions}
The family of K3 surfaces in Equation~(\ref{mainquartic}) satisfies the following:
\begin{enumerate}
\item[(0)] For a generic point in $\mathfrak{M}$, there is a $H \oplus E_7(-1) \oplus E_7(-1)$ polarization.
\item[(1)] If $J_4 = 0$, the polarization extends to $H \oplus E_8(-1) \oplus E_7(-1)$.
\item[(2)] if $\operatorname{Disc} (\mathcal{A})=0$, the polarization extends to $H \oplus E_8(-1) \oplus D_7(-1)$.
\item[(2b)] If $\operatorname{Disc} (\mathcal{S})=0$, the polarization extends to $H \oplus E_7(-1) \oplus E_7(-1) \oplus \langle -2 \rangle$.
\item[(3+4)] If $\operatorname{Disc} (\mathcal{A})=\operatorname{Res} (\mathcal{A}, \mathcal{B})=0$, the polarization extends to $H \oplus E_8(-1) \oplus D_8(-1)$.
\item[(5)] If $J_4 = J_5 = 0$, the polarization extends to $H \oplus E_8(-1) \oplus E_8(-1)$.
\end{enumerate}
\end{corollary}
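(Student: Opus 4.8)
The plan is to organize the six cases according to how the hypothesis interacts with the singular locus of the quartic $\mathrm{Q}(\alpha,\beta,\gamma,\delta,\varepsilon,\zeta)$. By Proposition~\ref{lem:polar1} one has $J_4=\gamma\varepsilon$ and $J_5=\gamma\zeta+\delta\varepsilon$. Hence cases $(0)$, $(1)$, $(5)$ are precisely the four strata already analyzed in Theorem~\ref{thm:polarization}: the generic stratum $J_4\neq0$ is the case $\gamma\varepsilon\neq0$, carrying $H\oplus E_7(-1)\oplus E_7(-1)$; the stratum $J_4=0$ (with $J_5\neq0$) splits as $\gamma=0,\varepsilon\neq0$ or $\gamma\neq0,\varepsilon=0$, both of which enhance to $H\oplus E_8(-1)\oplus E_7(-1)$ via diagrams~(\ref{diaggl8}) and~(\ref{diaggl9}); and $J_4=J_5=0$ forces $\gamma=\varepsilon=0$ (since $(\gamma,\delta)\neq(0,0)$ and $(\varepsilon,\zeta)\neq(0,0)$), which is diagram~(\ref{diaggl89}) and yields $H\oplus E_8(-1)\oplus E_8(-1)$. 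So for these three cases there is nothing to prove beyond quoting Theorem~\ref{thm:polarization} and the dictionary of Proposition~\ref{lem:polar1}.

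The genuinely new strata are $(2)$, $(3{+}4)$, $(2b)$, where generically $J_4\neq0$, so the two quartic singularities and diagram~(\ref{diaggl7}) are unchanged and one must instead locate the extra algebraic classes produced by the degenerating discriminant. The plan is to compute $\operatorname{NS}(\mathcal{X})$ for a generic member of each stratum by applying the Shioda--Tate formula to the alternate Jacobian fibration $\pi^{\mathcal{X}}_{\mathrm{alt}}$ of Equation~(\ref{eqns_SI_alt}), whose singular fibres on each stratum are listed in Lemma~\ref{lem:extensions_alt}. Since $\operatorname{MW}(\pi^{\mathcal{X}}_{\mathrm{alt}})=\mathbb{Z}/2\mathbb{Z}$, the lattice $\operatorname{NS}(\mathcal{X})$ is the index-two overlattice of the trivial lattice $H\oplus\bigoplus_v R_v$ obtained by adjoining the class of the $2$-torsion section, where an $I_{2n}^*$ fibre contributes $R_v=D_{2n+4}(-1)$, an $I_m$ fibre contributes $A_{m-1}(-1)$, an $I_0^*$ fibre contributes $D_4(-1)$, and the fibres of type $I_1$, $II$, $III$ contribute nothing. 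Reading off Lemma~\ref{lem:extensions_alt}: case $(2)$ has fibres $I_8^*+I_4+6I_1$, so the trivial lattice is $H\oplus D_{12}(-1)\oplus A_3(-1)$; case $(3{+}4)$ has $I_8^*+I_0^*+4I_1$, giving $H\oplus D_{12}(-1)\oplus D_4(-1)$; case $(2b)$ has $I_8^*+3I_2+4I_1$, giving $H\oplus D_{12}(-1)\oplus A_1(-1)^{\oplus3}$. Adjoining the torsion section then produces the overlattices $H\oplus E_8(-1)\oplus D_7(-1)$, $H\oplus E_8(-1)\oplus D_8(-1)$, and $H\oplus E_7(-1)\oplus E_7(-1)\oplus\langle-2\rangle$. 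Each identification of an abstract even overlattice with the named lattice is carried out by matching signature and discriminant form and then invoking Nikulin's uniqueness theorem, which applies since all of the lattices involved are indefinite of rank exceeding the length of their discriminant group by at least two. The harmless confluences labelled $(0b)$ never alter $\operatorname{NS}(\mathcal{X})$, as noted just before the statement, and may be ignored throughout.

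I expect the only real obstacle to lie in the last bookkeeping step: pinning down exactly which index-two overlattice of $H\oplus\bigoplus_v R_v$ is cut out by the $2$-torsion section, i.e.\ computing, for each reducible fibre, which element of the component group the section lands in. This is forced by the Mordell--Weil height pairing — the section has height zero, which determines its local correction terms — but one must verify that the resulting glue vector is the one producing the $E_7(-1)$ (resp.\ $D_7(-1)$, $D_8(-1)$) summand rather than, e.g., $D_{14}(-1)$ or $D_{16}(-1)$; equivalently, in case $(2b)$, that the section stays on the identity component of the $A_1$ created by the two colliding $I_1$'s, so that the extra $\langle-2\rangle$ genuinely splits off as an orthogonal summand. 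As a consistency check I would rerun the fibre-confluence analysis for the base-fibre-dual of the alternate fibration, Equation~(\ref{eqn:bfdual}), and for the standard fibration, Equation~(\ref{eqns_SI_std}), using Lemmas~\ref{lem:extensions_alt_FB} and~\ref{lem:extensions_std}; for instance in case $(5)$ the standard fibration degenerates to two fibres of type $II^*$, so its trivial lattice already contains $H\oplus E_8(-1)\oplus E_8(-1)$ with no overlattice needed, in agreement with the direct argument above.
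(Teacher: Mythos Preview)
Your proposal is correct, but the paper takes a cleaner route that you relegate to a consistency check. The paper works not with the alternate fibration $\pi^{\mathcal{X}}_{\mathrm{alt}}$ but with its base-fiber-dual $\check{\pi}^{\mathcal{X}}_{\mathrm{alt}}$ of Equation~(\ref{eqn:bfdual}). That fibration carries a fiber of Kodaira type $II^*$ in \emph{every} stratum listed in Lemma~\ref{lem:extensions_alt_FB}, which forces $\operatorname{MW}(\check{\pi}^{\mathcal{X}}_{\mathrm{alt}})=\{\mathbb{I}\}$ throughout. Hence $\operatorname{NS}(\mathcal{X})$ equals the trivial lattice $H\oplus\bigoplus_v R_v$ of $\check{\pi}^{\mathcal{X}}_{\mathrm{alt}}$ on the nose, and the entries of the corollary are read off directly from the fiber configurations $II^*+I_2^*$, $II^*+III^*$, $II^*+I_3^*$, $II^*+I_2^*+I_2$, $II^*+I_4^*$, $II^*+II^*$, with no overlattice computation whatsoever. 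The only residual step is the standard lattice identity $H\oplus E_8(-1)\oplus D_6(-1)\cong H\oplus E_7(-1)\oplus E_7(-1)$ needed to match the presentation in cases~(0) and~(2b), which follows from Nikulin just as you say.

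Your approach via $\pi^{\mathcal{X}}_{\mathrm{alt}}$ and its $2$-torsion section reaches the same conclusion but at the cost of the glue-vector bookkeeping you anticipate. There is also one point you pass over: you quote $\operatorname{MW}(\pi^{\mathcal{X}}_{\mathrm{alt}})=\mathbb{Z}/2\mathbb{Z}$ from Lemma~\ref{lem:Ellalt}, but that lemma is stated for the generic member, and on a special stratum the Mordell-Weil group could a priori acquire extra torsion. This can be excluded by a discriminant count, but it is one more thing to verify. The paper's choice of fibration with a persistent $II^*$ fiber eliminates both the overlattice and the Mordell-Weil issues in a single stroke.
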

\begin{proof}
The presence of a singular fiber of Kodaira type $II^*$ in the fibration given by Equation~(\ref{eqn:bfdual}) implies that we have, in all cases, a Mordell-Weil group of sections $\operatorname{MW}(\check{\pi}^{\mathcal{X}}_{\mathrm{alt}})=\{ \mathbb{I} \}$ \cite{MR2732092}*{Lemma~7.3}. Therefore, the lattice polarization coincides with the trivial lattice generated by the singular fibers extended by $H$ generated by the classes of the smooth fiber and the section of the elliptic fibration.
\end{proof}
\subsection{F-theory/heterotic duality}
We determined equations for three important elliptic fibrations with sections on the universal family of such K3 surfaces admitting a $H \oplus E_7(-1) \oplus E_7(-1)$ lattice polarization in Theorem~\ref{thm:polarization} and Lemmas~\ref{lem:Ellstd},~\ref{lem:Ellalt},~\ref{lem:Ellalt_dual}.  We also identified the coarse moduli space $\mathfrak{M}$ of the Clingher-Doran family to be the quotient space of $\mathbf{H}_2$ by the modular group $\Gamma_{\mathcal{T}}$; see Theorem~\ref{thm4} and~\ref{cor:K3moduli_space}.  We will now show, that the ring of modular forms of even characteristic relative to $\Gamma_{\mathcal{T}}$ coincides with the ring of modular forms on the bounded symmetric domain of type $IV$. The latter is known to provide a quantum-exact effective heterotic description of a natural sub-space of the moduli space of non-geometric heterotic models \cites{MR3366121,MR2826187,MR3417046}. Therefore, we will have established a particular F-theory/heterotic string duality map in eight dimensions. The heterotic theories on a torus $\mathbf{T}^2$ in question have two complex moduli and two non-vanishing complex Wilson lines. The duality map is the exact correspondence between the two moduli spaces, known in the large volume limit on the heterotic side, which corresponds to the stable degeneration limit on the F-theory side and established in \cites{MR1468319,MR1643100,MR1697279,MR2126482}. Therefore, our result generalizes earlier results by \cites{MR2369941, MR2826187, MR2824841, MR2935386, MR3366121, MR3417046}.

\par We let $L^{2,4}$ be the lattice of signature $(2,4)$ which is the orthogonal complement of $E_7(-1)\oplus E_7(-1)$ in the unique integral even unimodular lattice $\Lambda^{2,18}$ of signature $(2,18)$, which is
\begin{equation}
\label{k3lattice}
\Lambda^{2,18}=H\oplus H \oplus E_8(-1)\oplus E_8(-1) \;.
\end{equation}
We restrict to the quotient of the symmetric space\footnote{By $\mathcal{D}_{p,q}$ we denote the symmetric space for $O(p,q)$, i.e.,
\begin{equation}
\mathcal{D}_{p,q} = (O(p)\times O(q))\backslash O(p,q).
\end{equation}
}
for $O(2,4)$ by the automorphism group $O(L^{2,4})$, i.e., the space
\begin{equation} \label{moduli_space}
\mathcal{D}_{2,4}/O(L^{2,4}).
\end{equation}
The space $\mathcal{D}_{2,4}$ is also known as \emph{bounded symmetric domain of type $IV$}. We further restrict to a certain index-two sub-group $O^+(L^{2,4}) \subset O(L^{2,4})$  in the construction above with the corresponding degree-two cover given by
\begin{equation} \label{eqn:MSP+} \mathcal{D}_{2,4}/O^+(L^{2,4}).
\end{equation}
The group $O^+(L^{2,4})$ is the maximal sub-group whose action preserves the complex structure on the symmetric space, and thus is the maximal sub-group for which the corresponding modular forms are holomorphic. 
\par We have the following:
\begin{theorem}
The natural sub-space of the moduli space of non-geometric heterotic models whose quantum-exact effective heterotic description is captured by the ring of holomorphic modular forms on the bounded symmetric domain of type $IV$ is isomorphic to the coarse moduli space K3 of surfaces admitting a $H \oplus E_7(-1) \oplus E_7(-1)$ lattice polarization in Theorem~\ref{thm:polarization}.
\end{theorem}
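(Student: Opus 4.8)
The plan is to exhibit both moduli spaces in the statement as one and the same quotient of the four-dimensional domain $\mathbf{H}_2$, thereby reducing the theorem to results already in hand. On the heterotic side, the Narain moduli space of $\mathbf{T}^2$-compactifications of the $\mathfrak{e}_8\oplus\mathfrak{e}_8$ string in which the two Wilson-line expectation values are non-zero and break the gauge algebra to $\mathfrak{e}_7\oplus\mathfrak{e}_7$ is, by the standard toroidal construction, the quotient $\mathcal{D}_{2,4}/O^+(L^{2,4})$ with $L^{2,4}$ the orthogonal complement of $E_7(-1)\oplus E_7(-1)$ in $\Lambda^{2,18}$ as in Equation~(\ref{moduli_space}); one computes $L^{2,4}\cong H\oplus H\oplus \langle -2\rangle^{\oplus 2}$, of signature $(2,4)$ and complex dimension $4$, in agreement with the two complex moduli of $\mathbf{T}^2$ together with the two complex Wilson lines. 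The assertion that this sub-space has a quantum-exact effective description ``captured by the ring of holomorphic modular forms on the bounded symmetric domain of type $IV$'' is the statement that, together with its Baily--Borel compactification, it equals $\operatorname{Proj}$ of the ring of holomorphic automorphic forms on $\mathcal{D}_{2,4}/O^+(L^{2,4})$.

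The key geometric input is the accidental isomorphism of real Lie groups $\mathrm{Spin}(2,4)\cong \mathrm{SU}(2,2)$, which induces a biholomorphism of the type-$IV$ domain $\mathcal{D}_{2,4}$ with $\mathbf{H}_2$. I would use the explicit form of this identification worked out in \cite{MR1204828}: the lattice $L^{2,4}$ carries a Hermitian $\mathbb{Z}[i]$-structure for which the arithmetic orthogonal group $O^+(L^{2,4})$ is carried onto $\Gamma^+_{\mathcal{T}}$ (Equation~(\ref{index-two})), the index-two passage $O^+\subset O$ matching the index-two passage $\Gamma^+_{\mathcal{T}}\subset\Gamma_{\mathcal{T}}$ since both are the maximal subgroups preserving the complex structure on the domain; under this dictionary a holomorphic automorphic form for $O^+(L^{2,4})$ is precisely a modular form on $\mathbf{H}_2$ relative to $\Gamma^+_{\mathcal{T}}$ of even characteristic, i.e., with character $\chi_{2k}(g)=\det(G)^k$. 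By Vinberg's freeness result for rings of automorphic forms on type-$IV$ domains \cite{MR3235787}, in the string-theoretic reading of \cite{MR3366121}, together with our Theorem~\ref{thm4}, this ring is the polynomial ring $\mathbb{C}[J_2,J_3,J_4,J_5,J_6]$ in the five algebraically independent generators $J_k(\modvar)$ of weight $2k$. Taking $\operatorname{Proj}$ identifies the Baily--Borel compactification of the heterotic sub-space with $\overline{\mathfrak{M}}=\mathbb{P}(2,3,4,5,6)$, and its interior, where $(J_3,J_4,J_5)\neq(0,0,0)$ (the locus on which the effective description is non-degenerate, cf.~Corollary~\ref{lem:Js}), with the open complex variety $\mathfrak{M}$ of Equation~(\ref{modulispace}). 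Finally, Theorem~\ref{cor:K3moduli_space} identifies $\mathfrak{M}$ with the coarse moduli space of K3 surfaces admitting an $H\oplus E_7(-1)\oplus E_7(-1)$ polarization as in Theorem~\ref{thm:polarization}; on the F-theory side this last identification is made concrete by the family $\mathcal{X}(\alpha,\beta,\gamma,\delta,\varepsilon,\zeta)$ of Equation~(\ref{mainquartic}) equipped with its standard fibration $\pi^{\mathcal{X}}_{\mathrm{std}}$, the heterotic large-volume limit corresponding to the stable-degeneration limit of $\pi^{\mathcal{X}}_{\mathrm{std}}$ in the sense of \cites{MR1468319,MR1643100,MR1697279,MR2126482}. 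Composing these isomorphisms yields the theorem.

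The step I expect to be the main obstacle is the precise matching of arithmetic groups: one must verify that the orthogonal group $O^+(L^{2,4})$, \emph{as it actually arises} from the Narain lattice on the heterotic side, is carried by the exceptional isomorphism \emph{exactly} onto $\Gamma^+_{\mathcal{T}}$ --- not merely onto a commensurable group or a proper finite-index subgroup --- and that the ``even characteristic'' condition of Theorem~\ref{thm4} corresponds exactly to holomorphy of automorphic forms on $\mathcal{D}_{2,4}$ for this group. This is where one genuinely needs to combine Vinberg's description of the reflection group and ring of automorphic forms attached to $L^{2,4}$ \cite{MR3235787} with the explicit generators $G_1,\dots,G_5$ of \cite{MR1204828}*{Prop.~1.5.1} recorded in Equation~(\ref{eqn:generators}). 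Once the groups and characters are matched, the ring-theoretic statement is immediate from Theorem~\ref{thm4} and the moduli-theoretic statement from Theorem~\ref{cor:K3moduli_space}, so no additional computation is required.
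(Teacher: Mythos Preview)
Your overall strategy coincides with the paper's: identify the heterotic moduli space $\mathcal{D}_{2,4}/O^+(L^{2,4})$ with $\mathbf{H}_2$ modulo the appropriate arithmetic group via the exceptional isomorphism, invoke Vinberg to identify the ring of holomorphic automorphic forms with the ring generated by the $J_k$, and then quote Theorem~\ref{cor:K3moduli_space}. That skeleton is correct and is exactly what the paper does.

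However, your matching of arithmetic groups is off by one step, and this is precisely the point you yourself flag as the main obstacle. You assert that $O^+(L^{2,4})$ is carried onto $\Gamma^+_{\mathcal{T}}$, with the index-two passage $O^+\subset O$ corresponding to $\Gamma^+_{\mathcal{T}}\subset\Gamma_{\mathcal{T}}$. In fact the identification from \cite{MR1204828}*{Prop.~1.5.1} is $O^+(L^{2,4})\cong\Gamma_{\mathcal{T}}$ (the full group including $\mathcal{T}$ and $G_1$ separately), and the further index-two subgroup $SO^+(L^{2,4})$ corresponds to $\Gamma^+_{\mathcal{T}}$. Concretely, the reflections $\mathcal{R}_{G_1}$ and $\mathcal{R}_{\mathcal{T}}$ associated with roots of square $-2$ and $-4$ lie in $O^+(L^{2,4})\setminus SO^+(L^{2,4})$ and correspond to $G_1$ and $\mathcal{T}$ individually, while only the product $G_1\mathcal{T}$ lies in $\Gamma^+_{\mathcal{T}}$. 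With the correct dictionary in place, the ring of holomorphic $O^+(L^{2,4})$-automorphic forms is the ring of $\Gamma_{\mathcal{T}}$-modular forms of even characteristic, and Vinberg's result (the relevant reference is \cite{MR2682724}, not \cite{MR3235787}, which treats the $L^{2,3}$ case) together with Theorem~\ref{thm4} then gives $\mathbb{C}[J_2,J_3,J_4,J_5,J_6]$ as you state. Once this correction is made, the rest of your argument goes through verbatim.
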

\begin{proof}
In Section~\ref{ssec:modular_description}, we introduced the space $\mathbf{H}_2$ of complex two-by-two matrices $\modvar$ over $\mathbb{C}$ such that the hermitian matrix $(\modvar-\modvar^\dagger)/(2i)$ is positive definite; see~Equation~(\ref{Siegel_tau}). On these elements, the modular group $\Gamma_{\mathcal{T}}$ introduced in Equation~(\ref{modular group_extended}) acts by matrix multiplication for elements in $U(2,2)$ and as matrix transposition, generated by the additional element $\mathcal{T}\cdot \modvar =\modvar^t$. In \cite{MR1204828}*{Prop.~\!1.5.1} it was proved that there is an isomorphism $\Gamma_{\mathcal{T}} \cong O^+(L^{2,4})$  that induces an isomorphism
\begin{equation} \label{eq:h2iso}
\mathbf{H}_2 \cong \mathcal{D}_{2,4}.
\end{equation}
Generally, $O^+(L^{2,n})$ is the index-two sub-group given by the condition that the upper left minor of order two is positive; see \cite{MR2682724} for details. The group $O^+(L^{2,4})$ contains the special orthogonal sub-group $SO^+(L^{2,4})$ of all elements of determinant one.  In our situation, this group $SO^+(L^{2,4})$ is precisely the index-two sub-group $\Gamma^{+}_{\mathcal{T}}$ introduced in Equation~(\ref{index-two}): an isomorphism $SO^+(L^{2,4}) \cong \Gamma^{+}_{\mathcal{T}}$ is given by mapping the generators of $\Gamma^{+}_{\mathcal{T}}$ to generators of $SO^+(L^{2,4})$. In fact, we map the generators $G_1\mathcal{T}$ and $G_2, \dots, G_5$  in Lemma~\ref{lem:generators} to the generators explicitly given in \cite{MR1204828}*{p.~\!393} and denote the latter by $\mathcal{G}_k \in SO^+(L^{2,4})$ for $k=1,\dots, 5$.\footnote{In \cite{MR1204828} $G_1$ was mapped to $\mathcal{G}_1$ which is \emph{not} compatible with the identification $SO^+(L^{2,4}) \cong \Gamma^{+}_{\mathcal{T}}$.} Moreover, the elements $G_1$ and $\mathcal{T}$ are mapped to reflections $\mathcal{R}_{G_1}$ and $\mathcal{R}_{\mathcal{T}}$ in $O^+(L^{2,4})$ associated with roots of square $-2$ and $-4$, respectively, such that $\mathcal{G}_1 =\mathcal{R}_{G_1} \cdot \mathcal{R}_{\mathcal{T}}$. We also find $\mathcal{G}_3=\mathcal{R}_{G_3} \cdot \mathcal{R}_{\mathcal{T}}$ for another reflection $\mathcal{R}_{G_3}$.\footnote{In \cite{MR1204828} the roots associated with $\mathcal{R}_{G_1}$, $\mathcal{R}_{\mathcal{T}}$, and $\mathcal{R}_{G_3}$ were denoted by $\alpha(1,2,3)$, $\beta_1$, and $\beta_6$.}  Note that reflections belong to $O^+(L^{2,4})$, but not to $SO^+(L^{2,4})$. In fact, the generators $\mathcal{R}_{G_1}, \mathcal{R}_{\mathcal{T}}\in O^+(L^{2,4})$ together with $\mathcal{G}_k \in SO^+(L^{2,4})$ for $k=1,\dots, 5$ determine the full isomorphism $\Gamma_{\mathcal{T}} \cong O^+(L^{2,4})$.
\par The element $\mathcal{T}$ acts trivially on the five modular forms $J_k$ of weights $2k$ for $k=2, \dots , 6$. Thus, they all have \emph{even characteristic} with respect to the action of $\mathcal{T}$. We proved in Theorem~\ref{thm4} that they freely generate the ring of modular forms relative to $\Gamma_{\mathcal{T}}$ with character $\chi_{2k}(g)=\det(G)^k$ for all $g = G\,\mathcal{T} ^{n} \in\Gamma_{\mathcal{T}}$. By a result of Vinberg~\cite{MR2682724}, the ring of modular forms relative to $O^+(L^{2,4})$ turns out to be exactly this ring of modular forms relative to $\Gamma_{\mathcal{T}}$ of even characteristic. 
\end{proof}
\par  The space $\mathbf{H}_2$ is a generalization of the Siegel upper-half space $\mathbb{H}_2$. In fact, elements invariant under transposition $\mathcal{T}$ are precisely the two-by-two symmetric matrices over $\mathbb{C}$ whose imaginary part is positive definite, i.e., elements of the Siegel upper-half plane $\mathbb{H}_2\cong \mathcal{D}_{2,3}$, on which the modular group $\operatorname{Sp}_4(\mathbb{Z})\cong SO^+(L^{2,3})$ acts. For the sub-space
\begin{equation} \label{eq:h2iso_rest}
\mathcal{D}_{2,3}/O^+(L^{2,3}) \hookrightarrow \mathcal{D}_{2,4}/O^+(L^{2,4}),
\end{equation}
another result of Vinberg \cite{MR3235787} proves that the ring of $O^+(L^{2,3})$-modular forms corresponds to the ring of Siegel modular forms of \emph{even weight}. Igusa \cite{MR0141643} showed that this ring of even modular forms is generated by the Siegel modular forms $\psi_4$, $\psi_6$, $\chi_{10}$, $\chi_{12}$ of respective weights $4, 6, 10, 12$. 
\par Matrix transposition $\mathcal{T}$ acts as $-1$ on the $\Gamma_{\mathcal{T}}$-modular forms of \emph{odd characteristic}, and the fixed locus of $\mathcal{T}$ must be contained in the vanishing locus of any $\Gamma_{\mathcal{T}}$-modular form of odd characteristic. Modular forms of odd characteristic are generated by the unique (up to scaling) modular form $\Theta(\modvar)$ of odd characteristic introduced in Theorem~\ref{thm1}.  In Theorem~\ref{thm4} we found the relation $J_4(\modvar)=(\Theta(\modvar)/25)^2$. Therefore, the fixed locus of $\mathcal{T}$ coincides with the vanishing locus of $J_4(\modvar)$. In fact, we will show in Proposition~\ref{prop:compare} that in the case $\modvar=\tau \in \mathbb{H}_2$ the form $J_4=(\Theta(\modvar)/25)^2$ vanishes and the other $\Gamma_{\mathcal{T}}$-modular forms restrict to the generators of the ring of Siegel modular forms.
%

\section{Specialization to six lines tangent to a conic}
\label{6Lrestricted}
In this section we consider the specialization of the generic six-line configuration when the six lines are tangent to a common conic. Such a configuration has three moduli which we will denote by $\lambda_1, \lambda_2, \lambda_3$. It follows from \cite{MalmendierClingher:2018}*{Prop.~5.13} that the lines can be brought into the form
\begin{equation}
\label{Eqn:6Ltangent}
\begin{array}{lrcl}
 \ell_1: & z_1 & = & 0,\\
 \ell_2: & z_2 & = & 0,\\
 \ell_3: & z_1 + z_2 - z_3 &=& 0,\\
 \ell_4: & \lambda_1^2 \, z_1 + z_2 -  \lambda_1 z_3 &=& 0,\\
 \ell_5: & \lambda_2^2 \, z_1 + z_2 -  \lambda_2 z_3 &=& 0,\\
 \ell_6: & \lambda_3^2 \, z_1 + z_2 -  \lambda_3 z_3 &=& 0,
\end{array}
\end{equation} 
where $\lambda_i \not = 0, 1, \infty$ and $\lambda_i \not = \lambda_j$ for all $i \not= j$. We have the following:
\begin{lemma}
The six lines in Equation~(\ref{Eqn:6Ltangent}) are tangent to $C: z_3^2-4z_1z_2=0$.
\end{lemma}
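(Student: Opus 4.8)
The plan is to check tangency one line at a time using the classical duality: a line is tangent to a smooth conic precisely when its coefficient vector lies on the dual conic. The conic $C\colon z_3^2-4z_1z_2=0$ is represented by the symmetric matrix $M=\left(\begin{smallmatrix} 0&-2&0\\ -2&0&0\\ 0&0&1\end{smallmatrix}\right)$, which has $\det M=-4\neq 0$, so $C$ is smooth. A line $\ell\colon az_1+bz_2+cz_3=0$ with coefficient vector $\mathbf v=\langle a,b,c\rangle^t$ is tangent to $C$ if and only if $\mathbf v^t\,\operatorname{adj}(M)\,\mathbf v=0$. A one-line computation gives $\operatorname{adj}(M)=\left(\begin{smallmatrix} 0&2&0\\ 2&0&0\\ 0&0&-4\end{smallmatrix}\right)$, so the tangency condition collapses to the single equation $ab=c^2$.

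With this criterion in hand the verification is immediate. For $\ell_1$ and $\ell_2$ the coefficient vectors are $\langle 1,0,0\rangle^t$ and $\langle 0,1,0\rangle^t$, and $ab-c^2=0$ in both cases; concretely, setting $z_1=0$ (respectively $z_2=0$) in the equation of $C$ gives $z_3^2=0$, so $\ell_1$ meets $C$ doubly at $[0:1:0]$ and $\ell_2$ meets $C$ doubly at $[1:0:0]$. The four lines $\ell_3,\dots,\ell_6$ all share the shape $\lambda^2 z_1+z_2-\lambda z_3=0$, with $\lambda=1$ for $\ell_3$ and $\lambda=\lambda_{i-3}$ for $i=4,5,6$; their coefficient vector is $\langle \lambda^2,1,-\lambda\rangle^t$, and $ab-c^2=\lambda^2\cdot 1-(-\lambda)^2=0$. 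This exhausts the six lines.

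As an alternative presentation that also pins down the points of tangency, I would instead parametrise the conic by $\varphi\colon \mathbb P^1\to\mathbb P^2$, $t\mapsto[\,1:t^2:-2t\,]$, with $\varphi(\infty)=[0:1:0]$, and check $(-2t)^2-4\cdot 1\cdot t^2=0$ so that the image is $C$. Substituting $\varphi(t)$ into $\lambda^2 z_1+z_2-\lambda z_3$ yields $\lambda^2+t^2+2\lambda t=(t+\lambda)^2$, a perfect square; hence $\ell\cap C$ is the single point $\varphi(-\lambda)=[\,1:\lambda^2:2\lambda\,]$ counted with multiplicity two, which handles $\ell_3,\dots,\ell_6$ for $\lambda\in\{1,\lambda_1,\lambda_2,\lambda_3\}$. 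The two remaining lines appear as the boundary values of the parameter: $\varphi(0)=[1:0:0]$ lies on $\ell_2$ and $\varphi'(0)$ is proportional to $[0:0:1]$, so the tangent line to $C$ at $\varphi(0)$ is exactly $\{z_2=0\}=\ell_2$, and symmetrically $\{z_1=0\}=\ell_1$ is the tangent at $\varphi(\infty)=[0:1:0]$.

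I do not expect any genuine obstacle. The only points requiring a little care are (i) confirming that $C$ is smooth, so that the dual-conic / tangent-line dictionary applies, which is clear from $\det M=-4\neq 0$, and (ii) keeping track of the two \emph{degenerate} members $\ell_1,\ell_2$ of the family, which correspond to the values $\lambda=\infty$ and $\lambda=0$ of the parameter and are handled most transparently by the parametrisation in the previous paragraph.
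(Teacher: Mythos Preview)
Your proof is correct. Your second (parametrisation) approach is exactly what the paper does---substitute each line into the conic and observe a double root---only written out in full; your first approach via the dual conic $ab=c^2$ is a clean equivalent reformulation that the paper does not make explicit, but it leads to the same one-line verification.
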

\begin{proof}
It is easy to prove that the intersection of the conic $C: z_3^2-4z_1z_2$ with any of the six lines $\ell_i$ for $1\le i \le6$ in Equation~(\ref{Eqn:6Ltangent}) yields a root of order two, that is, a point of tangency.
\end{proof}
\par The following lemma is immediate:
\begin{lemma}
\label{lem:EllLeft_special}
For a configuration of six lines tangent to a conic, the K3 surface $\mathcal{Y}$ satisfies the following:
\begin{enumerate}
\item Equation~(\ref{eqns_Kummer}) is a Jacobian elliptic fibration  $\pi^{\mathcal{Y}}_{\mathrm{nat}}: \mathcal{Y} \to \mathbb{P}^1$ with 6 singular fibers of type $I_2$,  two singular fibers of type $I_0^*$, and the Mordell-Weil group of sections $\operatorname{MW}(\pi^{\mathcal{Y}}_{\mathrm{nat}})=(\mathbb{Z}/2\mathbb{Z})^2+\langle 1 \rangle$.
\item Equation~(\ref{eqns_Kummer_alt}) is a Jacobian elliptic fibration $\pi^{\mathcal{Y}}_{\mathrm{alt}}: \mathcal{Y} \to \mathbb{P}^1$ with 6 singular fibers of type $I_2$, one fiber of type $I_5^*$, one fiber of type $I_1$, and a Mordell-Weil group of sections $\operatorname{MW}(\pi^{\mathcal{Y}}_{\mathrm{alt}})=\mathbb{Z}/2\mathbb{Z}$.
\end{enumerate}
\end{lemma}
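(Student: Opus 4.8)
The plan is to use the fact, recalled in the introduction and opening of Section~\ref{6Lrestricted}, that six lines tangent to a conic form exactly the codimension-one locus $R=0$ (Lemma~\ref{lem:equiv} and Corollary~\ref{lem:Js}, together with $R^2 = 2^4 3^{-4} J_4$ of Equation~(\ref{eqn:R})), on which $\mathcal{Y}$ is the Kummer surface $\operatorname{Kum}(\operatorname{Jac}\mathcal{C})$ of the Jacobian of the associated genus-two curve; for a generic such configuration $\rho(\mathcal{Y}) = 17$, equivalently $T_{\mathcal{Y}}$ drops to rank five (cf.\ Proposition~\ref{lem_6lines} and \cite{MR2824841}). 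Granting this, each assertion reduces, via the Shioda--Tate identity $\rho(\mathcal{Y}) = 2 + \sum_v (m_v - 1) + \operatorname{rk}\operatorname{MW}$, to determining the Kodaira fiber types of the specialized Weierstrass model by Tate's algorithm and then controlling the torsion and, where needed, the Mordell--Weil lattice.

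For part (2), I would substitute the tangent configuration into Equation~(\ref{eqns_Kummer_alt}). Since $R=0$ is equivalent to $J_4 = 0$, the coefficient $\mathcal{A}(t) = J_4 t^2 - J_5 t + J_6$ drops to degree one while $\mathcal{B}(t)^2 - 4\mathcal{A}(t)$ stays of degree six with six simple roots; thus the six $I_2$ fibers survive unchanged, only one of the two $I_1$ fibers over $\mathcal{A} = 0$ remains, and Tate's algorithm at $t = \infty$ shows the generic $I_4^*$ fiber of Corollary~\ref{cor:Kum_alternate} is enhanced to $I_5^*$ (Euler number $11$, with $6\cdot 2 + 1 + 11 = 24$). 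Then $\sum_v (m_v - 1) = 6 + 9 = 15$, so $\rho = 17$ forces $\operatorname{rk}\operatorname{MW}(\pi^{\mathcal{Y}}_{\mathrm{alt}}) = 0$; the two-torsion section $(X,Y)=(0,0)$ persists, and since $\mathcal{A}(t)$ is not a square in $\mathbb{C}(t)$ it is not $2$-divisible and there is no further rational $2$-torsion section, whence $\operatorname{MW}(\pi^{\mathcal{Y}}_{\mathrm{alt}}) = \mathbb{Z}/2\mathbb{Z}$.

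For part (1), I would first check that the fiber configuration of the natural fibration is \emph{unchanged} on the tangent locus: after the $\operatorname{GL}_3$ normalization writing the tangent lines in the form~(\ref{lines}), i.e.\ expressing $a,b,c,d$ in terms of $\lambda_1,\lambda_2,\lambda_3$ subject to $R=0$, the discriminant of Equation~(\ref{eqns_Kummer}) still factors as $2^8 u^6 \mu^2(\mu^2-\nu^2)^2$ with the same vanishing orders, so one again has six fibers of type $I_2$ and two of type $I_0^*$ as in Lemma~\ref{lem:EllLeft}, and the three Weierstrass two-torsion sections survive. Hence $\sum_v(m_v-1) = 6 + 8 = 14$, so Shioda--Tate with $\rho = 17$ yields $\operatorname{rk}\operatorname{MW}(\pi^{\mathcal{Y}}_{\mathrm{nat}}) = 1$. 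The torsion injects into the product of component groups $(\mathbb{Z}/2\mathbb{Z})^{10}$, hence has exponent two, and sits inside the $2$-torsion $(\mathbb{Z}/2\mathbb{Z})^2$ of a smooth fiber; equality holds by the three explicit sections, so $\operatorname{MW}(\pi^{\mathcal{Y}}_{\mathrm{nat}}) \cong (\mathbb{Z}/2\mathbb{Z})^2 \oplus \mathbb{Z}$ as a group. To pin down the Mordell--Weil lattice $\langle 1\rangle$ I would exhibit a generator of the free part, searching for a low-degree section $X = X(u)$ for which $X$, $X - 2u(\mu-\nu)$ and $X - 2u(\mu+\nu)$ become squares up to a common factor, which the relation $R=0$ makes possible (or, equivalently, transporting a known extra section on $\operatorname{Kum}(\operatorname{Jac}\mathcal{C})$), and then compute its Shioda height $h(P) = 4 + 2(P\cdot\sigma) - \sum_v\operatorname{contr}_v(P)$ from the fiber components it meets, verifying $h(P) = 1$.

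The main obstacle is this last step of part (1): producing an explicit generator of the free part of $\operatorname{MW}(\pi^{\mathcal{Y}}_{\mathrm{nat}})$ on the tangent locus and checking that its height equals $1$. Everything else is either the known Kummer structure at $R=0$, a Tate's-algorithm fiber count, or the Shioda--Tate rank identity.
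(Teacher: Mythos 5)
The paper's ``proof'' of this lemma consists of two citations --- part (1) to \cite{MalmendierClingher:2018}*{Prop.~5.13} and part (2) to \cite{MR3712162}*{Prop.~9} --- so your derivation is genuinely different in that it is self-contained. Your bookkeeping is correct: on the tangent locus $R=0\Leftrightarrow J_4=0$, so in the alternate fibration $\mathcal{A}$ drops to degree one, one $I_1$ migrates to $t=\infty$ and promotes $I_4^*$ to $I_5^*$ (Euler numbers $12+1+11=24$), and Shioda--Tate with $\rho=17$ kills the Mordell--Weil rank; for the natural fibration the $6\,I_2+2\,I_0^*$ configuration persists and the same identity forces rank one, while injectivity of $\operatorname{MW}_{\mathrm{tor}}$ into $\prod_v G_v\cong(\mathbb{Z}/2\mathbb{Z})^{10}$ pins the torsion at $(\mathbb{Z}/2\mathbb{Z})^2$. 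Two remarks. First, the step you flag as the main obstacle --- exhibiting a height-one generator of the free part in (1) --- does not require an explicit section: since $\mathcal{Y}=\operatorname{Kum}(\operatorname{Jac}\mathcal{C})$ has $T_{\mathcal{Y}}\cong H(2)^{\oplus2}\oplus\langle-4\rangle$ with $|\det\operatorname{discr}T_{\mathcal{Y}}|=2^6$, while the trivial lattice $A_1^{\oplus6}\oplus D_4^{\oplus2}$ has discriminant $2^{10}$ and the torsion has order $2^4$, the relation $|\det T_{\mathcal{Y}}|=|\det(\mathrm{Triv})|\cdot\det(\mathrm{MWL})/|\operatorname{MW}_{\mathrm{tor}}|^2$ gives $\det(\mathrm{MWL})=1$, hence $\mathrm{MWL}\cong\langle1\rangle$; this is exactly the style of argument the paper itself uses for the base-fiber-dual fibration in Corollary~3.3. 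Second, in part (2) you rule out extra \emph{two}-torsion via $\mathcal{A}$ not being a square, but not higher torsion; a $\mathbb{Z}/4\mathbb{Z}$ section would have to map to an order-four element of the $I_5^*$ component group, contributing $9/4$ to $\sum_v\operatorname{contr}_v(P)=4$ and leaving $7/4$ to be made up from half-integer $I_2$-contributions, which is impossible. With those two routine additions your argument is complete.
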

\begin{proof}
 The proof of (1) was given in \cite{MalmendierClingher:2018}*{Prop.~5.13}. The proof of (2) was given in \cite{MR3712162}*{Prop.~9}.
\end{proof}
\begin{lemma}
\label{prop_Kummers}
For a configuration of six lines tangent to a conic, the K3 surface $\mathcal{Y}$  is the Kummer surface $\operatorname{Kum}(\operatorname{Jac} \mathcal{C})$ of the principally polarized abelian surface $\operatorname{Jac}(\mathcal{C})$, i.e., the Jacobian variety of a generic genus-two curve $\mathcal{C}$. In particular, the curve $\mathcal{C}$ is given in Rosenhain normal form as
\begin{equation}\label{Rosenhain}
 \mathcal{C}: \quad Y^2 = F(X)=X (X-1) (X-\lambda_1) (X-\lambda_2) (X-\lambda_3) .
\end{equation} 
\end{lemma}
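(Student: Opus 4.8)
The plan is to produce an explicit birational map between $\mathcal{Y}$ and the Kummer surface $\operatorname{Kum}(\operatorname{Jac}\mathcal{C})$ directly from the parametrization of the conic, and then upgrade it to an isomorphism using that birationally equivalent K3 surfaces are isomorphic. First I would parametrize $C\colon z_3^2-4z_1z_2=0$ by $\mathbb{P}^1\ni u\mapsto [1:u^2:2u]$ and record that the tangent line to $C$ at the point with parameter $u$ is $\ell_u\colon u^2 z_1+z_2-uz_3=0$. Matching against Equation~(\ref{Eqn:6Ltangent}) identifies the six points of tangency as $u\in\{\infty,0,1,\lambda_1,\lambda_2,\lambda_3\}$, with $\ell_1=\ell_\infty$, $\ell_2=\ell_0$, $\ell_3=\ell_u$ at $u=1$, and $\ell_{3+k}=\ell_{\lambda_k}$ for $k=1,2,3$. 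For a point $[z_1:z_2:z_3]$ with $z_1\neq 0$ the two tangents to $C$ through it have parameters $r_+,r_-$, the roots of $z_1u^2-z_3u+z_2=0$, so that $\ell_v([z_1:z_2:z_3])=z_1(v-r_+)(v-r_-)$ for every finite tangency parameter $v$, while $\ell_\infty=z_1$. Multiplying the six factors, the signs cancel and one gets $\ell_1\cdots\ell_6 = z_1^6\,F(r_+)F(r_-)$ with $F(X)=X(X-1)(X-\lambda_1)(X-\lambda_2)(X-\lambda_3)$. Hence on the chart $z_1\neq 0$ the double sextic $z_4^2=\ell_1\cdots\ell_6$ becomes $W^2=F(r_+)F(r_-)$ with $W=z_4/z_1^3$, and the substitution $[z_1:z_2:z_3]\leftrightarrow\{r_+,r_-\}$ realizes $\mathbb{P}^2$ as $\operatorname{Sym}^2\mathbb{P}^1$, with $C$ as the discriminant locus.

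Then I would recognize the right-hand model as a Kummer surface. The symmetric square $\operatorname{Sym}^2\mathcal{C}$ of $\mathcal{C}\colon Y^2=F(X)$ maps birationally onto $\operatorname{Jac}\mathcal{C}$ via $\{P_1,P_2\}\mapsto [P_1+P_2-K_\mathcal{C}]$; the $(-1)$-involution on $\operatorname{Jac}\mathcal{C}$ pulls back to $(P_1,P_2)\mapsto(\iota P_1,\iota P_2)$, where $\iota$ is the hyperelliptic involution; and the invariants of the group generated by the transposition of the two points and this simultaneous involution, acting on $\operatorname{Sym}^2\mathcal{C}$, are generated by the symmetric functions of $r_+,r_-$ together with $W:=y_1 y_2$, subject to the single relation $W^2=F(r_+)F(r_-)$. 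This exhibits a birational equivalence $\mathcal{Y}\dashrightarrow\operatorname{Kum}(\operatorname{Jac}\mathcal{C})$ — identifying this invariant-theoretic model with the classical Kummer quartic by, e.g., \cite{MR2824841} or standard facts on Kummer surfaces of genus-two Jacobians — and, both surfaces being K3, the map extends to an isomorphism. Since $\mathcal{C}\colon Y^2=X(X-1)(X-\lambda_1)(X-\lambda_2)(X-\lambda_3)$ is exactly Equation~(\ref{Rosenhain}) and the $\lambda_i$ are generic, $\mathcal{C}$ is a generic genus-two curve, which completes the argument. As a cross-check one may instead specialize the alternate-fibration Weierstrass equation of Theorem~\ref{thm:6lines_alt} to $J_4=0$ (equivalently $R=0$, by Equation~(\ref{eqn:R}) and Corollary~\ref{lem:Js}) and compare the resulting $6\,I_2+I_5^*+I_1$ fibration with the known one on a Kummer surface in \cite{MR3712162} and \cite{MalmendierClingher:2018}.

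The main obstacle will be making the identification of the second paragraph fully rigorous: verifying that the naive model $W^2=F(r_+)F(r_-)$ over $\operatorname{Sym}^2\mathbb{P}^1$ is genuinely birational to the classical Kummer quartic of $\operatorname{Jac}\mathcal{C}$ and that the birational map neither contracts nor loses a component over the conic $C$. This is standard but requires either a short local analysis near $C$ (where the double cover acquires its $I_0^*$ or $I_5^*$ fibers) or an explicit appeal to the literature. The bookkeeping attached to the tangency point at $u=\infty$ — so that $F$ has degree five and $W=z_4/z_1^3$ is the correct normalization making $W^2=F(r_+)F(r_-)$ hold — also needs checking, but poses no real difficulty.
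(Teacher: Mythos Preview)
Your argument is correct and takes a genuinely different route from the paper.  The paper's proof is a one-line identification: it appeals to Kumar's classification \cite{MR3263663} of all twenty-five elliptic fibrations on a generic Kummer surface of a Jacobian and simply observes that the natural fibration in Equation~(\ref{eqns_Kummer}), specialized to the tangent-to-a-conic case, coincides with fibration~(7) on Kumar's list.  There is no direct geometric construction of the birational map at all.

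Your approach, by contrast, is the classical one: realize $\mathbb{P}^2$ as $\operatorname{Sym}^2\mathbb{P}^1$ via the pair of tangents through a point, compute $\ell_1\cdots\ell_6=z_1^6 F(r_+)F(r_-)$ from the tangency parameters, and recognize $W^2=F(r_+)F(r_-)$ as the image of $\operatorname{Sym}^2\mathcal{C}$ under the $(\pm 1)$-quotient, hence birational to $\operatorname{Kum}(\operatorname{Jac}\mathcal{C})$.  This is more illuminating and entirely self-contained --- it explains \emph{why} the six tangency parameters become the Weierstrass points of $\mathcal{C}$ --- whereas the paper's proof is faster but imports the full strength of Kumar's classification.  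Your cross-check via the alternate fibration and $J_4=0$ is in fact closer in spirit to what the paper actually does (matching a Weierstrass model against a known Kummer fibration), just using a different fibration.  The caveats you flag (invariants of the $(\mathbb{Z}/2)^2$-action, the $u=\infty$ bookkeeping, behavior over the conic) are real but, as you say, standard.
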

\begin{proof}
All inequivalent elliptic fibrations on a generic Kummer surface where determined explicitly by Kumar in \cite{MR3263663}. In fact, Kumar computed elliptic parameters and Weierstrass equations for all twenty five different fibrations that appear, and analyzed the reducible fibers and Mordell-Weil lattices. Equation~(\ref{eqns_Kummer}) is the Weierstrass model of the  elliptic fibration {\tt (7)} in the list of all possible elliptic fibrations in~\cite{MR3263663}*{Thm.~2}.
\end{proof}
\par The ordered tuple $(\lambda_1, \lambda_2, \lambda_3)$ determines a point in the moduli space of genus-two curves together with a level-two structure,  and, in turn, a level-two structure on the corresponding Jacobian variety, i.e., a point in the moduli space of principally polarized abelian surfaces with level-two structure
\begin{equation}
 \mathfrak{A}_2(2) = \mathbb{H}_2 /  \Gamma_2(2),
\end{equation}
where $\Gamma_2(2)$ is the principal congruence sub-group of level two of the Siegel modular group $\Gamma_2 = \operatorname{Sp}_4(\mathbb{Z})$. In turn, the Rosenhain invariants generate the function field $\mathbb{C}(\lambda_1, \lambda_2, \lambda_3)$ of $\mathfrak{A}_2(2)$.
For a Jacobian variety with level-two structure corresponding to $\tau \in \mathfrak{A}_2(2)$, we have six odd theta characteristics and ten even theta characteristics;  see \cites{MR2367218,MR3782461}   for details.  We denote the even theta characteristics by 
\begin{small}
\[
\begin{split}
	\vartheta_1		=	\begin{bmatrix} 0 		& 0			\\[3pt] 0		 	& 0		 	\end{bmatrix}, \,
	\vartheta_2		=	\begin{bmatrix} 0 		& 0			\\[3pt] \frac{1}{2}	& \frac{1}{2}	\end{bmatrix}, \,
	\vartheta_3	&	=	\begin{bmatrix} 0 		& 0			\\[3pt] \frac{1}{2}	& 0			\end{bmatrix}, \;
	\vartheta_4		=	\begin{bmatrix} 0 		& 0			\\[3pt] 0			& \frac{1}{2}	\end{bmatrix}, \;
	\vartheta_5	\	=	\begin{bmatrix} \frac{1}{2} 	& 0			\\[3pt] 0		 	& 0		 	\end{bmatrix}, \\[5pt]
	\vartheta_6		=	\begin{bmatrix} \frac{1}{2} 	& 0			\\[3pt] 0		 	& \frac{1}{2} 	\end{bmatrix}, \,
	\vartheta_7		=	\begin{bmatrix} 0 		& \frac{1}{2}	\\[3pt] 0			& 0			\end{bmatrix}, \,
	\vartheta_8	&	=	\begin{bmatrix} \frac{1}{2} 	& \frac{1}{2}	\\[3pt] 0			& 0			\end{bmatrix}, \;
	\vartheta_9		=	\begin{bmatrix} 0 		& \frac{1}{2}	\\[3pt] \frac{1}{2}	& 0			\end{bmatrix}, \,
	\vartheta_{10}	=	\begin{bmatrix} \frac{1}{2} 	& \frac{1}{2}	\\[3pt] \frac{1}{2}	& \frac{1}{2}	\end{bmatrix}.
\end{split}
\]
\end{small}
and write
\begin{equation}
\label{Eqn:theta_short}
 \vartheta_i(\tau) \  \text{instead of} \  \vartheta\begin{bmatrix} a^{(i)} \\  b^{(i)} \end{bmatrix}(\tau) \ \text{with $i=1,\dots ,10$,}
\end{equation}
and $\vartheta_i =\vartheta_i(0)$.  Fourth powers of theta constants are modular forms of $\mathfrak{A}_2(2)$ and define the Satake compactification of $\mathfrak{A}_2(2)$ given by $\operatorname{Proj}[\vartheta^4_1: \dots : \vartheta^4_{10}]$. 
\par The three $\lambda$-parameters in the Rosenhain normal~(\ref{Rosenhain}) can be expressed as ratios of even theta constants by Picard's lemma. There are 720 choices for such expressions since the forgetful map, i.e., forgetting the level-two structure, is a Galois covering of degree $720 = |\mathrm{S}_6|$ since $\mathrm{S}_6$ acts on the  roots of $\mathcal{C}$ by permutations. Any of the $720$ choices may be used, we chose the one from \cite{MR0141643}:
\begin{lemma}
\label{lem:ppas}
If $\mathcal{C}$ is a non-singular genus-two curve with period matrix $\tau$ for $\operatorname{Jac}(\mathcal{C})$, then $\mathcal{C}$  is equivalent to the curve~(\ref{Rosenhain}) with Rosenhain parameters $\lambda_1, \lambda_2, \lambda_3$  given by 
\begin{equation}\label{Picard}
\lambda_1 = \frac{\vartheta_1^2\vartheta_4^2}{\vartheta_2^2\vartheta_3^2} \,, \quad \lambda_2 = \frac{\vartheta_4^2\vartheta_7^2}{\vartheta_2^2\vartheta_9^2}\,, \quad \lambda_3 =
\frac{\vartheta_1^2\vartheta_7^2}{\vartheta_3^2\vartheta_9^2}\,.
\end{equation}
Conversely, given three distinct complex numbers $(\lambda_1, \lambda_2, \lambda_3)$ different from $0, 1, \infty$ there is  complex abelian surface $A$ with period matrix $[\mathbb{I}_2 | \tau]$ such that $A =\operatorname{Jac} (\mathcal{C})$ where $\mathcal{C}$ is the genus-two curve with period matrix $\tau$.
\end{lemma}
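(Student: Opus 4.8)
The plan is to deduce these formulas from Thomae's formula for hyperelliptic theta constants together with the Torelli theorem in genus two. First I would recall the standard dictionary for a genus-two curve $\mathcal{C}\colon Y^2=\prod_{k=1}^{6}(X-a_k)$: the six Weierstrass points are in bijection with the six \emph{odd} theta characteristics, while the ten partitions of $\{a_1,\dots,a_6\}$ into two unordered triples are in bijection with the ten \emph{even} theta characteristics $\vartheta_1,\dots,\vartheta_{10}$. A level-two structure on $\operatorname{Jac}(\mathcal{C})$ is the same as a symplectic basis of $H_1(\mathcal{C},\mathbb{Z}/2\mathbb{Z})$, and fixing it pins down this bijection together with an ordering of the Weierstrass points; in particular it singles out three of them, which we send to $0,1,\infty$ by a M\"obius transformation, bringing $\mathcal{C}$ to the Rosenhain form~(\ref{Rosenhain}) with the remaining three Weierstrass points equal to $\lambda_1,\lambda_2,\lambda_3$. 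By construction each $\lambda_i$ is then a cross-ratio of four of the branch points $0,1,\lambda_1,\lambda_2,\lambda_3,\infty$.

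For the forward implication I would invoke Thomae's formula: for the Rosenhain curve the fourth power $\vartheta_i(\tau)^4$ of each even theta constant equals, up to one common nonzero factor $c(\tau)$ depending only on the period matrix $\tau$, the product of the pairwise differences of branch points within each of the two triples labelling $\vartheta_i$. Forming the three three-term ratios on the right-hand side of~(\ref{Picard}), the factor $c(\tau)$ cancels and, for the specific triples attached to $\vartheta_1,\vartheta_2,\vartheta_3,\vartheta_4,\vartheta_7,\vartheta_9$ in the displayed list, the surviving products of differences collapse to exactly the cross-ratio representing $\lambda_i$. Passing from fourth powers to the squared theta constants $\vartheta_i^2$ introduces a sign ambiguity, which is removed by the genus-two Riemann--Frobenius relations among theta constants (equivalently, by the compatibility of signs forced by the chosen level-two structure); this is precisely the content of Picard's lemma and is carried out explicitly in~\cite{MR0141643}.

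For the converse, given distinct $\lambda_1,\lambda_2,\lambda_3\in\mathbb{C}\setminus\{0,1\}$ the polynomial $F(X)=X(X-1)(X-\lambda_1)(X-\lambda_2)(X-\lambda_3)$ has six distinct roots, so~(\ref{Rosenhain}) defines a smooth projective genus-two curve $\mathcal{C}$. Its Jacobian $A=\operatorname{Jac}(\mathcal{C})$ is an indecomposable principally polarized abelian surface; choosing a symplectic basis of $H_1(\mathcal{C},\mathbb{Z})$ adapted to the ordering $0,1,\lambda_1,\lambda_2,\lambda_3,\infty$ of the Weierstrass points yields a period matrix of the shape $[\mathbb{I}_2\mid\tau]$ with $\tau\in\mathbb{H}_2$, and by the forward implication the Rosenhain invariants of this $\tau$ are again $\lambda_1,\lambda_2,\lambda_3$. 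Alternatively one may simply quote surjectivity of the Torelli period map $\mathbb{H}_2/\Gamma_2\to\mathfrak{A}_2$ together with the fact that every indecomposable principally polarized abelian surface is a Jacobian.

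The main obstacle is entirely combinatorial rather than analytic: one must match the abstract labelling of even theta characteristics by triples of Weierstrass points with the explicit $2\times2$ characteristic matrices $\vartheta_1,\dots,\vartheta_{10}$ fixed above, so that the particular three-term ratios in~(\ref{Picard}) reproduce $\lambda_1,\lambda_2,\lambda_3$ and not one of the other $720$ cross-ratio assignments permitted by the $\mathrm{S}_6$-action on the Weierstrass points. Tracking this --- and the attendant signs when going from $\vartheta_i^4$ to $\vartheta_i^2$ --- is the only delicate point; the underlying ingredients (Thomae's formula, Torelli, the theta relations) are classical, and the bookkeeping is exactly what Igusa records in~\cite{MR0141643}, which we follow.
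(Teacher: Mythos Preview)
Your sketch is correct and follows the classical route (Thomae's formula plus Torelli, with the sign bookkeeping handled via the Frobenius relations as in Igusa~\cite{MR0141643}). The paper itself does not give an argument at all: its proof consists solely of the sentence ``A proof can be found in \cite{MR2367218}*{Lemma~8}.'' So there is nothing to compare against beyond noting that your outline is precisely the standard argument one would expect to find behind such a citation; if anything, your version is more explicit than what the paper supplies.
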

\begin{proof}
A proof can be found in \cite{MR2367218}*{Lemma~8}.
\end{proof}
We also have the following:
\begin{lemma}
The following equations relate theta functions and branch points:
\begin{equation}\label{Thomae}
\begin{array}{ll}
\vartheta_1^4 = \kappa \, \lambda_1 \lambda_3 (\lambda_2 -1) (\lambda_3 - \lambda_1) &
\vartheta_2^4  = \kappa\, \lambda_3 (\lambda_2 - \lambda_1) (\lambda_3 - 1) \\[5pt]
\vartheta_3^4  = \kappa \, \lambda_2 (\lambda_2 -1) ( \lambda_3 - \lambda_1) &
\vartheta_4^4  = \kappa \, \lambda_1  \lambda_2 (\lambda_2 - \lambda_1) (\lambda_3 - 1) \\[5pt]
\vartheta_5^4  =  \kappa \, \lambda_2 (\lambda_1 - 1) (\lambda_3 - \lambda_1) (\lambda_3 - 1)&
\vartheta_6^4  =\kappa \, \lambda_3  (\lambda_1 - 1) (\lambda_2 - 1) (\lambda_2 - \lambda_1) \\[5pt]
\vartheta_7^4  =  \kappa\, \lambda_2 \lambda_3 (\lambda_1 - 1) (\lambda_3 - \lambda_2)  &
\vartheta_8^4  = \kappa \, \lambda_1  (\lambda_2 - 1) (\lambda_3 - 1) (\lambda_3 - \lambda_2)\\[5pt]
\vartheta_9^4  = \kappa \, \lambda_1  (\lambda_1 - 1) (\lambda_3 - \lambda_2), &
\vartheta_{10}^4  = \kappa \,  (\lambda_2 - \lambda_1) (\lambda_3 - \lambda_1) (\lambda_3 - \lambda_2),
\end{array}
\end{equation}
where $\kappa\not = 0$ is a non-zero constant. 
\end{lemma}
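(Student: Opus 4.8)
The plan is to identify the ten identities of Equation~(\ref{Thomae}) with the classical Thomae formulae for the genus-two hyperelliptic curve $\mathcal{C}$ in Equation~(\ref{Rosenhain}), and to fix all labelling and sign conventions by matching against Picard's Lemma~\ref{lem:ppas}. Write the six Weierstrass points of $\mathcal{C}$ as $p_1=0$, $p_2=1$, $p_3=\lambda_1$, $p_4=\lambda_2$, $p_5=\lambda_3$, $p_6=\infty$. After fixing a symplectic homology basis on $\mathcal{C}$, the ten even theta characteristics are in bijection with the unordered partitions $\{1,\dots,6\}=T\sqcup T^{c}$ into two triples, and Thomae's theorem asserts
\[
 \vartheta[\eta_T](\tau)^{4}\;=\;\kappa\;\prod_{\substack{i<j\\ i,j\in T}}(p_i-p_j)\;\prod_{\substack{i<j\\ i,j\in T^{c}}}(p_i-p_j),
\]
with one and the same nonzero factor $\kappa$ (a constant built from the period matrix of $\mathcal{C}$) for all ten characteristics. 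Since $p_6=\infty$, the differences $(p_i-p_6)$ are discarded (equivalently one dehomogenizes the normalized differentials), and each right-hand side collapses to a product of at most four of the quantities $\lambda_a$, $\lambda_a-1$, $\lambda_a-\lambda_b$, which is the shape of Equation~(\ref{Thomae}).

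First I would make the dictionary between the ten half-integer characteristics $\vartheta_1,\dots,\vartheta_{10}$ displayed in the text and the ten triple-partitions explicit. Rather than chasing Mumford's base assignment $\eta_{p_i}$ through every convention, I would pin it down using Lemma~\ref{lem:ppas}: forming $\vartheta_1^{4}\vartheta_4^{4}/(\vartheta_2^{4}\vartheta_3^{4})$ from the candidate right-hand sides must return $\lambda_1^{2}$, and likewise $\vartheta_4^{4}\vartheta_7^{4}/(\vartheta_2^{4}\vartheta_9^{4})=\lambda_2^{2}$ and $\vartheta_1^{4}\vartheta_7^{4}/(\vartheta_3^{4}\vartheta_9^{4})=\lambda_3^{2}$; these three conditions determine the partitions attached to $\vartheta_1,\vartheta_2,\vartheta_3,\vartheta_4,\vartheta_7,\vartheta_9$, after which the four remaining partitions (for $\vartheta_5,\vartheta_6,\vartheta_8,\vartheta_{10}$) are forced. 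The only residual freedom is a sign in each formula; but the products $\prod_{i<j}(p_i-p_j)$ are taken with a fixed order on the index set and the entries of Equation~(\ref{Thomae}) are written with the matching order on $\lambda_1<\lambda_2<\lambda_3$ and $0<1$, so the signs are consistent by construction, and any genuine sign discrepancy would already contradict Lemma~\ref{lem:ppas}.

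As an internal consistency check — and in particular to confirm that a single $\kappa$ governs all ten identities — I would use the relations of Section~\ref{sec:invariants}. For $\modvar=\tau\in\mathbb{H}_2$ one has $\theta_i(\tau)=\vartheta_i(\tau)^{2}$, so $t_i:=\vartheta_i(\tau)^{4}=\theta_i(\tau)^{2}$ lies in $\mathfrak{M}(2)$ and hence satisfies the linear relations~(\ref{eqn:PlueckerRelations}); one verifies directly that the ten polynomials on the right of Equation~(\ref{Thomae}) satisfy those same linear relations, which is a strong (over-determined) sanity check on the relative normalizations. The main obstacle in this proof is precisely the first step: getting the partition–characteristic dictionary, together with all signs and index orderings, simultaneously consistent with one uniform constant $\kappa$; the handling of $p_6=\infty$ is the standard, harmless one, since the omitted infinite differences are absorbed into $\kappa$, and the rest of the argument is bookkeeping anchored by Lemma~\ref{lem:ppas} and the relations~(\ref{eqn:PlueckerRelations}).
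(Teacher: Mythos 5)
Your proposal is correct and follows the same route as the paper, whose entire proof is the one line ``The proof follows immediately using Thomae's formula''; you have simply spelled out the bookkeeping (the partition--characteristic dictionary, the treatment of the branch point at infinity, and the normalization of $\kappa$) that the paper leaves implicit. Anchoring the dictionary and the signs via Lemma~\ref{lem:ppas} is a sensible way to fix the conventions and is consistent with the stated formulas.
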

\begin{proof}
The proof follows immediately using Thomae's formula.
\end{proof}
We can now express the invariants $t_i$ in terms of theta functions:
\begin{proposition}\label{prop-5.6}
For a configuration of six lines tangent to a conic associated with a genus-two curve $\mathcal{C}$ with level-two structure, the period matrix $\tau \in \mathfrak{A}_2(2)$ determines a point $\modvar \in \mathbf{H}_2/\Gamma_{\mathcal{T}}(1+i)$ such that
\begin{equation}
\label{modd_restriced_thetas}
  \Big[ t_1(\modvar):    \dots :  t_{10}(\modvar)\Big] = \Big[ \vartheta_1^4(\tau): \ \vartheta_2^4(\tau): \ \dots : \vartheta_{10}^4(\tau)\Big] \in \mathbb{P}^9 , \quad R=0.
\end{equation}
\end{proposition}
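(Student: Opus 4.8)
The plan is to prove (\ref{modd_restriced_thetas}) by a direct Pl\"ucker computation for the configuration (\ref{Eqn:6Ltangent}) together with a term-by-term comparison against Thomae's formula (\ref{Thomae}). The six lines in (\ref{Eqn:6Ltangent}) have coefficient matrix
\begin{equation*}
 \mathbf{A} \ = \ \begin{pmatrix} 1 & 0 & 1 & \lambda_1^2 & \lambda_2^2 & \lambda_3^2 \\ 0 & 1 & 1 & 1 & 1 & 1 \\ 0 & 0 & -1 & -\lambda_1 & -\lambda_2 & -\lambda_3 \end{pmatrix} \ ,
\end{equation*}
so I would first compute all twenty Pl\"ucker minors $D_{ijk}=\det\mathbf{A}_{ijk}$. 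Since the first two columns of $\mathbf{A}$ are $\langle 1,0,0\rangle^t$ and $\langle 0,1,0\rangle^t$, every minor containing the index $1$ or $2$ collapses to a $2\times2$ or $1\times1$ determinant, and the four minors among the indices $\{3,4,5,6\}$ are Vandermonde-type determinants in $1,\lambda_1,\lambda_2,\lambda_3$; consequently each $D_{ijk}$ is, up to sign, a product of factors of the form $\lambda_a$, $(\lambda_a-1)$ and $(\lambda_a-\lambda_b)$.

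After substituting these minors into the ten degree-one products of (\ref{eqn:DOcoords}), each $t_i$ is a monomial in the same factors; for example $t_{10}=D_{123}D_{456}=(\lambda_2-\lambda_1)(\lambda_3-\lambda_1)(\lambda_3-\lambda_2)$ and $t_9=D_{156}D_{234}=\lambda_1(\lambda_1-1)(\lambda_3-\lambda_2)$. Matching the complete list of the $t_i$ against the right-hand sides of (\ref{Thomae}) then yields $\vartheta_i^4(\tau)=\kappa\, t_i$ for all $i$ with one and the same nonzero constant $\kappa$, hence $[t_1:\dots:t_{10}]=[\vartheta_1^4(\tau):\dots:\vartheta_{10}^4(\tau)]$ in $\mathbb{P}^9$. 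Letting $\modvar$ be the point of $\mathbf{H}_2/\Gamma_{\mathcal{T}}(1+i)$ attached to the configuration through the isomorphism $\mathcal{F}$ of Theorem~\ref{thm2}, we have $t_i(\modvar)=\theta_i^2(\modvar)$ equal by construction to the $i$-th Dolgachev-Ortland coordinate just computed, so (\ref{modd_restriced_thetas}) follows for the $t_i$; in particular this recovers, geometrically, the square of the reduction formula $\theta_i(\tau)=\vartheta_i(\tau)^2$.

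It remains to check $R=0$. Since the lines (\ref{Eqn:6Ltangent}) are tangent to the conic $C\colon z_3^2-4z_1z_2=0$ (as established above) and, for pairwise distinct $\lambda_i\ne0,1,\infty$, no $t_i$ vanishes, Lemma~\ref{lem:equiv}(1) places the configuration in case (1) of Definition~\ref{def:sstable}, so $R=0$; alternatively one substitutes the minors directly into (\ref{Eqn:moduli}) and verifies $R=0$. This completes the proof.

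The only genuinely delicate point is the bookkeeping: one must keep the index triples used in (\ref{eqn:DOcoords}) to define $t_1,\dots,t_{10}$ synchronized with the even theta characteristics used in (\ref{Thomae}) and in Picard's normalization (\ref{Picard}). The ordering of the lines in (\ref{Eqn:6Ltangent}) is chosen precisely to match the ordering $0,1,\lambda_1,\lambda_2,\lambda_3,\infty$ of the Rosenhain branch points of $\mathcal{C}$, and with this convention the identifications $\vartheta_i^4(\tau)\leftrightarrow t_i$ are forced; coherence under the remaining relabelings is controlled by the $\mathrm{S}_6$-action on the $\theta_i$ recorded in Lemma~\ref{lem:Gtransfo}.
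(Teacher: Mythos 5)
Your proposal is correct and follows essentially the same route as the paper: compute the Dolgachev--Ortland coordinates $t_i$ from the Pl\"ucker minors of the explicit configuration (\ref{Eqn:6Ltangent}), compute $\vartheta_i^4(\tau)$ via Thomae's formula (\ref{Thomae}) with $\tau$ obtained from Picard's lemma, and match the two lists term by term (your sample computations, e.g.\ $t_{10}=(\lambda_2-\lambda_1)(\lambda_3-\lambda_1)(\lambda_3-\lambda_2)$ and $t_9=\lambda_1(\lambda_1-1)(\lambda_3-\lambda_2)$, check out against (\ref{Thomae})). Your explicit treatment of $R=0$ via Lemma~\ref{lem:equiv} is a welcome addition that the paper leaves implicit.
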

\begin{proof}
For the lines in Equations~(\ref{Eqn:6Ltangent}) we compute the period matrix $\tau$ for $\operatorname{Jac}(\mathcal{C})$ using Lemma~(\ref{lem:ppas}). Setting $\modvar = \tau$  yields a $\mathcal{T}$-invariant point in $\mathbf{H}_2/\Gamma_{\mathcal{T}}(1+i)$. By construction, the modular forms $\theta_i^2(\modvar)$ equal $t_i$ for $1 \le i \le 10$ and can be computed directly from Equations~(\ref{eqn:DOcoords}) for the lines in Equations~(\ref{Eqn:6Ltangent}). On the other hand, we can also compute the fourth powers of theta functions directly using Equations~(\ref{Thomae}) to confirm Equation~(\ref{modd_restriced_thetas}). 
\end{proof}
\begin{remark}
Proposition~\ref{prop-5.6} is a special case of a statement in \cite{MR1204828}*{Lemma~2.1.1(vi)} where it was shown that under the restriction to $\mathbb{H}_2/\Gamma_2(2)$ we have $\theta_i(\modvar)=\vartheta_i^2(\tau)$.
\end{remark}
For the Siegel three-fold $\mathfrak{A}_2=\mathbb{H}_2/\Gamma_2$, i.e., the set of isomorphism classes of principally polarized abelian surfaces, the even Siegel modular forms of $\mathfrak{A}_2$ form a polynomial ring in four free generators of degrees $4$, $6$, $10$ and $12$ usually denoted by $\psi_4, \psi_6, \chi_{10}$ and $\chi_{12}$, respectively. Igusa showed in \cite{MR0229643} that for the full ring of modular forms, one needs an additional generator $\chi_{35}$ which is algebraically dependent on the others.  In fact, its square 
is a polynomial in the even generators given in \cite{MR0229643}*{p.~849}.
\par Let $I_2, I_4, I_6 , I_{10}$ denote Igusa invariants of the binary sextic $Y^2=F(X)$ as defined in \cite{MR3731039}*{Sec.~2.3}. Igusa \cite{MR0229643}*{p.~\!848} proved that the relation between the Igusa invariants of a binary sextic $Y^2=F(X)$ defining a genus-two curve $\mathcal{C}$ with period matrix $\tau$ for $\operatorname{Jac}(\mathcal{C})$ and the even Siegel modular forms are as follows:
\begin{equation}
\label{invariants}
\begin{split}
 I_2(F) & = -2^3 \cdot 3 \, \dfrac{\chi_{12}(\tau)}{\chi_{10}(\tau)} \;, \\
 I_4(F) & = \phantom{-} 2^2 \, \psi_4(\tau) \;,\\
 I_6(F) & = -\frac{2^3}3 \, \psi_6(\tau) - 2^5 \,  \dfrac{\psi_4(\tau) \, \chi_{12}(\tau)}{\chi_{10}(\tau)} \;,\\
 I_{10}(F) & = -2^{14} \, \chi_{10}(\tau) \;.
\end{split}
\end{equation}
Conversely, the point $[I_2 : I_4 : I_6 : I_{10}]\in \mathbb{P}(2,4,6,10)$ in weighted projective space equals
\begin{equation}
\label{IgusaClebschProjective}
 \big[ 2^3 3^2 \chi_{12} : 2^2 3^2 \psi_4 \chi_{10}^2 : 2^3 3^2 \big(12 \psi_4 \chi_{12}+ \psi_6 \chi_{10} \big) \chi_{10}^2: 2^2 \chi_{10}^6 \big] .
\end{equation}
We have the following:
\begin{proposition}
\label{prop:compare}
For a configuration of six lines tangent to a conic associated with the binary sextic $Y^2=F(X)$ defining a genus-two curve $\mathcal{C}$, the period matrix $\tau$ determines a point $\modvar \in \mathbf{H}_2/\Gamma_{\mathcal{T}}$ such that
\begin{equation}
\label{modd_restriced}
\scalemath{\MyScaleMedium}{
\begin{array}{c}
\Big[  J_2(\modvar) : J_3(\modvar) : J_4(\modvar): J_5(\modvar) : J_6(\modvar)   \Big]   = \Big[ \psi_4(\tau) : \  \psi_6(\tau) : \ 0: \ 2^{12} 3^5 \chi_{10}(\tau) : \ 2^{12}3^6 \chi_{12}(\tau)\Big]\\[1em]
 =  \Big[ \frac{1}{4} I_4(F): \frac{1}{8} (I_2 I_4-3 I_6)(F):   0: -\frac{243}{4} I_{10}(F):  \frac{243}{32} I_2 I_{10}(F)\Big] 
\end{array}}
\end{equation}
as points in $\mathbb{P}(2,3,4,5,6)$. The discriminant of the Satake sextic restricts to
\begin{equation}
\operatorname{Disc}(\mathcal{S}) = 2^{64} 3^{30} \frac{\chi_{35}^2(\tau)}{\chi_{10}(\tau)} .
\end{equation}
\end{proposition}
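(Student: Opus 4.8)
The plan is to restrict everything already built in Sections~\ref{sec:invariants}--\ref{sec:VGSP} along Proposition~\ref{prop-5.6} and then match against the classical invariant theory of binary sextics. By Proposition~\ref{prop-5.6}, for a six-line configuration tangent to a conic the point $\modvar$ may be taken so that $[\,t_1(\modvar):\dots:t_{10}(\modvar)\,]=[\,\vartheta_1^4(\tau):\dots:\vartheta_{10}^4(\tau)\,]$ and $R=0$; since $R^2=2^4 3^{-4} J_4$ by Equation~(\ref{eqn:R}) (equivalently, since $J_4$ is a nonzero multiple of $\Theta^2$ and $\Theta$ vanishes on the $\mathcal{T}$-fixed locus $\mathbb{H}_2$ by Lemma~\ref{lem:J4vanish}), the fourth coordinate $J_4(\modvar)$ is identically zero on this locus, which accounts for the $0$ in the middle slot of Equation~(\ref{modd_restriced}). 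For the remaining four coordinates, I would substitute $t_i=\vartheta_i^4(\tau)$ into the expressions~(\ref{Satake2Theta}) for the Satake roots $x_1,\dots,x_6$ and then use Thomae's formula~(\ref{Thomae}) to write each $x_k$, up to the common nonzero factor $\kappa$, as an explicit polynomial in the Rosenhain parameters $\lambda_1,\lambda_2,\lambda_3$ of the curve~(\ref{Rosenhain}).

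From these $x_k(\lambda)$ one computes the power sums $s_2,s_3,s_5,s_6$ (with $s_1=0$ and $4s_4=s_2^2$, reflecting $R=0$) and hence, via Equation~(\ref{eqn:Jinvariants}), the invariants $J_2,J_3,J_5,J_6$ as explicit polynomials in $\lambda_1,\lambda_2,\lambda_3$, homogeneous of the appropriate degree in $\kappa$. On the other side, one computes the Igusa invariants $I_2,I_4,I_6,I_{10}$ of the Rosenhain sextic $F(X)=X(X-1)(X-\lambda_1)(X-\lambda_2)(X-\lambda_3)$ directly from their classical formulas (see \cite{MR3731039}*{Sec.~2.3}). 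Comparing polynomials in $\lambda_1,\lambda_2,\lambda_3$ then yields the second line of Equation~(\ref{modd_restriced}), the factor $\kappa$ being absorbed by the weighted-projective scaling of $\mathbb{P}(2,3,4,5,6)$. The first line is then purely formal: inserting the Igusa dictionary~(\ref{invariants})--(\ref{IgusaClebschProjective}) one gets $J_2=\tfrac14 I_4=\psi_4$, $J_3=\tfrac18(I_2 I_4-3 I_6)=\psi_6$, $J_5=-\tfrac{243}{4} I_{10}=2^{12}3^5\chi_{10}$ and $J_6=\tfrac{243}{32}I_2 I_{10}=2^{12}3^6\chi_{12}$ after the substitutions $I_2=-2^3\cdot3\,\chi_{12}/\chi_{10}$ and $I_6=-\tfrac{2^3}{3}\psi_6-2^5\psi_4\chi_{12}/\chi_{10}$. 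An equivalent route avoids theta constants altogether: compare the Weierstrass model of Theorem~\ref{thm:6lines_alt} with the Weierstrass model of Kumar's fibration~\texttt{(7)} on $\operatorname{Kum}(\operatorname{Jac}\mathcal{C})$, cf.\ \cite{MR3263663} and \cite{MR3712162}, whose coefficients are already written in terms of Siegel modular forms; this forces $\mathcal{A}=A$, $\mathcal{B}=B$ and so identifies the $J_k$.

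For the discriminant of the Satake sextic, combine Equation~(\ref{eq:DiscS}) with $t_i=\vartheta_i^4(\tau)$ to obtain $\operatorname{Disc}(\mathcal{S})=3^{30}\prod_{(i,j)\in\mathcal{P}}\bigl(\vartheta_i^4(\tau)-\vartheta_j^4(\tau)\bigr)^2$, where $\mathcal{P}$ is the explicit set of fifteen pairs of even characteristics read off from Equation~(\ref{eq:DiscS}). Each difference $\vartheta_i^4-\vartheta_j^4$ factors, by Thomae, as $\kappa$ times a product of differences of the six branch points $0,1,\lambda_1,\lambda_2,\lambda_3,\infty$; carrying this out, $\operatorname{Disc}(\mathcal{S})$ becomes $\kappa^{30}$ times a polynomial in $\lambda_1,\lambda_2,\lambda_3$. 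This is then matched with $2^{64}3^{30}\chi_{35}^2/\chi_{10}$ using, on the one hand, $\chi_{10}=c\prod_m\vartheta_m^2$ together with Thomae, and on the other hand Igusa's explicit expression for $\chi_{35}^2$ as a polynomial in $\psi_4,\psi_6,\chi_{10},\chi_{12}$ \cite{MR0229643}*{p.~849}; since that polynomial is divisible by $\chi_{10}$, the quotient $\chi_{35}^2/\chi_{10}$ is a genuine weight-$60$ cusp form, so the identity reduces to an equality of polynomials which is pinned down after normalizing the overall constant at one value of $(\lambda_1,\lambda_2,\lambda_3)$ (equivalently, by a single Fourier coefficient).

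The Part~A computations, though lengthy, are mechanical, so the real obstacle is Part~B: correctly identifying the fifteen-fold theta product coming from Equation~(\ref{eq:DiscS}) with $\chi_{35}^2/\chi_{10}$, which hinges on the classical factorization of $\chi_{35}$ (equivalently Igusa's relation for $\chi_{35}^2$) together with the divisibility $\chi_{10}\mid\chi_{35}^2$ in the ring of Siegel modular forms, and on getting the constant $2^{64}3^{30}$ exactly right. One should also check the compatibility of the ambient quotients: Proposition~\ref{prop-5.6} only places the image in $\mathbf{H}_2/\Gamma_{\mathcal{T}}(1+i)$, but $J_2,\dots,J_6$ and $\operatorname{Disc}(\mathcal{S})$ are $\mathrm{S}_6$-invariant and hence descend to $\mathbf{H}_2/\Gamma_{\mathcal{T}}$, while $\modvar=\tau$ being $\mathcal{T}$-fixed lands in $\mathbb{H}_2/\operatorname{Sp}_4(\mathbb{Z})$, which is precisely what makes the restriction to the ring of Siegel modular forms meaningful.
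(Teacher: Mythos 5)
Your proposal is correct and follows essentially the same route as the paper: restrict to the Rosenhain configuration, compute the $J_k$ as polynomials in $\lambda_1,\lambda_2,\lambda_3$ (the paper does this directly from the lines~(\ref{Eqn:6Ltangent}) via Equations~(\ref{eqn:Jinvariants}), which by Proposition~\ref{prop-5.6} and Thomae is the same computation as yours through $t_i=\vartheta_i^4$), compare with the Igusa invariants of the Rosenhain sextic, and convert via the dictionary~(\ref{invariants})--(\ref{IgusaClebschProjective}). Your write-up is in fact more explicit than the paper's on two points it leaves implicit, namely the vanishing of $J_4$ via $R^2=2^4 3^{-4}J_4$ and the matching of the fifteen-fold product in Equation~(\ref{eq:DiscS}) with $\chi_{35}^2/\chi_{10}$.
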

\begin{proof}
For the lines in Equations~(\ref{Eqn:6Ltangent}) we compute the period matrix $\tau$ for $\operatorname{Jac}(\mathcal{C})$ using Lemma~(\ref{lem:ppas}). Setting $\modvar = \tau$ and forgetting the level-two structure, yields a $\mathcal{T}$-invariant point in $\mathbf{H}_2/\Gamma_{\mathcal{T}}$. By construction, the modular forms $J_k(\modvar)$ equal $J_k$ for $2 \le k \le 6$ and can be computed directly from Equations~(\ref{eqn:Jinvariants}) for the lines in Equations~(\ref{Eqn:6Ltangent}). On the other hand, we can compute the Igusa invariants $I_2, I_4, I_6 , I_{10}$ of the binary sextic $Y^2=F(X)$ as defined in \cite{MR3731039}*{Sec.~2.3} for the genus-two curve~(\ref{Rosenhain}) to confirm Equation~(\ref{modd_restriced}). We then use Equation~(\ref{IgusaClebschProjective}) to convert to expressions in terms of $\psi_4, \psi_6, \chi_{10}$ and $\chi_{12}$.
\end{proof}
\par To summarize, when the six lines are tangent to a conic, the K3 surface $\mathcal{Y}$ becomes the Kummer surface $\mathrm{Kum}(\operatorname{Jac}\mathcal{C})$ of the Jacobian variety $\operatorname{Jac}(\mathcal{C})$ of a generic genus-two curve $\mathcal{C}$. In \cites{MR2824841,MR2935386} it was proved that the K3 surface $\mathcal{X}$ in turn is the Shioda-Inose surface $\mathrm{SI}(\operatorname{Jac}\mathcal{C})$, i.e., a K3 surface which carries a Nikulin involution such that quotienting by this involution and blowing up the fixed points, recovers the Kummer surface $\mathcal{Y}$ \emph{and} the rational quotient map of degree two induces a Hodge isometry\footnote{A Hodge isometry between two transcendental lattices is an isometry preserving the Hodge structure.} between the transcendental lattices $T_{\mathcal{X}}(2)$\footnote{The notation $T_{\mathcal{X}}(2)$ indicates that the bilinear pairing on the transcendental lattice $T_{\mathcal{X}}$ is multiplied by $2$.} and  $T_{\operatorname{Kum}(\operatorname{Jac}\mathcal{C})}$. In particular, the Shioda-Inose surface $\mathcal{X}$ and the Kummer surface $\operatorname{Kum}(\operatorname{Jac}\mathcal{C})$ have Picard rank greater or equal to $17$. Proposition~\ref{prop:compare} then has the following corollary:
\begin{corollary}
Configurations of six lines tangent to a conic give rise to a three-parameter family of Kummer surfaces $\mathrm{Kum}(\operatorname{Jac}\mathcal{C})$ of the Jacobian varieties $\operatorname{Jac}(\mathcal{C})$ of generic genus-two curves $\mathcal{C}$. Moreover, the corresponding three-parameter family of Shioda-Inose surfaces $\mathrm{SI}(\operatorname{Jac}\mathcal{C})$ associated with $\mathrm{Kum}(\operatorname{Jac}\mathcal{C})$ is obtained by setting $\epsilon=0$ and $\zeta=1$ in Equation~(\ref{mainquartic}).
\end{corollary}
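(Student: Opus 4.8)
The plan is to handle the two assertions separately: the first is essentially a consequence of results already in place, while the second amounts to locating the conic‑tangent specialization inside the $\varepsilon=0$ slice of the family and matching moduli with the Clingher--Doran normal form.

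For the first assertion, Lemma~\ref{prop_Kummers} already identifies the double‑sextic surface $\mathcal{Y}$ of a conic‑tangent configuration with $\mathrm{Kum}(\operatorname{Jac}\mathcal{C})$, where $\mathcal{C}$ is the genus‑two curve~(\ref{Rosenhain}) in Rosenhain form with parameters $\lambda_1,\lambda_2,\lambda_3$. By Lemma~\ref{lem:ppas}, every triple of distinct $\lambda_i\in\mathbb{C}\setminus\{0,1\}$ is the Rosenhain triple of the period matrix of some $\operatorname{Jac}(\mathcal{C})$, and the Rosenhain invariants generate the function field of $\mathfrak{A}_2(2)$; hence the configurations in~(\ref{Eqn:6Ltangent}) produce a genuinely three‑parameter family dominating the moduli of generic principally polarized abelian surfaces. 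This gives the first sentence.

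For the second assertion, start from the fact that a conic‑tangent configuration has $J_4(\modvar)=0$ by Proposition~\ref{prop:compare}. Feeding this into the moduli identification~(\ref{modd}) of Proposition~\ref{lem:polar1}, which reads $[J_2:J_3:J_4:J_5:J_6]=[\alpha:\beta:\gamma\varepsilon:\gamma\zeta+\delta\varepsilon:\delta\zeta]$, gives $\gamma\varepsilon=0$. The isomorphism $\mathcal{X}(\alpha,\beta,\gamma,\delta,\varepsilon,\zeta)\simeq\mathcal{X}(\alpha,\beta,\varepsilon,\zeta,\gamma,\delta)$ of $H\oplus E_7(-1)\oplus E_7(-1)$‑polarized K3 surfaces in Proposition~\ref{symmetries1}(b) interchanges the two branches $\gamma=0$ and $\varepsilon=0$, so we may take $\varepsilon=0$; and for generic conic‑tangent configurations one has $J_5=2^{12}3^5\chi_{10}(\tau)\neq0$, hence $\gamma\zeta\neq0$, so the rescaling isomorphism of Proposition~\ref{symmetries1}(a) (equivalently Lemma~\ref{lem:polar2}) normalizes $\zeta=1$. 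With $\varepsilon=0,\zeta=1$ the polarization enhances to $H\oplus E_8(-1)\oplus E_7(-1)$ — the case $\gamma\neq0,\varepsilon=0$ of Theorem~\ref{thm:polarization}, diagram~(\ref{diaggl9}) — and the alternate Weierstrass model~(\ref{eqns_SI_alt}) becomes $y^2=x\bigl(x^2+(t^3-3\alpha t-2\beta)x+(\delta-\gamma t)\bigr)$, i.e.\ $A(t)$ degenerates to a linear polynomial; this is precisely the Clingher--Doran normal form of the Inose quartic.

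It then remains to name this surface: Proposition~\ref{prop:compare} shows that, under $\modvar=\tau$, the invariants satisfy $[J_2:J_3:J_4:J_5:J_6]=[\psi_4(\tau):\psi_6(\tau):0:2^{12}3^5\chi_{10}(\tau):2^{12}3^6\chi_{12}(\tau)]$, so in the normalization above $(\alpha,\beta,\gamma,\delta)$ agrees, up to the overall scaling of Proposition~\ref{symmetries1}(a), with $(\psi_4,\psi_6,\chi_{10},\chi_{12})$ evaluated on the period matrix $\tau$ of $\operatorname{Jac}(\mathcal{C})$. This is exactly the modular parametrization by which Clingher and Doran \cites{MR2824841,MR2935386} realize $\mathcal{X}(\psi_4,\psi_6,\chi_{10},\chi_{12},0,1)$ as the Shioda--Inose surface $\mathrm{SI}(\operatorname{Jac}\mathcal{C})$; equivalently, once the polarization is $H\oplus E_8(-1)\oplus E_7(-1)$ and $\mathcal{Y}=\mathrm{Kum}(\operatorname{Jac}\mathcal{C})$, the Van~Geemen--Sarti two‑isogeny diagram~(\ref{pic:VGS}) specializes to the classical Shioda--Inose two‑isogeny and $\hat\Phi$ now does induce a Hodge isometry $T_{\mathcal{X}}(2)\cong T_{\mathrm{Kum}(\operatorname{Jac}\mathcal{C})}$. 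The step I expect to be delicate is not any one computation but the disentangling of the scaling and symmetry freedoms: one must verify that it is the $\varepsilon=0$ representative (not $\gamma=0$) together with the normalization $\zeta=1$ that is compatible with the Clingher--Doran conventions, that the resulting parameters match the Siegel modular forms exactly and not merely projectively, and that the Picard rank genuinely jumps to $\ge 17$ so that the rank‑seventeen Shioda--Inose statement of \cites{MR2824841,MR2935386} may legitimately be invoked.
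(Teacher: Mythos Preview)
Your argument is correct and matches the paper's logic: the paper states this corollary without an explicit proof, deriving it from Proposition~\ref{prop:compare} together with the Clingher--Doran results \cites{MR2824841,MR2935386} cited in the preceding paragraph, and the subsequent corollary's proof confirms the parameter match $(\alpha,\beta,\gamma,\delta)=(\psi_4,\psi_6,2^{12}3^5\chi_{10},2^{12}3^6\chi_{12})$ via Equations~(\ref{modd_restriced}) and~(\ref{modd}). You have simply made the normalization steps explicit.

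One small inaccuracy: Proposition~\ref{symmetries1}(a) sends $\zeta\mapsto\zeta$, so it cannot by itself normalize $\zeta=1$; the correct reference is the isomorphism~(\ref{eq:isom}) of Lemma~\ref{lem:polar2}, $(\gamma,\delta,\varepsilon,\zeta)\mapsto(t\gamma,t\delta,t^{-1}\varepsilon,t^{-1}\zeta)$, which with $\varepsilon=0$ and $t=\zeta$ achieves $\zeta=1$. Your parenthetical ``equivalently Lemma~\ref{lem:polar2}'' is therefore what actually does the work, and the two scalings are not interchangeable here.
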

We also have the following:
\begin{corollary}
Along the locus $J_4=0$, the lattice polarization of the K3 surfaces $\mathcal{X}(\alpha,\beta, \gamma, \delta, \varepsilon=0, \zeta=1)$ extends to a canonical 
$H \oplus E_8(-1) \oplus E_7(-1)$ lattice polarization. 
\end{corollary}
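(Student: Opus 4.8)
The plan is to deduce this statement directly from Proposition~\ref{lem:polar1}, Corollary~\ref{cor:extensions}, and the case analysis already carried out in the proof of Theorem~\ref{thm:polarization}, with no genuinely new computation needed.

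First I would observe that, by Proposition~\ref{lem:polar1}, the invariant $J_4$ of the six-line configuration attached to $\mathcal{X}(\alpha,\beta,\gamma,\delta,\varepsilon,\zeta)$ equals $\gamma\cdot\varepsilon$ as a coordinate in $\mathbb{P}(2,3,4,5,6)$. Setting $\varepsilon=0$ therefore forces $J_4=0$, so every member of the subfamily $\mathcal{X}(\alpha,\beta,\gamma,\delta,0,1)$ lies on the locus $\{J_4=0\}$; conversely, using the isomorphisms of polarized K3 surfaces in Proposition~\ref{symmetries1}(b) and Lemma~\ref{lem:polar2} one sees that, up to isomorphism, this subfamily exhausts the whole locus $\{J_4=0\}$ in $\mathfrak{M}$. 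Part~(1) of Corollary~\ref{cor:extensions} then immediately gives that the $H\oplus E_7(-1)\oplus E_7(-1)$ polarization extends to $H\oplus E_8(-1)\oplus E_7(-1)$.

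To see that the extended polarization is \emph{canonical} --- that is, pinned down by the surface itself rather than by a choice of auxiliary elliptic fibration --- I would instead read it off directly from the quartic ${\rm Q}(\alpha,\beta,\gamma,\delta,0,1)$ exactly as in the proof of Theorem~\ref{thm:polarization}: with $\varepsilon=0$ the lines ${\rm L}_2$ and ${\rm L}_3$ coincide and the rational double point at ${\rm P}_1=[0,1,0,0]$ becomes of type ${\rm A}_{11}$ instead of ${\rm A}_9$, while ${\rm P}_2=[0,0,1,0]$ stays of type ${\rm A}_5$ as long as $\gamma\neq0$. The configuration of smooth rational curves on the resolution is then diagram~(\ref{diaggl9}), which exhibits explicit generators of an $H\oplus E_8(-1)\oplus E_7(-1)$ polarization, the $E_8(-1)$ summand being spanned by $a_1,\dots,a_7$ together with ${\rm L}_2$; since these classes are determined by the singularity structure of the quartic, the extension is canonical. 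The remaining degenerate case $\gamma=0$ (still with $\varepsilon=0$) forces $\delta\neq0$ by the standing hypothesis $(\gamma,\delta)\neq(0,0)$, and then the last case of Theorem~\ref{thm:polarization} gives the larger polarization $H\oplus E_8(-1)\oplus E_8(-1)$, which contains $H\oplus E_8(-1)\oplus E_7(-1)$ primitively, so the surface is a fortiori $H\oplus E_8(-1)\oplus E_7(-1)$-polarized there as well.

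The only point requiring any care --- and the closest thing to an obstacle here --- is checking that the $E_8(-1)$ factor read off from diagram~(\ref{diaggl9}) is indeed the unique primitive overlattice of the original $E_7(-1)$ inside $\mathrm{NS}(\mathcal{X})$ compatible with the degree-four hyperplane polarization, so that ``extends'' is unambiguous; but this is implicit in the lattice-generator computations of Theorem~\ref{thm:polarization} (and cross-checked by the $III^*$-degeneration of the base-fiber-dual fibration in Lemma~\ref{lem:extensions_alt_FB}(1)), so the corollary is in the end a bookkeeping consequence of the earlier results.
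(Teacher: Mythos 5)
Your proof is correct, but it follows a genuinely different route from the paper's. The paper disposes of this corollary essentially by citation: it invokes the earlier result of Clingher and Doran that the subfamily of Equation~(\ref{mainquartic}) with $\varepsilon=0$, $\zeta=1$ carries a canonical $H\oplus E_8(-1)\oplus E_7(-1)$ polarization, and then records that the parameters $(\alpha,\beta,\gamma,\delta)=\left(\psi_4,\psi_6,2^{12}3^5\chi_{10},2^{12}3^6\chi_{12}\right)$ agree with Equations~(\ref{modd_restriced}) and~(\ref{modd}), so that the external normal form is consistent with the modular-form identifications of Section~5. You instead assemble the statement entirely from the paper's internal machinery: the identity $J_4=\gamma\cdot\varepsilon$ from Proposition~\ref{lem:polar1} places the subfamily on the locus $J_4=0$ (and, via Proposition~\ref{symmetries1} and the scaling isomorphism, shows it exhausts that locus up to isomorphism); Corollary~\ref{cor:extensions}(1) --- which rests on the $III^*$-degeneration of the base-fiber-dual fibration in Lemma~\ref{lem:extensions_alt_FB} and the triviality of its Mordell--Weil group --- yields the lattice extension; and the case $\gamma\neq0$, $\varepsilon=0$ in the proof of Theorem~\ref{thm:polarization}, with diagram~(\ref{diaggl9}), exhibits explicit divisor classes generating the enhanced polarization, which is what justifies the word \emph{canonical}. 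Your handling of the residual case $\gamma=\varepsilon=0$ via the $H\oplus E_8(-1)\oplus E_8(-1)$ diagram and the primitive embedding $E_7(-1)\hookrightarrow E_8(-1)$ is also sound. What your approach buys is self-containedness and a divisor-level description of the extension; what the paper's approach buys is the explicit tie to the Siegel modular forms $\psi_4,\psi_6,\chi_{10},\chi_{12}$, which is the real point of placing this corollary at the end of Section~5.
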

\begin{proof}
It was proved in \cite{MR2824841} that the family in Equation~(\ref{mainquartic}) with $\varepsilon=0, \zeta=1$ is endowed with a canonical 
$H \oplus E_8(-1) \oplus E_7(-1)$ lattice polarization. They also found the parameters $(\alpha,\beta,\gamma,\delta)$ in terms of the standard even Siegel modular forms $\psi_4, \psi_6, \chi_{10}, \chi_{12}$ (cf.~\cite{MR0141643}) given by
\begin{equation}
 (\alpha,\beta,\gamma,\delta) = \left(\psi_4, \psi_6, 2^{12}3^5 \, \chi_{10}, 2^{12}3^6 \, \chi_{12}\right) \;,
\end{equation}
which agrees with Equation~(\ref{modd_restriced}) and Equation~(\ref{modd}).
\end{proof}
\noindent
The different Jacobian elliptic fibrations, the Satake sextic, and further confluences of singular fibers were investigated in \cites{MR3712162, MR3731039}.
\bibliographystyle{amsplain}
\bibliography{ref}{}
\newpage
\begin{appendix}
\section{Invariants of the quintic pencils}
Using a 2-neighbor-step procedure twice starting with the natural fibration in Equation~(\ref{eqns_Kummer}), we constructed on the K3 surface $\mathcal{Y}$ associated with the double cover branched along the six lines given by Equations~(\ref{lines}) the following Weierstrass model:
\begin{equation}
\label{eqns_Kummer_alt_b}
  Y^2   = X \Big(X^2 - 2 \, \mathcal{B}(t)  X + \mathcal{B}(t)^2 - 4 \, \mathcal{A}(t) \Big), 
\end{equation} 
where $\mathcal{B}(t)=t^3- J'_2 \, t - J'_3$ and $\mathcal{A}(t) = J'_4 t^2 - J'_5 t  + J'_6$, and
\begin{equation}
\label{QP_inv}
\scalemath{\MyScaleTiny}{
\begin{array}{rcl}
 2 J'_2 & = & 2\,{a}^{2}{d}^{2}-2\,abcd+2\,{b}^{2}{c}^{2}-2\,{a}^{2}d+bca+adb+adc-2\,a{d}^{2}-2\,{b}^{2}c-2\,b{c}^{2}+bcd+2\,{a}^{2}\\
&&-2\,ba-2\,ca +ad+2\,{b}^{2}+bc-2\,db+2\,{c}^{2}-2\,cd+2\,{d}^{2},\\
-4 J'_3 &= & -4\,{a}^{3}{d}^{3}+6\,{a}^{2}bc{d}^{2}+6\,a{b}^{2}{c}^{2}d-4\,{b}^{3}{c}^{3}+6\,{a}^{3}{d}^{2}-6\,{a}^{2}bcd-3\,{a}^{2}b{d}^{2}\\
&&-3\,{a}^{2}c{d}^{2}+6\,{a}^{2}{d}^{3}-3\,a{b}^{2}{c}^{2}-6\,a{b}^{2}cd-6\,ab{c}^{2}d-6\,abc{d}^{2}+6\,{b}^{3}{c}^{2}+6\,{b}^{2}{c}^{3}\\
&&-3\,{b}^{2}{c}^{2}d+6\,{a}^{3}d-3\,{a}^{2}bc-6\,{a}^{2}bd-6\,{a}^{2}cd-6\,{a}^{2}{d}^{2}-6\,a{b}^{2}c-3\,a{b}^{2}d-6\,ab{c}^{2}+60\,abcd\\
&&-6\,ab{d}^{2}-3\,a{c}^{2}d-6\,ac{d}^{2}+6\,a{d}^{3}+6\,{b}^{3}c-6\,{b}^{2}{c}^{2}-6\,{b}^{2}cd+6\,b{c}^{3}-6\,b{c}^{2}d-3\,bc{d}^{2}-4\,{a}^{3}\\
&&+6\,{a}^{2}b+6\,{a}^{2}c-3\,{a}^{2}d+6\,a{b}^{2}-6\,bca-6\,adb+6\,a{c}^{2}-6\,adc-3\,a{d}^{2}-4\,{b}^{3}-3\,{b}^{2}c\\
&&+6\,{b}^{2}d-3\,b{c}^{2}-6\,bcd+6\,b{d}^{2}-4\,{c}^{3}+6\,{c}^{2}d+6\,c{d}^{2}-4\,{d}^{3},\\
16 \, J'_4 & = &  81 \, \left( bca-adb-adc+bcd+ad-bc \right) ^{2},\\
-\frac{8}{81}J'_5 & = & -2\,{b}^{2}{c}^{2}d{a}^{3}+{b}^{2}c{d}^{2}{a}^{3}+{b}^{2}{d}^{3}{a}^{3}+b{c}^{2}{d}^{2}{a}^{3}-4\,bc{d}^{3}{a}^{3}+{c}^{2}{d}^{3}{a}^{3}+{b}^{3}{c}^{3}{a}^{2}+{b}^{3}{c}^{2}d{a}^{2}\\
&&-2\,{b}^{3}c{d}^{2}{a}^{2}+{b}^{2}{c}^{3}d{a}^{2}+4\,{b}^{2}{c}^{2}{d}^{2}{a}^{2}+{b}^{2}c{d}^{3}{a}^{2}-2\,b{c}^{3}{d}^{2}{a}^{2}+b{c}^{2}{d}^{3}{a}^{2}-4\,{b}^{3}{c}^{3}da\\
&&+{b}^{3}{c}^{2}{d}^{2}a+{b}^{2}{c}^{3}{d}^{2}a-2\,{b}^{2}{c}^{2}{d}^{3}a+{b}^{3}{c}^{3}{d}^{2}+{b}^{2}{c}^{2}{a}^{3}+{b}^{2}cd{a}^{3}-2\,{b}^{2}{d}^{2}{a}^{3}+b{c}^{2}d{a}^{3}\\
&&+4\,bc{d}^{2}{a}^{3}+b{d}^{3}{a}^{3}-2\,{c}^{2}{d}^{2}{a}^{3}+c{d}^{3}{a}^{3}-2\,{b}^{3}{c}^{2}{a}^{2}+{b}^{3}cd{a}^{2}+{b}^{3}{d}^{2}{a}^{2}-2\,{b}^{2}{c}^{3}{a}^{2}\\
&&-4\,{b}^{2}{c}^{2}d{a}^{2}-4\,{b}^{2}c{d}^{2}{a}^{2}-2\,{b}^{2}{d}^{3}{a}^{2}+b{c}^{3}d{a}^{2}-4\,b{c}^{2}{d}^{2}{a}^{2}+4\,bc{d}^{3}{a}^{2}+{c}^{3}{d}^{2}{a}^{2}\\
&&-2\,{c}^{2}{d}^{3}{a}^{2}+{b}^{3}{c}^{3}a+4\,{b}^{3}{c}^{2}da+{b}^{3}c{d}^{2}a+4\,{b}^{2}{c}^{3}da-4\,{b}^{2}{c}^{2}{d}^{2}a+{b}^{2}c{d}^{3}a+b{c}^{3}{d}^{2}a\\
&&+b{c}^{2}{d}^{3}a+{b}^{3}{c}^{3}d-2\,{b}^{3}{c}^{2}{d}^{2}-2\,{b}^{2}{c}^{3}{d}^{2}+{b}^{2}{c}^{2}{d}^{3}-4\,bcd{a}^{3}+b{d}^{2}{a}^{3}+c{d}^{2}{a}^{3}-2\,{a}^{3}{d}^{3}\\
&&+{b}^{2}{c}^{2}{a}^{2}+4\,{b}^{2}cd{a}^{2}+{b}^{2}{d}^{2}{a}^{2}+4\,b{c}^{2}d{a}^{2}-4\,{a}^{2}bc{d}^{2}+b{d}^{3}{a}^{2}+{c}^{2}{d}^{2}{a}^{2}+c{d}^{3}{a}^{2}+{b}^{3}{c}^{2}a\\
&&-4\,{b}^{3}cda+{b}^{2}{c}^{3}a-4\,a{b}^{2}{c}^{2}d+4\,{b}^{2}c{d}^{2}a-4\,b{c}^{3}da+4\,b{c}^{2}{d}^{2}a-4\,bc{d}^{3}a-2\,{b}^{3}{c}^{3}+{b}^{3}{c}^{2}d\\
&&+{b}^{2}{c}^{3}d+{b}^{2}{c}^{2}{d}^{2}+{a}^{3}{d}^{2}+{a}^{2}bcd-2\,{a}^{2}b{d}^{2}-2\,{a}^{2}c{d}^{2}+{a}^{2}{d}^{3}-2\,a{b}^{2}{c}^{2}+a{b}^{2}cd\\
&&+ab{c}^{2}d+abc{d}^{2}+{b}^{3}{c}^{2}+{b}^{2}{c}^{3}-2\,{b}^{2}{c}^{2}d,\\
\frac{16}{81}J'_6 & = & -4\,{b}^{2}{c}^{2}d{a}^{4}+4\,b{c}^{2}d{a}^{4}+4\,{b}^{2}cd{a}^{4}-10\,bcd{a}^{4}+4\,b{c}^{2}{d}^{4}{a}^{3}-22\,b{c}^{2}{d}^{3}{a}^{3}-4\,{b}^{3}c{d}^{3}{a}^{3}\\
&&-22\,{b}^{2}c{d}^{3}{a}^{3}-10\,{b}^{2}{c}^{3}{d}^{2}{a}^{3}+16\,b{c}^{3}{d}^{2}{a}^{3}-10\,{b}^{3}{c}^{2}{d}^{2}{a}^{3}+4\,{b}^{2}{c}^{2}{d}^{2}{a}^{3}+16\,{b}^{3}c{d}^{2}{a}^{3}-4\,{b}^{3}{c}^{3}d{a}^{3}\\
&&+16\,{b}^{2}{c}^{3}d{a}^{3}-10\,b{c}^{3}d{a}^{3}+16\,{b}^{3}{c}^{2}d{a}^{3}-10\,{b}^{3}cd{a}^{3}+4\,{b}^{2}{c}^{2}{d}^{4}{a}^{2}-10\,b{c}^{2}{d}^{4}{a}^{2}-10\,{b}^{2}c{d}^{4}{a}^{2}\\
&&+4\,{b}^{2}c{d}^{4}{a}^{3}+12\,bc{d}^{4}{a}^{3}-4\,b{c}^{3}{d}^{3}{a}^{3}+12\,{b}^{2}{c}^{2}{d}^{3}{a}^{3}+12\,bc{d}^{4}{a}^{2}-10\,{b}^{2}{c}^{3}{d}^{3}{a}^{2}+16\,b{c}^{3}{d}^{3}{a}^{2}\\
&&-10\,{b}^{3}{c}^{2}{d}^{3}{a}^{2}+4\,{b}^{4}c{d}^{2}a-10\,{b}^{4}{c}^{4}da+12\,{b}^{3}{c}^{4}da+12\,{b}^{2}{c}^{4}da-10\,b{c}^{4}da+12\,{b}^{4}{c}^{3}da+12\,{b}^{4}{c}^{2}da\\
&&-10\,{b}^{4}cda+4\,{b}^{2}{c}^{2}{d}^{3}{a}^{2}+16\,{b}^{3}c{d}^{3}{a}^{2}+4\,{b}^{2}{c}^{4}{d}^{2}{a}^{2}-4\,b{c}^{4}{d}^{2}{a}^{2}+12\,{b}^{3}{c}^{3}{d}^{2}{a}^{2}+4\,{b}^{2}{c}^{3}{d}^{2}{a}^{2}\\
&&+4\,{b}^{4}{c}^{2}{d}^{2}{a}^{2}+4\,{b}^{3}{c}^{2}{d}^{2}{a}^{2}-4\,{b}^{4}c{d}^{2}{a}^{2}+4\,{b}^{3}{c}^{4}d{a}^{2}-10\,{b}^{2}{c}^{4}d{a}^{2}+4\,b{c}^{4}d{a}^{2}+4\,{b}^{4}{c}^{3}d{a}^{2}\\
&&-22\,{b}^{3}{c}^{3}d{a}^{2}-10\,{b}^{4}{c}^{2}d{a}^{2}+4\,{b}^{4}cd{a}^{2}-4\,{b}^{2}{c}^{2}{d}^{4}a+4\,b{c}^{2}{d}^{4}a+4\,{b}^{2}c{d}^{4}a-10\,bc{d}^{4}a-4\,{b}^{3}{c}^{3}{d}^{3}a\\
&&+16\,{b}^{2}{c}^{3}{d}^{3}a-10\,b{c}^{3}{d}^{3}a+16\,{b}^{3}{c}^{2}{d}^{3}a-10\,{b}^{3}c{d}^{3}a+4\,{b}^{3}{c}^{4}{d}^{2}a-10\,{b}^{2}{c}^{4}{d}^{2}a+4\,b{c}^{4}{d}^{2}a+4\,{b}^{4}{c}^{3}{d}^{2}a\\
&&-22\,{b}^{3}{c}^{3}{d}^{2}a-10\,{b}^{4}{c}^{2}{d}^{2}a-10\,bc{d}^{4}{a}^{4}+4\,b{c}^{2}{d}^{3}{a}^{4}+4\,{b}^{2}c{d}^{3}{a}^{4}+12\,bc{d}^{3}{a}^{4}+4\,{b}^{2}{c}^{2}{d}^{2}{a}^{4}\\
&&-10\,b{c}^{2}{d}^{2}{a}^{4}-10\,{b}^{2}c{d}^{2}{a}^{4}+12\,bc{d}^{2}{a}^{4}+4\,{b}^{2}c{d}^{2}{a}^{3}+12\,b{c}^{2}d{a}^{3}+12\,{b}^{2}cd{a}^{3}+4\,b{c}^{2}{d}^{3}{a}^{2}+4\,{b}^{2}c{d}^{3}{a}^{2}\\
&&+4\,{b}^{2}{c}^{3}d{a}^{2}+12\,b{c}^{3}d{a}^{2}+4\,{b}^{3}{c}^{2}d{a}^{2}+12\,{b}^{3}cd{a}^{2}+12\,b{c}^{2}{d}^{3}a+12\,{b}^{2}c{d}^{3}a+4\,{b}^{2}{c}^{3}{d}^{2}a+12\,b{c}^{3}{d}^{2}a\\
&&+4\,{b}^{3}{c}^{2}{d}^{2}a+12\,{b}^{3}c{d}^{2}a+4\,bc{d}^{3}{a}^{3}-22\,bc{d}^{2}{a}^{3}-22\,{b}^{2}{c}^{2}d{a}^{3}+4\,bcd{a}^{3}-22\,bc{d}^{3}{a}^{2}\\
&&-22\,b{c}^{3}{d}^{2}{a}^{2}+12\,{b}^{2}{c}^{2}{d}^{2}{a}^{2}+4\,b{c}^{2}{d}^{2}{a}^{2}-22\,{b}^{3}c{d}^{2}{a}^{2}+4\,{b}^{2}c{d}^{2}{a}^{2}+4\,{b}^{2}{c}^{2}d{a}^{2}-10\,b{c}^{2}d{a}^{2}\\
&&-10\,{b}^{2}cd{a}^{2}-22\,{b}^{2}{c}^{2}{d}^{3}a+4\,bc{d}^{3}a+4\,{b}^{2}{c}^{2}{d}^{2}a-10\,b{c}^{2}{d}^{2}a-10\,{b}^{2}c{d}^{2}a+4\,{b}^{3}{c}^{3}da-22\,{b}^{2}{c}^{3}da\\
&&+16\,{a}^{2}bc{d}^{2}+16\,a{b}^{2}{c}^{2}d+4\,b{c}^{3}da-22\,{b}^{3}{c}^{2}da+4\,{b}^{3}cda+4\,b{c}^{2}{d}^{2}{a}^{3}-4\,{b}^{3}{c}^{2}a-4\,{b}^{2}{c}^{3}d\\
&&+4\,{c}^{3}{d}^{2}{a}^{2}-10\,{b}^{2}{d}^{3}{a}^{2}+4\,{b}^{2}{c}^{2}{a}^{3}+4\,{b}^{2}{d}^{2}{a}^{2}-10\,{b}^{3}{c}^{2}{d}^{2}+4\,{b}^{2}{c}^{2}{a}^{2}+16\,{b}^{3}{c}^{3}a+4\,{c}^{2}{d}^{2}{a}^{2}\\
&&-10\,{b}^{2}{c}^{3}{d}^{2}-4\,b{d}^{3}{a}^{2}+4\,{b}^{3}{d}^{2}{a}^{2}+16\,c{d}^{3}{a}^{3}-10\,{b}^{3}{c}^{2}{a}^{2}+16\,b{d}^{3}{a}^{3}-4\,c{d}^{2}{a}^{3}+16\,{b}^{2}{d}^{3}{a}^{3}\\
&&-4\,c{d}^{3}{a}^{2}-10\,{b}^{2}{c}^{3}{a}^{2}+4\,{b}^{2}{c}^{2}{d}^{3}-10\,{c}^{2}{d}^{3}{a}^{2}+16\,{b}^{3}{c}^{3}d-10\,{c}^{2}{d}^{2}{a}^{3}+16\,{b}^{3}{c}^{3}{d}^{2}-4\,{b}^{3}{c}^{2}d\\
&&+4\,{b}^{2}{c}^{2}{d}{2}-4\,b{d}^{2}{a}^{3}-4\,{b}^{2}{c}^{3}a+16\,{b}^{3}{c}^{3}{a}^{2}-10\,{b}^{2}{d}^{2}{a}^{3}+16\,{c}^{2}{d}^{3}{a}^{3}+{b}^{4}{c}^{2}+4\,{b}^{4}{c}^{4}-4\,{b}^{3}{c}^{4}\\
&&-4\,{b}^{4}{c}^{3}+{d}^{2}{a}^{4}+{d}^{4}{a}^{2}+4\,{d}^{4}{a}^{4}-4\,{d}^{3}{a}^{4}-4\,{d}^{4}{a}^{3}+{b}^{2}{c}^{4}+2{a}^{3}{d}^{3}+2{b}^{3}{c}^{3}+4\,{b}^{4}{c}^{2}d\\
&&+2{b}^{3}{c}^{3}{a}^{3}+{b}^{4}{c}^{4}{d}^{2}-10\,b{d}^{3}{a}^{4}-4\,{b}^{3}{d}^{2}{a}^{3}+4{b}^{3}{d}^{3}{a}^{3}+{b}^{2}{c}^{2}{a}^{4}+4\,{b}^{2}{c}^{4}a+4\,b{d}^{4}{a}^{4}\\
&&-10\,c{d}^{3}{a}^{4}-4\,{c}^{3}{d}^{3}{a}^{2}+4\,{b}^{2}{d}^{4}{a}^{2}+{b}^{4}{d}^{2}{a}^{2}-10\,{b}^{3}{c}^{4}a-4\,{b}^{3}{c}^{4}{a}^{2}+4\,b{d}^{2}{a}^{4}+4\,{b}^{2}{d}^{2}{a}^{4}\\
&&+2{b}^{3}{c}^{3}{d}^{3}-10\,c{d}^{4}{a}^{3}-4\,{b}^{3}{c}^{2}{a}^{3}+4\,{c}^{2}{d}^{4}{a}^{2}+4\,c{d}^{4}{a}^{2}-4\,{b}^{4}{c}^{3}{a}^{2}-10\,{b}^{3}{c}^{4}d+4\,c{d}^{4}{a}^{4}-10\,b{d}^{4}{a}^{3}\\
&&+4\,b{d}^{4}{a}^{2}+2\,{c}^{3}{d}^{3}{a}^{3}+4\,{b}^{2}{c}^{4}d+4\,c{d}^{2}{a}^{4}-4\,{b}^{3}{d}^{3}{a}^{2}+4\,{b}^{4}{c}^{2}a-10\,{b}^{4}{c}^{3}d-4\,{c}^{2}{d}^{4}{a}^{3}+{b}^{4}{c}^{4}{a}^{2}\\
&&-4\,{b}^{3}{c}^{4}{d}^{2}-4\,{b}^{2}{c}^{3}{d}^{3}+4\,{b}^{4}{c}^{2}{a}^{2}-4\,{b}^{2}{c}^{3}{a}^{3}+4\,{b}^{4}{c}^{2}{d}^{2}-4\,{b}^{3}{c}^{2}{d}^{3}+4\,{b}^{4}{c}^{4}a-10\,{b}^{4}{c}^{3}a+4\,{b}^{2}{c}^{4}{d}^{2}\\
&&+2\,{c}^{4}{d}^{2}{a}^{2}-4\,{b}^{4}{c}^{3}{d}^{2}-4\,{b}^{2}{d}^{3}{a}^{4}-4\,{b}^{2}{d}^{4}{a}^{3}-4\,{c}^{2}{d}^{3}{a}^{4}+4\,{c}^{2}{d}^{2}{a}^{4}\\
&&+{b}^{2}{d}^{4}{a}^{4}+{b}^{2}{c}^{2}{d}^{4}+4\,{b}^{4}{c}^{4}d-4\,{c}^{3}{d}^{2}{a}^{3}+{c}^{2}{d}^{4}{a}^{4}+4\,{b}^{2}{c}^{4}{a}^{2}.
\end{array}}
\end{equation}

\end{appendix}
\end{document}